\numberwithin{equation}{section}
\numberwithin{figure}{section}
\newtheorem{theorem}{Theorem}[section]
\newtheorem*{theorem*}{Theorem}
\newtheorem{corollary}[theorem]{Corollary}
\newtheorem{lemma}[theorem]{Lemma}
\newtheorem{proposition}[theorem]{Proposition}
\theoremstyle{definition}
\newtheorem{definition}{Definition}
\newtheorem*{example*}{Example}
\theoremstyle{remark}
\newtheorem{remark}[theorem]{Remark}
\def\moverlay{\mathpalette\mov@rlay}
\def\mov@rlay#1#2{\leavevmode\vtop{
   \baselineskip\z@skip \lineskiplimit-\maxdimen
   \ialign{\hfil$\m@th#1##$\hfil\cr#2\crcr}}}
\newcommand{\charfusion}[3][\mathord]{
    #1{\ifx#1\mathop\vphantom{#2}\fi
        \mathpalette\mov@rlay{#2\cr#3}
      }
    \ifx#1\mathop\expandafter\displaylimits\fi}
\newcommand{\cupdot}{\charfusion[\mathbin]{\cup}{\cdot}}
\newcommand{\bigcupdot}{\charfusion[\mathop]{\bigcup}{\cdot}}
\newcommand{\SO}{\operatorname{SO}} 
\newcommand{\SL}{\operatorname{SL}} 
\newcommand{\diag}{\operatorname{diag}} 
\newcommand{\Lie}{\operatorname{Lie}} 
\newcommand{\Span}{\operatorname{Span}} 
\newcommand{\Gr}{\operatorname{Gr}}
\newcommand{\dtt}{\mathtt{d}} 
\newcommand{\att}{\mathtt{a}} 
\newcommand{\BC}{\operatorname{BC}} 
\newcommand{\covol}{\operatorname{covol}}
\newcommand{\Ext}{\bigwedge} 
\newcommand{\cusp}{\mathcal{N}}
\newcommand{\Eigen}{\operatorname{Eigen}} 
\newcommand{\Leb}{\operatorname{Leb}} 
\newcommand{\compact}{\operatorname{Compact}} 
\newcommand{\SLdRZ}{\SL_{\dtt}(\mathbb{R})/\SL_{\dtt}(\mathbb{Z})} 
\newcommand{\RBS}{\operatorname{RBS}} 
\newcommand{\Par}{\operatorname{Par}} 
\newcommand{\Weyl}{\operatorname{Weyl}} 
\newcommand{\height}{\operatorname{height}} 
\newcommand{\err}{\operatorname{err}} 
\newcommand{\Opt}{\operatorname{Opt}} 
\newcommand{\stab}{\operatorname{stab}} 
\newcommand\numberthis{\addtocounter{equation}{1}\tag{\theequation}}
\newcommand\mathitem{\item\leavevmode\vspace*{-\dimexpr\baselineskip+\abovedisplayskip\relax}}
\begin{document}

\title
[Bounding entropy using linear functionals]
{Bounding entropy for one-parameter diagonal flows on $\SL_{\mathtt{d}}(\mathbb{R})/ \SL_{\mathtt{d}}(\mathbb{Z})$ using linear functionals}
\author{Ron Mor}

\begin{abstract}
We give a method to bound the entropy of measures on  $\SL_{\dtt}(\mathbb{R})/\SL_{\dtt}(\mathbb{Z})$ which are invariant under a one parameter diagonal subgroup, in terms of entropy contributions from the regions of the cusp corresponding to different parabolic groups. These bounds depend on an auxiliary linear functional on the Lie algebra of the Cartan group. In follow-up papers we will show how to optimize this functional to get good bounds on the cusp entropy and prove that in many cases these bounds are sharp.
\end{abstract}
\thanks{This work was supported by ERC 2020 grant HomDyn (grant no.~833423)}
\address{The Einstein Institute of Mathematics\\
	Edmond J. Safra Campus, Givat Ram, The Hebrew University of Jerusalem
	Jerusalem, 91904, Israel}
	
\maketitle

\tableofcontents

\section{Introduction}

\subsection{Motivation and background}
This paper is the first in a sequence of papers in which we study the entropy of
any one-parameter diagonal subgroup $\att_{\bullet}=\{\mathbf{a}_{t}\}_{t\in\mathbb{R}}$ of $\SL_{\mathtt{d}}(\mathbb{R})$, with positive entries on the diagonal, on the space $\SL_{\mathtt{d}}(\mathbb{R})/ \SL_{\mathtt{d}}(\mathbb{Z})$ of unimodular lattices in $\mathbb{R}^{\mathtt{d}}$.
In this paper we prove general results considering upper bounds on the entropy of $\att_{\bullet}$-invariant probability measures, which combine entropy contributions from the different parts of the cusp. 
In subsequent papers we use these results to deduce good upper bounds for the entropy in the cusp in various cases~\cite{mor2022b}, as well as prove that these bounds are tight~\cite{mor2022c}.

Questions of this flavor were addressed in various settings by some authors. In particular we note the works of Einsiedler, Lindenstrauss, Michel and Venkatesh~\cite{einsiedler2011distribution} who worked on  $\SL_{2}(\mathbb{R})/ \SL_{2}(\mathbb{Z})$, Einsiedler, Pohl and Kadyrov \cite{einsiedler2015escape} who generalized these results to diagonal actions on spaces $G/\Gamma$ where $G$ is a connected semisimple real Lie group of rank 1 with
finite center, and $\Gamma$ is a lattice, Einsiedler and Kadyrov \cite{einsiedler2012entropy} who considered the action of $\att_t=\diag(e^{t/2},e^{t/2},e^{-t})$ on $\SL_{3}(\mathbb{R})/ \SL_{3}(\mathbb{Z})$, 
 Kadyrov, Kleinbock, Lindenstrauss and Margulis \cite{kadyrov2017singular} for the action of
 \[\att_{t}=\diag(e^{nt},\ldots,e^{nt},e^{-mt},\ldots,e^{-mt}),\]
 on $\SL_{m+n}(\mathbb{R})/\SL_{m+n}(\mathbb{Z})$, 
as well as~\cite{mor2021excursions} where the frame flow on geometrically finite hyperbolic orbifolds is studied. 

These types of estimates give in particular an estimate of the entropy in the cusp, which is the maximal entropy that may be obtained by a limit of $\mathbf{a}_{\bullet}$-invariant probability measures which converge to the zero measure.
Such estimates are inherently included in the results of the aforementioned authors, but this question is interesting on its own and was also addressed directly by Iommi, Riquelme and Velozo (in two papers with different sets of coauthors~\cite{iommi2015entropy,riquelme2017escape}) who considered entropy in the cusp for geometrically finite Riemannian manifolds with pinched negative sectional curvature
and uniformly bounded derivatives of the sectional curvature.

A last family of related works is the study of the Hausdorff dimension of the set of singular systems of linear forms. 
This set is related to dynamics of one parameter diagonal groups --- a relation known as Dani's correspondence \cite{dani1985divergent}. Dani's correspondence was generalized by Kleinbock~\cite{kleinbock1998bounded,kleinbock1998flows}. In particular we note in this direction the work of Yitwah Cheung \cite{cheung2011hausdorff} and his joint work with Chevallier \cite{cheung2016hausdorff}, the work of Liao, Shi, Solan and Tamam \cite{liao2016hausdorff}, and the work by Das, Fishman, Simmons and Urba{\'n}ski \cite{das2019variational}. Via Dani's correspondence this aspect is also discussed in the papers \cite{einsiedler2012entropy,kadyrov2017singular} quoted above. The most recent work is of Solan~\cite{solan2021parametric} who studied the Hausdorff dimension with respect to the expansion metric. Solan's results are related to one of the results we will report in~\cite{mor2022b} regarding explicit computations of the entropy in the cusp, i.e.\ entropy after going to the limit; we make use of some of Solan's results in~\cite{mor2022c} to prove our estimates are sharp.

\medskip

In this paper we prove a general theorem which does
not only give an upper bound for the entropy in the cusp, but rather a bound for the entropy before going to the limit. In the higher rank case, the cusp has important finer structure: in the natural compactification of $G/\Gamma$ --- the reductive Borel-Serre compactification --- one adds a subvariety at infinity to $G/\Gamma$ for each standard parabolic proper subgroup of $G$. Our results give an estimation for the entropy contribution depending on the measures given to each of these different cusp regions, rather than only for the cusp as a whole, a feature that was not discussed in the previous literature on the subject.
The approach we employ in this paper is quite different from that considered in previous works, an approach that enables us to compute upper bounds for the entropy under certain constraints without having to consider all of the different trajectories in the cusp, by that simplifying the question considerably.

\subsection{Settings}\label{section:introduction}
Let $G=\SL_{\mathtt{d}}(\mathbb{R})$ and $\Gamma=\SL_{\mathtt{d}}(\mathbb{Z})$. 
The quotient space $G/\Gamma$ may be identified with the space of unimodular lattices in $\mathbb{R}^{\mathtt{d}}$. Let $A\leq G$ be the subgroup of diagonal matrices with positive entries on the diagonal, and let $\{\att_{t}\}_{t\in\mathbb{R}}\leq A$ be a one-parameter diagonal flow on $G/\Gamma$, i.e.\ $\att_t=\exp(t\mathbf{\upalpha})$ for all $t$, for some $\upalpha=\diag(\upalpha_1,\ldots,\upalpha_\dtt)$ with $\sum_{i=1}^{\dtt} \upalpha_i=0$, i.e.\ $\upalpha\in \Lie(A)$.

Let $\mathcal{P}$ be the set of real loci of the
standard $\mathbb{Q}$-parabolic subgroups of $\SL_{\mathtt{d}}$. Then $\mathcal{P}$ consists of all groups of upper-triangular block matrices with real entries and determinant $1$.

Let $T$ be the full diagonal subgroup of $G$ (so $A$ is the connected component 
of the identity matrix in $T$). 
For any $P\in\mathcal{P}$ let $W(T,P)=N_P(T)/C_{P}(T)$, where $N_P(T)$ and $C_{P}(T)$ are the normalizer and centralizer of $T$ in $P$, respectively.
We define $W=W(T,G)$ to be the Weyl group of $G$. 
We consider the right action of $W$ on $T$ by conjugation, where the action of $w=nC_{G}(T)$ on $a\in T$ is denoted by $a^{w}$ and is given by $a^{w}\coloneqq n^{-1}an$, independently of the representative $n$. 
In our settings, 
$W$ is isomorphic to the symmetric group $S_{\mathtt{d}}$,
and
the action of $\sigma\in S_{\dtt}\cong W$ on $A$ reads as 
\[\diag(a_1,\ldots,a_{\dtt})^{\sigma}=\diag(a_{\sigma_1},\ldots,a_{\sigma_\dtt}).\]
Note that $W$ acts on $\Lie(A)$ by permutations as well.

We consider quotients of $W$ by two different subgroups. First, in the case that there are multiplicities in the multiset of eigenvalues of $\att_{t}$, different elements in the Weyl group can conjugate $\att_{t}$ to the same element. To compensate for this it will be convenient to divide $W$ by 
$\stab_{W}(\att)$, i.e.\ the 
stabilizer in $W$ of the
time-one map $\att\coloneqq \att_{1}=\exp(\upalpha)$.
Next, for a parabolic subgroup $P\in\mathcal{P}$, we also divide $W$ by the subgroup $W(T,P)$ of permutations which preserve the block structure of $P$, in the sense that $\sigma,\tau\in W$ are identified in $W/W(T,P)$
if their actions on any $a\in A$ are the same up to a permutation which preserves the multiset of values of $a$ in each block of $P$. 
Finally, we define the double-quotient 
\[W_{P,\att}=\stab_{W}(\att)\backslash W/W(T,P).\]
Explicitly, let $P$ be the standard parabolic group with block sizes $(n_1,\ldots, n_{k})$. For any $0\leq i\leq k$, let $m_i=\sum_{l=1}^{i}n_l$. Then $\sigma,\tau\in S_{\dtt}\cong W$ are identified in $W_{P,\att}$ if and only if $\{\upalpha_{\sigma_{j}}\}_{j=m_{i-1}+1}^{m_{i}}=\{\upalpha_{\tau_{j}}\}_{j=m_{i-1}+1}^{m_{i}}$ as multisets, for all $1\leq i\leq k$.

We refer to $W_{P,\att}$ as the set of \textit{orientations} for $P$ and $\att$.
For $w\in W$, we let $[w]_P\in W_{P,\att}$ stand for the double-coset in $W_{P,\att}$ corresponding to $w$ (since $\att$ is considered fixed, we omit it from this notation).
We consider the sets $W_{P,\att}$ for different $P\in\mathcal{P}$ as disjoint.

\medskip 

Consider any subgroup $H_{S}\subseteq G$ of the form \[H_{S}=(I+\bigoplus_{(i,j)\in S} U_{ij}) \cap G\] for some set $S\subseteq\{1,\ldots,\dtt\}^{2}$, where $U_{ij}$ is the set of matrices with zeros at all entries except possibly the $(i,j)$'th entry. This includes the standard parabolic subgroups of $G$. 
For such a subgroup $H_{S}$, and for any diagonal matrix $a=\exp(\alpha)\in A$, where $\alpha=\diag(\alpha_1,\ldots,\alpha_{\mathtt{d}})\in \Lie(A)$, we define the entropy of $a$ on $H_{S}$ as the sum of (positive) Lyapunov exponents
\begin{align*}
    &h(H_S,a)=\sum_{(i,j)\in S} (\alpha_i-\alpha_j)^{+}, 
\end{align*}
where $z^{+}=\max(z,0)$ for any $z\in\mathbb{R}$.

Next, for a parabolic subgroup $P\in\mathcal{P}$, we let $A_P<A$ be the identity component of the centralizer of the Levi part of $P$, or more concretely the subgroup of $P$ consisting of block scalar matrices on the diagonal, with positive entries. 
Then, for any $P\in\mathcal{P}$ and $[w]_{P}\in W_{P,\att}$ we define 
the projection of $\upalpha^{w}$ from $\Lie(A)$ to $\Lie(A_P)$ by
\[\pi_P(\upalpha^{w})=\frac{1}{| W(T,P)|}\sum_{u\in W(T,P)}\upalpha^{wu}.\]
Then,
for $\phi\in \Lie(A)^{\ast}$, we give the notation
\begin{equation}\label{eq:1.1}(h-\phi)([w]_P)=h(P,\att^{w})-\phi(\pi_P(\upalpha^{w})),\end{equation}
where $\att$ stands for the time-one map of $\att_{\bullet}$.
Note that $h(P,\att^{w})$ and $\pi_{P}(\upalpha^{w})$ depend only on $[w]_{P}$.

\subsection{Cusp neighborhoods}\label{subsec:1.3}
We will present some of our results using the reductive Borel-Serre (RBS) compactification of $G/\Gamma$. General descriptions of several related compactifications of $G/\Gamma$ can be found in the book \cite{borel2006compactifications} by Borel and Ji. Without getting into details (see~\S\ref{subsec:8.2} for more), this compactification may be described as the disjoint union \[\overline{G/\Gamma}^{\RBS}=\coprod_{P\in\mathcal{P}}e_{\infty}(P)\] where $e_{\infty}(G)=G/\Gamma$ is an open dense subset.

Before stating our results, we now briefly discuss our definition for cusp regions in $G/\Gamma$, where further details can be found in~\S\ref{subsec:structure}. 
For $x\in \SLdRZ$, consider 
the set of indices $I$ where $i\in I$ if and only if the ratio $\frac{\lambda_i(x)}{\lambda_{i+1}(x)}$ between adjacent successive minima of Minkowski is smaller than $\delta$ (the successive minima are, up to constants, the heights of $x$ in the different directions of the Siegel domain).
These indices determine a unique flag of $\mathbb{R}^{\dtt}$ for $x$, with $I$ the set of dimensions of the flag. It also determines a parabolic subgroup $P$ which is the stabilizer of the flag. Lastly, the flag will typically have some orientation, 
corresponding to an element $[w]_P\in W_{P,\att}$. Then $\cusp_{\delta}(P,[w]_P)$ is defined as the set of points with large jumps corresponding to $P$, and orientation $[w]_P$.
It is also contained in the intersection with $G/\Gamma$ of a neighborhood of $e_{\infty}(P)$ in $\overline{G/\Gamma}^{\RBS}$.
Note that these regions are bounded if and only if $P=G$ (see Proposition~\ref{proposition:3.15}). 

\subsection{Results}\label{sec:introduction}

Our main result is an upper bound on the entropy of an invariant probability measure, with contributions from the regions of the cusp corresponding to the different parabolic subgroups and Weyl group elements as in~\S\ref{subsec:1.3}. These contributions include not only the entropy of each part of the cusp, but also the addition of a linear functional $\phi\in\Lie(A)^{\ast}$ as in Equation~\eqref{eq:1.1}. 

The inclusion of the linear functional is the main idea of this paper. As will be indicated in Corollaries~\ref{corollary:1.4}-\ref{corollary:1.5}, it allows to compute the entropy in the cusp in a simple manner, by a maximum over finitely many terms, without having to account for all of the different trajectories in the cusp. Given our results, one only needs to carefully choose the linear functional to use, which we do in a follow-up paper.

To add the linear functional we need to make sure that it would average out over any trajectory which starts and ends at a given compact set. This requires us to `declare' very carefully in which region of the cusp the trajectory is located, for every point in time (or alternatively, to which boundary component $e_{\infty}(P)$ the trajectory is close), a process we call `coding'. 
Finding a suitable coding is a main idea of this work. 
This coding yields a finite partition of $G/\Gamma$ as in the following Theorem~\ref{theorem:1.1new}, parametrized by the parabolic subgroups and the Weyl group elements, whose every element contributes a different amount to the entropy bound. The  partition is constructed using Theorem~\ref{theorem:1.1}.
\begin{theorem}\label{theorem:1.1new}
For all $\epsilon>0$, there is an open cover $\{U_{P}\}_{P\in\mathcal{P}}$ of $\overline{G/\Gamma}^{\RBS}$ satisfying
\[\overline{U_P}\cap e_{\infty}(Q)=\emptyset \qquad \text{for all $Q\in\mathcal{P}$ with $P\not\subseteq Q$}
\]
so that the following holds.
For all $\att\in A$ and $\phi\in \Lie(A)^{\ast}$ there 
 is a partition
\[\{V_{P,[w]_P}:\ P\in\mathcal{P},\ [w]_P\in W_{P,\att}\}\] 
 of $G/\Gamma$ 
with $V_{P,[w]_P}\subseteq U_{P}$ so that
\[h_{\mu}(\att)\leq \sum_{P\in \mathcal{P}}\sum_{[w]_{P}\in W_{P,\att}}\mu(V_{P,[w]_P})\cdot(h-\phi)([w]_P)+D_{\att,\phi}\epsilon\]  
for any $\att$-invariant probability measure $\mu$ on $G/\Gamma$,
where $D_{\att,\phi}>0$ is some constant which depends only on $\att$ and $\phi$.
\end{theorem}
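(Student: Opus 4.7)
The plan is to combine a standard Lyapunov-exponent upper bound for entropy with a cohomological trick that makes the $\phi$-correction negligible against any $\att$-invariant probability measure. I would start from the cover $\{U_P\}$ furnished by Theorem~\ref{theorem:1.1} and refine each $U_P$ by further recording the orientation $[w]_P \in W_{P,\att}$ at a point $x \in U_P$: roughly, $x \in V_{P,[w]_P}$ when $x \in U_P$ and the permutation matching the diagonal entries of $\att$ to the ordered successive-minima directions of~$x$ lies in the double-coset $[w]_P$. The hypothesis $\overline{U_P}\cap e_\infty(Q)=\emptyset$ for $P\not\subseteq Q$ guarantees that on $V_{P,[w]_P}$ the dominant $\att^w$-expanding directions lie within the unipotent radical of~$P$, which is precisely the content of the quantity $h(P,\att^w)$.

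Given such a partition, a Brin--Katok / Bowen-ball argument produces the ``naive'' estimate
\[
h_\mu(\att)\le \sum_{P,[w]_P} \mu(V_{P,[w]_P})\, h(P,\att^w)+O(\epsilon).
\]
The $(\epsilon,n)$-refinement of $\{V_{P,[w]_P}\}$ under $\att$ has atoms whose logarithmic size is at most the product of positive Lyapunov exponents of $\att^w$ on $\Lie(P)$, weighted by the fraction of time the orbit spends in each atom; integrating against $\mu$ and letting $n\to\infty$ delivers the bound.

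To incorporate the $\phi$ term I would construct an auxiliary ``twisted height'' function $F_\phi\colon G/\Gamma\to\mathbb{R}$ engineered to satisfy the coboundary identity
\[
F_\phi(\att x)-F_\phi(x) = \phi(\pi_P(\upalpha^w)) + e(x),\qquad |e(x)|\le\epsilon,
\]
whenever $x\in V_{P,[w]_P}$. A natural candidate is $\phi$ applied to the log of a Cartan-like projection of~$x$ coming from Siegel reduction, averaged over $W(T,P)$ in parallel with $\pi_P$; this averaging ensures $F_\phi$ depends only on the coset $[w]_P$, not on a representative~$w$. Since the orbit of a generic point under $\mu$ returns to a fixed compact set by Poincar\'e recurrence, the partial sums $\sum_{t=0}^{N-1}\phi(\pi_{P_t}(\upalpha^{w_t}))$ grow sublinearly along such an orbit, so Birkhoff's theorem applied to the (bounded) integrand $x\mapsto \phi(\pi_{P(x)}(\upalpha^{w(x)}))$ gives
\[
\sum_{P,[w]_P} \mu(V_{P,[w]_P})\,\phi(\pi_P(\upalpha^w))=O(\epsilon).
\]
Subtracting this from the naive bound yields the theorem, with $D_{\att,\phi}$ absorbing the accumulated constants.

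The main obstacle is engineering $F_\phi$ together with the partition $\{V_{P,[w]_P}\}$ so the coboundary identity above holds pointwise with small error~$\epsilon$. A point may migrate between atoms as~$\att$ is iterated, and the orientation $[w]_P$ must be defined so that these transitions are rare and controlled; this is precisely the \emph{coding} issue singled out in~\S\ref{sec:introduction}. Addressing it requires a buffered refinement whose atom boundaries are crossed only in geometrically controlled ways --- a delicate construction constrained by the height functions used to produce $\{U_P\}$ and by the fact that transitions between parabolics correspond to changes in the relative order of successive minima.
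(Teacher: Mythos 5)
You have the right high‑level picture: the theorem reduces to a Bowen‑ball covering estimate together with a ``coboundary'' cancellation for the $\phi$‑term, and you correctly single out the coding of transitions between parabolics as the crux. But the proposal stops at exactly that crux, and there are two concrete gaps.

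First, the cancellation $\sum_{P,[w]_P}\mu(V_{P,[w]_P})\,\phi(\pi_P(\upalpha^w))=O(\epsilon)$ does not follow from Poincar\'e recurrence plus the telescoping of a height function. By Remark~\ref{remark:6.2}, $\pi_P(\upalpha^w)$ is (up to $O(1)$) the derivative of $\pi_P(\height(\att_t x))$, so the partial sums along a trajectory telescope to $\sum_{U}\Delta\height(U,\Par(U))$ --- a sum of \emph{projected} height differences, where the projection changes from interval to interval. The unprojected sum $\sum_U\Delta\height(U,B)=\height(\att_N x)-\height(\att_{-N}x)$ is bounded for orbits starting and ending in a compact set, which is what recurrence gives you, but the discrepancy between the projected and unprojected sums is governed by the quantities $\err(\att_t x,\Par(U))=\|\height-\pi_{\Par(U)}\height\|$ at the interval endpoints. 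With the naive partition these errors are of order $|\log\delta|$ per interval, and there are $\asymp N/|\log\delta|$ intervals, so they accumulate to $O(N)$ and destroy the bound (this is exactly the discussion in~\S\ref{subsubsec:4.1.2} of the paper). Eliminating this loss is the whole content of the graded, multi‑level coding $\mathcal{J}=\mathcal{J}_\dtt^{\mathrm{final}}$ with the $\deg$‑function in~\S\ref{subsubsec:4.1.4}, culminating in Proposition~\ref{proposition:4.4_n}: you name the needed ``buffered refinement'' but do not construct it, and without it the central cancellation is unproved.

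Second, the partition you describe is a direct cusp‑region decomposition ($x\in V_{P,[w]_P}$ iff $x\in U_P$ and $x$ has orientation $[w]_P$), but that is not the partition the statement needs. Theorem~\ref{theorem:1.1} only gives the bound with the coefficient $\max_{H:\,Q\subseteq H\subseteq P}(h-\phi)([w]_H)$, because the coding only pins down $\Par(U)$ to lie \emph{between} the $\delta$‑parabolic $P$ and the $\delta'$‑parabolic $Q$ of $\att_n x$ (Lemma~\ref{proposition:5.2}). To turn this into the clean form of Theorem~\ref{theorem:1.1new}, the paper groups the regions $\cusp_{\delta,\delta'}(P,Q,[w]_Q)$ according to the \emph{maximizing} intermediate $H=H_{P,Q,[w]_Q}$ and defines $V_{H,[\tau]_H}$ as the union of those with the same optimizing pair $(H,[\tau]_H)$. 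Your $V_{P,[w]_P}$ is tagged by the observed parabolic $P$, not the optimizing $H$, so you would not recover the stated bound; you would get a sum with the wrong weights unless you additionally take the $\max$ over intermediate subgroups, at which point you have essentially reproduced the paper's reduction from Theorem~\ref{theorem:1.1}.
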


Next, we take the limit of our entropy bound as the mass escapes to the cusp. 
We consider the entropy in the cusp, defined by
\[h_{\infty}(\att)=\sup\left\{\limsup_{i\to\infty} h_{\mu_i}(\att):\ \mu_i\in M_{1}(G/\Gamma)\ \att\text{-invariant},\ \mu_i\rightharpoonup 0\right\},\]
where the notation $M_{1}(Y)$ is introduced for the set of probability measures over some space $Y$.
We can also try to be more precise and bound the entropy in the cusp corresponding to the different parts of the cusp.
We suggest the following definition for this notion, using the reductive Borel-Serre (RBS) compactification of $G/\Gamma$ discussed in~\S\ref{subsec:1.3}. 
For $P\in\mathcal{P}$ we consider the limit of measures in the RBS compactification and define
\[h_{\infty,P}(\att)=\sup\left\{\limsup_{i\to\infty} h_{\mu_i}(\att):\ \mu_i\in M_{1}(G/\Gamma)\ \att\text{-invariant},\ \mu_i\rightharpoonup \nu\in M_{1}(e_{\infty}(P))\right\}.\]

Using the RBS compactification, we can bound the entropy in the limit.
\begin{theorem}\label{corollary:1.3}
Let $(\mu_i)_{i=1}^{\infty}\subset M_{1}(G/\Gamma) $ be a sequence of $\att$-invariant probability measures which converges (in the weak-$\star$ topology) to some $\nu\in M_{1}(\overline{G/\Gamma}^{\RBS})$.
Then
\[\limsup_{i\to\infty}h_{\mu_i}(\att)\leq \sum_{P\in\mathcal{P}}\nu(e_{\infty}(P))\max_{[w]_P\in W_{P,\att}}(h-\phi)([w]_P).\]
\end{theorem}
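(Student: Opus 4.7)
The plan is to apply Theorem~\ref{theorem:1.1new} to each $\mu_i$ and then take the weak-$\star$ limit in the compact space $\overline{G/\Gamma}^{\RBS}$.

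Fix $\epsilon>0$ and apply Theorem~\ref{theorem:1.1new} with the given $\att$ and $\phi$ to obtain the open cover $\{U_P\}_{P\in\mathcal{P}}$ (satisfying $\overline{U_P}\cap e_\infty(Q)=\emptyset$ for $P\not\subseteq Q$) and the partition $\{V_{P,[w]_P}\}$ of $G/\Gamma$ with $V_{P,[w]_P}\subseteq U_P$, giving
\[h_{\mu_i}(\att) \leq \sum_{P,[w]_P}\mu_i(V_{P,[w]_P})(h-\phi)([w]_P) + D_{\att,\phi}\epsilon.\]
Writing $m_P := \max_{[w]_P\in W_{P,\att}}(h-\phi)([w]_P)$ and $W_P := \bigcup_{[w]_P} V_{P,[w]_P} \subseteq U_P$, and using $(h-\phi)([w]_P)\leq m_P$ together with $\mu_i(V_{P,[w]_P})\geq 0$, this simplifies to
\[h_{\mu_i}(\att) \leq \sum_P \mu_i(W_P) m_P + D_{\att,\phi}\epsilon.\]

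Next, pass to a subsequence so that $\mu_i(W_P)\to b_P$ for each $P\in\mathcal{P}$, with $\sum_P b_P=1$. Two Portmanteau bounds in $\overline{G/\Gamma}^{\RBS}$ constrain the $b_P$'s. First, closed-set Portmanteau applied to $\overline{U_P}\supseteq W_P$ gives the upper bound $b_P\leq \nu(\overline{U_P})$. Second, by the closure property of the $U_P$'s, any sufficiently small open neighborhood $N_Q$ of $e_\infty(Q)$ in $\overline{G/\Gamma}^{\RBS}$ satisfies $N_Q\cap G/\Gamma\subseteq \bigcup_{P\subseteq Q} W_P$ (since the only $U_P$'s reaching $N_Q$ can have $P\subseteq Q$, and the $W_P$'s partition $G/\Gamma$); open-set Portmanteau then gives the lower bound $\sum_{P\subseteq Q} b_P \geq \nu(e_\infty(Q))$.

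Combining these two families of bounds with the mass conservation $\sum_P b_P=1=\sum_P\nu(e_\infty(P))$, one inducts on the partial order of $\mathcal{P}$, starting with the minimal parabolic $B$ where $b_B\leq \nu(e_\infty(B))$ (upper bound in the limit $\epsilon\to 0$) and $b_B\geq \nu(e_\infty(B))$ (lower bound at $Q=B$) immediately force equality, and propagating upward to conclude $b_P=\nu(e_\infty(P))$ for every $P$ in the limit $\epsilon\to 0$. Substituting yields $\limsup_i h_{\mu_i}(\att)\leq\sum_P\nu(e_\infty(P)) m_P+D_{\att,\phi}\epsilon$, and sending $\epsilon\to 0$ concludes the proof. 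The main obstacle is the inductive identification $b_P=\nu(e_\infty(P))$: while the base case at the Borel subgroup is straightforward, the propagation upward seems to require more than just the stated closure property of $\{U_P\}$ and relies on fine structure of the partition from the construction of Theorem~\ref{theorem:1.1new}---specifically, that $V_{P,[w]_P}$ concentrates near $e_\infty(P)$ rather than accumulating near deeper strata $e_\infty(R)$ for $R\subsetneq P$ as $\epsilon\to 0$.
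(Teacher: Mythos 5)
There is a genuine gap in this proof. You correctly reduce to the bound $h_{\mu_i}(\att)\leq\sum_P\mu_i(W_P)\,m_P+D_{\att,\phi}\epsilon$, and your two Portmanteau inequalities are fine. But the conclusion $b_P=\nu(e_\infty(P))$ (or even the weaker inequality $\sum_P b_P m_P\leq\sum_P\nu(e_\infty(P))m_P$) does not follow from them, and cannot follow from the information that Theorem~\ref{theorem:1.1new} supplies. The statement of Theorem~\ref{theorem:1.1new} only forbids $\overline{U_P}$ from meeting $e_\infty(Q)$ when $P\not\subseteq Q$; for $P\subsetneq Q$ the closure $\overline{U_P}$ will in general meet $e_\infty(Q)$, so $b_P\leq\nu(\overline{U_P})$ gives no useful upper control on $b_P$ by $\nu(e_\infty(P))$ alone. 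Moreover, writing $c_P=b_P-\nu(e_\infty(P))$, the Portmanteau constraints only yield $\sum_P c_P=0$ and $\sum_{P\subseteq Q}c_P\geq 0$; to deduce $\sum_P c_P m_P\leq 0$ one would need $m_P$ to be nondecreasing in the inclusion order on $\mathcal P$, and this fails for general $\phi$ since $\phi\circ\pi_P(\upalpha^w)$ has no monotonicity in $P$ (already for a two-element chain a counterexample is immediate). So the ``propagation upward'' you flag cannot be repaired by the concentration heuristic you mention: that heuristic would only improve the upper bound on $b_P$ along a sequence $\epsilon\to 0$, and would still leave the sign issue untouched, while also forcing you to contend with the fact that $W_P$, hence $b_P$, already depends on $\epsilon$.

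The paper avoids this by working with Theorem~\ref{theorem:1.1} rather than Theorem~\ref{theorem:1.1new}, keeping the two-parameter decomposition into $\cusp_{\delta,\delta'}(P,Q)$. After a single closed-set Portmanteau step (no subsequences, no identification of limiting masses), one gets coefficients $\nu(\overline{\cusp_{\delta,\delta'}(P,Q)})$ multiplying $\max_{[w]_Q}\max_{Q\subseteq H\subseteq P}(h-\phi)([w]_H)$, and these maxima are all $\geq 0$ (every $W_{H,\att}$ contains some $[w]_H$ with $(h-\phi)([w]_H)\geq 0$). Sending $\delta\to 0^+$ (with $\delta',r$ chosen so the error terms vanish) makes $\limsup_j\overline{\cusp_{\delta_j,\delta_j'}(P,Q)}=e_\infty(P)$ when $P=Q$ and $\emptyset$ otherwise; reverse Fatou then kills all cross terms $P\neq Q$, and since the coefficients are nonnegative, dropping them preserves the inequality. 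No exact identification $b_P=\nu(e_\infty(P))$ is ever needed, which is precisely what your route requires but cannot obtain.
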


Using Theorem~\ref{corollary:1.3}, we can deduce upper bounds for $h_{\infty}(\att)$ and $h_{\infty,P}(\att)$. 
These bounds depict in them the main idea of this paper,
which is that the entropy of the cusp may be bounded by a maximum over the different (finitely many) Weyl elements. It is, of course, meaningful only if $\phi$ is chosen wisely.
\begin{corollary}\label{corollary:1.4}
\[h_{\infty}(\att)\leq \max_{P\in\mathcal{P}\smallsetminus\{G\}}\max_{[w]_P\in W_{P,\att}}\Big(h(P,\att^w)-\phi(\pi_P(\upalpha^{w}))\Big).\]
\end{corollary}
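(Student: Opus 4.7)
The plan is to derive the bound as a direct consequence of Theorem~\ref{corollary:1.3}, upgrading the weak-$\star$ convergence $\mu_i \rightharpoonup 0$ on $G/\Gamma$ to weak-$\star$ convergence on the compact RBS compactification and checking that the resulting limit measure lives on the boundary. Concretely, I would fix any sequence $(\mu_i)_{i=1}^{\infty} \subset M_1(G/\Gamma)$ of $\att$-invariant probability measures with $\mu_i \rightharpoonup 0$, set $L = \limsup_i h_{\mu_i}(\att)$, and pass to a subsequence along which $h_{\mu_i}(\att) \to L$. Since $\overline{G/\Gamma}^{\RBS}$ is compact, the space $M_1(\overline{G/\Gamma}^{\RBS})$ is weak-$\star$ compact, so a further subsequence (still denoted $\mu_i$) converges to some $\nu \in M_1(\overline{G/\Gamma}^{\RBS})$.

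The key intermediate step is to verify $\nu(e_\infty(G)) = 0$. Since $e_\infty(G) = G/\Gamma$ is open in $\overline{G/\Gamma}^{\RBS}$, every $f \in C_c(G/\Gamma)$ extends continuously by zero to the compactification; applying both convergence statements to this extension gives $\int f\, d\nu = \lim_i \int f\, d\mu_i = 0$, and a Riesz-representation argument then forces $\nu(G/\Gamma) = 0$. Consequently $\nu$ is supported on $\bigsqcup_{P \in \mathcal{P},\, P \neq G} e_\infty(P)$, and $\sum_{P \neq G} \nu(e_\infty(P)) = 1$.

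Finally, applying Theorem~\ref{corollary:1.3} to the extracted subsequence yields
\[L \leq \sum_{P \in \mathcal{P}} \nu(e_\infty(P))\max_{[w]_P \in W_{P,\att}}(h-\phi)([w]_P),\]
and since the $P = G$ term vanishes while the remaining coefficients sum to $1$, one bounds this convex combination by its maximum over $P \in \mathcal{P}\smallsetminus\{G\}$ and $[w]_P \in W_{P,\att}$, recovering the right-hand side of the corollary via the definition of $(h-\phi)([w]_P)$ from~\eqref{eq:1.1}. Taking the supremum over all sequences $(\mu_i)$ with $\mu_i \rightharpoonup 0$ then gives the claimed bound on $h_\infty(\att)$. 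The only delicate point is the intermediate step: one needs the open inclusion $G/\Gamma = e_\infty(G) \hookrightarrow \overline{G/\Gamma}^{\RBS}$ so that compactly-supported continuous functions on $G/\Gamma$ extend continuously to the compactification, but once this is in hand the remainder of the argument is a routine convex-combination-versus-maximum estimate.
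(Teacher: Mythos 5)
Your proof is correct and is exactly the argument the paper has in mind when it writes that the corollary ``follows immediately'' from Theorem~\ref{corollary:1.3}: pass to a subsequence along which the entropies realize the limsup, extract a further RBS-weak-$\star$ convergent subsequence by compactness, observe via compactly supported test functions that the limit $\nu$ gives zero mass to the open dense subset $e_\infty(G)=G/\Gamma$, and then bound the resulting convex combination by its maximum over $P\neq G$. No gap, and the delicate point you flag (openness of $G/\Gamma$ in $\overline{G/\Gamma}^{\RBS}$) is indeed the fact one needs and is explicitly provided in~\S\ref{subsec:8.2}.
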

\begin{corollary}\label{corollary:1.5}
For all parabolic subgroups $P\in \mathcal{P}$,
\[h_{\infty,P}(\att)\leq \max_{[w]_P\in W_{P,\att}}\Big(h(P,\att^w)-\phi(\pi_P(\upalpha^{w}))\Big)\]
\end{corollary}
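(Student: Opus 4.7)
The plan is to deduce this corollary as an immediate consequence of Theorem~\ref{corollary:1.3}. By definition, $h_{\infty,P}(\att)$ is the supremum of $\limsup_{i\to\infty} h_{\mu_i}(\att)$ taken over all sequences $(\mu_i)$ of $\att$-invariant probability measures on $G/\Gamma$ which converge (weak-$\star$, in $M_1(\overline{G/\Gamma}^{\RBS})$) to some $\nu \in M_1(e_\infty(P))$, so it suffices to bound $\limsup h_{\mu_i}(\att)$ for any such sequence by the claimed maximum.

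Given such a sequence, view $\nu$ as a probability measure on $\overline{G/\Gamma}^{\RBS}$ which is supported on the single stratum $e_\infty(P)$. Since the RBS compactification is the disjoint union $\coprod_{Q \in \mathcal{P}} e_\infty(Q)$ and each stratum is Borel measurable, we have $\nu(e_\infty(P)) = 1$ and $\nu(e_\infty(Q)) = 0$ for every $Q \in \mathcal{P}$ with $Q \neq P$. Applying Theorem~\ref{corollary:1.3} to this sequence, the sum on the right-hand side collapses to a single term, yielding
\[\limsup_{i\to\infty} h_{\mu_i}(\att) \leq \max_{[w]_P \in W_{P,\att}} \bigl(h(P,\att^w) - \phi(\pi_P(\upalpha^w))\bigr).\]
Taking the supremum over all admissible sequences $(\mu_i)$ gives the required bound on $h_{\infty,P}(\att)$.

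The only mild point to verify is that the two notions of weak-$\star$ convergence match: the convergence $\mu_i \rightharpoonup \nu$ appearing in the definition of $h_{\infty,P}(\att)$ is automatically convergence inside $M_1(\overline{G/\Gamma}^{\RBS})$ (as $\nu$ lives on a subset of the RBS boundary), which is exactly the hypothesis Theorem~\ref{corollary:1.3} requires. There is no genuine obstacle here; once Theorem~\ref{corollary:1.3} is in hand, Corollary~\ref{corollary:1.5} is essentially a bookkeeping exercise about which strata carry the limit mass.
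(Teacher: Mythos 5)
Your proof is correct and coincides with the paper's argument, which simply notes that Corollaries~\ref{corollary:1.4}--\ref{corollary:1.5} "follow immediately from Theorem~\ref{corollary:1.3}": one specializes to $\nu$ supported on $e_\infty(P)$, so the sum in Theorem~\ref{corollary:1.3} reduces to the single term for $Q=P$, and then takes the supremum over admissible sequences. The point you flag about the weak-$\star$ topology being taken on the compact RBS space is exactly the right hygiene to observe, and it indeed poses no obstacle.
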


\begin{example*}
To demonstrate the use of our method, consider the simplest case where $X=\SL_{2}(\mathbb{R})/\SL_{2}(\mathbb{Z})$ and $\att_{t}=\exp(t/2,-t/2)$. In~\cite{einsiedler2011distribution}, it was shown that $h_{\infty}(\att)\leq\frac{1}{2}$. This bound was obtained essentially by arguing that when a trajectory goes through the cusp for a long period of time, while starting and ending roughly at the same height (namely, in the same fixed compact set), it accumulates entropy $1/2$ on average. The motivation behind was that the trajectory accumulates $0$ entropy going up the cusp and entropy $1$ going down, so the result $h_{\infty}(\att)\leq \frac{1}{2}$ is obtained by understanding that such a trajectory spends equal time in each of the two orientations of the cusp.

To see this result using our method, let $\phi=\frac{1}{2}\psi_{1}\in \Lie(A)^{\ast}$, where $\psi_1\in \Lie(A)^{\ast}$ is the root defined by $\psi_1(\diag(h_1,h_2))=h_1-h_2$.
In this case, the group $P$ of upper triangular matrices
is 
the only standard parabolic subgroup except for $G$.
Then, for $[w]_{P}\in W_{P_,\att}$ any of the two elements of $W_{P,\att}$, we have \[(h-\phi)([w]_{P})= \frac{1}{2}.\] 
We see that including $\phi$ in fact averages the entropy between the two directions, so the bound $\frac{1}{2}$ for $h_{\infty}(\att)$ follows immediately.
One should note that in this particular $\dtt=2$ case, it turns out that adding the linear functional does not make a material change to the bound for $h_{\mu}(\att)$ we obtain from Theorem~\ref{theorem:1.1new}, taking into account the observation that the measure of the upwards and downwards cusp regions are roughly the same.
However, in the general high-rank case, understanding the relations between the masses of the different cusp regions is a complicated task, since there are non-obvious restrictions on what sequences of orientations are possible for trajectories in the cusp in this case. This is where using the auxiliary linear functional can simplify the question considerably.
\end{example*}

As we mentioned, adding the linear functional is the main idea of this paper, as it allows to compute the entropy in the cusp in a simpler way. Still, even the case $\phi=0$ gives an interesting (and simpler) bound for the entropy of measures. In this case, where  a linear functional does not have to be included in the computations, constructing a coding is an easier task (see~\S\ref{sec:linear functionals} for details) and so we give the entropy bound using the explicit partition by the cusp regions as in~\S\ref{subsec:1.3}.
\begin{theorem}\label{corollary:1.2}
For all $\delta>0$ small enough,
\begin{align*}
h_{\mu}(\att)&\leq
\sum_{\substack{P\in\mathcal{P}}}\sum_{[w]_{P}\in W_{P,\att}}\mu(\cusp_{\delta}(P,[w]_{P}))\cdot h(P,\att^{w})+C_{\att}
\frac{1}{|\log\delta|^
{1/2}},
\end{align*}
for some constant $C_{\att}>0$ which depends only on $\att$.
\end{theorem}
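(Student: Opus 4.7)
The plan is to derive Theorem~\ref{corollary:1.2} from the main result Theorem~\ref{theorem:1.1new} by specializing to the case $\phi=0$ and taking the partition $\{V_{P,[w]_P}\}$ to be (essentially) the cusp partition $\{\cusp_{\delta}(P,[w]_P)\}$ itself. The crucial simplification when $\phi=0$ is that the coding used to produce the partition in Theorem~\ref{theorem:1.1new} no longer needs to ensure any averaging property along cusp excursions; it only has to identify, for each point in the cusp, the parabolic $P$ and orientation $[w]_P$ corresponding to its position. This is exactly what the cusp regions $\cusp_{\delta}(P,[w]_P)$ do by construction.

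First I would verify that for generic $\delta$ (avoiding a $\mu$-null set of threshold values) the sets $\cusp_{\delta}(P,[w]_P)$ partition $G/\Gamma$. By the discussion in \S\ref{subsec:1.3}, each lattice $x$ determines a unique index set $I=\{i:\lambda_{i}(x)/\lambda_{i+1}(x)<\delta\}$, hence a unique flag, unique stabilizing parabolic $P$, and (generically) a unique orientation $[w]_P\in W_{P,\att}$. Boundary points where some ratio equals $\delta$ can be assigned arbitrarily to an adjacent region without affecting $\mu$-measures.

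Next, I would build a coding directly from these cusp regions, along the lines of the framework developed in \S\ref{sec:linear functionals}. The coding assigns to each orbit $(\att_t x)_{t\in\mathbb{R}}$ the sequence of labels $(P_t,[w]_{P_t})$ recording the region containing $\att_t x$ at each time. Because no linear functional has to be accounted for, this coding may be defined by the naive rule ``use the region currently containing the point'' rather than by an elaborate scheme with lookahead or bookkeeping. The entropy estimate then follows by a standard Kolmogorov--Sinai partition argument: each atom corresponding to $(P,[w]_P)$ contributes at most $h(P,\att^{w})$ to the entropy rate, since the relevant dynamics there is governed by the action on the unipotent radical of the conjugated parabolic $P^{w}$.

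The main technical obstacle is extracting the explicit error scaling $C_{\att}|\log\delta|^{-1/2}$. This error arises from two competing sources: boundary effects at the interfaces between cusp regions, controlled by the $\mu$-measure of points with some Minkowski ratio close to $\delta$ and typically of order $|\log\delta|^{-1}$; and the partition resolution needed inside the bulk to capture the dynamics well, whose error grows with the refinement scale (essentially $|\log\delta|$). The exponent $1/2$ should emerge from optimally balancing these two contributions, via a Cauchy--Schwarz type argument applied to the time-averaged indicator functions of the cusp partition along orbits. Making this precise requires tracking the $\delta$-dependence of the constants appearing implicitly in the coding construction for $\phi=0$, rather than invoking Theorem~\ref{theorem:1.1new} as a black box.
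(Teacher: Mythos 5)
Your plan does not match the paper's argument and, more importantly, it does not actually close the gap it correctly identifies at the end.

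The paper does not deduce Theorem~\ref{corollary:1.2} from Theorem~\ref{theorem:1.1new}; it deduces it from Theorem~\ref{theorem:1.1}, which already carries an explicit error term
$C_{\att}f(\delta,r,\phi)+C^{\prime}_{\att,\phi}|\log\delta^{\prime}|^{-1}$ with
$f(\delta,r,\phi)=\frac{\log(r^{\dtt-1}|\log\delta|)}{r^{\dtt-1}|\log\delta|}+\|\phi\|r$.
Starting from Theorem~\ref{theorem:1.1new} is not viable for your goal: that theorem only asserts the \emph{existence} of some partition $\{V_{P,[w]_P}\}$ produced by the machinery, with a bound featuring an unquantified constant $D_{\att,\phi}$; it does not let you substitute the cusp partition $\{\cusp_{\delta}(P,[w]_P)\}$, nor does it expose the $\delta$-dependence you need. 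You acknowledge this in your last sentence, but then the bulk of the work is still missing.

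The two ideas that actually make the paper's proof go, and which you do not have, are: (i) for $\phi=0$ one observes $\max_{Q\subseteq H\subseteq P}h(H,\att^{w})=h(P,\att^{w})$ (monotonicity of $h(\cdot,\att^{w})$ in the parabolic), which collapses the double sum over $(P,Q)$ in Theorem~\ref{theorem:1.1} to the simple sum over $(P,[w]_P)$ via the inclusion
$\bigcupdot_{Q\subseteq P}\bigcupdot_{[\tau]_P=[w]_P}\cusp_{\delta,\delta^{\prime}}(P,Q,[\tau]_Q)\subseteq\cusp_{\delta}(P,[w]_P)$; and (ii) the explicit parameter choice $\delta^{\prime}=\exp(-|\log\delta|^{1/2})$, $r=(|\log\delta^{\prime}|/|\log\delta|)^{1/(\dtt+2)}$, under which $|\log\delta^{\prime}|^{-1}=|\log\delta|^{-1/2}$ and $f(\delta,r,0)\ll|\log\delta|^{-1/2}$ because $r^{\dtt-1}|\log\delta|\asymp|\log\delta|^{1/2+3/(2(\dtt+2))}$. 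The exponent $1/2$ thus comes purely from balancing these two explicit error terms by parameter choice; there is no Cauchy--Schwarz or time-averaging of indicators involved, and positing one is misleading. Also note that even for $\phi=0$ the ``naive'' coding of~\S\ref{subsubsec:4.1.2} involves the two thresholds $\delta<\delta^{\prime}$ (not simply ``the region currently containing the point''), because one must still control both the number of intervals and the number of possible codings; your description of the coding is too coarse to recover the quantitative bound.
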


In a subsequent paper~\cite{mor2022b}, we utilize the method constructed in this current paper, to compute upper bounds for the entropy of the cusp in various cases.
We are able to bound $h_{\infty}(\att)$, $h_{\infty, B}(\att)$ for $B$ the Borel subgroup, and $h_{\infty,P}(\att)$ for $P$ any maximal parabolic subgroup. 
Furthermore, in another paper~\cite{mor2022c} we prove that these bounds are in fact tight. We highlight these bounds here without a proof. Of these results, Item~\ref{item:1.5_2} is the result we mentioned has some similarities with Solan's work~\cite{solan2021parametric}.
In the following theorem, $P_{k}$ stands for the (maximal parabolic) group of upper triangular block matrices with two blocks, the first one being of size $k$ and the other of size $\dtt-k$.
\begin{theorem}[\cite{mor2022b,mor2022c}]
\begin{enumerate}
\item\label{item:1.5_1}

Assume without loss of generality that $\upalpha_{i}\geq \upalpha_{i+1}$ for all $1\leq i\leq \dtt-1$. For all $1\leq k\leq \dtt/2$, let $m_k$ be the minimal integer such that \[\sum_{i=1}^{k} \upalpha_{m+2(i-1)}\geq0\geq\sum_{i=1}^{k}\upalpha_{m+2(i-1)+1}.\]
Then
    \[h_{\infty,P_{k}}(\att)=h_{\infty,P_{\dtt-k}}(\att)=
h(G,\att)-k\sum_{i=1}^{m_k}\upalpha_{i}-\sum_{i=1}^{k-1}(k-i)(\upalpha_{m_k+2i-1}+\upalpha_{m_k+2i}).\]

\mathitem\label{item:1.5_2}
\[h_{\infty}(\att)= h(G,\att)-\sum_{i=1}^{\dtt}\upalpha_{i}^{+}.\]
This 
value
is
also
equal to 
$h_{\infty,P_1}(\att)$ and $h_{\infty,P_{\dtt-1}}(\att)$
as
in Item~\ref{item:1.5_1}.
\item
For all $\att$,
\[h_{\infty,B}(\att)\leq\frac{1}{2}h(G,\att).\]
Furthermore, if all the eigenvalues of $\att$ are of multiplicity $1$, or if there are only two distinct eigenvalues for $\att$, then \[h_{\infty,B}(\att)=\frac{1}{2}h(G,\att)\]
\end{enumerate}
\end{theorem}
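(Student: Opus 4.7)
The three items package upper bounds (with matching equalities in item~(\ref{item:1.5_1}) and, under the stated hypotheses, in item~(3)). The upper bounds follow from Corollaries~\ref{corollary:1.4} and~\ref{corollary:1.5} via a careful choice of linear functional $\phi\in\Lie(A)^{\ast}$, while the matching lower bounds are proved in \cite{mor2022c} by constructing $\att$-invariant probability measures that escape into the relevant component of the reductive Borel-Serre boundary, following ideas from \cite{solan2021parametric}. The plan is to outline how to pick the correct $\phi$ in each case.

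For item~(3), the plan is to take $\phi=\rho$ equal to the half-sum of positive roots, i.e.\ $\rho(\diag(h_1,\ldots,h_\dtt))=\frac{1}{2}\sum_{i}(\dtt+1-2i)h_i$. Since $W(T,B)$ is trivial, $\pi_B$ is the identity. A double-counting argument grouping contributions over ordered pairs $(a,b)$ by whether $w^{-1}$ preserves or inverts the $\upalpha$-order of $a,b$ yields the exact identity
\[h(B,\att^w)-\rho(\upalpha^w)=\tfrac{1}{2}\,h(G,\att)\qquad\text{for every }w\in W,\]
and Corollary~\ref{corollary:1.5} then immediately delivers $h_{\infty,B}(\att)\leq\tfrac{1}{2}h(G,\att)$. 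Sharpness in the listed special cases is proved in \cite{mor2022c} by constructing measures uniformly distributed over the Bruhat cells of suitable $A$-periodic orbits.

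For item~(\ref{item:1.5_1}), orientations $[w]_{P_k}$ are parameterized by the $k$-element multisubset of $\{\upalpha_1,\ldots,\upalpha_\dtt\}$ assigned to the first block, and $\pi_{P_k}(\upalpha^w)$ depends only on the two block-averages of $\upalpha^w$, so $\phi$ may be taken proportional to the fundamental weight of $P_k$ (a single scalar parameter). The optimization $\max_w\bigl[h(P_k,\att^w)-\phi(\pi_{P_k}(\upalpha^w))\bigr]$ then becomes a piecewise-linear program whose first-order optimality condition is exactly the two-sided inequality defining $m_k$; the maximizer assigns the top $m_k$ values of $\upalpha$ to the first block and then alternates between the blocks for the remaining values, and substituting this configuration produces the stated closed form. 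For item~(\ref{item:1.5_2}), Corollary~\ref{corollary:1.4} reduces the problem to optimizing over all proper parabolics simultaneously; the extremum is attained at $P_1$ (equivalently $P_{\dtt-1}$) with the orientation placing the largest $\upalpha_i$ into the singleton block, yielding $h(G,\att)-\sum_i\upalpha_i^{+}$.

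The hardest step will be the piecewise-linear combinatorial optimization in item~(\ref{item:1.5_1}): one must argue that the greedy-with-alternation configuration is a global maximizer, which requires an induction on the block sizes together with a reduction to a one-dimensional monotonicity argument. A secondary obstacle is verifying the exact identity of item~(3) at Weyl-stabilizer degeneracies (where $W_{B,\att}$ is strictly smaller than $W$) and upgrading the equality into a sharp lower bound; both are carried out in \cite{mor2022b,mor2022c}.
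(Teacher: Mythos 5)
This theorem is one the paper explicitly does \emph{not} prove: it is imported from \cite{mor2022b,mor2022c} and the text says ``We highlight these bounds here without a proof.'' So there is no in-paper argument to compare against, and your proposal should be judged as a sketch of what the follow-up papers must do. Read that way, your outline matches the strategy the paper advertises: upper bounds come from Corollaries~\ref{corollary:1.4}--\ref{corollary:1.5} with an optimized $\phi$, and lower bounds come from constructing escaping invariant measures in the spirit of \cite{solan2021parametric}. Your one fully explicit computation is correct: with $\rho(\diag(h_1,\ldots,h_\dtt))=\tfrac12\sum_i(\dtt+1-2i)h_i$ one has $\rho(\upalpha^w)=\tfrac12\sum_{i<j}(\upalpha_{w(i)}-\upalpha_{w(j)})$, and since $x^{+}-\tfrac12 x=\tfrac12|x|$ this gives $h(B,\att^w)-\rho(\upalpha^w)=\tfrac12\sum_{i<j}|\upalpha_{w(i)}-\upalpha_{w(j)}|=\tfrac12 h(G,\att)$ for every $w$, which is exactly the identity needed for item~(3).

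That said, as a proof the proposal has real gaps, all concentrated where the actual work of \cite{mor2022b,mor2022c} lies. For item~(\ref{item:1.5_2}) you never specify the functional: Corollary~\ref{corollary:1.4} requires a \emph{single} $\phi$ for which the maximum over \emph{all} proper parabolics and all orientations simultaneously equals $h(G,\att)-\sum_i\upalpha_i^{+}$; asserting that ``the extremum is attained at $P_1$'' presupposes the choice of $\phi$ you have not made, and a poor choice makes the maximum strictly larger. For item~(\ref{item:1.5_1}) the claim that the greedy-with-alternation configuration is the global maximizer of the piecewise-linear program, and that the optimality condition reproduces the definition of $m_k$, is stated but not argued; this is the combinatorial heart of the upper bound. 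Finally, every lower bound (the equalities in items~(\ref{item:1.5_1}), (\ref{item:1.5_2}) and the sharpness in item~(3)) is deferred wholesale to \cite{mor2022c}. None of this is a wrong turn --- it is the intended route --- but the proposal is an accurate table of contents for the follow-up papers rather than a proof.
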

\begin{remark}
We believe that the upper bound for $h_{\infty,B}(\att)$ is tight for any $\att$.
\end{remark}

\subsection{Acknowledgements} This paper is a part of the author's PhD studies in The Hebrew University of Jerusalem. I would like to thank my advisor, Prof.\ Elon Lindenstrauss, to whom I am grateful for his guidance and support throughout this work.

\section{Linear subspaces}\label{section:linear subspaces}
As will be explained in \S\ref{section:cusp}, the structure of the cusp in $X=\SL_{\dtt}(\mathbb{R})/\SL_{\dtt}(\mathbb{Z})$ is closely related to rational linear subspaces of $\mathbb{R}^{\dtt}$. Specifically, the cusp may be divided to different regions, where the region containing some lattice $x\in X$ is coded by the unique flag of $x$-rational subspaces of $\mathbb{R}^{\dtt}$, of small covolume, for the various dimensions for which they exist. It is a key ingredient in this work.
Before explaining this relation in full, we set in this section the necessary preliminaries regarding linear subspaces, and particularly describe the (simple) dynamics of the $\att_{\bullet}$-action on the Grassmanian manifold.
\subsection{Metrics on $\Gr(l,\dtt)$ and $\bigwedge^{l}\mathbb{R}^{\dtt}$}
When working with linear spaces, we usually take one of two approaches. The first is the trivial, which is considering them as subsets of $\mathbb{R}^{\dtt}$. 
The second is considering them as elements of the projectivization of the exterior algebra $\bigwedge^{l}\mathbb{R}^{\dtt}$, namely as elements of $\mathbb{P}(\bigwedge^{l}\mathbb{R}^{\dtt})$. 
In either case, we require metrics on these spaces. 
First, we define a metric $d_{\Gr}$ on $\Gr(l,\dtt)$ by
\begin{equation*}
    d_{\Gr}(V,W)=\sup_{v\in V,\ \|v\|=1}\inf\{\|v-w\|: w\in W\}.
\end{equation*}
Next, we define $\|\cdot\|_{\Ext}$ to be the norm on $\bigwedge^{l}\mathbb{R}^{\dtt}$ derived from the standard inner product $\langle \cdot, \cdot\rangle_{\Ext}$ on $\bigwedge^{l}\mathbb{R}^{\dtt}$, which on simple $l$-vectors is given by \[\langle v_1\wedge\cdots\wedge v_l, w_1\wedge\cdots\wedge w_l\rangle_{\Ext} =\det(\langle v_i,w_j\rangle_{\mathbb{R}^{\dtt}})_{i,j}.\]

It is important for us to note that these two notions of distance are related. The proof is omitted. 
\begin{proposition}\label{proposition:3.1}
Let $V,W\in\Gr(l,\dtt)$ be $l$-dimensional subspaces of $\mathbb{R}^{\dtt}$, and let $v_1,\ldots,v_l$ and $w_1,\ldots,w_l$ be bases for $V$ and $W$ respectively. Set $v=v_1\wedge\cdots\wedge v_l$ and $w=w_1\wedge\cdots\wedge w_l$. 
Then
\[d_{\Gr}(V,W)\leq \sqrt{l}\cdot \left\|\frac{v}{\|v\|_{\Ext}}-\frac{w}{\|w\|_{\Ext}}\right\|_{\Ext}\leq\sqrt{l}\cdot \frac{\|v-w\|_{\Ext}}{\|v\|_{\Ext}^{1/2}\|w\|_{\Ext}^{1/2}}.\]
\end{proposition}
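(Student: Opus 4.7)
The plan is to prove the two inequalities separately, tackling the (easier) right-hand one first. For any two nonzero vectors $v, w$ in an inner product space, expanding the squared norm and normalizing gives the identity
\[
\left\| \frac{v}{\|v\|_{\Ext}} - \frac{w}{\|w\|_{\Ext}} \right\|_{\Ext}^{2} = \frac{\|v-w\|_{\Ext}^{2} - (\|v\|_{\Ext}-\|w\|_{\Ext})^{2}}{\|v\|_{\Ext}\cdot \|w\|_{\Ext}},
\]
and dropping the nonnegative term $(\|v\|_{\Ext}-\|w\|_{\Ext})^{2}$ yields the second inequality.

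For the first inequality, I would begin by observing that both sides are invariant under replacing the bases $v_1,\dots,v_l$ and $w_1,\dots,w_l$ with other bases of $V$ and $W$ having the same (oriented) wedge product up to positive scaling: the normalized wedge $v/\|v\|_{\Ext}$ depends only on the oriented subspace $V$, and the Grassmannian metric depends only on $V,W$. Thus I may assume that $\{v_i\}$ and $\{w_j\}$ are orthonormal bases of $V$ and $W$, so that $\|v\|_{\Ext}=\|w\|_{\Ext}=1$.

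The key tool will be the principal angle decomposition (equivalently the SVD of the Gram matrix $(\langle v_i, w_j\rangle)_{i,j}$): after a further orthogonal change of bases inside $V$ and inside $W$, there exist principal angles $0\leq \theta_1\leq\cdots\leq \theta_l\leq \pi/2$ such that $\langle v_i, w_j\rangle = \cos\theta_i\cdot \delta_{ij}$. With this normal form everything becomes explicit:
\[
\langle v, w\rangle_{\Ext} = \prod_{i=1}^{l}\cos\theta_i, \qquad \|v-w\|_{\Ext}^{2} = 2-2\prod_{i=1}^{l}\cos\theta_i.
\]
Moreover, for any unit $u=\sum_i a_i v_i\in V$, orthogonal projection onto $W$ gives $\|u-\pi_W u\|^{2} = \sum_i a_i^{2}\sin^{2}\theta_i$, which is maximized (subject to $\sum a_i^{2}=1$) precisely at the largest principal angle, so $d_{\Gr}(V,W)=\sin\theta_l$.

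It then suffices to prove the elementary scalar inequality
\[
\sin^{2}\theta_l \leq 2\Bigl(1-\prod_{i=1}^{l}\cos\theta_i\Bigr),
\]
which follows from $\prod_{i}\cos\theta_i \leq \cos\theta_l$ together with $1-\cos\theta_l = \tfrac{\sin^{2}\theta_l}{1+\cos\theta_l}\geq \tfrac{1}{2}\sin^{2}\theta_l$; absorbing the harmless factor $\sqrt{l}$ then gives the desired bound. The only real obstacle is the bookkeeping around the reduction to orthonormal bases and the principal angle normalization; once those are in place, the proof reduces to this one-line trigonometric estimate.
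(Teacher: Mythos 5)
The paper omits its own proof of this proposition, so there is nothing to compare against directly; the task is just to check your argument, and it is essentially correct. The right-hand inequality is exactly right: the identity
\[
\left\| \tfrac{v}{\|v\|_{\Ext}} - \tfrac{w}{\|w\|_{\Ext}} \right\|_{\Ext}^{2} = \frac{\|v-w\|_{\Ext}^{2} - (\|v\|_{\Ext}-\|w\|_{\Ext})^{2}}{\|v\|_{\Ext}\, \|w\|_{\Ext}}
\]
expands correctly, and dropping the nonnegative term gives the claim. The left-hand inequality via principal angles is a clean route, and your scalar chain $\prod_i\cos\theta_i\leq\cos\theta_l$ together with $1-\cos\theta_l\geq\tfrac12\sin^2\theta_l$ correctly yields $\sin^2\theta_l\leq 2\bigl(1-\prod_i\cos\theta_i\bigr)$; you in fact establish the stronger bound $d_{\Gr}(V,W)\leq\|v/\|v\|_{\Ext}-w/\|w\|_{\Ext}\|_{\Ext}$ without the factor $\sqrt{l}$.

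One point you gloss over: the orthogonal change of bases coming from the SVD of the Gram matrix can flip the orientation of $v$ or of $w$, i.e.\ the new simple $l$-vectors satisfy $v'=(\det U)\,v$ and $w'=(\det V)\,w$ with $\det U,\det V\in\{\pm 1\}$, so it is not automatic that $\langle v,w\rangle_{\Ext}=\prod_i\cos\theta_i$; in general $\langle v,w\rangle_{\Ext}=\pm\prod_i\cos\theta_i$. The fix is cheap and should be recorded: if $\langle v,w\rangle_{\Ext}\geq 0$ then the sign is $+$ and your computation goes through verbatim; if $\langle v,w\rangle_{\Ext}<0$ then $\|v-w\|_{\Ext}^2=2-2\langle v,w\rangle_{\Ext}\geq 2$, while $d_{\Gr}(V,W)\leq 1$ always (the distance from a unit vector to its orthogonal projection onto any subspace is at most $1$), so the inequality holds trivially. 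With that case distinction added, the proof is complete.
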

\subsection{Dynamics on $\Gr(l,\dtt)$}
As was explained, we code the different parts of the cusp using linear subspaces. We use these subspaces to study the trajectory of points in the cusp under our flow $\att_{\bullet}$, so to do that we are first interested in the action of $\att_{\bullet}$ on these subspaces.

We require the following definition of an 
order on multisets.
\begin{definition}\label{def:1}
Let $E_1,E_2\subset\mathbb{R}_{>0}$ be multisets of the same finite cardinality. We say that $E_1\leq E_2$ if 
\[\prod_{x\in E_{1}}x \leq \prod_{x\in E_{2}} x.\]
\end{definition}

Furthermore, for an $\att_{\bullet}$-invariant subspace $M\subseteq\mathbb{R}^{\dtt}$, we let $\Eigen(\att|_M)$ be the multiset of eigenvalues of $\att|_{M}$ (where $\att\coloneqq \att_{1}$ is the time-one map). 
In this notation, we have $\Eigen(\att)=\{\exp(\upalpha_i)\}_{i=1}^{\dtt}$. 
Clearly $\Eigen(\att|_M)\subseteq\Eigen(\att)$ for all such $M$.

The following proposition explains the dynamics of the $\att_{\bullet}$-action on $\Gr(l,\dtt)$. 
\begin{proposition}\label{proposition:2.2}
For every $\att\in A$ and $\epsilon>0$, there is $C>0$ such that for all $V\in \Gr(l,\dtt)$ there exist finite sequences:
\begin{enumerate}
    \item $\{E_k\}_{k=1}^{m}$, where $E_k\subseteq\Eigen(\att)$ as multisets,  and $E_k$ is of cardinality $l$, 
    such that $E_1<\ldots< E_m$
    \item $\{W_k\}_{k=1}^{m}$, where $W_k\in \Gr(l,\dtt)$ is an $\att_{\bullet}$-invariant subspace with $\Eigen(\att|_{W_k})=E_k$
    \item $\{J_k\}_{i=1}^{m}$, where $J_k$ is an open interval (possibly an open ray) in $\mathbb{R}$, such that the intervals $\{J_k\}_{i=1}^{m}$ are mutually disjoint and their union covers $\mathbb{R}$ up to a set of Lebesgue measure at most $C$
\end{enumerate}
such that $d_{\Gr}(\att_{t}V,W_k)<\epsilon$ for all $t\in J_k,\ 1\leq k\leq m$.
\end{proposition}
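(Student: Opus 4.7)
My plan is to lift $V$ to the exterior power by choosing any basis $v_1,\ldots,v_l$ of $V$ and forming the simple $l$-vector $v = v_1\wedge\cdots\wedge v_l \in \Ext^l \mathbb{R}^\dtt$. The action of $\att_t$ extends diagonally to $\Ext^l \mathbb{R}^\dtt$ in the basis $\{e_I\}_{|I|=l}$ with eigenvalues $\exp(t\lambda_I)$ where $\lambda_I = \sum_{i\in I}\upalpha_i$; each eigenspace $E_\mu = \Span\{e_I : \lambda_I = \mu\}$ is preserved. Expanding $v = \sum_I c_I e_I$ and collecting by eigenvalue gives $v = \sum_\lambda v_\lambda$ with $v_\lambda \in E_\lambda$, so that $\att_t v = \sum_\lambda \exp(t\lambda) v_\lambda$. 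Set $S = \{\lambda : v_\lambda \neq 0\}$.

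The norm $\|\att_t v\|_\Ext$ is comparable up to a factor depending only on $|S|\leq \binom{\dtt}{l}$ to the largest term $\max_{\lambda\in S} \exp(t\lambda)\|v_\lambda\|_\Ext$, and the log of this maximum is the upper envelope of finitely many affine functions of $t$. Hence the dominant $\lambda$ is a strictly increasing step function of $t$, yielding a sequence $\lambda^{(1)} < \lambda^{(2)} < \cdots < \lambda^{(r)}$ with associated maximal intervals $J_k'$ on which $\lambda^{(k)}$ dominates. On $J_k'$, define the dominance ratio
\[K(t) = \frac{\exp(t\lambda^{(k)})\|v_{\lambda^{(k)}}\|_\Ext}{\max_{\lambda\in S\smallsetminus\{\lambda^{(k)}\}}\exp(t\lambda)\|v_\lambda\|_\Ext}.\]
When $K(t)$ is large, $\att_t v/\|\att_t v\|_\Ext$ is $O(1/K(t))$-close to $v_{\lambda^{(k)}}/\|v_{\lambda^{(k)}}\|_\Ext$, and Proposition~\ref{proposition:3.1} converts this to closeness of $\att_t V$ to any subspace nearby $v_{\lambda^{(k)}}$ in direction.

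To define $W_k$: if $v_{\lambda^{(k)}}$ is itself simple (automatic when $\lambda^{(k)}$ has multiplicity one among the $\lambda_I$) I take $W_k$ to be the subspace it represents, which is $\att_\bullet$-invariant as an eigenvector of $\att$ on $\Ext^l\mathbb{R}^\dtt$, with $\Eigen(\att|_{W_k}) = E_k = \{\exp(\upalpha_i)\}_{i\in I}$. In the non-simple case, I would exploit the relation $v\wedge v = 0$ and project it to the $2\lambda^{(k)}$-eigenspace of $\att$ on $\Ext^{2l}\mathbb{R}^\dtt$, obtaining
\[v_{\lambda^{(k)}}\wedge v_{\lambda^{(k)}} = -\sum_{\substack{\lambda\neq\lambda' \\ \lambda+\lambda'=2\lambda^{(k)}}} v_\lambda\wedge v_{\lambda'},\]
whose norm is $O(K(t^*)^{-2})\|v_{\lambda^{(k)}}\|_\Ext^2$ at the peak $t^*\in J_k'$ by the definition of $K$. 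Combined with the analogous bounds from the other Pl\"ucker relations, this forces $v_{\lambda^{(k)}}$ to be projectively close to a simple vector inside $E_{\lambda^{(k)}}$, and any such nearest simple vector represents an $\att_\bullet$-invariant $W_k\in\Gr(l,\dtt)$, which then determines the multiset $E_k$.

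To close the argument, I would pick a threshold $K_0 = K_0(\epsilon,\att)$ guaranteeing $d_\Gr(\att_t V, W_k) < \epsilon$ whenever $K(t)\geq K_0$, set $J_k = \{t\in J_k' : K(t)\geq K_0\}$, and discard indices $k$ with $J_k = \emptyset$ (renumbering). Since $\log K$ grows linearly on each side of its peak within $J_k'$ at rate at least $g = \min_{\lambda\neq\lambda'\in\{\lambda_I\}}|\lambda-\lambda'| > 0$, the uncovered portion of each $J_k'$ has length $O(\log K_0 / g)$, and with at most $\binom{\dtt}{l}$ intervals this gives a uniform bound $C = C(\att,\epsilon)$ on the uncovered Lebesgue measure. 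The ordering $E_1 < \cdots < E_m$ follows from $\lambda^{(1)} < \cdots < \lambda^{(m)}$ via Definition~\ref{def:1}. The main obstacle I anticipate is the non-simple case: going beyond just $v\wedge v = 0$ and invoking the full Pl\"ucker system to show that strong dominance of $v_{\lambda^{(k)}}$ forces it close to a simple vector in the $\lambda^{(k)}$-eigenspace, so that the resulting $W_k$ is genuinely $\att_\bullet$-invariant with the correct eigenvalue multiset.
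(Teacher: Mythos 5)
Your strategy coincides with the paper's: lift $V$ to $\bigwedge^l\mathbb{R}^\dtt$, decompose $v=\sum_\lambda v_\lambda$ into $\att$-eigencomponents, identify on each time interval a unique dominant component $v_{\lambda^{(k)}}$, and replace $v_{\lambda^{(k)}}$ by a nearby decomposable vector in the same eigenspace to obtain an $\att_\bullet$-invariant $W_k$. The intervals, the $\asymp 1$ comparison with the maximal term, and the $O(\log K_0)$ bound on the uncovered set are all as in the paper.

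The gap you flag at the end is genuine and is exactly where the paper does its real work. You correctly observe that $v\wedge v=0$ and the remaining Pl\"ucker relations, projected to the $2\lambda^{(k)}$-eigenspace, show that every Pl\"ucker polynomial is $O(K^{-2})$-small at (normalized) $v_{\lambda^{(k)}}$. But the converse implication — ``all Pl\"ucker polynomials are small at $z$'' $\Rightarrow$ ``$z$ is close to a common zero'' — is not elementary; there is no a priori Lipschitz or quantitative inverse-function estimate, because the Grassmannian is not a regular level set of the Pl\"ucker system and the vanishing locus can be highly degenerate. The paper closes this with the \L{}ojasiewicz inequality: setting $f=\sum_q f_q^2$ and $\tilde f=f\circ\nu$ its restriction to the coordinates of the eigenspace $E_{\lambda^{(k)}}$, \L{}ojasiewicz gives $\alpha>0$ with $\operatorname{dist}(z,Z_{\tilde f})\ll|\tilde f(z)|^\alpha$ on the unit sphere, so $|\tilde f|\ll\epsilon$ at the normalized $v_{\lambda^{(k)}}$ yields a nearby $v'\in E_{\lambda^{(k)}}$ with $[v']\in\Gr(l,\dtt)$. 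Two points worth absorbing: the H\"older exponent $\alpha$ is generally $<1$, so the final distance bound is $\epsilon^\alpha$, not $\epsilon$, and one must restrict the Pl\"ucker system to the eigenspace coordinates before applying \L{}ojasiewicz so that the nearby decomposable vector stays in $E_{\lambda^{(k)}}$ and hence is genuinely $\att_\bullet$-invariant. Without \L{}ojasiewicz or an equivalent quantitative closeness-to-variety result, your proof does not close.
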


\begin{remark}\label{remark:2.2}
Let $\epsilon_0\in(0,1)$ be small enough so that $\att$-invariant subspaces of $\mathbb{R}^{\dtt}$ of the same dimensions and with different multisets of eigenvalues, are at least $2\epsilon_0$ apart from each other with respect to $d_{\Gr}$, for any $\att\in A$. 
It would also be convenient, for technical reasons, to fix $\epsilon_0$ so that the Frobenius and the right-invariant metrics are equivalent on an $\epsilon_0$ neighborhood of the identity (see~\S\ref{subsec:metrics_on_SL}).
We will often turn to Proposition~\ref{proposition:2.2} with this fixed $\epsilon_0$ rather than vary $\epsilon$. In particular, the sets $\{E_k\}_{k=1}^{m}$ would be uniquely defined for a given $V$.
\end{remark}
\begin{remark}\label{remark:2.3}
We may extend the intervals $\{J_k\}_{k=1}^{m}$ to be as large as possible such that they still satisfy the proposition, hence we may treat the intervals as uniquely defined.
\end{remark}

\begin{proof}
Let $\{e_i\}_{i=1}^{\dtt}$ be the standard basis of $\mathbb{R}^{\dtt}$. 
Let $I$ be the set of strictly increasing $l$-tuples of elements of $\{1,\ldots,\dtt\}$, i.e.\ \[I=\{(i_1,\ldots,i_l):\ 1\leq i_1<\cdots<i_l\leq \dtt\}.\]
For $\mathbf{i}=(i_1,\ldots,i_{l})\in \{1,\ldots,\dtt\}^{l}$, let $u_{\mathbf{i}}=e_{i_1}\wedge\cdots\wedge e_{i_l}$, so that $\{u_{\mathbf{i}}\}_{\mathbf{i}\in I}$ is the standard basis for $\bigwedge^{l}\mathbb{R}^{\dtt}$.

Consider $V$ as an element of the projectivization of $\bigwedge^{l}\mathbb{R}^{\dtt}$, and write $V=\left[\sum_{\mathbf{i}\in I} c_{\mathbf{i}} u_{\mathbf{i}}\right]$, for some coefficients $c_{\mathbf{i}}\in\mathbb{R}$, where $[x]$ is the notation for the equivalence class of $x\in\bigwedge^{l}\mathbb{R}^{\dtt}$ in $\mathbb{P}(\bigwedge^{l}\mathbb{R}^{\dtt})$. 
Let $v=\sum_{\mathbf{i}\in I} c_{\mathbf{i}} u_{\mathbf{i}}$.

Each basis element $u_{\mathbf{i}}$ has an eigenvalue $\exp(\sum_{j=1}^{l} t \upalpha_{i_j})$ for the action of $\att_{t}$. For `most' actions $\att_t$, different basis elements have different eigenvalues, but it is not necessarily the case. So we group the basis elements according to their eigenvalues, that is write $I=\bigsqcup_{j=1}^{n} I_j$ where for any $1\leq j\leq n$, the vector $u_{\mathbf{i}}$ has the same $\att_{t}$-eigenvalue $\exp(t\beta_j)$ for all $\mathbf{i}\in I_j$. We write this union in such a way that the finite sequence $(\beta_j)_{j=1}^{n}$ is strictly increasing.
We may then write $v=\sum_{j=1}^{n} v_j$, where  $v_{j}=\sum_{\mathbf{i}\in I_j}c_{\mathbf{i}}u_{\mathbf{i}}$.
Then,  
in these notations,
$\att_{t} v=\sum_{j=1}^{n} \exp(t\beta_j) v_j$.

Note that $\{u_{\mathbf{i}}\}_{\mathbf{i}\in I}$ is an orthonormal basis for $\bigwedge^{l}\mathbb{R}^{\dtt}$, so in particular $\{v_j\}_{j=1}^{n}$ are orthogonal.
It follows that 
\begin{equation}\label{eq:2.0}\|\att_{t} v\|_{\Ext}^{2}=\sum_{j=1}^{n}\exp(2t\beta_j)\|v_j\|_{\Ext}^2.\end{equation}
Consider $\|\att_{t} v\|_{\Ext}^{2}$ as a function of $t\in\mathbb{R}$. It is just a linear combination of finitely many exponential functions, with different logarithmic derivatives, so it is clear that 
there are mutually disjoint intervals $J_1,\ldots,J_{m}$ which cover $\mathbb{R}$ up to Lebesgue measure $\ll_{\att} |\log\epsilon|$, and a strictly increasing finite sequence $(j_k)_{k=1}^{m}$, such that
\begin{equation}\label{eq:2.1}
    \sum_{j\not=j_k}\|v_{j}\|_{\Ext}^2 \exp(2t\beta_j)<\epsilon^2\cdot\|v_{j_k}\|_{\Ext}^2 \exp(2t\beta_{j_k})
\end{equation}
for all $1\leq k\leq m$ and all $t\in J_k$. In other words, on each interval there is a unique dominant component of the sum in the RHS of Equation~\eqref{eq:2.0}.
It follows immediately that 
\begin{equation}\label{eq:2.2}
    \frac{\|\att_{t}v-\exp(t\beta_{j_k})v_{j_k}\|_{\Ext}}{\|\att_{t}v\|_{\Ext}^{1/2}\|\exp(t\beta_{j_k})v_{j_k}\|_{\Ext}^{1/2}}<\epsilon.
\end{equation}

Note that $v_j$ is an eigenvector for the action of $\att_{t}$.
Then we would have liked to use Proposition~\ref{proposition:3.1} here to deduce that $\att_{t}v$ is close to an invariant subspace corresponding to $v_{j_k}$. However, $[v_{j_k}]$ is not necessarily an element of the Grassmanian as it may not satisfy the Pl\"ucker relations. We therefore perturb $v_{j_k}$ by a small amount, as follows. 

Let $D=\dim \bigwedge ^{l} \mathbb{R}^{\dtt}$.
For an element $[x]\in \mathbb{P}(\bigwedge^{l}\mathbb{R}^{\dtt})$, let $\iota([x])\in \mathbb{R}^{D}$ be the coordinates vector of $x$ relative to the standard basis $\{u_{\mathbf{i}}\}_{\mathbf{i}\in I}$ (with respect to some fixed ordering of the basis elements, say first the elements $u_{\mathbf{i}}$ for $\mathbf{i}\in I_1$, then for $\mathbf{i}\in I_2$ and so on). It is well defined only up to scalar, i.e.\ a normalization and a sign. For the normalization, we choose the sum of squares of $\iota([x])$ being equal to one.  For the sign, we may for instance choose the first non-zero component of $\iota([x])$ to be positive. 
With a slight abuse of notation, we may define \[\iota^{-1}:\ \mathbb{R}^{D}\smallsetminus\{0\}\to \mathbb{P}(\bigwedge\nolimits^{l}\mathbb{R}^{\dtt})\] sending $\{c_{\mathbf{i}}\}_{\mathbf{i}\in I}$ to $[\sum_{\mathbf{i}\in I} c_{\mathbf{i}}u_{\mathbf{i}}]$. It is the inverse of $\iota$ on its image, but is defined over all $\mathbb{R}^{D}\smallsetminus\{0\}$.
Moreover, let \[\nu:\ \mathbb{R}^{|I_{j_k}|}\smallsetminus\{0\}\to \Span\{\iota([u_{\mathbf{i}}])\}_{\mathbf{i}\in I_{j_k}}\subseteq\mathbb{R}^{D}\]
be the natural isomorphism defined by extending the coordinates of a vector in $\mathbb{R}^{|I_{j_k}|}$ by adding zeros before and after, so that the resulting $D$-tuple lies in $\Span\{\iota([u_{\mathbf{i}}])\}_{\mathbf{i}\in I_{j_k}}$.

We consider the Pl\"ucker relations as functions $f_1,\ldots,f_p$, where for any $1\leq q\leq p$ the function $f_q:\ \mathbb{R}^{D}\to\mathbb{R}$ is defined on the coordinates vector $\iota([x])$ for $x\in \bigwedge^{l}\mathbb{R}^{\dtt}$. Let $f:\ \mathbb{R}^{D}\to\mathbb{R}$ be defined by $f=\sum_{q=1}^{p} f_{q}^2$. It follows that $[x]\in \Gr(l,\dtt)$ if and only if $f(\iota([x]))=0$.
We consider the restrictions of $f$ to $\mathbb{R}^{|I_{j_k}|}$, namely define $\tilde{f}=f\circ\nu$.
Note that if $\tilde{f}(z)=0$ then $f(r\nu(z))=0$ for any $r\in \mathbb{R}$ (because $f_q$ is homogeneous), and so it follows that $\iota^{-1}(\nu(z))\in \Gr(l,\dtt)$.

Therefore, we are interested in studying the zero locus of $\tilde{f}$, which we denote by $Z_{\tilde{f}}$. 
Note that $\nu^{-1}(\iota([u_{\mathbf{i}}]))\in Z_{\tilde{f}}$ for all $\mathbf{i}\in I_{j_k}$ (because $[u_i]\in \Gr(l,\dtt)$, hence satisfies the relations), so in particular $Z_{\tilde{f}}\not=\emptyset$.
We consider $\tilde{f}$ as a real analytic function defined on the open subset $\mathbb{R}^{|I_{j_k}|}\smallsetminus\{0\}$ of $\mathbb{R}^{|I_{j_k}|}$.
Let $S$ be the unit sphere in $\mathbb{R}^{|I_{j_k}|}$. By the \L{}ojasiewicz inequality~\cite{lojasiewicz1965ensembles}, there is a constant $\alpha>0$ such that 
\begin{equation}\label{eq:2.4}\inf\{\|z-y\|:\ y\in Z_{\tilde{f}}\}\ll_{S}|\tilde{f}(z)|^{\alpha}\end{equation}
for any $z\in S$. As $S$ is constant throughout this proof, we will denote $\ll_{S}$ by $\ll$ for simplicity of notation.

Note that since $V=[v]$ is a subspace, $v$ satisfies the Pl\"ucker relations, i.e.\ $f(\iota([v]))=0$. 
Recall that the functions $f_q$ are given by an homogeneous relations involving sums of products of pairs of Pl\"ucker coordinates. As such, it is easy to see that perturbing the coordinates of a vector $h\in\mathbb{R}^D$ with $\|h\|=1$ by a difference of $\ll\epsilon$ changes the value of $f(h)$ by a difference of $\ll \epsilon$.  
Then, it follows from Equation~\eqref{eq:2.1} that $|f(\iota([v_{j_k}]))|\ll \epsilon$. Therefore $|\tilde{f}(z)|\ll \epsilon$, for $z=\nu^{-1}(\iota([v_{j_k}]))$.
By 
applying Equation~\eqref{eq:2.4} to this specific $z$,
there is some $y\in Z_{\tilde{f}}$ such that
$\|z-y\|\ll\epsilon^{\alpha}$, and so
\begin{equation}\label{eq:2.3}
\|\iota([v_{j_k}])-\nu(y)\|\ll \epsilon^{\alpha},
\end{equation}
i.e.\ the coordinates vector of $[v_{j_k}]$ is close to the coordinates vector of an element of the Grassmanian.

Consider $[v^{\prime}]=\iota^{-1}(\nu(y))$. Then $[v^{\prime}]\in \Gr(l,\dtt)$ because $y\in Z_{\tilde{f}}$.
We would like to use Equation~\eqref{eq:2.3} to bound $\|v_{j_k}-v^{\prime}\|_{\Ext}$, but it only bounds the difference of the coordinates vectors up to normalizations and signs. Still, by normalizing $v^{\prime}$ so that $\|v^{\prime}\|_{\Ext}=\|v_{j_k}\|_{\Ext}$, and perhaps by multiplying $v^{\prime}$ by $-1$,  
it 
follows that 
\[\|v_{j_k}-v^{\prime}\|_{\Ext}\ll \|v^{\prime}\|_{\Ext}\epsilon^{\alpha}.\]
Then, it follows from Equation~\eqref{eq:2.2} that for any $t\in J_{k}$
\begin{align*}\label{eq:2.100}
    \numberthis
    \|\att_{t}v-\exp(t\beta_{j_k})v^{\prime}\|_{\Ext}&\leq \|\att_{t}v-\exp(t\beta_{j_k})v_{j_k}\|_{\Ext}+\exp(t\beta_{j_k})\|v_{j_k}-v^{\prime}\|_{\Ext}\\&
    \ll (\epsilon+\epsilon^{\alpha})\|\att_{t}v\|_{\Ext}^{1/2}\|\exp(t\beta_{j_k})v^{\prime}\|_{\Ext}^{1/2} 
\end{align*}
and so it follows from Proposition~\ref{proposition:3.1} that \[d_{\Gr}(\att_{t}V,[v^{\prime}])\ll(\epsilon+\epsilon^{\alpha})\] as required. 

Note that $[v^{\prime}]$ is indeed an $\att_{t}$-invariant subspace, because $v^{\prime}\in\Span(\{u_{\mathbf{i}}\}_{\mathbf{i}\in I_{j_k}})$. So we let $W_{k}=[v^{\prime}]$ and $E_{k}=\Eigen(\att|_{W_k})$. 
This concludes the construction of $E_{k},W_{k},J_{k}$ as required in the proposition. Note that 
\[\exp(\beta_k)=\prod_{x\in E_{k}} x\]
so the sequence $(E_k)_{k=1}^{m}$ is indeed increasing with respect to the 
order `$<$'.
\end{proof}

We emphasize that a subspace being close to an invariant space $M$ with eigenvalues $E=\Eigen(\att|_M)$, implies that the action of $\att_{\bullet}$ changes the volume of the subspace roughly according to $E$.

\begin{corollary}\label{corollary:2.4}
Let $\epsilon>0$ be small enough.
Let $V\in\Gr(l,\dtt)$ and write $V=[v]$ for $v\in \bigwedge^{l}\mathbb{R}^{\dtt}$.
Let $E_k,J_k$, for $1\leq k\leq m$, be as in Proposition~\ref{proposition:2.2}. Let \[\beta_k=\log\Big(\prod_{x\in E_k} x\Big).\]
Then 
\begin{equation*}\|\att_{t}v\|_{\Ext}\asymp_{\epsilon,\att}\exp\Big(\sum_{k=1}^{m}\beta_{k}\cdot \Leb(J_k\cap [0,t])\Big)\cdot\|v\|_{\Ext}
\end{equation*}
for all $t\geq 0$, where $\Leb$ is the Lebesgue measure on $\mathbb{R}$. 
\end{corollary}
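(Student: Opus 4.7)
My plan is to leverage the orthogonal decomposition $v=\sum_{j=1}^{n} v_j$ into $\att$-eigenspace components (with $v_j$ having $\att_s$-eigenvalue $\exp(s\beta_j)$) together with the identity
\[
\|\att_s v\|_\Ext^{2} \;=\; \sum_{j=1}^{n} \exp(2s\beta_j)\,\|v_j\|_\Ext^{2}
\]
already derived as Equation~\eqref{eq:2.0} inside the proof of Proposition~\ref{proposition:2.2}. The key observation is that the number $n$ of distinct eigenvalues of $\att$ on $\bigwedge^{l}\mathbb{R}^{\dtt}$ depends only on $\att$, so the sum is trapped between its largest summand and $n$ times it. Thus, passing from the sum to its maximum costs only a multiplicative constant depending on $\att$, and this reduces the problem to analyzing a convex piecewise-linear function.

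Concretely, I would introduce
\[
g(s) \;=\; \max_{1\leq j\leq n}\bigl(s\beta_j + \log\|v_j\|_\Ext\bigr),
\]
so that $\exp(g(s))\leq\|\att_s v\|_\Ext \leq \sqrt{n}\,\exp(g(s))$ for every $s\in\mathbb{R}$. In particular $\|v\|_\Ext\asymp_\att \exp(g(0))$ and $\|\att_t v\|_\Ext\asymp_\att \exp(g(t))$. The function $g$ is convex and piecewise linear with slopes drawn from $\{\beta_1,\dots,\beta_n\}$. On each $J_k$, the strict dominance inequality~\eqref{eq:2.1} shows that the summand $j_k$ exceeds all others combined by a factor of $\epsilon^{-2}$; in particular it dominates each individually, hence achieves the maximum defining $g$, and therefore $g'(s)=\beta_{j_k}=\beta_k$ throughout $J_k$.

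Integrating $g'$ over $[0,t]$ then yields
\[
g(t)-g(0) \;=\; \sum_{k=1}^{m} \beta_k\cdot\Leb(J_k\cap[0,t]) \;+\; \int_{[0,t]\smallsetminus\bigsqcup_{k}J_k} g'(s)\,ds.
\]
By Proposition~\ref{proposition:2.2} the complement $[0,t]\smallsetminus\bigsqcup_k J_k$ has Lebesgue measure at most $C=C(\att,\epsilon)$, while on it $|g'|\leq \max_j|\beta_j|$, a constant depending only on $\att$. Consequently the error integral is $O_{\att,\epsilon}(1)$. Combining this with the two-sided comparison $\|\cdot\|_\Ext \asymp_\att \exp(g(\cdot))$ applied at $s=0$ and $s=t$ yields
\[
\log\|\att_t v\|_\Ext - \log\|v\|_\Ext \;=\; \sum_{k=1}^{m}\beta_k\cdot\Leb(J_k\cap[0,t]) + O_{\att,\epsilon}(1),
\]
which is the desired asymptotic after exponentiating.

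The only delicate point is translating the ``single summand dominates the sum'' formulation of~\eqref{eq:2.1} into the ``single summand attains the maximum'' formulation needed for the derivative computation, which is immediate. I do not foresee any genuine obstacle beyond bookkeeping the contribution of the bounded-measure gap regions, which is absorbed into the $\asymp_{\att,\epsilon}$ constants.
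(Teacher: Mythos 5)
Your proof is correct, and it takes a genuinely different route from the paper's. The paper establishes $\|\att_t v\|_\Ext \asymp \|\exp(t\beta_{j_k})v'\|_\Ext$ on each $J_k$ by reusing the perturbed vector $v'$ and the estimate \eqref{eq:2.100} from the end of the proof of Proposition~\ref{proposition:2.2}, which it turns into a quadratic inequality in $r=\|\att_t v\|_\Ext^{1/2}$; it then stitches the intervals together one at a time, using the fact that adjacent gaps have bounded length. Your argument instead works directly from the orthogonality identity \eqref{eq:2.0}: you replace the sum of exponentials by its largest term up to a factor of $n$ (which depends only on $\att$ and $\dtt$), observe that $g(s)=\max_j(s\beta_j+\log\|v_j\|_\Ext)$ is convex and piecewise linear with slope $\beta_{j_k}$ on $J_k$ by \eqref{eq:2.1}, and integrate $g'$ over $[0,t]$, absorbing the gap regions (total measure $\leq C(\att,\epsilon)$, slope bounded by $\max_j|\beta_j|$) into the implied constant in a single step. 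This is cleaner and more self-contained: it never needs the Pl\"ucker-relation perturbation $v'$ (which plays no role in the norm estimate, only in the Grassmannian statement of the proposition), and it handles all the interval-to-interval bookkeeping simultaneously via the integral rather than by hand. What the paper's version buys is a pointwise comparison $\|\att_t v\|_\Ext\asymp\|\exp(t\beta_{j_k})v'\|_\Ext$ to an actual Grassmannian element, but that extra precision is not needed for the corollary as stated.
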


\begin{proof}
Let $a_k=\inf J_k$ and $b_k=\sup J_k$. 
Let $W_k=[v^{\prime}]$ as in the proof of Proposition~\ref{proposition:2.2}.
First, note that \[\|\att_{b_k}v\|_{\Ext}\asymp_{\epsilon,\att_{1}}\|\att_{a_{k+1}}v\|_{\Ext},\] 
for any $k<m$, because $|a_{k+1}-b_{k}|<C$ (and $C$ depends on $\epsilon$), and because $\att_{1}$ changes the norm by up to a bounded (from both sides) factor.
Because there are $\ll_{\dtt} 1$ many intervals $J_k$, it suffices to show that 
\begin{equation}\label{eq:2.102}
\|\att_{s_2}v\|_{\Ext}\asymp\exp(\beta_{k}\cdot (s_2-s_1))\cdot\|\att_{s_1}v\|_{\Ext}
\end{equation}
for any $s_1,s_2\in J_k$ with $s_2\geq s_1$.

Let $t\in J_{k}$. Note that
\begin{align*}\label{eq:2.103}
\numberthis
\|\att_{t}v\|_{\Ext}&\leq\|\att_{t}v-\exp(t\beta_{j_k})v^{\prime}\|_{\Ext}+
\|\exp(t\beta_{j_k})v^{\prime}\|_{\Ext}\\&
\leq C_0(\epsilon+\epsilon^{\alpha})\|\att_{t}v\|^{1/2}_{\Ext} \|\exp(t\beta_{j_k})v^{\prime}\|_{\Ext}^{1/2}+\|\exp(t\beta_{j_k})v^{\prime}\|_{\Ext}
\end{align*}
where $C_0\asymp 1$ is the implicit constant in Equation~\eqref{eq:2.100}.
Letting 
\begin{align*}
    &r=\|\att_{t}v\|_{\Ext}^{1/2}\\&
    b=C_0(\epsilon+\epsilon^{\alpha})\|\exp(t\beta_{j_k})v^{\prime}\|_{\Ext}^{1/2}\\&
    c=\|\exp(t\beta_{j_k})v^{\prime}\|_{\Ext}
\end{align*}
we see that Equation~\eqref{eq:2.103} is just a quadratic inequality
\[r^2\leq b r + c.\]
So it follows that
\begin{equation*}
    |r-b/2|\leq \frac{1}{2}\sqrt{b^2+4c}.
\end{equation*}
Then, assuming $\epsilon$ is small enough, it is easy to see that
\begin{equation}
r^{2}=\|\att_{t} v\|_{\Ext}\ll \|\exp(t\beta_{j_k})v^{\prime}\|_{\Ext}.
\end{equation}
Similarly,
\[\|\att_t v\|_{\Ext}\geq \|\exp(t\beta_{j_k})v^{\prime}\|_{\Ext}-\|\att_t v-\exp(t\beta_{j_k})v^{\prime}\|_{\Ext},\]
so $r^{2}\geq -br+c$,
from which we deduce
\[\|\att_{t} v\|_{\Ext}\gg \|\exp(t\beta_{j_k})v^{\prime}\|_{\Ext}.\]
All together we find
\begin{equation}\label{eq:2.101}\|\att_{t} v\|_{\Ext}\asymp \|\exp(t\beta_{j_k})v^{\prime}\|_{\Ext}.\end{equation}
Since Equation~\eqref{eq:2.101} holds for all $t\in J_{k}$, Equation~\eqref{eq:2.102} follows by substituting $t=s_1,s_2$ in Equation~\eqref{eq:2.101} and dividing. With it, the desired result follows as well.
\end{proof}

To conclude this section, we give the following definition of an orientation of a linear subspace, in the context of Proposition~\ref{proposition:2.2}.
\begin{definition}\label{definition:2}
Let
$V\in \Gr(l,\dtt)$.
Let $\{J_{k}\}_{k=1}^{m}$ and $\{E_{k}\}_{k=1}^{m}$ be the intervals of time and the multisets of eigenvalues, respectively, guaranteed from Proposition~\ref{proposition:2.2} for the constant $\epsilon_0$ set in Remark~\ref{remark:2.2}.
Then, if $0\in J_{k}$ for some $1\leq l\leq m$, we say that the linear subspace $V$ has \textbf{orientation} $E_{k}$ with respect to $\att$.
\end{definition}

\section{The structure of the cusp in $\SLdRZ$}\label{section:cusp}
In this section we explain the structure of the cusp, and set the required notations to state the results of this paper. This is the connection between the discussion of linear spaces in \S\ref{section:linear subspaces} and the space of lattices.

\subsection{The successive minima of Minkowski}
The discussion in this subsection is rather classical. First, we define some standard notions for lattices in $\mathbb{R}^{\dtt}$, and in particular for rational subspaces and their covolume.
We will often implicitly identify $X=\SL_{\mathtt{d}}(\mathbb{R})/\SL_{\mathtt{d}}(\mathbb{Z})$ and the space of unimodular lattices, by the map $\SLdRZ\ni[g]\mapsto g\mathbb{Z}^{\dtt}$.

\subsubsection{Definitions and basic concepts}\label{section:3.1.1}
\begin{definition}
Let $\Lambda$ be a lattice in $\mathbb{R}^{\dtt}$.
A subspace $V$ of $\mathbb{R}^{\dtt}$ is called $\Lambda$-rational if $V\cap\Lambda$ is a lattice in $V$.
In this case, we define $\covol_{\Lambda}(V)$ (the covolume of $V$ with respect to $\Lambda$) by the volume of a fundamental parallelepiped for $\Lambda\cap V$ in $V$, where the volume on $V$ is derived from the Euclidean volume.
\end{definition}
\begin{remark}\label{remark:3.1}
Given a $\Lambda$-rational subspace $V$ and a $\mathbb{Z}$-basis $\{v_1,\ldots,v_l\}$ for $\Lambda\cap V$, the set
\[\left\{\sum_{j=1}^{l}r_j v_j:\ r_j\in[0,1)\right\}\]
is a fundamental parallelepiped and so the covolume satisfies \[\covol_{\Lambda}(V)=\|v_1\wedge\cdots\wedge v_l\|_{\Ext}.\] 
\end{remark}

\begin{definition}
Let $x\in\SL_{\mathtt{d}}(\mathbb{R})/\SL_{\mathtt{d}}(\mathbb{Z})$. Then we define $\alpha_0(x)=1$, and for any $1\leq i\leq \dtt$ 
\[\alpha_i(x)=\min\{\textup{$\covol_{x}(V)$: $V\subset\mathbb{R}^{\dtt}$\textup{ an $i$-dimensional $x$-rational subspace}}\}.\]
\end{definition}
\begin{remark}
It is not difficult to show that the minimum in the definition of $\alpha_i(x)$ is indeed obtained.
\end{remark}
\begin{definition}
 Let $x\in\SL_{\mathtt{d}}(\mathbb{R})/\SL_{\mathtt{d}}(\mathbb{Z})$. Then the successive minima of Minkowski are defined, for any $1\leq i\leq\dtt$, by
    \[\lambda_i(x)=\min\{r\in\mathbb{R}:\ \exists v_1,\ldots,v_i\in x\textup{ linearly independent, with }\|v_j\|\leq r\textup{ for all }j\}.\]
\end{definition}
\begin{remark}
Clearly, $\lambda_1(x)\leq\cdots\leq\lambda_{\dtt}(x)$, for any $x\in\SLdRZ$.
\end{remark}

We will especially be interested in identifying when there is a big gap between the different successive minima of a lattice, so we give the following definitions.
\begin{definition}
Let $x\in\SLdRZ$.  For any $1\leq i\leq\dtt-1$, define
\[\eta_i(x)=\frac{\lambda_{i}(x)}{\lambda_{i+1}(x)}.\]
Moreover, for any $\epsilon>0$, let
\[\eta(x,\epsilon)=\{1\leq i\leq \mathtt{d}-1:\:\eta_i(x)<\epsilon\}.\]
\end{definition}

A classical theorem asserts that $\alpha_i(x)$ is roughly equal to the product of the successive minima $\lambda_1(x),\ldots,\lambda_i(x)$. A proof may be found for example in~\cite[Theorem 1.15, Corollary 1.16]{einsiedlerHomogeneousBook}.
\begin{theorem}\label{theorem:3.4}
Let $x\in\SL_{\dtt}(\mathbb{R})/\SL_{\dtt}(\mathbb{Z})$.
Then, for all $1\leq i\leq \dtt$,
\begin{equation*}
    \alpha_i(x)\asymp\lambda_1(x)\cdots\lambda_i(x).
\end{equation*}
Moreover, there exists a $\mathbb{Z}$-basis $v_1,\ldots,v_{\dtt}$ for $x$ such that $\|v_i\|\asymp \lambda_i(x)$ for all $1\leq i\leq \dtt$, and $\|\pi_{k}(v_k)\|\asymp \|v_k\|$ for all $2\leq k\leq \dtt$, where $\pi_k$ is the projection to $\Span\{v_1,\ldots,v_{k-1}\}^{\perp}$.
\end{theorem}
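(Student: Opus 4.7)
The two statements of the theorem are tightly linked, so the plan is to build the reduced $\mathbb{Z}$-basis first and then read off the asymptotic identity as a consequence. The basis will be constructed along a flag $\{0\}=M_0\subset M_1\subset\cdots\subset M_\dtt=x$ of primitive sublattices, where $M_i$ has rank $i$ and its $\mathbb{R}$-span $V_i$ satisfies $\covol_x(V_i)=\alpha_i(x)$. The existence of such a \emph{nested} family of minimisers is the first technical point; it follows from a classical argument showing that if two $x$-rational subspaces $V, W$ of respective dimensions $i$ and $j$ both minimise covolume but are not nested, then the inequality
\[
\covol_x(V\cap W)\,\covol_x(V+W)\leq \covol_x(V)\,\covol_x(W)
\]
forces $V\cap W$ or $V+W$ (one of which is again $x$-rational) to violate minimality in its dimension. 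Once the flag is fixed, choose, using primitivity of $M_{i-1}$ in $M_i$, vectors $\tilde v_i\in M_i$ so that $\tilde v_1,\ldots,\tilde v_i$ is a $\mathbb{Z}$-basis for $M_i$, and then subtract integer combinations of $\tilde v_1,\ldots,\tilde v_{i-1}$ from $\tilde v_i$ to land its Gram--Schmidt coefficients in $[-\tfrac12,\tfrac12]$. Call the resulting reduced basis $v_1,\ldots,v_\dtt$.

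Next I would verify the two asymptotic estimates on this basis. From Remark~\ref{remark:3.1} and the Gram--Schmidt description $\covol_x(V_i)=\|\pi_1(v_1)\|\cdots\|\pi_i(v_i)\|$, we get
\[
\|\pi_i(v_i)\|=\frac{\covol_x(V_i)}{\covol_x(V_{i-1})}=\frac{\alpha_i(x)}{\alpha_{i-1}(x)}.
\]
To compare this with $\lambda_i(x)$, I would apply Minkowski's second theorem to the rank-$i$ lattice $x\cap V_i$ inside $V_i$: its successive minima $\mu_1\leq\cdots\leq\mu_i$ satisfy $\mu_1\cdots\mu_i\asymp \covol_x(V_i)=\alpha_i(x)$, while any $j$ independent short vectors in $V_i$ are independent short vectors in $\mathbb{R}^\dtt$, so $\mu_j\geq\lambda_j(x)$. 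Inductive use of this across $i=1,\ldots,\dtt$ together with Minkowski's theorem $\lambda_1\cdots\lambda_\dtt\asymp 1$ for unimodular lattices pins $\|\pi_i(v_i)\|\asymp\lambda_i(x)$. The reduction step then yields
\[
\|\pi_i(v_i)\|\leq \|v_i\|\leq \|\pi_i(v_i)\|+\tfrac12\bigl(\|v_1\|+\cdots+\|v_{i-1}\|\bigr),
\]
from which an induction on $i$ gives $\|v_i\|\asymp\|\pi_i(v_i)\|\asymp\lambda_i(x)$.

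Finally, the first assertion $\alpha_i(x)\asymp\lambda_1(x)\cdots\lambda_i(x)$ comes essentially for free: the lower bound is the Minkowski argument above applied to any minimising $V$, and the upper bound uses $v_1,\ldots,v_i$ in the constructed basis, which span an $x$-rational subspace of covolume $\leq\|v_1\wedge\cdots\wedge v_i\|_{\Ext}\leq\|v_1\|\cdots\|v_i\|\asymp\lambda_1(x)\cdots\lambda_i(x)$. The step I expect to be the main obstacle is the joint induction in the previous paragraph: the size of $v_i$ after reduction is controlled by the sizes of $v_1,\ldots,v_{i-1}$, so one must carry all three quantities $\|v_j\|$, $\|\pi_j(v_j)\|$ and $\lambda_j(x)$ along simultaneously, and ensure that the implicit constants do not blow up as $i$ grows. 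All implicit constants depend only on $\dtt$, in line with the statement.
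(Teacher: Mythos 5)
The paper itself does not prove Theorem~\ref{theorem:3.4}; it delegates to~\cite[Theorem 1.15, Corollary 1.16]{einsiedlerHomogeneousBook}, so there is no in-paper proof to compare against and I evaluate your sketch on its own. Your overall strategy --- an adapted, Gram--Schmidt size-reduced $\mathbb{Z}$-basis along a nested flag of primitive sublattices, together with Minkowski's second theorem applied to each $x\cap V_i$ --- is the standard one, and the estimates in the second half of your sketch (the identity $\|\pi_i(v_i)\|=\covol_x(V_i)/\covol_x(V_{i-1})$, the bound $\mu_j\geq\lambda_j(x)$, and the control of $\|v_i\|$ after size reduction) are all correct as written.

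The genuine gap is the opening claim that a nested flag of \emph{exact} $\alpha_i$-minimizers exists, justified by the submodularity inequality of Theorem~\ref{theorem:3.8}. If $V,W$ are non-nested minimizers of dimensions $i<j$, with $p=\dim(V\cap W)<i$ and $q=\dim(V+W)>j$, then submodularity together with $\covol_x(V\cap W)\geq\alpha_p$ and $\covol_x(V+W)\geq\alpha_q$ yields only $\alpha_p\alpha_q\leq\alpha_i\alpha_j$, which is not a contradiction: it is exactly the direction of the (approximate) log-convexity of $i\mapsto\alpha_i$ noted in the remark following Corollary~\ref{corollary:3.2}, so neither $V\cap W$ nor $V+W$ is forced to ``violate minimality.'' (Both are automatically $x$-rational, incidentally, so the parenthetical in your sketch signals some confusion.) The claim is also unnecessary. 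Take linearly independent $u_1,\ldots,u_\dtt\in x$ with $\|u_j\|=\lambda_j(x)$ and set $V_i=\Span(u_1,\ldots,u_i)$; this flag nests for free, and $\Lambda\cap V_i$ is a primitive rank-$i$ sublattice containing $\mathbb{Z}u_1+\cdots+\mathbb{Z}u_i$. Prove the first assertion of the theorem first and independently of the basis construction: your Minkowski argument on a minimizer gives $\alpha_i\gg\lambda_1\cdots\lambda_i$, and $\alpha_i\leq\covol_x(V_i)\leq\|u_1\wedge\cdots\wedge u_i\|_{\Ext}\leq\lambda_1\cdots\lambda_i$ gives the reverse. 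Then $\covol_x(V_i)$ is squeezed between $\alpha_i$ and $\lambda_1\cdots\lambda_i\ll\alpha_i$, so $\covol_x(V_i)\asymp\alpha_i(x)$, which is all your Gram--Schmidt computation actually requires. With the first assertion already in hand, the ``joint induction'' you flag at the end is not needed: $\|\pi_i(v_i)\|\asymp\alpha_i/\alpha_{i-1}\asymp\lambda_i(x)$ is direct, and size reduction gives $\|v_i\|\leq\|\pi_i(v_i)\|+\tfrac{1}{2}\sum_{j<i}\|\pi_j(v_j)\|\ll\lambda_i+\sum_{j<i}\lambda_j\ll_\dtt\lambda_i(x)$ with no need to carry all three quantities simultaneously.
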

The following corollary follows using Remark~\ref{remark:3.1}.
\begin{corollary}\label{corollary:3.2}
Let $v_1,\ldots,v_{\dtt}$ be as in Theorem~\ref{theorem:3.4}.
Then for all $1\leq i\leq\dtt$, the subspace $W_{i}\coloneqq\Span\{v_1,\ldots,v_i\}$ is $x$-rational and satisfies $\covol_{x}(W_{i})\asymp\alpha_i(x)$. 
\end{corollary}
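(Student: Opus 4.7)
The plan is to verify both claims of the corollary directly using Theorem~\ref{theorem:3.4} together with Remark~\ref{remark:3.1}, with essentially no new ideas required beyond what is already set up.

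First I would establish $x$-rationality of $W_i$. Since $v_1,\ldots,v_{\dtt}$ is a $\mathbb{Z}$-basis for the lattice $x$, any vector of $x$ has a unique expression $\sum_{j=1}^{\dtt} n_j v_j$ with $n_j\in\mathbb{Z}$. If such a vector lies in $W_i=\Span\{v_1,\ldots,v_i\}$, then by linear independence of the full basis we must have $n_j=0$ for all $j>i$. Thus $x\cap W_i=\mathbb{Z}v_1\oplus\cdots\oplus\mathbb{Z}v_i$, which is clearly a lattice in $W_i$.

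Next I would compute the covolume. By Remark~\ref{remark:3.1},
\[
\covol_x(W_i)=\|v_1\wedge\cdots\wedge v_i\|_{\Ext}.
\]
The standard identity relating the norm of a wedge to the Gram--Schmidt heights gives
\[
\|v_1\wedge\cdots\wedge v_i\|_{\Ext}=\prod_{k=1}^{i}\|\pi_k(v_k)\|,
\]
where we set $\pi_1=\mathrm{Id}$ and, for $k\geq 2$, $\pi_k$ is the orthogonal projection to $\Span\{v_1,\ldots,v_{k-1}\}^{\perp}$, exactly as in Theorem~\ref{theorem:3.4}.

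Finally I would combine the estimates supplied by Theorem~\ref{theorem:3.4}: $\|\pi_k(v_k)\|\asymp\|v_k\|\asymp\lambda_k(x)$ for every $1\leq k\leq i$. Multiplying these comparisons yields
\[
\covol_x(W_i)\asymp\prod_{k=1}^{i}\lambda_k(x)\asymp\alpha_i(x),
\]
where the last comparison uses the first assertion of Theorem~\ref{theorem:3.4}. There is no real obstacle here; the only point worth being careful about is that one must invoke both parts of Theorem~\ref{theorem:3.4} (the product formula for $\alpha_i$ and the control $\|\pi_k(v_k)\|\asymp\|v_k\|$), since without the latter the wedge product could be much smaller than the product of the $\|v_k\|$'s if the vectors were nearly linearly dependent.
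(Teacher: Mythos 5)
Your proof is correct and follows exactly the approach the paper intends; the paper simply states that the corollary ``follows using Remark~\ref{remark:3.1}'' without spelling out the details, and your argument fills them in properly (the $\mathbb{Z}$-basis argument for rationality, Remark~\ref{remark:3.1} for the wedge-norm formula, the Gram--Schmidt factorization, and both estimates from Theorem~\ref{theorem:3.4}). Your closing remark about needing the near-orthogonality $\|\pi_k(v_k)\|\asymp\|v_k\|$ is well taken: without it the wedge norm could be much smaller than $\prod_k\|v_k\|$, so this hypothesis is genuinely used.
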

\begin{remark}
It follows that for all $x\in\SL_{\dtt}(\mathbb{R})/\SL_{\dtt}(\mathbb{Z})$ and for all $1\leq i\leq \dtt-1$,
\begin{equation*}
    \eta_i(x)\asymp\frac{\alpha_{i}^{2}(x)}{\alpha_{i-1}(x)\alpha_{i+1}(x)}.
\end{equation*}
In other words, informally, the function $i\mapsto \log\alpha_i(x)$ is convex, up to an additive constant, and the amount of deviation from linearity in its graph, at the point $i$, is roughly given by $\log\eta_i(x)$.
\end{remark}

Recall that by Mahler's compactness theorem, a closed subset $C\subset X$ is compact if and only if \[\inf\{\lambda_1(x):\:x\in C\}>0,\] i.e.\ there are no lattices in $C$ with arbitrarily short vectors. 
The following proposition shows that if $\lambda_1(x)$ is exceptionally small, i.e.\ a point is high up in the cusp, then there exists some jump $\eta_i(x)$ which is very small as well. Conversely, if $\eta_i(x)$ is small, then so is $\lambda_1(x)$. 
It follows that being high up in the cusp may be characterised by having a significant jump in the successive minima. 
Moreover, considering $\eta_1(x),\ldots,\eta_{\dtt-1}(x)$ instead of only $\lambda_1(x)$ gives more information about the reason that the point is high up in the cusp, as it characterises the existence of rational subspaces of small covolume, for some dimensions, as we will shortly see.

\begin{proposition}\label{proposition:3.4}
For all $1\leq i\leq \dtt-1$, the functions $\alpha_i$, $\lambda_1$ and $\eta_i$ satisfy
\begin{equation*}
    \lambda_1^{i}\ll\alpha_i\ll \eta_i^{i(\dtt-i)/\dtt}.
\end{equation*}
Moreover,
\begin{equation*}
    \min_{j=1,\ldots,\dtt-1}\eta_j\ll\lambda_{1}^{2/(\dtt-1)}.
\end{equation*}
\end{proposition}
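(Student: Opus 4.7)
The plan is to derive all three bounds from Theorem~\ref{theorem:3.4}, which relates $\alpha_i(x)$ to the product $\lambda_1(x)\cdots\lambda_i(x)$, combined with the unimodularity of $x$ (which gives $\alpha_{\dtt}(x)=1$, so $\prod_{j=1}^{\dtt}\lambda_j(x)\asymp 1$). Since everything reduces to manipulating products of successive minima, no serious obstacle is expected; the argument is essentially bookkeeping with the monotonicity $\lambda_1\leq\cdots\leq\lambda_{\dtt}$.

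For the first bound $\lambda_1^{i}\ll\alpha_i$, I would simply observe that $\lambda_j\geq\lambda_1$ for every $j$, so Theorem~\ref{theorem:3.4} gives $\alpha_i\asymp \lambda_1\cdots\lambda_i\geq \lambda_1^{i}$.

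For the second bound $\alpha_i\ll\eta_i^{i(\dtt-i)/\dtt}$, I would use two complementary monotonicity estimates: since $\lambda_j\leq\lambda_i$ for $j\leq i$, one gets $\alpha_i\ll\lambda_i^{i}$; and since $\lambda_j\geq \lambda_{i+1}$ for $j\geq i+1$, the product $\prod_{j=i+1}^{\dtt}\lambda_j\asymp\alpha_{\dtt}/\alpha_i=1/\alpha_i$ satisfies $1/\alpha_i\gg\lambda_{i+1}^{\dtt-i}$. Raising the first inequality to the power $\dtt-i$, the second to the power $i$, and multiplying, I obtain
\[
\alpha_i^{\dtt}\ll \lambda_i^{i(\dtt-i)}\lambda_{i+1}^{-i(\dtt-i)}=\eta_i^{i(\dtt-i)},
\]
which yields the claim after taking the $\dtt$-th root.

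For the final bound, let $c=\min_{j}\eta_j(x)$. Since $\lambda_{j+1}\leq\lambda_j/c$ for every $j$, an easy induction gives $\lambda_j\leq \lambda_1 c^{-(j-1)}$. Feeding this into $1\asymp\prod_{j=1}^{\dtt}\lambda_j$ yields
\[
1\ll \lambda_1^{\dtt}\,c^{-\dtt(\dtt-1)/2},
\]
and solving for $c$ produces $c\ll\lambda_1^{2/(\dtt-1)}$, which is precisely the asserted inequality. The only mildly subtle point is ensuring the implicit constants in Theorem~\ref{theorem:3.4} absorb uniformly into the final $\ll$, but since only finitely many applications are involved this presents no real difficulty.
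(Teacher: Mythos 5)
Your proof is correct and follows essentially the same route as the paper's: all three bounds are derived from Theorem~\ref{theorem:3.4} (i.e.\ $\alpha_i\asymp\lambda_1\cdots\lambda_i$), the unimodularity relation $\alpha_{\dtt}\asymp\prod_j\lambda_j\asymp 1$, and the monotonicity $\lambda_1\le\cdots\le\lambda_{\dtt}$. The only cosmetic differences are that for the middle inequality you combine the two estimates by raising to powers $\dtt-i$ and $i$ and multiplying rather than dividing as the paper does, and for the final inequality you bound $\lambda_j\le\lambda_1 c^{-(j-1)}$ directly rather than rewriting $\alpha_{\dtt}$ via the $\eta_j$'s; both yield identical exponents.
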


\begin{proof}
Note that \[\alpha_i\asymp\lambda_1\cdots\lambda_i\geq\lambda_1^{i}\] so the first inequality of the first statement is clear.
Next, note that $\alpha_{\dtt}\equiv 1$, so
\[\frac{1}{\alpha_i}=\frac{\alpha_{\dtt}}{\alpha_i}\asymp\lambda_{i+1}\cdots\lambda_{\dtt}\geq\lambda_{i+1}^{\dtt-i}\]
so
\begin{equation*}
    \eta_i=\frac{\lambda_i}{\lambda_{i+1}}\gg\frac{(\lambda_1\cdots\lambda_i)^{1/i}}{\alpha_{i}^{-1/(\dtt-i)}}\gg\alpha_{i}^{\dtt/(i(\dtt-i))}
\end{equation*}
and therefore
\[\lambda_{1}^{i}\ll\alpha_i\ll\eta_{i}^{i(\dtt-i)/\dtt}\]
as desired.

For the second statement, let $i>1$. Note that
\begin{equation*}
    \lambda_i=\frac{\lambda_{i-1}}{\eta_{i-1}}=\cdots=\frac{\lambda_1}{\eta_1\cdots\eta_{i-1}}.
\end{equation*}
Then
\begin{equation*}
    \alpha_i\asymp\lambda_1\cdots\lambda_i=\frac{\lambda_{1}^{i}}{\eta_{1}^{i-1}\cdots\eta_{i-2}^{2}\eta_{i-1}}.
\end{equation*}

By substituting $i=\dtt$, we see that
\begin{equation*}
1\asymp\frac{\lambda_{1}^{\dtt}}{\eta_{1}^{\dtt-1}\cdots\eta_{\dtt-2}^{2}\eta_{\dtt-1}}\leq(\min_{i=1,\ldots,\dtt-1}\eta_{i})^{-\dtt(\dtt-1)/2}\lambda_{1}^{\dtt}
\end{equation*}
as desired.
\end{proof}

The importance of these definitions will be clear shortly. The key tool that we use is the following elementary result, which may be found for instance in~\cite[Lemma 5.6]{eskin1998upper}.
\begin{theorem}\label{theorem:3.8}
Let $\Lambda$ be a lattice in $\mathbb{R}^{\dtt}$, and let $L,M$ be $\Lambda$-rational linear subspaces.
Then
\[\covol_{\Lambda}(L)\covol_{\Lambda}(M)\geq \covol_{\Lambda}(L\cap M)\covol_{\Lambda}(L+M)\]
\end{theorem}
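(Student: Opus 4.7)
My plan is to reduce the inequality to a purely exterior-algebra statement via compatible $\mathbb{Z}$-bases. Set $k=\dim(L\cap M)$, $l=\dim L$, $m=\dim M$. Choose a $\mathbb{Z}$-basis $v_{1},\ldots,v_{k}$ of $\Lambda\cap(L\cap M)$. Since $(\Lambda\cap L)/(\Lambda\cap L\cap M)$ embeds into the real vector space $L/(L\cap M)$ and is therefore torsion-free, I can extend to a $\mathbb{Z}$-basis $v_{1},\ldots,v_{k},u_{k+1},\ldots,u_{l}$ of $\Lambda\cap L$, and symmetrically to a $\mathbb{Z}$-basis $v_{1},\ldots,v_{k},w_{k+1},\ldots,w_{m}$ of $\Lambda\cap M$. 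By Remark~\ref{remark:3.1} the three covolumes $\covol_{\Lambda}(L\cap M)$, $\covol_{\Lambda}(L)$, $\covol_{\Lambda}(M)$ equal the $\|\cdot\|_{\Ext}$-norms of the respective wedge products.

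For $L+M$, the $l+m-k$ vectors $v_{1},\ldots,v_{k},u_{k+1},\ldots,u_{l},w_{k+1},\ldots,w_{m}$ all lie in $\Lambda\cap(L+M)$ and $\mathbb{R}$-span $L+M$, so they generate a full-rank sublattice $\Lambda'\subseteq \Lambda\cap(L+M)$. Consequently
\[
\covol_{\Lambda}(L+M)\;\leq\;\covol(\Lambda')\;=\;\|v_{1}\wedge\cdots\wedge v_{k}\wedge u_{k+1}\wedge\cdots\wedge u_{l}\wedge w_{k+1}\wedge\cdots\wedge w_{m}\|_{\Ext},
\]
and it is precisely the direction of this inequality (passing to a smaller lattice \emph{increases} the covolume) that will produce the inequality, rather than equality, in the theorem. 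Writing $V=v_{1}\wedge\cdots\wedge v_{k}$, $U=u_{k+1}\wedge\cdots\wedge u_{l}$, and $W=w_{k+1}\wedge\cdots\wedge w_{m}$, the problem now reduces to the Hadamard-type estimate
\[
\|V\wedge U\|_{\Ext}\cdot\|V\wedge W\|_{\Ext}\;\geq\;\|V\|_{\Ext}\cdot\|V\wedge U\wedge W\|_{\Ext}.
\]

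To settle this last inequality, I would orthogonalize: let $\pi$ denote orthogonal projection onto $\Span(v_{1},\ldots,v_{k})^{\perp}$, and put $U'=\pi(u_{k+1})\wedge\cdots\wedge\pi(u_{l})$ and $W'=\pi(w_{k+1})\wedge\cdots\wedge\pi(w_{m})$. Each difference $u_{i}-\pi(u_{i})$ lies in $\Span(v_{1},\ldots,v_{k})$, so wedging with $V$ annihilates those components and yields $\|V\wedge U\|_{\Ext}=\|V\|_{\Ext}\cdot\|U'\|_{\Ext}$, and analogously $\|V\wedge W\|_{\Ext}=\|V\|_{\Ext}\cdot\|W'\|_{\Ext}$ together with $\|V\wedge U\wedge W\|_{\Ext}=\|V\|_{\Ext}\cdot\|U'\wedge W'\|_{\Ext}$. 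The desired inequality then collapses to $\|U'\|_{\Ext}\cdot\|W'\|_{\Ext}\geq \|U'\wedge W'\|_{\Ext}$, which is the classical Hadamard bound for the wedge of two simple multivectors; it follows by applying ordinary Hadamard to a Gram--Schmidt orthogonalization of the combined $(l-k)+(m-k)$ vectors in $\Span(V)^{\perp}$. The only real obstacle here is bookkeeping around the basis-extension step and the direction of the sublattice inclusion; the exterior-algebra manipulations themselves are routine.
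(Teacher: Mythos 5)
The paper does not prove this theorem itself; it cites~\cite[Lemma 5.6]{eskin1998upper}, so there is no in-paper argument to compare against. Your proposal is a correct and self-contained proof of the inequality. The two reductions are sound: (i)~the quotient $(\Lambda\cap L)/(\Lambda\cap L\cap M)$ is indeed torsion-free (if $nx\in L\cap M$ with $x\in\Lambda\cap L$ then already $x\in L\cap M$), hence free, so the exact sequence splits and a basis of $\Lambda\cap(L\cap M)$ extends to one of $\Lambda\cap L$; (ii)~the $l+m-k$ vectors you assemble are $\mathbb{R}$-linearly independent since $\dim(L+M)=l+m-k$, so they generate a finite-index sublattice $\Lambda'\subseteq\Lambda\cap(L+M)$, and passing to the sublattice increases the covolume, giving $\covol_{\Lambda}(L+M)\leq\|V\wedge U\wedge W\|_{\Ext}$; this is precisely where the inequality, rather than equality, enters. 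The orthogonalization step is also fine: the Gram matrix of $V\wedge U'$ is block-diagonal because every $\pi(u_j)\perp v_i$, yielding $\|V\wedge U'\|_{\Ext}=\|V\|_{\Ext}\|U'\|_{\Ext}$ and its two analogues, after which everything cancels to $\|U'\|_{\Ext}\|W'\|_{\Ext}\geq\|U'\wedge W'\|_{\Ext}$.

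One small correction of phrasing on that last line: what you need is the Fischer (block-Hadamard) inequality $\det G\leq\det G_{11}\det G_{22}$ for the Gram matrix of the combined $(l-k)+(m-k)$ vectors, not the scalar Hadamard bound $\det G\leq\prod_i G_{ii}$, and it does not ``follow by applying ordinary Hadamard to a Gram--Schmidt orthogonalization'' in a one-line way. The clean argument is the one you essentially gesture at: Gram--Schmidt the $\pi(u_j)$'s, then project the $\pi(w_s)$'s onto their orthogonal complement, so the determinant factors as $\|U'\|_{\Ext}^2\cdot\vol(\text{projected }w\text{'s})^2$, and the projected volume is $\leq\|W'\|_{\Ext}$ since orthogonal projection is a contraction. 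With that substitution the proof is complete and correct.
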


As we have seen in Proposition~\ref{proposition:3.4}, if $\eta_i(x)$ is small then $\alpha_i(x)$ is small as well, i.e.\ there is an $i$-dimensional rational subspace of small covolume. We now show that if $\eta_i(x)$ is small enough, then such small covolume subspace is unique, in the sense that all other subspaces are of much larger covolume.
\begin{proposition}\label{proposition:3.6}
Let $x\in X$ and let $1\leq i\leq\dtt-1$. Let $L$ be a $x$-rational subspace of dimension $i$ which satisfies $\covol_{x}(L)=\alpha_i(x)$.
Then for all $x$-rational subspaces $M$ of dimension $i$, such that $M\not=L$, the covolume satisfies
\begin{equation*}
    \covol_{x}(M)\gg\eta_i^{-1}(x)\alpha_i(x).
\end{equation*}
\end{proposition}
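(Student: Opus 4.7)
The plan is to apply Theorem~\ref{theorem:3.8} directly to $L$ and $M$. Since $M \neq L$ and $\dim L = \dim M = i$, the intersection $L \cap M$ has dimension $j < i$ and, by dimension count, $L + M$ has dimension $k = 2i - j > i$. Both $L \cap M$ and $L + M$ are $x$-rational (the intersection and the sum of $x$-rational subspaces are $x$-rational), so by the definition of $\alpha_\bullet(x)$ we have $\covol_x(L \cap M) \geq \alpha_j(x)$ and $\covol_x(L+M) \geq \alpha_k(x)$. Together with $\covol_x(L) = \alpha_i(x)$, Theorem~\ref{theorem:3.8} gives
\[\alpha_i(x)\,\covol_x(M) \;\geq\; \alpha_j(x)\,\alpha_k(x).\]

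The remaining task is to show that $\alpha_j(x)\alpha_k(x) \gg \eta_i(x)^{-1}\alpha_i(x)^2$. For this I would translate to successive minima via Theorem~\ref{theorem:3.4}: writing $r = i - j = k - i \geq 1$,
\[\frac{\alpha_j(x)\alpha_k(x)}{\alpha_i(x)^2} \;\asymp\; \frac{\lambda_{i+1}(x)\cdots\lambda_k(x)}{\lambda_{j+1}(x)\cdots\lambda_i(x)}.\]
Both numerator and denominator have exactly $r$ factors. Pairing $\lambda_{i+s}(x)$ in the numerator with $\lambda_{j+s}(x)$ in the denominator for $s = 1,\dots,r$, each ratio $\lambda_{i+s}(x)/\lambda_{j+s}(x)$ is a product of consecutive ratios $\lambda_{t+1}(x)/\lambda_t(x) = \eta_t(x)^{-1}$ for $t$ ranging over $[j+s,\,i+s-1]$. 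Since $j+s \leq i \leq i+s-1$ for every $s \in \{1,\dots,r\}$, each such pair picks up the factor $\eta_i(x)^{-1}$, and all the other factors $\eta_t(x)^{-1}$ are $\geq 1$. Keeping just one copy of $\eta_i(x)^{-1}$ from the $s=1$ pair and bounding the rest by $1$ yields
\[\frac{\alpha_j(x)\alpha_k(x)}{\alpha_i(x)^2} \;\gg\; \eta_i(x)^{-1},\]
which combined with the previous display gives the claim.

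The proof is essentially a bookkeeping computation, so the main obstacle is purely combinatorial: making sure that no matter what the precise values of $j < i < k = 2i - j$ happen to be, the index $i$ always lies strictly inside the window $[j+1, k]$, so that the jump $\lambda_{i+1}(x)/\lambda_i(x) = \eta_i(x)^{-1}$ is genuinely forced to appear. This is exactly where the hypothesis that $L$ is an $\alpha_i$-minimizer is used: without it, we could only bound $\covol_x(M)$ from below by $\alpha_i(x)$, and the sharper factor $\eta_i(x)^{-1}$ would be lost.
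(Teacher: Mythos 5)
Your proof is correct and takes essentially the same approach as the paper: apply Theorem~\ref{theorem:3.8} to $L$ and $M$, translate the resulting $\alpha$-ratio into Minkowski minima via Theorem~\ref{theorem:3.4}, and extract the factor $\eta_i^{-1}$ from the ordering $\lambda_1\leq\cdots\leq\lambda_\dtt$. The only cosmetic difference is in the final extraction step, where the paper simply bounds the numerator by $\lambda_i^k$ and the denominator by $\lambda_{i+1}^k$ (giving the stronger $\eta_i^{-k}$), whereas you pair the terms and keep a single $\eta_i^{-1}$ factor; both are valid.
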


\begin{proof}
Let $L,M$ be $\Lambda$-rational subspaces of dimension $i$. 
Suppose that $L\not=M$, so there exists $k>0$ such that
\[\dim(L\cap M)=i-k,\ \dim(L+M)=i+k.\]

Then, using Theorem~\ref{theorem:3.8},
\begin{align*}
    \frac{\alpha_{i}^2(x)}{\covol_{\Lambda}(L)\covol_{\Lambda}(M)}&\leq\frac{\alpha_{i}^2(x)}{\covol_{\Lambda}(L\cap M)\covol_{\Lambda}(L+M)}\leq\frac{\alpha_{i}^2(x)}{\alpha_{i-k}(x)\alpha_{i+k}(x)}\\&
    \asymp\frac{\lambda_{i-k+1}(x)\cdots\lambda_{i}(x)}{\lambda_{i+1}(x)\cdots\lambda_{i+k}(x)}\leq\frac{\lambda_{i}^k(x)}{\lambda_{i+1}^k(x)}=\eta_{i}^{k}(x).
\end{align*}

Assume that $\covol_{\Lambda}(L)=\alpha_i(x)$. Then
\[\covol_{\Lambda}(M)\gg \eta_{i}^{-k}(x)\alpha_i(x).\]
\end{proof}
\begin{corollary}\label{corollary:3.7}
In Corollary~\ref{corollary:3.2} we saw the existence of a flag
\[\{0\}\leq W_1\leq\cdots\leq W_{\dtt-1}\leq \mathbb{R}^{\dtt},\]
where $W_i$ is an $i$-dimensional $x$-rational subspace, which satisfies
\[\covol_{x}(W_i)\asymp \alpha_i(x)\]
for all $i$.
It follows from Proposition~\ref{proposition:3.6} that there is some constant $0<\eta_0< 1$, which we fix for the rest of the paper, so that for all indices $i$ with $\eta_i(x)<\eta_0$, the subspace $W_i$ is defined uniquely, and it is the unique $x$-rational subspace of covolume $\alpha_i(x)$.
\end{corollary}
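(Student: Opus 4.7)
The plan is to deduce the corollary as a direct consequence of Proposition~\ref{proposition:3.6}, by choosing $\eta_{0}$ small relative to the implicit constants appearing in Proposition~\ref{proposition:3.6} and in Corollary~\ref{corollary:3.2}.

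First I would establish uniqueness of the minimizer itself. Fix any $i$ and let $L$ be an $i$-dimensional $x$-rational subspace achieving $\covol_{x}(L)=\alpha_{i}(x)$ (such $L$ exists since the minimum is attained). If $L'$ is another such minimizer with $L'\neq L$, then Proposition~\ref{proposition:3.6} applied to $M=L'$ yields
\[
\alpha_{i}(x)=\covol_{x}(L')\geq c_{1}\,\eta_{i}^{-1}(x)\,\alpha_{i}(x),
\]
for some absolute constant $c_{1}>0$. This forces $\eta_{i}(x)\geq c_{1}$. Hence if I insist $\eta_{0}\leq c_{1}/2$, the condition $\eta_{i}(x)<\eta_{0}$ rules out this scenario, so the minimizer $L$ is unique.

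Next I would identify this unique minimizer with the subspace $W_{i}$ provided by Corollary~\ref{corollary:3.2}. By that corollary there is a constant $c_{2}$ with $\covol_{x}(W_{i})\leq c_{2}\alpha_{i}(x)$. If $W_{i}\neq L$, Proposition~\ref{proposition:3.6} applied to $M=W_{i}$ gives
\[
c_{2}\,\alpha_{i}(x)\geq \covol_{x}(W_{i})\geq c_{1}\,\eta_{i}^{-1}(x)\,\alpha_{i}(x),
\]
so $\eta_{i}(x)\geq c_{1}/c_{2}$. Choosing $\eta_{0}\coloneqq\min(c_{1},c_{1}/c_{2})/2$ (a constant depending only on $\dtt$), the assumption $\eta_{i}(x)<\eta_{0}$ forces $W_{i}=L$, so $W_{i}$ is the unique $x$-rational subspace of covolume $\alpha_{i}(x)$.

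This is essentially bookkeeping; the substantive content is already in Proposition~\ref{proposition:3.6}. There is no real obstacle, only the need to absorb the implicit constants from $\asymp$ in Corollary~\ref{corollary:3.2} and from $\gg$ in Proposition~\ref{proposition:3.6} into the single choice of threshold $\eta_{0}$, and to verify that this $\eta_{0}$ may be taken to depend only on $\dtt$ (and not on $x$ or $i$), which follows because both implicit constants arise from dimension-only estimates.
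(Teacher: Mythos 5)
Your proof is correct and follows exactly the approach the paper intends: the corollary is stated in the paper without an explicit proof as an immediate consequence of Proposition~\ref{proposition:3.6}, and your argument makes that deduction precise by absorbing the two implicit constants (from the $\asymp$ in Corollary~\ref{corollary:3.2} and the $\gg$ in Proposition~\ref{proposition:3.6}) into a single threshold $\eta_0$ depending only on $\dtt$. Both conclusions of the corollary---that the minimizer is unique and that $W_i$ equals it regardless of the basis choice in Theorem~\ref{theorem:3.4}---are covered by your two applications of Proposition~\ref{proposition:3.6}.
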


\begin{definition}\label{definition:6}
Let $x\in X$. Assume that $\eta_i(x)<\eta_0$ for some $i\in\{1,\ldots,\dtt-1\}$. Then, let $V_i(x)$ be the unique $i$-dimensional $x$-rational subspace of volume $\alpha_i(x)$.
\end{definition}

\subsubsection{Dynamics}
We use the results of~\S\ref{section:linear subspaces} with the definitions of~\S\ref{section:3.1.1} in order to study how $\alpha_i(x)$ and $V_i(x)$ change upon applying the flow $\att_{\bullet}$.
The following estimate is crude and not tight, but is still useful. We will later on improve on it, in Proposition~\ref{lemma:6.1}.
\begin{lemma}\label{lemma:3.11}
There is a constant 
$0\leq h\ll_{\att_{1}} 1$ 
such that
\[\covol_{\att_t x}(\att_t V)\leq \exp(h|t|) \covol_{x}(V)\]
for all $x\in\SLdRZ$, for all $x$-rational $l$-dimensional subspaces $V$, and for all  $t\in\mathbb{R}$, $1\leq l\leq \dtt$.
\end{lemma}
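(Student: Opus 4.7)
The plan is to unwind the definition of covolume via exterior products, and then bound the operator norm of $\att_{t}$ on $\bigwedge^{l}\mathbb{R}^{\dtt}$ uniformly in $l$.

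First, given an $x$-rational subspace $V$ of dimension $l$ with $\mathbb{Z}$-basis $v_{1},\ldots,v_{l}$ for $x\cap V$, Remark~\ref{remark:3.1} gives $\covol_{x}(V)=\|v_{1}\wedge\cdots\wedge v_{l}\|_{\Ext}$. Since $\att_{t}$ is a linear automorphism of $\mathbb{R}^{\dtt}$, the vectors $\att_{t}v_{1},\ldots,\att_{t}v_{l}$ form a $\mathbb{Z}$-basis for $(\att_{t}x)\cap(\att_{t}V)$, and so
\[
\covol_{\att_{t}x}(\att_{t}V)=\|\att_{t}v_{1}\wedge\cdots\wedge\att_{t}v_{l}\|_{\Ext}=\|\att_{t}^{\wedge l}(v_{1}\wedge\cdots\wedge v_{l})\|_{\Ext},
\]
where $\att_{t}^{\wedge l}$ denotes the induced action on $\bigwedge^{l}\mathbb{R}^{\dtt}$. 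Thus it suffices to produce an $h\geq0$, depending only on $\upalpha$, such that $\|\att_{t}^{\wedge l}w\|_{\Ext}\leq e^{h|t|}\|w\|_{\Ext}$ for every $w\in\bigwedge^{l}\mathbb{R}^{\dtt}$ and every $1\leq l\leq \dtt$.

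Next I would compute this operator norm directly using the eigen-decomposition. The basis $\{u_{\mathbf{i}}\}_{\mathbf{i}\in I}$ of $\bigwedge^{l}\mathbb{R}^{\dtt}$ introduced in the proof of Proposition~\ref{proposition:2.2} is orthonormal with respect to $\|\cdot\|_{\Ext}$, and each $u_{\mathbf{i}}=e_{i_{1}}\wedge\cdots\wedge e_{i_{l}}$ is an eigenvector of $\att_{t}^{\wedge l}$ with eigenvalue $\exp(t\sum_{j=1}^{l}\upalpha_{i_{j}})$. Consequently the operator norm satisfies
\[
\|\att_{t}^{\wedge l}\|_{\mathrm{op}}=\exp\!\Bigl(t\cdot\max_{\mathbf{i}\in I}\sum_{j=1}^{l}\upalpha_{i_{j}}\Bigr)\quad\text{for }t\geq0,
\]
with an analogous expression involving the minimum for $t\leq 0$. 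Setting
\[
h=\max_{1\leq l\leq\dtt}\max\!\Bigl\{\max_{|J|=l}\sum_{j\in J}\upalpha_{j},\ -\min_{|J|=l}\sum_{j\in J}\upalpha_{j}\Bigr\}
\]
gives a nonnegative quantity (nonnegativity follows from $\sum_{j=1}^{\dtt}\upalpha_{j}=0$, which forces both the max and the min of the $l$-subset sums to straddle $0$) that depends only on $\upalpha$, i.e.\ only on $\att=\att_{1}$, and clearly satisfies $h\leq l\cdot \max_{j}|\upalpha_{j}|\ll_{\att}1$.

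With this $h$, both $\|\att_{t}^{\wedge l}\|_{\mathrm{op}}\leq e^{ht}$ for $t\geq0$ and $\|\att_{t}^{\wedge l}\|_{\mathrm{op}}\leq e^{h|t|}$ for $t\leq 0$ hold, uniformly in $l$, so the preceding reduction gives the claimed bound $\covol_{\att_{t}x}(\att_{t}V)\leq e^{h|t|}\covol_{x}(V)$. I do not expect any real obstacle here: the lemma is essentially the statement that $\att_{t}$ stretches volumes of $l$-dimensional subspaces by at most the top eigenvalue of $\att_{t}^{\wedge l}$, and the only minor point to verify is that $h$ can be chosen to be nonnegative, which is automatic from $\operatorname{tr}\upalpha=0$.
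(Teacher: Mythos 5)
Your proof is correct and follows essentially the same route as the paper: reduce via Remark~\ref{remark:3.1} to an operator norm bound for $\att_{t}$ acting on $\bigwedge^{l}\mathbb{R}^{\dtt}$, then read off that bound from the orthonormal eigenbasis $\{u_{\mathbf{i}}\}$. The paper simply declares the bound with $h=\sum_{i}\upalpha_{i}^{+}$, which is in fact numerically equal to the constant you wrote down (both maxima over subsets are achieved by taking $J$ to be the positive, resp.\ negative, indices, and trace zero makes the two sides agree), so you have only made explicit a computation the paper leaves implicit.
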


\begin{proof}
Let 
$h=\sum_{i=1}^{\dtt}\max(\upalpha_{i},0).$
Then it is clear that  
\begin{equation}\label{eq:}
\|\att_t v\|_{\Ext}\leq \exp(h|t|)\|v\|_{\Ext}
\end{equation}
for all $v\in\bigwedge^{l}\mathbb{R}^{\dtt}$, and for all $1\leq l \leq \dtt$, $t\in\mathbb{R}$. 

Now, to prove the claim, let $v\in \bigwedge^{l}\mathbb{R}^{\dtt}$ be a wedge product of a $\mathbb{Z}$-basis for $x\cap V$. Then $V=[v]$, $\att_{t} V=[\att_{t} v]$, and $\att_{t}v$ is a wedge product of a $\mathbb{Z}$-basis for $\att_{t} x\cap \att_{t} V$.
Then we get from Remark~\ref{remark:3.1} and  Equation~\eqref{eq:}
\[\covol_{\att_{t} x}(\att_{t} V)=\|\att_{t} v\|_{\Ext}\leq \exp(h|t|)\|v\|_{\Ext}=\exp(h|t|)\covol_{x}(V).\]
\end{proof}
\begin{corollary}\label{corollary:3.12}
Let $h$ be as in Lemma~\ref{lemma:3.11}. Then
\[\exp(-h|t|)\leq \frac{\alpha_l(\att_t x)}{\alpha_l(x)}\leq \exp(h|t|)\]
for all $x\in\SLdRZ$, $t\in\mathbb{R}$ and $1\leq l\leq \dtt$.
\end{corollary}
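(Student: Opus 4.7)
The plan is a short direct application of Lemma~\ref{lemma:3.11} in both time directions. The definition of $\alpha_l$ as a minimum over $l$-dimensional rational subspaces means one can bound $\alpha_l(\att_t x)$ from above by evaluating $\covol_{\att_t x}$ on any convenient candidate, and the natural candidate is the $\att_t$-translate of a minimizer for $\alpha_l(x)$.

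First I would establish the upper bound. Pick an $l$-dimensional $x$-rational subspace $V$ that realizes the minimum, so $\covol_x(V)=\alpha_l(x)$. The key observation is that $\att_t V$ is an $l$-dimensional $(\att_t x)$-rational subspace: indeed, $\att_t x \cap \att_t V = \att_t(x\cap V)$ is a lattice in $\att_t V$. Hence $\alpha_l(\att_t x)\leq \covol_{\att_t x}(\att_t V)$, and Lemma~\ref{lemma:3.11} gives
\[
\alpha_l(\att_t x)\leq \covol_{\att_t x}(\att_t V)\leq \exp(h|t|)\covol_x(V)=\exp(h|t|)\alpha_l(x),
\]
which is the right-hand inequality.

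For the lower bound I would apply the same argument with roles swapped: replace $x$ by $y\coloneqq \att_t x$ and apply the flow $\att_{-t}$. Choose an $l$-dimensional $y$-rational subspace $W$ with $\covol_y(W)=\alpha_l(y)$. Then $\att_{-t} W$ is an $l$-dimensional $x$-rational subspace (since $\att_{-t}y=x$), so
\[
\alpha_l(x)\leq \covol_x(\att_{-t}W)\leq \exp(h|-t|)\covol_y(W)=\exp(h|t|)\alpha_l(\att_t x),
\]
again by Lemma~\ref{lemma:3.11}. Rearranging gives the left-hand inequality.

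There is essentially no obstacle here; the only point requiring a moment's care is the simple observation that the map $V\mapsto \att_t V$ is a bijection between $l$-dimensional $x$-rational subspaces and $l$-dimensional $(\att_t x)$-rational subspaces, with inverse $\att_{-t}$, which is why the two one-sided estimates combine to the desired two-sided bound.
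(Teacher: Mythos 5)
Your proof is correct and follows essentially the same route as the paper: apply Lemma~\ref{lemma:3.11} to a minimizer $V$ of $\alpha_l(x)$ to get the upper bound, and then to a minimizer of $\alpha_l(\att_t x)$ with time $-t$ to get the lower bound. The paper's proof does exactly this, with the same two applications of the lemma.
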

\begin{proof}
Let $V_1$ and $V_2$ be $x$-rational and $\att_{t} x$-rational subspaces, respectively, of dimension $l$, such that
\[\covol_{x}(V_1)=\alpha_{l}(x),\ \covol_{\att_t x} (V_2)=\alpha_{l}(\att_t x).\]
Then we apply Lemma~\ref{lemma:3.11} once for $x,V_1,t$ to obtain
\[\alpha_{l}(\att_t x)\leq \covol_{\att_t x}(\att_t V_1)\leq \exp(h|t|)\covol_{x}(V_1)=\exp(h|t|)\alpha_{l}(x) ,\]
and once for $\att_{t}x,V_2,-t$ to obtain
\begin{align*}
\alpha_{l}(x)&\leq\covol_{x}(\att_{-t} V_2)=\covol_{\att_{-t} (\att_t x)}(\att_{-t} V_2)\\&
\leq \exp(h|-t|)\covol_{\att_{t}x}(V_2)=\exp(h|t|)\alpha_{l}(\att_t x) .
\end{align*}
\end{proof}

The following simple proposition is essential to us, and asserts that when $\eta_l(x)$ is small, the unique small $x$-rational subspace of dimension $l$ does not change upon applying $\att_{\bullet}$. It will later on correspond to $x$ being high up in the cusp. 
\begin{proposition}\label{proposition:3.13}
Let $0<\delta<\eta_0$ and $1\leq l\leq \dtt-1$. 
Then for all $|t|\ll_{\att_1} |\log\delta|$ and for all $x\in \SLdRZ$ with $\eta_{l}(x)<\delta$, we have \[\eta_l(\att_t x)<\eta_0\] and \[V_l(\att_t x)= \att_t V_l(x).\]
\end{proposition}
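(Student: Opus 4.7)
The plan is to verify the two conclusions separately, then combine them: first show $\eta_l(\att_t x) < \eta_0$ so that $V_l(\att_t x)$ is well-defined as the unique minimizer of covolume among $l$-dimensional $\att_t x$-rational subspaces, and then show $\att_t V_l(x)$ itself achieves that unique minimum.

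For the first conclusion, I would bound $\eta_l(\att_t x)$ using the remark following Corollary \ref{corollary:3.2}, which says $\eta_i(y) \asymp \alpha_i^2(y)/(\alpha_{i-1}(y)\alpha_{i+1}(y))$. Corollary \ref{corollary:3.12} controls each $\alpha_j(\att_t x)/\alpha_j(x)$ within $[\exp(-h|t|), \exp(h|t|)]$, so combining gives
\[
\eta_l(\att_t x) \ll \exp(4h|t|)\,\eta_l(x) < C\exp(4h|t|)\,\delta
\]
for an absolute constant $C$. As $\delta < \eta_0$, this is $< \eta_0$ provided $|t| \leq \tfrac{1}{4h}\log(\eta_0/(C\delta))$, which is of the form $|t| \ll_{\att_1} |\log\delta|$. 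By Corollary \ref{corollary:3.7}, $V_l(\att_t x)$ is then the unique $\att_t x$-rational $l$-dimensional subspace with covolume $\alpha_l(\att_t x)$.

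For the second conclusion, I would compare covolumes under the flow. Every $l$-dimensional $\att_t x$-rational subspace is of the form $\att_t M$ for a unique $l$-dimensional $x$-rational subspace $M$. Lemma \ref{lemma:3.11} applied both forwards and backwards yields $\exp(-h|t|)\covol_x(M) \leq \covol_{\att_t x}(\att_t M) \leq \exp(h|t|)\covol_x(M)$. In particular $\covol_{\att_t x}(\att_t V_l(x)) \leq \exp(h|t|)\alpha_l(x)$. For any $M \neq V_l(x)$, Proposition \ref{proposition:3.6} gives $\covol_x(M) \gg \eta_l^{-1}(x)\alpha_l(x) > \delta^{-1}\alpha_l(x)$, and hence
\[
\covol_{\att_t x}(\att_t M) \gg \exp(-h|t|)\,\delta^{-1}\alpha_l(x) > \exp(-2h|t|)\,\delta^{-1}\,\covol_{\att_t x}(\att_t V_l(x)).
\]
So $\att_t V_l(x)$ strictly beats every competitor once $\exp(2h|t|)\delta$ falls below the implicit constant, which again amounts to $|t| \ll_{\att_1} |\log\delta|$.

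Combining the two, $\att_t V_l(x)$ is an $l$-dimensional $\att_t x$-rational subspace of smaller covolume than any other such subspace, so it realizes $\alpha_l(\att_t x)$, and uniqueness from the first paragraph forces $V_l(\att_t x) = \att_t V_l(x)$. Both conditions on $|t|$ have the same form $|t| \leq c|\log\delta|$ for an explicit $c>0$ depending only on $\att_1$, so taking the smaller of the two constants yields the single implicit constant in the proposition. The only mildly subtle point — which is what I'd expect to check carefully — is that the implicit constants absorbed into $\asymp$ and $\ll$ in Proposition \ref{proposition:3.6} and the remark after Corollary \ref{corollary:3.2} are independent of $x$, so that the threshold on $|t|$ depends only on $\att_1$ and $\delta$.
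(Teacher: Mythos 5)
Your proof is correct and follows essentially the same route as the paper: both prove $\eta_l(\att_t x)<\eta_0$ via Corollary~\ref{corollary:3.12} and the relation $\eta_l\asymp\alpha_l^2/(\alpha_{l-1}\alpha_{l+1})$, and both prove uniqueness by combining Proposition~\ref{proposition:3.6} (to say the competing subspace was far from minimal before flowing) with Lemma~\ref{lemma:3.11}/Corollary~\ref{corollary:3.12} (to say the flow can only distort covolumes by $\exp(O(h|t|))$). The only stylistic difference is that the paper argues by contradiction, pulling $V_l(\att_t x)$ back under $\att_{-t}$ and deducing $|t|\gg|\log\delta|$, whereas you argue directly by pushing each competitor $M\neq V_l(x)$ forward under $\att_t$; these are mirror images of one another and use the same ingredients. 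Your closing remark about constant dependence is well placed: the implicit constants in Proposition~\ref{proposition:3.6} and in $\eta_i\asymp\alpha_i^2/(\alpha_{i-1}\alpha_{i+1})$ come from Theorems~\ref{theorem:3.4} and \ref{theorem:3.8} and depend only on $\dtt$, not on $x$, exactly as needed.
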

\begin{proof}
Using Corollary~\ref{corollary:3.12}, note that
\[\eta_l(\att_t x)\asymp \frac{\alpha_l^2(\att_t x)}{\alpha_{l-1}(\att_t x)\alpha_{l+1}(\att_t x)}\leq \exp(4h|t|)\frac{\alpha_l^2(x)}{\alpha_{l-1}(x)\alpha_{l+1}(x)}\asymp\exp(4h|t|)\eta_l(x)\]
and so, since $\eta_l(x)< \delta$, the assertion $\eta_l(\att_t x)<\eta_0$ follows for all $t\ll\frac{1}{h}|\log\delta|$, as required.

Let $t\in\mathbb{R}$, and assume that $V_l(\att_t x)\not=\att_{t}V_l(x)$. then by Lemma~\ref{proposition:3.6},
\begin{equation}\label{eq:3.2}
\covol_{x}(\att_{-t}V_l(\att_t x))\gg \eta_l^{-1}(x)\covol_{x}(V_l(x))=\eta_{l}^{-1}(x)\alpha_l(x).
\end{equation}
On the other hand, by Lemma~\ref{lemma:3.11}, 
\begin{equation}\label{eq:3.3}
\covol_{x}(\att_{-t}V_l(\att_t x))\leq \exp(h|t|)\covol_{\att_{t}x}(V_l(\att_t x))=\exp(h|t|)\alpha_l(\att_{t}x),
\end{equation}
so together we get from Equations~\eqref{eq:3.2}-\eqref{eq:3.3}
\[\frac{\alpha_l(\att_t x)}{\alpha_l(x)}\gg(\eta_l(x)\exp(h|t|))^{-1}\]
and so, by Corollary~\ref{corollary:3.12}, $\exp(-2h|t|)\ll \delta$ so $|t|\gg\frac{1}{h}|\log \delta|$.
Therefore, a contradiction rises if $|t|\ll\frac{1}{h}|\log\delta|$, and we deduce $V_l(\att_t x)=\att_{t}V_l(x)$ for such $t$. 
\end{proof}

\subsection{The structure of the cusp of $\SLdRZ$}\label{subsec:structure}
In this section we define a natural structure of the cusp of $X=\SLdRZ$ which follows from the notions we introduced before. 

\subsubsection{$\mathbb{Q}$-parabolic subgroups and flags}
\label{subsec:3.3}

In~\S\ref{subsubsec:3.2.2}
we will give a partition of the cusp to regions characterised by the location in the cusp and the orientation of the lattices in them, as follows. 
First, we associate to a point $x\in X$ the set $I\subseteq \{1,\ldots,\dtt-1\}$ of places where the jumps in Minkowski's successive minima are significant, i.e.\ $\eta_i(x)$ is small.
Next, as we have seen in Definition~\ref{definition:6}, the set $I$ corresponds to a flag $\{V_{i}(x)\}_{i\in I}$ of exceptionally small $x$-rational subspaces of $\mathbb{R}^{\dtt}$, with $\dim V_{i}(x)=i$.
Then, if possible, we associate to each $V_{i}(x)$ its orientation $E_{i}\subset \Eigen(\att)$, in the sense of Definition~\ref{definition:2}. Then $\{E_{i}\}_{i\in I}$ is an increasing (with respect to $\subseteq$) sequence of multisets.

Note that these two pieces of information about a lattice, the set of jumps $I$ and the sequence of orientations $\{E_i\}_{i\in I}$, are precisely equivalent to the information given by indicating a pair of a parabolic subgroup $P\in\mathcal{P}$ and an element $[w]_{P}\in W_{P,\att}$. This correspondence is as follows. We will use it freely throughout this paper.
First, note that the set of standard parabolic subgroups $\mathcal{P}$ is in one-to-one correspondence with subsets $I\subseteq\{1,\ldots,\dtt-1\}$, where a set $I$ is mapped to the parabolic subgroup $P_I$, which is the stabilizer of flags of $\mathbb{R}^{\dtt}$, whose set of dimensions is $I$, with respect to an adapted basis. 
Concretely, $P_I$ is the subgroup of $G$ of block upper triangular matrices, with $I$ being the set of partial sums of the block sizes (not including $0$ or $\dtt$).
We will denote this 
correspondence by $\eta$, 
namely let $\eta(P)=I$ for the unique set $I$ so that $P=P_I$.

Next, consider the set \[\Eigen(\att)=\{\exp(\upalpha_1),\ldots,\exp(\upalpha_{\dtt})\}\] as a multiset, and let
$\{E_i\}_{i\in I}$ 
be an increasing (with respect to $\subseteq$) sequence of multisets, with each $E_i$ of cardinality $i$ (as a multiset), and all contained in $\Eigen(\att)$.
To this data we can attach an element $[w]_P$ of the relative Weyl group $W_{P,\att}$, uniquely determined by the requirement that 
for every $i \in I$ we have (as multisets)
\[
E_i = \{\exp(\upalpha_{w(1)}),\ldots,\exp(\upalpha_{w(i)})\}.
\]

With all of that in mind, we can now give the following definition for the orientation of a lattice. 
\begin{definition}
Let $x\in X$ and $P\in \mathcal{P}$. Assume $\eta_{l}(x)<\eta_0$ for all $l\in \eta(P)$.
Then we say that $x$ has orientation $[w]_{P}\in W_{P,\att}$ with respect to $P$ if for all $l\in \eta(P)$ the unique small $x$-rational subspace $V_{l}(x)$ has orientation \[\{\exp(\upalpha_{w(1)}),\ldots,\exp(\upalpha_{w(l)})\}\]
in the sense of Definition~\ref{definition:2}.
\end{definition}
\begin{remark}
Note that not all lattices have orientation, but as will be shown in Proposition~\ref{proposition:3.9}, most do.
Furthermore, recall that our notion of an orientation of a linear subspace in Definition~\ref{definition:2} was with respect to some constant $\epsilon_0$ which guarantees that subspaces with different orientations are far from each other, so in particular if the orientation of a lattice exists it is well defined.
\end{remark}

\subsubsection{Cusp regions}\label{subsubsec:3.2.2}
Let us define the cusp regions we use in this paper. 
First, we define the following partition of $X$ to regions based on jumps in the successive minima. Later we refine these regions according to the orientations of the lattices.

\begin{definition} For $P,Q\in\mathcal{P}$ with $Q\subseteq P$, and for $0<\delta<\delta^{\prime}<\eta_0$, let 
\begin{enumerate}
    \item $\cusp_{\delta}(P)=\left\{x\in X:\ \eta(x,\delta)=\eta(P)\right\}$
    \item $\cusp_{\delta,\delta^{\prime}}(P,Q)=\cusp_{\delta}(P)\cap\cusp_{\delta^{\prime}}(Q)$
    \item $\compact_{\delta}(X)=\{x\in X:\ \eta(x,\delta)=\emptyset$\}
\end{enumerate}
\end{definition}

\begin{remark}
We will mostly use these definitions for $P\not=G$, for which, as will be indicated in Proposition~\ref{proposition:3.15}, the sets $\cusp_{\delta}(P,Q)$ are unbounded. For the case $P=G$, we simply have $\cusp_{\delta}(G)=\compact_{\delta}(X)$. 
\end{remark}

The following are easy yet important characteristics of these sets.
\begin{proposition}\label{proposition:3.15}
Let $0<\delta<\delta^{\prime}<\eta_0$. Then
\begin{enumerate}
\item
The collection of sets
\[\{\compact_{\delta}(X)\}\cup\{\cusp_{\delta,\delta^{\prime}}(P,Q):\ Q\subseteq P\not=G\}\] is a partition of $X$.
\item
$\compact_{\delta}(X)$ is a compact subset of $X$. 
\item 
$\cusp_{\delta,\delta^{\prime}}(P,Q)$ is unbounded, for all $Q\subseteq P\not=G$.
\end{enumerate}
\end{proposition}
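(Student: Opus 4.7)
Statement (1) is essentially a bookkeeping exercise. For each $x \in X$, both $\eta(x,\delta)$ and $\eta(x,\delta')$ are uniquely determined subsets of $\{1,\ldots,\dtt-1\}$, and the hypothesis $\delta < \delta'$ forces $\eta(x,\delta) \subseteq \eta(x,\delta')$. Under the bijection $\eta:\mathcal{P}\to 2^{\{1,\ldots,\dtt-1\}}$ described in \S\ref{subsec:3.3}, this containment corresponds to the reverse containment of the associated parabolics (a finer flag has a smaller stabilizer), so $P = \eta^{-1}(\eta(x,\delta))$ and $Q = \eta^{-1}(\eta(x,\delta'))$ satisfy $Q \subseteq P$. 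The dichotomy $\eta(x,\delta) = \emptyset$ versus $\eta(x,\delta) \neq \emptyset$ then assigns $x$ uniquely to either $\compact_{\delta}(X)$ or to exactly one set $\cusp_{\delta,\delta'}(P,Q)$ with $P \neq G$, giving the claimed partition.

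For statement (2), I would show closedness and boundedness separately. The maps $\lambda_i : X \to \mathbb{R}_{>0}$ are continuous (a classical fact about Minkowski's successive minima), hence so is $\eta_i = \lambda_i/\lambda_{i+1}$; since $\compact_{\delta}(X) = \bigcap_{i=1}^{\dtt-1}\{\eta_i \geq \delta\}$, it is closed. For boundedness I would apply the second inequality of Proposition~\ref{proposition:3.4}, namely $\min_j \eta_j(x) \ll \lambda_1(x)^{2/(\dtt-1)}$: on $\compact_\delta(X)$ the left-hand side is $\geq \delta$, yielding a uniform lower bound $\lambda_1(x) \gg \delta^{(\dtt-1)/2} > 0$. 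Mahler's compactness criterion then gives compactness.

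For statement (3), I would exhibit an explicit unbounded family inside $\cusp_{\delta,\delta'}(P,Q)$. Fix $I = \eta(P)$ and $J = \eta(Q)$, so $\emptyset \neq I \subseteq J \subseteq \{1,\ldots,\dtt-1\}$. For each choice of numbers $\varepsilon_1,\ldots,\varepsilon_{\dtt-1}$ with $\varepsilon_i \in (0,\delta)$ for $i \in I$, $\varepsilon_i \in [\delta,\delta')$ for $i \in J\setminus I$, and $\varepsilon_i \in [\delta',1]$ for $i \notin J$, I consider the diagonal lattice $x_{\vec{\varepsilon}} = \diag(c_1,\ldots,c_\dtt)\mathbb{Z}^\dtt$ determined by the ratios $c_i/c_{i+1} = \varepsilon_i$ and the normalization $\prod_i c_i = 1$. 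Because the $c_i$ are nondecreasing, the successive minima satisfy $\lambda_i(x_{\vec{\varepsilon}}) = c_i$ and hence $\eta_i(x_{\vec{\varepsilon}}) = \varepsilon_i$, which places $x_{\vec{\varepsilon}}$ in $\cusp_{\delta,\delta'}(P,Q)$. Solving the normalization gives $c_1 = \prod_j \varepsilon_j^{(\dtt-j)/\dtt}$; since $I \neq \emptyset$, driving $\varepsilon_i \to 0$ for some fixed $i \in I$ forces $\lambda_1(x_{\vec{\varepsilon}}) = c_1 \to 0$, so by Mahler the family is unbounded.

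The only mildly delicate point is verifying in (3) that the prescribed $x_{\vec{\varepsilon}}$ really lies in the precise region $\cusp_{\delta,\delta'}(P,Q)$ rather than some larger region; this amounts to checking that the strict inequalities used in the definition of $\eta(x,\cdot)$ match the half-open ranges I imposed on the $\varepsilon_i$. Everything else reduces to continuity of Minkowski's successive minima, Proposition~\ref{proposition:3.4}, and Mahler's compactness criterion, so I do not anticipate any serious obstacles.
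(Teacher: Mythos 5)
Your proposal is correct and follows essentially the same route as the paper's: item (1) is bookkeeping on $\eta(x,\delta)\subseteq\eta(x,\delta')$, item (2) is Proposition~\ref{proposition:3.4} plus Mahler, and item (3) is an explicit diagonal family.  The only noteworthy difference is in (3): the paper writes out a one-parameter sequence $h_n$ in $\cusp_\delta(P)$ and explicitly leaves the full $\cusp_{\delta,\delta'}(P,Q)$ case to the reader, whereas your parametrization by $\vec\varepsilon$ with ranges $(0,\delta)$, $[\delta,\delta')$, $[\delta',1]$ on the three index classes $I$, $J\setminus I$, $\{1,\ldots,\dtt-1\}\setminus J$ handles the double-indexed region directly and matches the strict/non-strict boundaries of the definitions exactly, which is a slightly more complete treatment of the same idea.
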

\begin{proof}
\begin{enumerate}
    \item The proof is immediate from the definitions.
    \item 
    Note that
    \[\min_{j=1,\ldots,\dtt-1}\eta_j(x)\geq\delta\] for any $x\in\compact_{\delta}(X)$.
    Then, by Proposition~\ref{proposition:3.4},
    \[\lambda_{1}(x)\gg \delta^{(\dtt-1)/2},\]
    and so the result follows from Mahler's compactness theorem.
    \item
    We will prove that $\cusp_{\delta}(P)$ is unbounded for $P\not=\ G$. The proof for the sets $\cusp_{\delta,\delta^{\prime}}(P,Q)$ is similar and is left to the reader.
    
    Consider
    \[\eta(P)=\{i_1,\ldots,i_k\},\ i_1 < \cdots< i_k.\]
    Let $i_0=0$ and $i_{k+1}=\dtt$,
   and
   \[h_n=\diag(\underbrace{z_1,\ldots,z_1}_{\text{$i_1-i_0$ times}},\ldots,\underbrace{z_{k+1},\ldots,z_{k+1}}_{\text{$i_{k+1}-i_{k}$ times}}),\]
    where
    \[z_m=\begin{cases}n^{-(k+1-m)} & 1\leq m\leq k \\ \prod_{s=1}^{k}z_{s}^{-(i_s-i_{s-1})/(i_{k+1}-i_k)} & m=k+1\end{cases}.\]
    Let $x_n=[h_n]\in X$ the lattice corresponding to $h_n$. Then it is easy to show that $h_n\in \cusp_{\delta}(P)$ for all $n\in\mathbb{N}$ large enough, and $\lambda_{1}(x_n)=n^{-k}\underset{n\to\infty}{\to}0.$
\end{enumerate}
\end{proof}

Next, we define regions in our partition of $X$ which correspond to lattices with a well defined orientation.
\begin{definition}\label{definition:8} Let $P\in\mathcal{P}$, $[w]_P\in W_{P,\att}$ and $0<\delta<\eta_0$. 
Then define
\[\cusp_{\delta}(P,[w]_P)=\Big\{x\in \cusp_{\delta}(P):\ \text{$x$ has orientation $[w]_P$ with respect to $P$}\Big\}.\]
\end{definition}
We further define the sets $\cusp_{\delta,\delta^{\prime}}(P,Q,[w]_{Q})$ of points $x\in \cusp_{\delta,\delta^{\prime}}(P,Q)$ with orientation $[w]_{Q}$ with respect to (the smaller group) $Q$. 
\begin{definition}\label{definition:9}
For $Q\subseteq P\in \mathcal{P}$ and $[w]_{Q}\in W_{Q,\att}$, let
\begin{align*}
    \cusp_{\delta,\delta^{\prime}}(P,Q,[w]_{Q})&
    =\cusp_{\delta}(P)\cap \cusp_{\delta^{\prime}}(Q,[w]_{Q}).
\end{align*}

\end{definition}

As we mentioned, not all lattices have a well defined orientation. We will 
use the following notations for 
these
parts of the cusp. 
\begin{definition}
For $\delta>0$ and $P\in\mathcal{P}$, let
\[\cusp_{\delta}(P,\emptyset)=\cusp_{\delta}(P)\smallsetminus\bigcup_{[w]_P\in W_{P,\att}}\cusp_{\delta}(P,[w]_P)\]
and
\[\cusp_{\delta}(X,\emptyset)=\bigcup_{P\in \mathcal{P}}\cusp_{\delta}(P,\emptyset).\]
Furthermore, for $Q\subseteq P$ and $\delta^{\prime}>\delta$, we define \[\cusp_{\delta,\delta^{\prime}}(P,Q,\emptyset)=\cusp_{\delta}(P)\cap\cusp_{\delta^{\prime}}(Q,\emptyset).\]
\end{definition}

The following proposition shows that
the
regions 
$\cusp_{\delta}(P,[w]_{P})$ 
of lattices with well defined orientations
cover a large portion of the space 
.
\begin{proposition}\label{proposition:3.9} 
Let $\mu$ be an $\att_{\bullet}$-invariant probability measure on $\SL_{\dtt}(\mathbb{R})/\SL_{\dtt}(\mathbb{Z})$. 
Let $0<\delta< \eta_0$.
Then 
\[\mu\left(\cusp_{\delta}(X,\emptyset)\right)\ll_{\att} \frac{1}{|\log\delta|}.\]
\end{proposition}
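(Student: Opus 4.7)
The strategy is to reduce the claim to a dimension-by-dimension estimate and then exploit $\att$-invariance of $\mu$, using the fact from Proposition~\ref{proposition:2.2} that for any $V\in\Gr(l,\dtt)$ the set of $t\in\mathbb{R}$ for which $\att_t V$ has no orientation (in the sense of Definition~\ref{definition:2}) has Lebesgue measure at most a constant $C_0=C_0(\att)$, namely the complement of the union of the intervals $J_k$ appearing in Proposition~\ref{proposition:2.2} with $\epsilon=\epsilon_0$.

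First, I would record the inclusion
\[
\cusp_\delta(X,\emptyset)\subseteq\bigcup_{l=1}^{\dtt-1}C_l,\qquad C_l=\{x\in X:\eta_l(x)<\delta,\ V_l(x)\text{ has no orientation}\},
\]
since if $x\in\cusp_\delta(P,\emptyset)$ then some $l\in\eta(P)$ must witness the failure of orientation of $V_l(x)$. So it suffices to prove $\mu(C_l)\ll_{\att}1/|\log\delta|$ for each $l$. Setting $T=c|\log\delta|$ with $c>0$ small enough that Proposition~\ref{proposition:3.13} applies for $|t|\leq T$, I would use $\att$-invariance of $\mu$ and Fubini to write
\[
T\cdot\mu(C_l)=\int_X\Leb\bigl(\{t\in[0,T]:\att_t x\in C_l\}\bigr)\,d\mu(x),
\]
reducing the problem to a uniform pointwise upper bound on the integrand.

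For the pointwise bound, fix $x\in X$ and look at the connected components $I$ of the open set $\{t\in\mathbb{R}:\eta_l(\att_t x)<\eta_0\}$ containing some active point $t_0\in[0,T]$ with $\eta_l(\att_{t_0}x)<\delta$. By Proposition~\ref{proposition:3.13}, each such $I$ contains $(t_0-T,t_0+T)$; two such components being disjoint would force their active points to be more than $2T$ apart, impossible inside $[0,T]$ of length $T$, so at most one such component can occur. On this unique component Proposition~\ref{proposition:3.13} also yields $V_l(\att_t x)=\att_{t-t_0}V_l(\att_{t_0}x)$ for every $t\in I\cap[0,T]$, whence by Definition~\ref{definition:2} and Proposition~\ref{proposition:2.2} the set of $t\in I\cap[0,T]$ for which $V_l(\att_t x)$ has no orientation is the $t_0$-translate of the complement of $\bigcup_k J_k$ for $V_l(\att_{t_0}x)$, hence of Lebesgue measure at most $C_0$. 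The integrand is therefore $\leq C_0$, giving $\mu(C_l)\leq C_0/T\ll_{\att}1/|\log\delta|$; summing over $l$ completes the proof.

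The main obstacle is calibrating the constant $c$ in $T=c|\log\delta|$ so that Proposition~\ref{proposition:3.13} simultaneously (i) extends the identity $V_l(\att_t x)=\att_{t-t_0}V_l(\att_{t_0}x)$ across all of $I\cap[0,T]$ and (ii) rules out a second active component inside $[0,T]$; both conditions are met as soon as $c$ is smaller than the implicit constant in Proposition~\ref{proposition:3.13}, after which everything else is a direct application of the orientation framework of \S\ref{section:linear subspaces}. A minor subtlety is that $\cusp_\delta(P,\emptyset)$ could a priori contain lattices whose $V_l(x)$'s individually have orientations that nevertheless fail to assemble into a single element $[w]_P\in W_{P,\att}$; however, the flag structure $V_{l_1}(x)\subseteq V_{l_2}(x)$ together with the separation guaranteed by Remark~\ref{remark:2.2} forces the individual orientations to be compatible as multisets, so the displayed inclusion above is correct.
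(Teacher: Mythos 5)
Your proof is correct, and it reaches the estimate through a route that is genuinely different in organization from the paper's, although the two arguments rest on the same two facts (Proposition~\ref{proposition:2.2}'s bound on the no-orientation set of a trajectory, and Proposition~\ref{proposition:3.13}'s persistence of $V_l$ over a window of length $\asymp|\log\delta|$). The paper fixes a proper parabolic $P$, works with the Birkhoff sum $f(x)=\sum_{n=0}^{N-1}\mathbbm{1}_{\att_{-n}(\cusp_\delta(P,\emptyset))}(x)$ for $N\asymp|\log\delta|$, shows $0\le f\ll_{\att}1$ by treating separately the cases $x\in\cusp_\delta(P)$ and $x\in\cusp_\delta(P')$ with $P'\ne P$ (the latter by shifting to the first time the orbit enters $\cusp_\delta(P)$), and then averages. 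You instead decompose by dimension $l$, set $C_l=\{x:\eta_l(x)<\delta,\ V_l(x)\text{ has no orientation}\}$, and bound $\mu(C_l)$ with a continuous Fubini identity and a uniform pointwise bound on the time spent in $C_l$. Your dimension-by-dimension reduction has the advantage of dissolving the paper's two-case structure: once you know that at most one "active" $\eta_0$-component can meet $[0,T]$, the bound on the orbit-time integrand is immediate, and there is no need to move the orbit into a canonical parabolic before counting. The continuous-time phrasing also avoids the small bookkeeping over integer lattice points in $\mathbb{R}\smallsetminus L$.

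One remark on the point you flag at the end: the inclusion $\cusp_\delta(X,\emptyset)\subseteq\bigcup_l C_l$ does require that if each $V_l(x)$, $l\in\eta(P)$, individually has an orientation, then these multisets automatically nest and thus assemble into a single $[w]_P$. You state this as following from the flag structure and Remark~\ref{remark:2.2} without proof; this is fine, since the paper's own proof makes the same implicit reduction (``there is some $i\in\eta(P)$ so that $\att_n V_i(x)$ has no well defined orientation''). For completeness, the nesting can be seen by projecting $V_{l_1}(x)$ orthogonally into the invariant space $W_2$ witnessing the orientation of $V_{l_2}(x)$: the projection is $O(\epsilon_0)$-close to $V_{l_1}(x)$, lies inside the invariant $W_2$, so its Proposition~\ref{proposition:2.2}-limit is an invariant subspace of $W_2$; by the separation in Remark~\ref{remark:2.2} its orientation agrees with that of $V_{l_1}(x)$, hence $E_{l_1}\subseteq E_{l_2}$.
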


\begin{proof}
Note that $\cusp_{\delta}(G,\emptyset)=\emptyset$, because the trivial flag $0<\mathbb{R}^{\dtt}$ does have an orientation.
Fix some $P\in\mathcal{P}\smallsetminus\{G\}$ and let $x\in \cusp_{\delta}(P)$.
By applying Proposition~\ref{proposition:2.2} to $V_i(x)$, for all $i\in \eta(P)$, we see that there is a collection of $\ll 1$ intervals, whose union we denote by $L$, such that 
$\Leb(\mathbb{R}\smallsetminus L)\ll_{\att} 1$, and $\att_{t} V_{i}(x)$ has a well defined orientation for all $t\in L$ and $i\in \eta(P)$.

Note that by Proposition~\ref{proposition:3.13}, $V_i(\att_{t}x)=\att_{t}V_i(x)$ for all $t\leq \kappa|\log\delta|$, for some fixed $\kappa>0$ (which depends on $\att$), and for all $i\in\eta(P)$. Let $N=\lceil \kappa|\log\delta|\rceil$.
Therefore, if 
$\att_{n} x\in \cusp_{\delta}(P,\emptyset)$
for some $n\in [0,N-1]\cap\mathbb{Z}$ then there is some $i\in \eta(P)$ 
so that $\att_{n}V_{i}(x)$ has no well defined orientation,
hence $n\in \mathbb{R}\smallsetminus L$. 
However,  $\mathbb{R}\smallsetminus L$ contains $\ll_{\att} 1$ integer points (determined by the number of intervals and their length, both bounded by $\ll_{\att} 1$). So we deduce
\[0\leq f(x)\coloneqq\sum_{n=0}^{N-1}\mathbbm{1}_{\att_{-n}(\cusp_{\delta}(P,\emptyset))}(x)\leq \Big| (\mathbb{R}\smallsetminus L)\cap \mathbb{Z}\Big|\ll_{\att} 1,\]
for any $x\in\cusp_{\delta}(P)$.

Now, let $x^{\prime}\in\cusp_{\delta}(P^{\prime})$ for $P^{\prime}\not=P$. Assume $f(x^{\prime})\not=0$, then there is some minimal $0\leq m\leq N-1$ so that
$\att_{m}x^{\prime}\in \cusp_{\delta}(P)$.
Then it follows that $f(\att_{m}x^{\prime})\ll_{\att} 1$, and so
\begin{align*}0\leq f(x^{\prime})&=\sum_{n=0}^{m-1}\mathbbm{1}_{\att_{-n}(\cusp_{\delta}(P,\emptyset))}(x^{\prime})+f(\att_{m}x^{\prime})-\sum_{n=N}^{N+m-1}\mathbbm{1}_{\att_{-n}(\cusp_{\delta}(P,\emptyset))}(x^{\prime})\\&
\leq f(\att_{m}x^{\prime})\ll_{\att} 1.
\end{align*}
All together, we found that
\[0\leq f\ll_{\att} 1.\]

So it follows from $\att_{\bullet}$-invariance of $\mu$ that
\begin{align*}\label{eq:3.18}
\mu(\cusp_{\delta}(P,\emptyset))&=\frac{1}{N}\sum_{n=0}^{N-1}\mu(\att_{-n}(\cusp_{\delta}(P,\emptyset)))
=\frac{1}{N}\int_{X}\sum_{n=0}^{N-1}\mathbbm{1}_{\att_{-n}(\cusp_{\delta}(P,\emptyset))}d\mu\\&
\ll_{\att} \frac{1}{N}\int_{X}1d\mu\leq\frac{1}{\kappa|\log\delta|}.
\end{align*}
Then, by taking union over all $P\in\mathcal{P}$ we get
\[\mu(\cusp_{\delta}(X,\emptyset))=\mu(\bigcup_{P\in\mathcal{P}}\cusp_{\delta}(P,\emptyset))\ll_{\att} \frac{1}{|\log\delta|},\]
as required.
\end{proof}

\subsubsection{Additional definitions of cusp regions}
Our main cusp regions in this work are those defined in~\S\ref{subsec:structure}.
Still, in some cases throughout this paper 
it would be convenient to work simultaneously with another definition of the cusp regions, as follows.
For $P\in\mathcal{P}$ and $[w]_{P}\in W_{P,\att}$, let
\[\cusp_{\delta}^{+}(P)=\bigcup_{Q\subseteq P} \cusp_{\delta}(Q)=\{x\in X:\ \eta(P)\subseteq \eta(x,\delta)\},\]
and
\begin{align*}
\cusp_{\delta}^{+}(P,[w]_P)=\{x\in\cusp_{\delta}^{+}(P):\ \text{$x$ has orientation $[w]_{P}$ with respect to $P$}\}.
\end{align*}
Compared to Definition~\ref{definition:8} of $\cusp_{\delta}(P,[w]_P)$, we allow points in $\cusp_{\delta}^{+}(P,[w]_P)$ to have exceptionally small rational subspaces not only in dimensions $\eta(P)$ but possibly in others as well, but we only determine the orientation $[w]_P$ by those of dimensions $\eta(P)$.
Note that unlike our original regions, the regions $\cusp_{\delta}^{+}(P,[w]_P)$ are not necessarily mutually disjoint for different subgroups $P$. We will only use them to simplify notations.

\subsection{The dynamics in the cusp of $\SLdRZ$}
As we have seen,
the location of a point $x$ in the cusp may be characterised by the flag
$\{V_{i}(x)\}_{i\in \eta(P)}$ 
of unique rational subspaces of small covolume. 
In \S\ref{section:linear subspaces} we studied the dynamics of a given linear subspace. 
The following Lemma extend the crude bound of Corollary~\ref{corollary:3.12} to a more accurate result, using these inputs.
\begin{proposition}\label{lemma:6.1}
Let $0<\delta<\eta_0$, $x\in X$. Assume $\att_{t}x\in \cusp_{\delta}^{+}(P,[w]_P)$ for all $t\in [0,T]$.
Then 
\[\alpha_{l}(\att_{t}x)\asymp\exp(t\cdot\sum_{i=1}^{l}\upalpha_{w(i)})\cdot\alpha_{l}(x)\]
for all $l\in\eta(P)$ and $t\in [0,T]$.
\end{proposition}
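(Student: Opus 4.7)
The plan is to identify $\alpha_l(\att_t x)$ with the exterior norm of the $\att_t$-image of a single wedge vector representing $V_l(x)$, and then invoke Corollary~\ref{corollary:2.4}.

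First, I would establish that $V_l(\att_t x) = \att_t V_l(x)$ for every $t \in [0, T]$ and every $l \in \eta(P)$. Since $\att_t x \in \cusp_\delta^+(P, [w]_P)$ throughout, we have $\eta_l(\att_t x) < \delta < \eta_0$ at every such $t$, so $V_l(\att_t x)$ is well defined by Corollary~\ref{corollary:3.7}. Proposition~\ref{proposition:3.13} yields the identification in a neighborhood of radius $\asymp |\log \delta|$ about each $t_0 \in [0, T]$, so covering $[0, T]$ by an overlapping sequence of such neighborhoods and chaining the equalities gives $V_l(\att_t x) = \att_t V_l(x)$ for all $t \in [0, T]$. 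Having this, I fix a $\mathbb{Z}$-basis $v_1,\ldots,v_l$ for $x \cap V_l(x)$ and set $v = v_1 \wedge \cdots \wedge v_l$. Since $\att_t v_1, \ldots, \att_t v_l$ is then a $\mathbb{Z}$-basis for $\att_t x \cap \att_t V_l(x)$, Remark~\ref{remark:3.1} gives
\[
\alpha_l(x) = \|v\|_{\Ext}, \qquad \alpha_l(\att_t x) = \covol_{\att_t x}(\att_t V_l(x)) = \|\att_t v\|_{\Ext}.
\]

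Next, I would apply Proposition~\ref{proposition:2.2} and Corollary~\ref{corollary:2.4} to the subspace $V = V_l(x)$ with $\epsilon = \epsilon_0$ from Remark~\ref{remark:2.2}, producing intervals $J_1,\ldots,J_m$ with associated multisets $E_1 < \cdots < E_m$ and exponents $\beta_k = \log \prod_{y \in E_k} y$. The key observation is the shift identity $J_k^{\att_t V} = J_k^{V} - t$, which follows immediately from the definition, since $\att_s(\att_t V) = \att_{s+t}V$ is $\epsilon_0$-close to $W_k$ exactly when $s + t \in J_k^{V}$. Consequently, the orientation of $\att_t V_l(x) = V_l(\att_t x)$ in the sense of Definition~\ref{definition:2} is $E_k$ precisely when $t \in J_k^{V}$. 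The hypothesis that $\att_t x$ has orientation $[w]_P$ for every $t \in [0, T]$ means this orientation is the multiset $E \coloneqq \{\exp(\upalpha_{w(i)})\}_{i=1}^{l}$ throughout; since the $E_k$ are pairwise distinct and the $J_k$ are disjoint, we conclude $[0, T] \subseteq J_{k_0}^{V}$ for the unique $k_0$ with $E_{k_0} = E$ and $\beta_{k_0} = \sum_{i=1}^{l}\upalpha_{w(i)}$. Substituting into Corollary~\ref{corollary:2.4} gives
\[
\|\att_t v\|_{\Ext} \asymp \exp\Bigl(t \cdot \sum_{i=1}^{l} \upalpha_{w(i)}\Bigr) \cdot \|v\|_{\Ext}
\]
for all $t \in [0, T]$, which combined with the previous paragraph yields the desired estimate.

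The main obstacle is articulating cleanly the shift relation between the orientation of $V$ with respect to $\att_\bullet$ and the orientation of $\att_t V$; the hypothesis that the orientation is \emph{constant} on the whole interval $[0, T]$ is what pins $[0, T]$ to a single $J_{k_0}$ (rather than crossing between several $J_k$'s, which would produce a piecewise-linear rather than linear exponent), and once this is in place the remainder of the argument is a direct translation between covolumes and exterior norms.
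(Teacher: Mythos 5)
Your proposal is correct and takes essentially the same route as the paper: establish $V_l(\att_t x)=\att_t V_l(x)$ by propagating Proposition~\ref{proposition:3.13} across $[0,T]$, translate covolume to $\|\att_t v\|_{\Ext}$ via Remark~\ref{remark:3.1}, and invoke Corollary~\ref{corollary:2.4}. You additionally spell out the shift relation $J_k^{\att_t V}=J_k^V-t$ and why the constant orientation hypothesis forces $[0,T]$ into a single $J_{k_0}$, a step the paper's one-line application of Corollary~\ref{corollary:2.4} leaves implicit; this is a useful clarification but not a different argument.
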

\begin{proof}
Let $l\in\eta(P)$, then $\eta_l(x)<\delta$. It follows from Proposition~\ref{proposition:3.13} that $\eta_l(\att_t x)<\eta_0$ and $V_l(\att_t x)=\att_{t}V_l(x)$ for all $t\in [0,s]$ for some $s>0$. As $\att_{\min\{s,T\}} x\in \cusp_{\delta}^{+}(P,[w])$ as well, we proceed inductively and deduce that $\eta_l(\att_t x)<\eta_0$ and $V_l(\att_t x)=\att_{t} V_l(x)$ for all $0\leq t\leq T$.

Let $v\in \bigwedge^{l}\mathbb{R}^{\dtt}$ be the wedge of a $\mathbb{Z}$-basis for $x\cap V_l(x)$.
Then, using Remark~\ref{remark:3.1} and Corollary~\ref{corollary:2.4} (applied to the constant $\epsilon_0$ as in Remark~\ref{remark:2.2}), we have
\begin{align*}
\alpha_{l}(\att_{t} x)&=\covol_{\att_{t}x}(V_{l}(\att_{t} x))=\covol_{\att_{t} x}(\att_{t} V_{l}(x))=\|\att_{t}v\|_{\Ext}\\&
\asymp \exp(t\cdot\sum_{i=1}^{l}\upalpha_{w(i)})\cdot\|v\|_{\Ext}=\exp(t\cdot\sum_{i=1}^{l}\upalpha_{w(i)})\cdot\covol_{x}(V_l(x))\\&
=\exp(t\cdot\sum_{i=1}^{l}\upalpha_{w(i)})\cdot \alpha_{l}(x),
\end{align*}
where the transition from the first to the second line utilized the notations of~\S\ref{subsec:3.3}.
\end{proof}

As in~\S\ref{section:introduction}, for any $P\in\mathcal{P}$ and $[w]_P\in W_{P,\att}$ we define the projection of $\upalpha^{w}$ from $\Lie(A)$ to $\Lie(A_P)$  
by
\[\pi_P(\upalpha^{w})=\frac{1}{| W(T,P)|}\sum_{u\in W(T,P)}\upalpha^{wu}.\]
Explicitly,
if $P=P_{I}$ for $I=\{i_1,\ldots,i_k\}\subset\{1,\ldots,\dtt-1\}$, where $0=i_0<i_1<\cdots <i_k<i_{k+1}=\dtt$, then 
\[
\pi_P(\upalpha^{w})
=\left(
\begin{array}{c|c|c}
z_1 I_{i_1-i_0}  &  & \\
\hline
 &  \ddots & \\
 \hline
 & & z_{k+1} I_{i_{k+1}-i_k}
\end{array}
\right)
\]
where 
\[z_{s}=\frac{1}{i_{s}-i_{s-1}}\sum_{i=i_{s-1}+1}^{i_{s}}\upalpha_{w(i)}\] 
is the block average.

\begin{remark}\label{remark:6.2}
It can be shown as a corollary of Proposition~\ref{lemma:6.1} that if $\att_{t}x\in \cusp_{\delta}(P,[w]_{P})$ for all $t\in [0,T]$, for some $P\in \mathcal{P}$ and $w\in W$, then
\[\begin{pmatrix} \lambda_1(\att_{t} x) \\ \vdots \\ \lambda_{\dtt}(\att_{t}x) \end{pmatrix}
\asymp_{\delta}
\exp(t\pi_P(\upalpha^{w}))
\begin{pmatrix} \lambda_1(x) \\ \vdots \\ \lambda_{\dtt}(x) \end{pmatrix}\]
for all $t\in [0,T]$.
This also serves as a motivation for our definition of 
$\pi_P(\upalpha^{w})$, which can be interpreted as an averaged action.
\end{remark}

It would be convenient to give notations for the height of some $x\in X$ in the cusp.
Define
\begin{equation}\label{eq:3.4}\height(x)=\diag(-\log\lambda_1(x),\ldots,-\log\lambda_{\dtt}(x)).\end{equation}
Note that $\height(x)$ is contained in a bounded neighborhood of $\Lie(A^{+})$, for $A^{+}<A$ the subgroup of matrices whose diagonal entries are ordered from largest to smallest.
Furthermore, for any $P\in\mathcal{P}$, define the projection
\[\pi_{P}(\height(x))=\frac{1}{|W(T,P)|}\sum_{w\in W(T,P)}\height(x)^{w}.\]
Then $\pi_{P}(\height(x))$ is contained in a bounded neighborhood of $\Lie(A_{P}^{+})$, for $A_P^{+}=A_P\cap A^{+}$.

\section{Including linear functionals in the entropy bounds}\label{sec:linear functionals}

The main idea of this paper, as discussed in~\S\ref{sec:introduction}, is that upper bounds for the entropy in the cusp can be obtained by including linear functionals in the entropy computations. This allows to produce upper bounds on the entropy without having to account for all the different possible trajectories of lattices in the cusp. In this section we lay down the required settings for including linear functionals in the computations.

\subsection{Coding trajectories}\label{subsec:coding}

\subsubsection{Strategy and overview}\label{subsubsec:4.1.1}
Before diving into the technical details, we describe generally the goals of this section.
In this section, we aim to assign a coding (see Definition~\ref{def:13} later on) to any given point $x\in X$, that describes well the trajectory $\{\att_{t} x\}_{t\in [-N,N]}$, in terms of the parabolic regions the trajectory passes through, and the orientations of the lattice at these times. 
To do so, for any given $x\in X$ we perform the following procedure (which depends on $x$, but we omit this dependence from the notations for simplicity). First, we construct a partition $\mathcal{J}$ of $[-N,N]$ to finitely many intervals, and a function \[\Par:\ \mathcal{J}\to \mathcal{P}\] assigning a suitably chosen standard parabolic subgroup to each interval. This is the main task when defining a coding. When doing so, we have some freedom on choosing the parabolic subgroup for each time $t$, as we vary the parameters defining the boundary between the cusp regions to our convenience.
The key property is that for some fixed $\delta^{\prime}$, if $t\in U\in \mathcal{J}$ then \[\att_{t}x\in\cusp_{\delta^{\prime}}^{+}(\Par(U)).\]

Then, we define a finer partition $\mathcal{J}^{\prime}$ obtained by dividing each element of $\mathcal{J}$ to $\ll_{\dtt} 1$ many finer intervals, 
and define a map
\[ \Weyl :\ \mathcal{J}^{\prime}\to \bigcup_{P\in \mathcal{P}}W_{P,\att}\cup\{\emptyset\}\]
which satisfies
\[\Weyl(U)\in W_{\Par(U),\att}\cup\{\emptyset\}\]
for all $U\in \mathcal{J}^{\prime}$, so that for all $t\in U$ we have
\begin{equation}\label{eq:4.0}\att_{t}x\in \cusp_{\delta^{\prime}}^{+}(\Par(U),\Weyl(U))\end{equation}
(where for $U\in\mathcal{J}^{\prime}$ we define $\Par(U)=\Par(V)$ for $V\in\mathcal{J}$ the unique element so that $U\subseteq V$).
Once $\mathcal{J}^{\prime}$, $\Par$ and $\Weyl$ are defined, the entropy contribution of the trajectory $\{\att_{t} x\}_{t\in [-N,N]}$ will be shown in~\S\ref{sec:6}-\ref{sec:proof_1.1} to be essentially bounded by
\begin{equation*}
\sum_{\substack{U\in \mathcal{J}^{\prime}:\\ \ \emptyset\not=\Weyl(U)=[w]_{\Par(U)}}}\hspace{-1cm}|U|\cdot h(\Par(U),\att^{w})+O_{
\att}(|\mathcal{J}|).\end{equation*}
The error term of order $O_{\att}(|\mathcal{J}|)$ has two contributions of similar magnitude; First, as a consequence of Proposition~\ref{proposition:2.2} there would be in our construction $\ll |\mathcal{J}|$ intervals $U\in\mathcal{J}^{\prime}$, each of length $\ll_{\att} 1$, for which $\Weyl(U)=\emptyset$ and on which we would only have a trivial bound for the entropy.
Secondly, when the precise details are written down, one finds that the fact that we divide $[-N,N]$ to a partition adds an inherent error of $O(1)$ per interval, which accumulates to $O(|\mathcal{J}|)$.
In order for this error term to not affect the tightness of the entropy bounds, a key requirement for us is to be able to construct for any $\epsilon>0$ a partition $\mathcal{J}$ as discussed with the additional requirement
\begin{equation}\label{eq:4.1_n}|\mathcal{J}|\leq \epsilon N\end{equation}
for $N$ large enough. 

\medskip

A second, more subtle, requirement from the partition $\mathcal{J}$ (which should be satisfies simultaneously with Equation~\eqref{eq:4.1_n}) follows from our desire to include a linear functional $\phi$ in the entropy bounds. 
To obtain Theorem~\ref{theorem:1.1new}, we want to sum up a contribution \[|U|\cdot \phi(\pi_{\Par(U)}(\upalpha^{w}))\] from each interval $U\in \mathcal{J}^{\prime}$ with $\emptyset\not=\Weyl(U)=[w]_{\Par(U)}$.
Similarly to the first requirement, we ask that the sum of all of these contributions is bounded in absolute value by $\epsilon N$. 
By linearity of $\phi$, it is sufficient to require
\begin{equation}\label{eq:4.2_n}
\|\sum_{\substack{U\in \mathcal{J}^{\prime}:\\ \ \emptyset\not=\Weyl(U)=[w]_{\Par(U)}}}\hspace{-1cm}|U|\cdot \pi_{\Par(U)}(\upalpha^{w})\|\leq \epsilon N.
\end{equation}
Let us explain the nature of this sum.
For an interval $U\subseteq [-N,N]$ and $P\in \mathcal{P}$, let 
\[\Delta\height(U,P)=\pi_{P}\Big(\height(\att_{\sup U}x)\Big)-\pi_{P}\Big(\height(\att_{\inf U}x)\Big).\]
It follows from Proposition~\ref{lemma:6.1} that
\[\||U|\cdot \pi_{\Par(U)}(\upalpha^{w})-\Delta\height(U,\Par(U))\|\ll 1\]
for $U\in\mathcal{J}^{\prime}$ with $\emptyset\not=\Weyl(U)=[w]_{\Par(U)}$, and from Corollary~\ref{corollary:3.12} that
\[\|\Delta\height(U,\Par(U))\|\ll_{\att} \max(|U|,1)\ll_{\att} 1\]
for any other $U\in\mathcal{J}^{\prime}$.
So together
\begin{equation}\label{eq:4.3_n}\|\sum_{\substack{U\in \mathcal{J}^{\prime}:\\ \ \emptyset\not=\Weyl(U)=[w]_{\Par(U)}}}\hspace{-1cm}|U|\cdot \pi_{\Par(U)}(\upalpha^{w})-\sum_{U\in\mathcal{J}^{\prime}}\Delta\height(U,\Par(U))\|\ll_{\att} |\mathcal{J}^{\prime}|.\end{equation}
Therefore, the linear functional contribution essentially just counts the change in heights. Hence, using Equations~\eqref{eq:4.1_n},\eqref{eq:4.3_n}, it is sufficient to require
\begin{equation*}\|\sum_{U\in\mathcal{J}^{\prime}}\Delta\height(U,\Par(U))\|\leq \epsilon N\end{equation*}
instead of Equation~\eqref{eq:4.2_n}.
Note that we have
\[\sum_{U\in\mathcal{J}^{\prime}}\Delta\height(U,\Par(U))=\sum_{U\in\mathcal{J}}\Delta\height(U,\Par(U))\]
so this requirement is in fact equivalent to
\begin{equation}\label{eq:4.4_n}
\|\sum_{U\in\mathcal{J}}\Delta\height(U,\Par(U))\|\leq \epsilon N.
\end{equation}

Note that the full change of height in $A^{+}$, without projections, is simply
\[\sum_{U\in \mathcal{J}}\Delta\height(U,B)=\height(\att_{N}x)-\height(\att_{-N}x).\]
We take special interest in trajectories $\{\att_{t}x\}_{t\in [-N,N]}$ which begin and end at similar heights, e.g.\ $\att_{-N}x,\att_{N}x\in\compact_{\tau}(X)$ for some fixed $\tau>0$. For such $x$, the total height change in $A^{+}$ is bounded in norm by some function of $\tau$. That is, if $\Par(U)$ was equal to $B$ for all intervals,  requirement~\eqref{eq:4.4_n} would be immediately met.

However, we would like to count the change of height in each interval $U$ with respect to its assigned parabolic subgroup $\Par(U)$, as in Equation~\eqref{eq:4.4_n}, and still have it be small. 
This discrepancy between counting the full and the projected changes of heights is the essence of the construction we will show in this section. 
In fact, we will construct $\mathcal{J}$ so that is satisfies the following stronger property.
Define
\[\err(z,P)=\|\height(z)-\pi_{P}(\height(z))\|.\]
Then we 
will show that
\begin{equation}\label{eq:4.2}\sum_{U\in\mathcal{J}}\Big[\err(\att_{\inf U}x,\Par(U))+\err(\att_{\sup U}x,\Par(U))\Big]\leq \epsilon N,
\end{equation}
which is of course stronger than Equation~\eqref{eq:4.4_n} for large $N$, as
\begin{multline*}\|\sum_{U\in\mathcal{J}}\Delta\height(U,\Par(U))\|\leq \sum_{U\in\mathcal{J}}\Big[\err(\att_{\inf U}x,\Par(U))+\err(\att_{\sup U}x,\Par(U))\Big]\\
+\|\height(\att_{N}x)-\height(\att_{-N}x)\|\end{multline*}
and the latter height difference is bounded in norm as discussed above.

\subsubsection{Construction: naive approach}\label{subsubsec:4.1.2}
We first suggest a naive construction. It is satisfactory in order to bound entropy, but fails to allow the addition of linear functionals. Still, it is beneficial to first understand this simpler case.

In our constructions, we will often define parabolic subgroups $P\in \mathcal{P}$ by the set $\eta(P)$ of partial sums of the block sizes of $P$ (not including $0$ or $\dtt$). So the task in defining $\mathcal{J}$ is to declare which dimensions $l\in \{1,\ldots,\dtt-1\}$ are significant for $\att_{n}x$ at each time $n\in [-N,N]$.

Fix some $0<\delta<\eta_0$ with $|\log\delta|<N$.
In the naive approach, we
 keep track only of dimensions $l$ where the jump in the successive minima is significant with respect to $\delta$, that is $\eta_l(\att_t x)<\delta$. But once $l$ was marked as a significant jump at some time $t$, it is also considered significant at the time interval around $t$ where $\eta_l(\att_s x)< \delta^{\prime}$, for some  $\delta <\delta^{\prime}<\eta_0$.
We choose $\delta^{\prime}$ to satisfy \[\delta^{\prime}>\delta^{1/2},\]
in order to simplify the expressions we obtain.

First of all, this approach clearly guarantees that Equation~\eqref{eq:4.0} holds.
Secondly, by this approach, each $l$ that was marked as significant at some time, stays significant for a duration of at least 
\[\gg_{\att} \log \frac{\delta^{\prime}}{\delta} > \frac{1}{2}|\log\delta|.\]
Then a partition constructed this way will have at most $|\mathcal{J}|\ll_{\att} N/|\log\delta|$ elements, which satisfies requirement~\eqref{eq:4.1_n} with $\epsilon=1/|\log\delta|$.

However, requirement~\eqref{eq:4.2} would fail in general. 
Note that by construction, for all $U\in\mathcal{J}$, $t\in U$ and $l\not\in\eta(\Par(U))$, we have
\[0<(\height(\att_t x))_{l}-(\height(\att_t x))_{l+1}=-\log(\eta_l(\att_t x))<-\log \delta=|\log\delta|.\]
Recall that $\pi_{\Par(U)}(\height(\att_t x))$ just averages the entries inside each block, which in our case are $|\log\delta|$-close to each other. So the tightest bound that can a-priory be given is
\[\err(\att_t x  ,\Par(U))\ll |\log\delta|.\]
Therefore, although the number of intervals $|\mathcal{J}|\ll_{\att} \epsilon N$ was small,
the sum of this error term $|\log\delta|=1/\epsilon$ over all intervals $U\in\mathcal{J}$ fails to be small. It accumulates to $O_{\att}(N)$ which contradicts our requirement.

The goal of the detailed construction in this section is to produce intervals where the error term at the interval end points is bounded more efficiently compared to its effect of increasing the number of intervals. To do so, we would have liked to achieve a construction with
\[|\mathcal{J}|\ll_{\att} \epsilon N\] while \[\err(\att_{t}x,\Par(U))\ll 1/\epsilon^{\beta}\]
for $t=\inf U,\sup U$, and for some $\beta<1$. 
However, reducing the error for all of the intervals uniformly is too strict. Instead, in~\S\ref{subsubsec:4.1.3} we construct a degree function for intervals
\[\deg:\ \mathcal{J}\to \{1,\ldots,\dtt\},\]
so that $\mathcal{J}$ is comprised of disjoint subsets
\[\mathcal{J}=\bigcupdot_{m=1}^{\dtt}\left(\mathcal{J}\cap\deg^{-1}(m)\right).\]
In this construction, we will have an efficient bound of error for each $\mathcal{J}\cap\deg^{-1}(m)$ separately. That is, we would essentially prove
\[\err(\att_{t} x,\Par(U))\cdot |\mathcal{J}\cap\deg^{-1}(m)|\ll_{\att} \epsilon^{1-\beta} N\]
for all $1\leq m\leq \dtt$, $U\in \mathcal{J}\cap \deg^{-1}(m)$, and $t=\inf U,\sup U$. The precise statement is shown in Proposition~\ref{proposition:4.4_n}.

\subsubsection{Construction: notations}\label{subsubsec:4.1.3}
For any finite collection $\mathcal{T}$ of disjoint sub intervals of $[-N,N]$, let $\mathcal{T}^{c}$ be the complement of $\mathcal{T}$ in $[-N,N]$, i.e.\ the partition of $[-N,N]\smallsetminus\bigcup \mathcal{T}$ to maximal disjoint intervals.

Let $x\in\SLdRZ$, $0<\delta<\delta^{\prime}<\eta_0$ and $N\in \mathbb{N}$. As before we assume $\delta^{\prime}>\delta^{1/2}$ and $|\log\delta|<N$.
For any $t\in [-N,N]$ and $l\in \{1,\ldots,\dtt-1\}$, let 
\[m_{l}(t)=\sup\{s\leq t:\ \eta_l(\att_s x)\geq \delta^{\prime}\},\]
i.e.\ the previous time $s$ that $\eta_l(\att_s x)$ was larger than $\delta^{\prime}$ (or $-\infty$ if it wasn't), and let
\[M_{l}(t)=\inf\{t\leq s:\ \eta_l(\att_s x)\geq \delta^{\prime}\},\]
i.e.\ the next time $s$ when $\eta_l(\att_{s} x)$ will be larger than $\delta^{\prime}$ (or $\infty$ if it will not).
Let
\[\mathcal{T}_{\delta,l}=\left\{(m_l(t),M_l(t))\cap[-N,N]:\ t\in[-N,N]\text{ such that } \eta_l(\att_t x)<\delta\right\}.\]
Note that for $s\in (m_l(t),M_l(t))$ we have $(m_l(t),M_l(t))=(m_l(s),M_l(s))$. Hence, any $U_1\not=U_2\in T_{\delta,l}$ are disjoint. 
As follows from Corollary~\ref{corollary:3.12},
every $U\in \mathcal{T}_{\delta,l}$ is of length \[\gg_{\att} \log\frac{\delta^{\prime}}{\delta}>\frac{1}{2}|\log\delta|,\]
so in particular $\mathcal{T}_{\delta,l}$ is finite.

Each $\mathcal{T}_{\delta,l}$ keeps track of the times when there is a $\delta$-significant $l$-dimensional rational subspace for $\att_t x$.
For each time $t\in [-N,N]$, we consider the set of indices $l$ which are $\delta$-significant, i.e.\ the indices $l$ for which there is an element of $\mathcal{T}_{\delta,l}$ which covers $t$. We define
\[E_{\delta}(t)=\{l:\ t\in \bigcup \mathcal{T}_{\delta,l}\}.\]
Then we consider the partition
\[\mathcal{F}_{\delta}=\bigvee_{l=1}^{\dtt-1}(\mathcal{T}_{\delta,l}\cup\mathcal{T}_{\delta,l}^{c}),\]
which is precisely the partition of $[-N,N]$ to maximal disjoint intervals so that $E_{\delta}$ is constant on each interval. For $U\in \mathcal{F}_{\delta}$, we let $E_{\delta}(U)$ stand for this common value.
We take special interest in intervals with non-empty $E_{\delta}$, so we define 
\[\mathcal{T}_{\delta}=\{U\in \mathcal{F}_{\delta}:\ E_{\delta}(U)\not=\emptyset\}.\]
An illustration of the construction of $\mathcal{T}_{\delta}$ is shown in Figure~\ref{fig:4.1}.

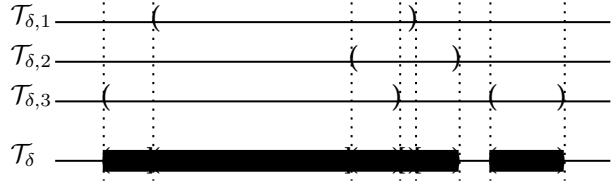
\begin{figure}
\begin{center}

\tikzset{every picture/.style={line width=0.75pt}} 
\begin{tikzpicture}[x=0.75pt,y=0.75pt,yscale=-1,xscale=1]

\draw    (264,230) -- (544,230) ;
\draw    (264,250) -- (544,250) ;
\draw    (264,270) -- (544,270) ;

\draw [
draw opacity=1 ]   (264,300) -- (544,300) ;

\draw (310,220) node [anchor=north west][inner sep=0.75pt]   [align=left] {\textbf{(}};
\draw (440,220) node [anchor=north west][inner sep=0.75pt]   [align=left] {\textbf{)}};
\draw (410,240) node [anchor=north west][inner sep=0.75pt]   [align=left] {\textbf{(}};
\draw (462,240) node [anchor=north west][inner sep=0.75pt]   [align=left] {\textbf{)}};
\draw (285,260) node [anchor=north west][inner sep=0.75pt]   [align=left] {\textbf{(}};
\draw (432,260) node [anchor=north west][inner sep=0.75pt]   [align=left] {\textbf{)}};
\draw (480,260) node [anchor=north west][inner sep=0.75pt]   [align=left] {\textbf{(}};
\draw (515,260) node [anchor=north west][inner sep=0.75pt]   [align=left] {\textbf{)}};

\draw  [dash pattern={on 0.84pt off 2.51pt}]  (285+3.5,220) -- (285+3.5,310) ;
\draw  [dash pattern={on 0.84pt off 2.51pt}]  (310+3.5,220) -- (310+3.5,310) ;
\draw  [dash pattern={on 0.84pt off 2.51pt}]  (410+3.5,220) -- (410+3.5,310) ;
\draw  [dash pattern={on 0.84pt off 2.51pt}]  (432+6,220) -- (432+6,310) ;
\draw  [dash pattern={on 0.84pt off 2.51pt}]  (440+6,220) -- (440+6,310) ;
\draw  [dash pattern={on 0.84pt off 2.51pt}]  (462+6,220) -- (462+6,310) ;
\draw  [dash pattern={on 0.84pt off 2.51pt}]  (480+3.5,220) -- (480+3.5,310) ;
\draw  [dash pattern={on 0.84pt off 2.51pt}]  (515+6,220) -- (515+6,310) ;

\draw  [draw opacity=0][fill={rgb, 255:red, 0; green, 0; blue, 0 }  ,fill opacity=0.5 ] (285+3.5,295) -- (462+5,295)  -- (462+5,305)  -- (285+3.5,305)  -- cycle ;
\draw  [draw opacity=0][fill={rgb, 255:red, 0; green, 0; blue, 0 }  ,fill opacity=0.5 ] (480+3.5,295) -- (515+5,295)  -- (515+5,305)  -- (480+3.5,305)  -- cycle ;

\draw (285,290+1.5) node [anchor=north west][inner sep=0.75pt]   [align=left] {\textcolor{black}{\textbf{(}}};

\draw (310-1.5,290+1.5) node [anchor=north west][inner sep=0.75pt]   [align=left] {\textcolor{black}{\textbf{]}}};
\draw (310,290+1.5) node [anchor=north west][inner sep=0.75pt]   [align=left] {\textcolor{black}{\textbf{(}}};

\draw (410-1.5,290+1.5) node [anchor=north west][inner sep=0.75pt]   [align=left] {\textcolor{black}{\textbf{]}}};
\draw (410,290+1.5) node [anchor=north west][inner sep=0.75pt]   [align=left] {\textcolor{black}{\textbf{(}}};

\draw (432,290+1.5) node [anchor=north west][inner sep=0.75pt]   [align=left] {\textcolor{black}{\textbf{)}}};
\draw (432+3,290+1.5) node [anchor=north west][inner sep=0.75pt]   [align=left] {\textcolor{black}{\textbf{[}}};

\draw (440,290+1.5) node [anchor=north west][inner sep=0.75pt]   [align=left] {\textcolor{black}{\textbf{)}}};
\draw (440+3,290+1.5) node [anchor=north west][inner sep=0.75pt]   [align=left] {\textcolor{black}{\textbf{[}}};

\draw (462,290+1.5) node [anchor=north west][inner sep=0.75pt]   [align=left] {\textcolor{black}{\textbf{)}}};

\draw (480,290+1.5) node [anchor=north west][inner sep=0.75pt]   [align=left] {\textcolor{black}{\textbf{(}}};

\draw (515,290+1.5) node [anchor=north west][inner sep=0.75pt]   [align=left] {\textcolor{black}{\textbf{)}}};

\draw (240,220) node [anchor=north west][inner sep=0.75pt]    {$\mathcal{T}_{\delta ,1}$};
\draw (240,240) node [anchor=north west][inner sep=0.75pt]    {$\mathcal{T}_{\delta ,2}$};
\draw (240,260) node [anchor=north west][inner sep=0.75pt]    {$\mathcal{T}_{\delta ,3}$};

\draw (240,290) node [anchor=north west][inner sep=0.75pt] 
{$\mathcal{T}_{\delta}$};

\end{tikzpicture}
\end{center}
\caption{An illustration of the construction of $\mathcal{T}_{\delta}$ for $\dtt=4$.}
\label{fig:4.1}
\end{figure}

Note that in these new notations, 
\[\mathcal{F}_{\delta}=\mathcal{T}_{\delta}\cup\mathcal{T}_{\delta}^{c}\]
is precisely the naive partition discussed in~\S\ref{subsubsec:4.1.2}.
To allow us to reduce the size of the errors,
as discussed in~\S\ref{subsubsec:4.1.2}, we define 
inductively the partition $\mathcal{J}$ by taking refinements of the naive partition by finer and finer partitions. 
To do so, we fix
a parameter $r\in(0,1)$ so that $\delta^{r^{\dtt+1}}<\delta^{\prime}$, i.e.\ $r>(\frac{\log\delta^{\prime}}{\log\delta})^{1/(\dtt+1)}$. This would allow us to perform $\dtt$ steps of the induction. The saving in the errors would turn out to be
\[\err(\att_{t}x,\Par(U))\cdot |\mathcal{J}^{\prime}\cap\deg^{-1}(m)|\ll_{\att} rN.\]

\subsubsection{Construction: full details}\label{subsubsec:4.1.4}
In the following induction, we define partitions $\mathcal{J}_{m}$ which are comprised of two sub-collections: final intervals (which would belong to the final partition $\mathcal{J}$), and temporary intervals (which could change in the next step).
We begin the induction 
with
$\mathcal{J}_{0}=\mathcal{F}_{\delta}$
and no final intervals
\[\mathcal{J}^{\mathrm{final}}_{0}=\emptyset,\ \mathcal{J}^{\mathrm{temp}}_{0}=\mathcal{J}_{0}.\]
Assume that for some $0\leq m\leq \dtt-1$ a partition \[\mathcal{J}_{m}=\mathcal{J}_{m}^{\mathrm{final}}\cupdot\mathcal{J}_{m}^{\mathrm{temp}}\] of $[-N,N]$ was constructed, so that 
$\mathcal{J}^{\mathrm{temp}}_{m}\subseteq \mathcal{F}_{\delta^{r^{m}}}$ and $|E_{\delta^{r^{m}}}(U)|\geq m$ for all $U\in \mathcal{J}_{m}^{\mathrm{temp}}$.
For $U\in \mathcal{J}_{m}^{\mathrm{temp}}$, let
\[a_U=\inf \{t\in U:\  E_{\delta^{r^{m+1}}}(t)=E_{\delta^{r^{m}}}(U)\}\]
be the first time in $U$ (or $\infty$ if such does not exist) when there are no $\delta^{r^{m+1}}$-significant indices other than those that are already $\delta^{r^{m}}$-significant.
Similarly, let
\[b_U=\sup\{t\in U:\ E_{\delta^{r^{m+1}}}(t)=E_{\delta^{r^{m}}}(U)\}.\]
Then we define
\[\mathcal{J}^{\mathrm{final}}_{m+1}=\mathcal{J}^{\mathrm{final}}_{m}\cupdot\{[a_U,b_U]\cap U:\ U\in\mathcal{J}^{\mathrm{temp}}_{m},\ a_U,b_U\not\in\{\pm\infty\}\}.\]
We also define
\[\mathcal{J}^{\mathrm{temp}}_{m+1}=(\mathcal{J}^{\mathrm{final}}_{m+1})^{c}\vee \mathcal{F}_{\delta^{r^{m+1}}}\]
and
\[\mathcal{J}_{m+1}=\mathcal{J}^{\mathrm{final}}_{m+1}\cupdot\mathcal{J}^{\mathrm{temp}}_{m+1}.\]

An illustration of this construction is shown in Figure~\ref{fig:4.2}. 
In words, at the $(m+1)$th step, we consider a finer partition of $[-N,N]$, corresponding to the larger parameter $\delta^{r^{m+1}}$. If by looking at this finer partition we find that a temporary interval $U$ is completely covered by $\bigcup_{l\not\in E_{\delta^{r^{m}}}(U)} \mathcal{T}_{\delta^{r^{m+1}},l}$,
we have no ability to save error terms, and have to divide $U$ according to $\mathcal{F}_{\delta^{r^{m+1}}}$ and carry it to the next step. Otherwise, if $U$ is not entirely covered, we take a maximal sub-interval $[a_U,b_U]\cap U$ whose two end points are not covered, and declare it final. 

\begin{lemma}\label{lemma:4.1}
The collection $\mathcal{J}_{m+1}^{\mathrm{temp}}$ in the induction step satisfies the following.
\begin{enumerate}
    \item \label{item:4.1.1}
$\mathcal{J}_{m+1}^{\mathrm{temp}}\subseteq\mathcal{F}_{\delta^{r^{m+1}}}$.
    \item \label{item:4.1.2} $|E_{\delta^{r^{m+1}}}(U)|\geq m+1$ for all $U\in\mathcal{J}_{m+1}^{\mathrm{temp}}$.
\end{enumerate}
\end{lemma}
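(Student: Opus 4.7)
The plan is to verify both items directly from the definitions, with the key preliminary observation being that since $0<\delta<1$ and $r\in(0,1)$, we have $\delta^{r^{m+1}}>\delta^{r^m}$, so $\mathcal{T}_{\delta^{r^{m+1}},l}\supseteq \mathcal{T}_{\delta^{r^m},l}$ for every $l$, and consequently $\mathcal{F}_{\delta^{r^{m+1}}}$ refines $\mathcal{F}_{\delta^{r^m}}$; in particular the pointwise inclusion $E_{\delta^{r^{m+1}}}(t)\supseteq E_{\delta^{r^m}}(t)$ holds, and any boundary point of $\mathcal{F}_{\delta^{r^m}}$ is already a boundary point of $\mathcal{F}_{\delta^{r^{m+1}}}$.

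For item~\ref{item:4.1.1}, I will check that every endpoint of an interval in $\mathcal{J}_{m+1}^{\mathrm{final}}$ is a boundary point of $\mathcal{F}_{\delta^{r^{m+1}}}$. Endpoints of intervals coming from $\mathcal{J}_m^{\mathrm{final}}$ are handled by the inductive application of item~\ref{item:4.1.1} at step $m$ combined with the refinement observation. For a newly added interval $[a_V,b_V]\cap V$ with $V\in\mathcal{J}_m^{\mathrm{temp}}$, the endpoints $\inf V,\sup V$ are boundaries of $\mathcal{F}_{\delta^{r^m}}$ and hence of $\mathcal{F}_{\delta^{r^{m+1}}}$, while the key check is that $a_V$ itself is a boundary of $\mathcal{F}_{\delta^{r^{m+1}}}$: by definition $E_{\delta^{r^{m+1}}}(a_V)=E_{\delta^{r^m}}(V)$, whereas for $t\in V$ with $t<a_V$ (if any) one has $E_{\delta^{r^{m+1}}}(t)\ne E_{\delta^{r^m}}(V)$ by the infimum defining $a_V$, so $\mathcal{F}_{\delta^{r^{m+1}}}$ has a boundary at $a_V$; the edge case $a_V=\inf V$ is already handled by the previous remark. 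The argument for $b_V$ is symmetric. Once endpoint alignment is established, each component of $(\mathcal{J}_{m+1}^{\mathrm{final}})^c$ is a union of elements of $\mathcal{F}_{\delta^{r^{m+1}}}$, and so the common refinement $(\mathcal{J}_{m+1}^{\mathrm{final}})^c\vee \mathcal{F}_{\delta^{r^{m+1}}}$ consists entirely of elements of $\mathcal{F}_{\delta^{r^{m+1}}}$.

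For item~\ref{item:4.1.2}, I will pick $U\in\mathcal{J}_{m+1}^{\mathrm{temp}}$ and note that $U$ is contained in some $V\in\mathcal{J}_m^{\mathrm{temp}}$ but disjoint from the final piece $[a_V,b_V]\cap V$. If no final piece was extracted from $V$, then by definition the set $\{t\in V:E_{\delta^{r^{m+1}}}(t)=E_{\delta^{r^m}}(V)\}$ is empty, so for every $t\in V$ the inclusion $E_{\delta^{r^{m+1}}}(t)\supseteq E_{\delta^{r^m}}(V)$ is strict and gives $|E_{\delta^{r^{m+1}}}(t)|\ge |E_{\delta^{r^m}}(V)|+1\ge m+1$ by the inductive bound. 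Otherwise $U\subseteq V\cap\bigl([\inf V,a_V)\cup(b_V,\sup V]\bigr)$, and the infimum/supremum definitions of $a_V,b_V$ force $E_{\delta^{r^{m+1}}}(t)\ne E_{\delta^{r^m}}(V)$ on this set, giving strict containment and again cardinality at least $m+1$. Since by item~\ref{item:4.1.1} the function $E_{\delta^{r^{m+1}}}$ is constant on $U$, this yields $|E_{\delta^{r^{m+1}}}(U)|\ge m+1$, closing the induction.

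I do not expect any substantive obstacle; the whole argument is bookkeeping around the definitions. The only delicate points are verifying the degenerate cases $a_V=\inf V$ (or the analogous $b_V=\sup V$) and the empty-set case $a_V=+\infty$, $b_V=-\infty$, and making sure the pointwise inclusion $E_{\delta^{r^{m+1}}}\supseteq E_{\delta^{r^m}}$ is interpreted correctly so that non-equality upgrades to strict containment.
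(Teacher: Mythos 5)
Your proof is correct and takes essentially the same approach as the paper's: both rest on the observation that $\mathcal{F}_{\delta^{r^{m+1}}}$ refines $\mathcal{F}_{\delta^{r^m}}$ (so item~\eqref{item:4.1.1} reduces to the endpoints $a_V,b_V$ being aligned with $\mathcal{F}_{\delta^{r^{m+1}}}$) and, for item~\eqref{item:4.1.2}, on the strict inclusion $E_{\delta^{r^m}}(V)\subsetneq E_{\delta^{r^{m+1}}}(t)$ for $t\in V\smallsetminus[a_V,b_V]$ combined with the inductive cardinality bound. The only stylistic difference is that you verify endpoint alignment directly, whereas the paper expresses each component of $(\mathcal{J}_{m+1}^{\mathrm{final}})^c$ as a union of elements of $\mathcal{F}_{\delta^{r^{m+1}}}$; these are the same argument.
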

\begin{proof}
    To prove~\eqref{item:4.1.1}, it is sufficient to show that every $V\in (\mathcal{J}_{m+1}^{\mathrm{final}})^{c}$ is a union of elements of $\mathcal{F}_{\delta^{r^{m+1}}}$.
    
    Let $V\in(\mathcal{J}_{m+1}^{\mathrm{final}})^{c}$.
    Then it is easy to see that $V$ is a union of two types of intervals -- elements of $\mathcal{J}_{m}^{\mathrm{temp}}$, and intervals of the form $U\cap(-\infty,a_{U})$ or $U\cap(b_{U},\infty)$ for some $U\in\mathcal{J}_{m}^{\mathrm{temp}}$.  
    Note that by the induction assumption $\mathcal{J}_{m}^{\mathrm{temp}}\subseteq \mathcal{F}_{\delta^{r^{m}}}$. Therefore, as $\mathcal{F}_{\delta^{r^{m+1}}}$ is a refinement of $\mathcal{F}_{\delta^{r^{m}}}$, it is clear that any union of elements of $\mathcal{J}_{m}^{\mathrm{temp}}$ is also a union of elements of $\mathcal{F}_{\delta^{r^{m+1}}}$ as desired. Therefore, it is sufficient to prove that for any $U\in\mathcal{J}_{m}^{\mathrm{temp}}$, if $a_{U},b_{U}\not\in\{\pm\infty\}$ then $U\cap(-\infty,a_{U})$ and $U\cap(b_{U},\infty)$ are both unions of elements of $\mathcal{F}_{\delta^{r^{m+1}}}$.
    Indeed,
    it follows from the definitions of $a_U,b_U$ that in this case $U\cap[a_U,b_U]$ is a union of elements of $\mathcal{F}_{\delta^{r^{m+1}}}$.
    As $U\in\mathcal{J}_m^{\mathrm{temp}}\subseteq \mathcal{F}_{\delta^{r^{m}}}$, it is too
    a union of elements of $\mathcal{F}_{\delta^{r^{m+1}}}$. Therefore, so are $U\cap (-\infty,a_U)$ and $U\cap(b_U,\infty)$.
    
    \medskip

    For~\eqref{item:4.1.2}, note that for any $U\in\mathcal{J}_{m}^{\mathrm{temp}}$, the (possibly empty) interval $[a_U,b_U]$ contains all the times $t\in U$ so that  $E_{\delta^{r^{m+1}}}(t)=E_{\delta^{r^{m}}}(U)$.
    So for any other $t\in U$ we have
\[E_{\delta^{r^{m}}}(U)\subsetneq E_{\delta^{r^{m+1}}}(t).\]
    By the induction assumption $|E_{\delta^{r^{m}}}(U)|\geq m$,
    so we deduce that
    \[|E_{\delta^{r^{m+1}}}(t)|\geq m+1\]
    for any $t\in U\smallsetminus [a_U,b_U]$.
    As any element of    $\mathcal{J}_{m+1}^{\mathrm{temp}}$ is contained in $U\smallsetminus [a_U,b_U]$ for some $U\in\mathcal{J}_{m}^{\mathrm{temp}}$, the conclusion follows.
\end{proof}

\begin{corollary}
    This construction terminates after $\dtt$ steps, with
    \[\mathcal{J}_{\dtt}^{\mathrm{final}}=\mathcal{J}_{\dtt-1},\qquad \mathcal{J}_{\dtt}^{\mathrm{temp}}=\emptyset.\]
\end{corollary}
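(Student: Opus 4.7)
The plan is a short pigeonhole argument layered on top of Lemma~\ref{lemma:4.1}. Iterating that lemma with base case $\mathcal{J}_0 = \mathcal{F}_\delta$ (where the inductive hypothesis $|E_{\delta^{r^{0}}}(U)|\geq 0$ is vacuous), I obtain that for every $0\leq m\leq \dtt-1$ and every $U\in \mathcal{J}_m^{\mathrm{temp}}$, one has $|E_{\delta^{r^m}}(U)|\geq m$. But $E_{\delta^{r^m}}(U)$ is a subset of $\{1,\ldots,\dtt-1\}$, which has only $\dtt-1$ elements. Taking $m=\dtt-1$ forces the equality
\[
E_{\delta^{r^{\dtt-1}}}(U) = \{1,\ldots,\dtt-1\}
\]
for every $U\in \mathcal{J}_{\dtt-1}^{\mathrm{temp}}$.

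The one substantive check I need is monotonicity of $E_{\delta}(t)$ in $\delta$. Inspecting the construction, the bracketing endpoints $m_l$ and $M_l$ depend only on $\delta^{\prime}$, not on $\delta$, so relaxing $\delta$ to a larger value only enlarges the set of $s$'s with $\eta_l(\att_s x)<\delta$ and therefore only enlarges $\bigcup \mathcal{T}_{\delta,l}$ and hence $E_{\delta}(t)$. Since $0<r<1$ and $\delta<1$ give $\delta^{r^{\dtt}}>\delta^{r^{\dtt-1}}$, this monotonicity applied at every $t\in U\in\mathcal{J}_{\dtt-1}^{\mathrm{temp}}$ yields
\[
\{1,\ldots,\dtt-1\} = E_{\delta^{r^{\dtt-1}}}(U) \subseteq E_{\delta^{r^{\dtt}}}(t) \subseteq \{1,\ldots,\dtt-1\},
\]
so the inclusions are equalities throughout $U$.

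From here the conclusion is immediate. The set appearing in the definition of $a_U, b_U$ is all of $U$, so $a_U=\inf U$ and $b_U=\sup U$, both finite since $U\subseteq[-N,N]$, and therefore $[a_U,b_U]\cap U = U$ is moved into the final collection at step $\dtt$. This gives
\[
\mathcal{J}_{\dtt}^{\mathrm{final}} = \mathcal{J}_{\dtt-1}^{\mathrm{final}} \cupdot \mathcal{J}_{\dtt-1}^{\mathrm{temp}} = \mathcal{J}_{\dtt-1},
\]
which already partitions $[-N,N]$, so $(\mathcal{J}_{\dtt}^{\mathrm{final}})^{c}=\emptyset$ and consequently $\mathcal{J}_{\dtt}^{\mathrm{temp}}=(\mathcal{J}_{\dtt}^{\mathrm{final}})^{c}\vee \mathcal{F}_{\delta^{r^{\dtt}}}=\emptyset$. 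I do not anticipate any real obstacle here; the only step that requires care is verifying the monotonicity of $E_{\delta}(t)$ in $\delta$ from the definitions, and that is essentially bookkeeping.
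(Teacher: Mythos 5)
Your proof is correct and follows essentially the same route as the paper's: apply Lemma~\ref{lemma:4.1} and pigeonhole on subsets of $\{1,\ldots,\dtt-1\}$ to force $E_{\delta^{r^{\dtt-1}}}(U)=\{1,\ldots,\dtt-1\}$ for $U\in\mathcal{J}_{\dtt-1}^{\mathrm{temp}}$, which makes $[a_U,b_U]\cap U=U$ and drives the induction to completion. You additionally spell out the monotonicity of $E_\delta(t)$ in $\delta$ (needed to pass to $E_{\delta^{r^{\dtt}}}(t)=\{1,\ldots,\dtt-1\}$ pointwise on $U$, which is what the definition of $a_U,b_U$ actually requires); the paper compresses this step into a single phrase and your version makes the bookkeeping more transparent.
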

\begin{proof}
By Lemma~\ref{lemma:4.1}, every interval $U\in\mathcal{J}_{\dtt-1}^{\mathrm{temp}}$ satisfies $E_{\delta^{r^{\dtt}}}(U)=\{1,\ldots,\dtt-1\}$, and so $[a_U,b_U]=[\inf U,\sup U]$ by definition. Therefore, $U\cap[a_U,b_U]=U\in \mathcal{J}_{\dtt}^{\mathrm{final}}$, i.e.\ \[\mathcal{J}_{\dtt}^{\mathrm{final}}=\mathcal{J}_{\dtt-1}^{\mathrm{final}}\cupdot\mathcal{J}_{\dtt-1}^{\mathrm{temp}}=\mathcal{J}_{\dtt-1}.\]
The claim $\mathcal{J}_{\dtt}^{\mathrm{temp}}=\emptyset$ follows immediately.
\end{proof}

Then, we define \[\mathcal{J}=\mathcal{J}_{\dtt}^{\mathrm{final}}=\mathcal{J}_{\dtt-1}.\]
We define a degree function which assigns to each $U\in\mathcal{J}$ the step of the induction where it was added to $\mathcal{J}$, i.e.\
\[\deg:\ \mathcal{J}\to\{1,\ldots,\dtt\}\]
given by
\[\deg(U)=\min\{m\in \{1,\ldots,\dtt\}:\ U\in\mathcal{J}_{m}^{\mathrm{final}}\}.\]
We also define a function
\[\Par:\ \mathcal{J}\to \mathcal{P}\]
which assigns a standard parabolic subgroup, according to the significant indices in the construction, as follows.
Any $U\in \mathcal{J}$ is by construction of the form
\[U=V\cap [a_{V},b_{V}]\]
for some $V\in \mathcal{J}_{\deg(U)-1}^{\mathrm{temp}}$.
Then we define
$\Par(U)$ to be
the unique standard parabolic subgroup $\Par(U)\in\mathcal{P}$ with
\[\eta(\Par(U))=E_{\delta^{r^{\deg(U)-1}}}(V).\]

The key property of the construction is that at the end points of any $U\in\mathcal{J}$ the jumps satisfy
\[\eta_l(\att_t x)\geq \delta^{r^{\deg U}},\]
for all 
$l\not\in \eta(\Par(U))$
at the endpoints
$t=\inf U,\sup U$,
despite the fact that $U$ was in essence defined in the $(\deg U-1)$th step using $\mathcal{F}_{\delta^{r^{\deg U-1}}}$ (it is just an element of $\mathcal{J}_{\deg U-1}^{\mathrm{temp}}$ trimmed at its ends).
This gap of one power of $r$ is the fact that allows to cut down the error terms.
Some of the properties of $\mathcal{J}$ are summarised in the following proposition.

\begin{proposition}\label{proposition:5.1}
Let $\mathcal{J},\Par$ be as above. 
Then
\begin{enumerate}
    \item\label{item:5.1_1} 
    $\eta(\att_t x,\delta)\subseteq \eta(\Par(U))\subseteq \eta(\att_t x,\delta^{\prime})$
    for any $t\in U\in\mathcal{J}$.  
    In particular, 
        $\att_t x\in \cusp_{\delta^{\prime}}^{+}(\Par(U))$.
    \item\label{item:5.1_2} Let $U\in \mathcal{J}$. Then $\eta_l(\att_t x)\geq \delta^{r^{\deg U}}$ for $t=\inf U, \sup U$, for all
    $l\not\in \eta(\Par(U))$. 
    \item\label{item:5.1_3} $|\{U\in \mathcal{J}:\ \deg(U)=m\}|\ll_{\att}  N/(r^{m-1}|\log\delta|)$
\end{enumerate}
\end{proposition}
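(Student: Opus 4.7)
My plan is to prove each item by unpacking the inductive construction of $\mathcal{J}$ from \S\ref{subsubsec:4.1.4}.

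For item~\ref{item:5.1_1}, the first step I would take is to establish the monotonicity
\[\eta(\att_t x, \delta_1) \subseteq E_{\delta_2}(t) \subseteq \eta(\att_t x, \delta')\]
valid for any $\delta \leq \delta_1 \leq \delta_2 \leq \delta'$. The left inclusion holds because $\eta_l(\att_t x) < \delta_1 \leq \delta_2$ puts $t$ inside $(m_l(t), M_l(t)) \in \mathcal{T}_{\delta_2, l}$; the right inclusion holds because $l \in E_{\delta_2}(t)$ places $t$ in some $(m_l(s), M_l(s))$, on which $\eta_l < \delta'$ by definition of $m_l, M_l$. Given $U \in \mathcal{J}$ with $m = \deg(U)$ and parent $V \in \mathcal{J}_{m-1}^{\mathrm{temp}} \subseteq \mathcal{F}_{\delta^{r^{m-1}}}$, I would observe that $\eta(\Par(U)) = E_{\delta^{r^{m-1}}}(V) = E_{\delta^{r^{m-1}}}(t)$ for all $t \in U$, and then apply the monotonicity with $\delta_2 = \delta^{r^{m-1}}$ (taking $\delta_1 = \delta$, then $\delta_1 = \delta^{r^{m-1}}$) to obtain the two containments; the final clause is immediate from the definition of $\cusp_{\delta'}^{+}$.

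For item~\ref{item:5.1_2}, my approach exploits the defining property of the endpoints $a_V, b_V$ of $U$. By construction at step $m = \deg(U)$, $a_V$ is the infimum of $t \in V$ with $E_{\delta^{r^m}}(t) = E_{\delta^{r^{m-1}}}(V) = \eta(\Par(U))$, so in particular $E_{\delta^{r^m}}(a_V) = \eta(\Par(U))$. For any $l \notin \eta(\Par(U))$ we then get $a_V \notin \bigcup \mathcal{T}_{\delta^{r^m}, l}$. I then conclude by contradiction: were $\eta_l(\att_{a_V} x) < \delta^{r^m}$, the interval $(m_l(a_V), M_l(a_V))$ would be a non-empty element of $\mathcal{T}_{\delta^{r^m}, l}$ containing $a_V$, contradicting the previous sentence. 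Hence $\eta_l(\att_{a_V} x) \geq \delta^{r^m} = \delta^{r^{\deg U}}$, and $b_V$ is handled symmetrically.

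For item~\ref{item:5.1_3}, intervals with $\deg(U) = m$ are either trimmings $V \cap [a_V, b_V]$ of elements $V \in \mathcal{J}_{m-1}^{\mathrm{temp}}$ (one per $V$) or, at the terminal stage $m = \dtt$, the elements of $\mathcal{J}_{\dtt-1}^{\mathrm{temp}}$ themselves, so the count is at most $|\mathcal{J}_{m-1}^{\mathrm{temp}}| \leq |\mathcal{F}_{\delta^{r^{m-1}}}|$. To bound $|\mathcal{F}_{\delta^{r^{m-1}}}|$, I plan to count the endpoints of the intervals in $\bigcup_l \mathcal{T}_{\delta^{r^{m-1}}, l}$; each such interval has length $\gg_{\att} \log(\delta'/\delta^{r^{m-1}})$ by the same argument presented for $\mathcal{T}_{\delta, l}$ just after its definition. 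Using the assumption $r^{\dtt+1}|\log\delta| > |\log\delta'|$, one has $\log(\delta'/\delta^{r^{m-1}}) \geq (r^{m-1} - r^{\dtt+1})|\log\delta| \gg r^{m-1}|\log\delta|$ for $m \leq \dtt$, giving $|\mathcal{F}_{\delta^{r^{m-1}}}| \ll_{\att} N/(r^{m-1}|\log\delta|)$, as required.

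The main technical nuisance I anticipate lies in bookkeeping the boundary cases: when $a_V$ or $b_V$ coincide with $\inf V, \sup V$ or with $\pm N$, when the interval $(m_l(\cdot), M_l(\cdot))$ is truncated by $[-N, N]$, or when elements of $\mathcal{J}_{\dtt-1}^{\mathrm{temp}}$ require a slightly separate treatment since they are not genuinely trimmed. I expect these to be absorbed cleanly by the $\cap[-N, N]$ convention built into the definition of $\mathcal{T}_{\delta, l}$, and by the observation that $a_V, b_V$ are necessarily finite in the non-trivial case so the trimming operation is well-defined.
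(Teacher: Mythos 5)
Your arguments for items~\ref{item:5.1_1} and~\ref{item:5.1_2} capture the right ideas (the monotonicity of $E_\delta$ in $\delta$ for item~\ref{item:5.1_1}, and the closedness of $\{t\in V:\ E_{\delta^{r^m}}(t)=\eta(\Par(U))\}$ forcing the endpoints $a_V,b_V$ to belong to it for item~\ref{item:5.1_2}); the paper itself only says these items ``follow easily from the construction,'' and your write-up is a reasonable expansion modulo the boundary bookkeeping you already flag. The problem is in item~\ref{item:5.1_3}: the step $(r^{m-1}-r^{\dtt+1})|\log\delta|\gg r^{m-1}|\log\delta|$ is false uniformly in the allowed range of $r$. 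It is equivalent to $1-r^{\dtt+2-m}\gg 1$, and for $m\leq\dtt$ the exponent $\dtt+2-m$ is at least $2$, so the left side is at worst $1-r^2$, which tends to $0$ as $r\to 1^{-}$. Since nothing in the hypotheses bounds $r$ away from $1$ (and Theorem~\ref{theorem:1.1} requires the implied constant to depend only on $\att$, with $r$ appearing explicitly in the error term), your lower bound on the interval length does not give $|\mathcal{F}_{\delta^{r^{m-1}}}|\ll_\att N/(r^{m-1}|\log\delta|)$.

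The paper's derivation avoids this by invoking, in addition to $r>(\log\delta'/\log\delta)^{1/(\dtt+1)}$, the second standing hypothesis $\delta'>\delta^{1/2}$ (equivalently $2|\log\delta'|<|\log\delta|$), which your shortcut never uses. Concretely, one writes $\log(\delta'/\delta^{r^{m}})=r^{m}\bigl(|\log\delta|-r^{-m}|\log\delta'|\bigr)$, bounds $r^{-m}|\log\delta'|\leq r^{-\dtt}|\log\delta'|<|\log\delta'|^{1/(\dtt+1)}|\log\delta|^{\dtt/(\dtt+1)}$, and then applies the elementary inequality $x-y\geq (x^{\dtt+1}-y^{\dtt+1})/\bigl((\dtt+1)x^{\dtt}\bigr)$ with $x=|\log\delta|^{1/(\dtt+1)}$, $y=|\log\delta'|^{1/(\dtt+1)}$ to conclude $\log(\delta'/\delta^{r^{m}})\gg_{\dtt} r^{m}\bigl(|\log\delta|-|\log\delta'|\bigr)\gg r^{m}|\log\delta|$. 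This chain produces a factor uniform in $r$; your one-line estimate does not, and the claim as written has a genuine gap.
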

\begin{proof}
Items \ref{item:5.1_1}-\ref{item:5.1_2} follow easily from the construction.

For item \ref{item:5.1_3}, we first bound the length of elements of $\mathcal{T}_{\delta^{r^{m}}}$. 
Let $U\in \mathcal{T}_{\delta^{r^{m}},l}$. Then by construction, there is some $t\in [-N,N]$ such that $\eta_{l}(\att_t x)<\delta^{r^{m}}$, and $U=(m_{l}(t),M_{l}(t))\cap [-N,N]$.
By Corollary~\ref{corollary:3.12}, 
\begin{align*}M_{l}(t)-t&\gg_{\att} \log\frac{\delta^{\prime}}{\delta^{r^{m}}}=r^{m}|\log\delta|-|\log\delta^{\prime}|=r^{m}(|\log\delta|-r^{-m}|\log\delta^{\prime}|)\\&
\geq r^{m}(|\log\delta|-r^{-\dtt}|\log\delta^{\prime}|)>r^{m}(|\log\delta|-|\log\delta^{\prime}|^{1/(\dtt+1)}|\log\delta|^{\dtt/(\dtt+1)})\\&
=r^{m}|\log\delta|^{\dtt/(\dtt+1)}(|\log\delta|^{1/(\dtt+1)}-|\log\delta^{\prime}|^{1/(\dtt+1)})\\&
\gg r^{m}(|\log\delta|-|\log\delta^{\prime}|)=r^{m}\log\frac{\delta^{\prime}}{\delta},
\end{align*}
where we used the facts that $r\in ((\frac{\log\delta^{\prime}}{\log\delta})^{1/(\dtt+1)},1)$, $m\leq \dtt$, and that for $x>y>0$
\[(x-y)=(x^{\dtt+1}-y^{\dtt+1})/(x^{\dtt}+x^{\dtt-1}y+\cdots +y^{\dtt})\geq (x^{\dtt+1}-y^{\dtt+1})/((\dtt+1) x^{\dtt}).\]
The same estimate holds for $t-m_{l}(t)$ as well.

So we see that each interval of $\mathcal{T}_{\delta^{r^{m}},l}$ is of length at least $\gg_{\att} r^{m}\log\frac{\delta^{\prime}}{\delta}$, hence the number of intervals is bounded by
\[|\mathcal{T}_{\delta^{r^{m}},l}|\ll_{\att} N/(r^{m}\log\frac{\delta^{\prime}}{\delta})\ll N/(r^{m}|\log\delta|),\]
where the assumption $2|\log\delta^{\prime}|<|\log\delta|< N$ 
was used.
Then, 
it follows that
\begin{equation}\label{eq:4.7n}|\mathcal{F}_{\delta^{r^{m}}}|\ll_{\att}  N/(r^{m}|\log\delta|) \end{equation}
as well.

Now, it follows from Lemma~\ref{lemma:4.1} and its proof that for any $m$, the partition $\mathcal{J}_{m}$ is coarser than $\mathcal{F}_{\delta^{r^{m}}}$ in the sense that it is obtained by taking unions of elements of $\mathcal{F}_{\delta^{r^{m}}}$.
Then
\begin{equation}\label{eq:4.7_n}|\mathcal{J}_{m}|\leq |\mathcal{F}_{\delta^{r^{m}}}|\ll_{\att}  N/(r^{m}|\log\delta|) .\end{equation}
Lastly, as we saw, each element $U\in\mathcal{J}$ with $\deg(U)=m$ is obtained by trimming the ends of an element of $\mathcal{J}_{m-1}^{\mathrm{temp}}$, and each element of $\mathcal{J}_{m-1}^{\mathrm{temp}}$ is trimmed this way at most once. 
Therefore,
\[|\{U\in \mathcal{J}:\ \deg(U)=m\}|\leq | \mathcal{J}_{m-1}^{\mathrm{temp}}|\ll_{\att}   N/(r^{m-1}|\log\delta|)  ,\]
which
concludes the proof.

\end{proof}

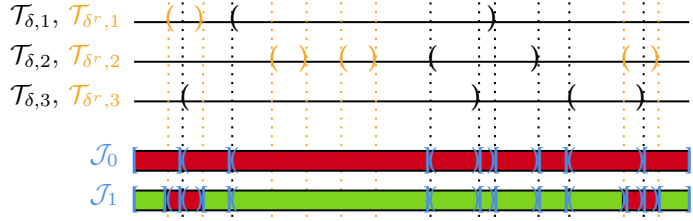
\begin{figure}
\begin{center}

\tikzset{every picture/.style={line width=0.75pt}} 

\begin{tikzpicture}[x=0.75pt,y=0.75pt,yscale=-1,xscale=1]

\draw    (264,230) -- (544,230) ;
\draw    (264,250) -- (544,250) ;
\draw    (264,270) -- (544,270) ;

\draw [color={rgb, 255:red, 74; green, 144; blue, 226 }  ,draw opacity=1 ]   (264,300) -- (544,300) ;
\draw [color={rgb, 255:red, 74; green, 144; blue, 226 }  ,draw opacity=1 ]   (264,320) -- (544,320) ;

\draw (310,220) node [anchor=north west][inner sep=0.75pt]   [align=left] {\textbf{(}};
\draw (440,220) node [anchor=north west][inner sep=0.75pt]   [align=left] {\textbf{)}};
\draw (410,240) node [anchor=north west][inner sep=0.75pt]   [align=left] {\textbf{(}};
\draw (462,240) node [anchor=north west][inner sep=0.75pt]   [align=left] {\textbf{)}};
\draw (285,260) node [anchor=north west][inner sep=0.75pt]   [align=left] {\textbf{(}};
\draw (432,260) node [anchor=north west][inner sep=0.75pt]   [align=left] {\textbf{)}};
\draw (480,260) node [anchor=north west][inner sep=0.75pt]   [align=left] {\textbf{(}};
\draw (515,260) node [anchor=north west][inner sep=0.75pt]   [align=left] {\textbf{)}};

\draw (277.5,220) node [anchor=north west][inner sep=0.75pt]   [align=left] {\textbf{\textcolor[rgb]{0.96,0.65,0.14}{(}}};
\draw (292.5,220) node [anchor=north west][inner sep=0.75pt]   [align=left] {\textbf{\textcolor[rgb]{0.96,0.65,0.14}{)}}};
\draw (330,240) node [anchor=north west][inner sep=0.75pt]   [align=left] {\textbf{\textcolor[rgb]{0.96,0.65,0.14}{(}}};
\draw (345,240) node [anchor=north west][inner sep=0.75pt]   [align=left] {\textbf{\textcolor[rgb]{0.96,0.65,0.14}{)}}};
\draw (365,240) node [anchor=north west][inner sep=0.75pt]   [align=left] {\textbf{\textcolor[rgb]{0.96,0.65,0.14}{(}}};
\draw (380,240) node [anchor=north west][inner sep=0.75pt]   [align=left] {\textbf{\textcolor[rgb]{0.96,0.65,0.14}{)}}};
\draw (507.5,240) node [anchor=north west][inner sep=0.75pt]   [align=left] {\textbf{\textcolor[rgb]{0.96,0.65,0.14}{(}}};
\draw (522.5,240) node [anchor=north west][inner sep=0.75pt]   [align=left] {\textbf{\textcolor[rgb]{0.96,0.65,0.14}{)}}};

\draw  [dash pattern={on 0.84pt off 2.51pt}]  (285+3.5,220) -- (285+3.5,330) ;
\draw  [dash pattern={on 0.84pt off 2.51pt}]  (310+3.5,220) -- (310+3.5,330) ;
\draw  [dash pattern={on 0.84pt off 2.51pt}]  (410+3.5,220) -- (410+3.5,330) ;
\draw  [dash pattern={on 0.84pt off 2.51pt}]  (432+6,220) -- (432+6,330) ;
\draw  [dash pattern={on 0.84pt off 2.51pt}]  (440+6,220) -- (440+6,330) ;
\draw  [dash pattern={on 0.84pt off 2.51pt}]  (462+6,220) -- (462+6,330) ;
\draw  [dash pattern={on 0.84pt off 2.51pt}]  (480+3.5,220) -- (480+3.5,330) ;
\draw  [dash pattern={on 0.84pt off 2.51pt}]  (515+6,220) -- (515+6,330) ;

\draw [color={rgb, 255:red, 245; green, 166; blue, 35 }  ,draw opacity=1 ] [dash pattern={on 0.84pt off 2.51pt}]  (278+3.5,220) -- (277.5+3.5,330) ;
\draw [color={rgb, 255:red, 245; green, 166; blue, 35 }  ,draw opacity=1 ] [dash pattern={on 0.84pt off 2.51pt}]  (293+6,220) -- (292.5+6,330) ;
\draw [color={rgb, 255:red, 245; green, 166; blue, 35 }  ,draw opacity=1 ] [dash pattern={on 0.84pt off 2.51pt}]  (330+3.5,220) -- (330+3.5,330) ;
\draw [color={rgb, 255:red, 245; green, 166; blue, 35 }  ,draw opacity=1 ] [dash pattern={on 0.84pt off 2.51pt}]  (345+6,220) -- (345+6,330) ;
\draw [color={rgb, 255:red, 245; green, 166; blue, 35 }  ,draw opacity=1 ] [dash pattern={on 0.84pt off 2.51pt}]  (365+3.5,220) -- (365+3.5,330) ;
\draw [color={rgb, 255:red, 245; green, 166; blue, 35 }  ,draw opacity=1 ] [dash pattern={on 0.84pt off 2.51pt}]  (380+6,220) -- (380+6,330) ;
\draw [color={rgb, 255:red, 245; green, 166; blue, 35 }  ,draw opacity=1 ] [dash pattern={on 0.84pt off 2.51pt}]  (507.5+3.5,220) -- (507.5+3.5,330) ;
\draw [color={rgb, 255:red, 245; green, 166; blue, 35 }  ,draw opacity=1 ] [dash pattern={on 0.84pt off 2.51pt}]  (522.5+6,220) -- (522.5+6,330) ;

\draw  [draw opacity=0][fill={rgb, 255:red, 208; green, 2; blue, 27 }  ,fill opacity=0.5 ] (264,295) -- (544,295)  -- (544,305)  -- (264,305)  -- cycle ;
\draw  [draw opacity=0][fill={rgb, 255:red, 208; green, 2; blue, 27 }  ,fill opacity=0.5 ] (277.5+3,315) -- (292.5+6,315)  -- (292.5+6,325)  -- (277.5+3,325)  -- cycle ;
\draw  [draw opacity=0][fill={rgb, 255:red, 208; green, 2; blue, 27 }  ,fill opacity=0.5 ] (507.5+3,315) -- (522.5+6,315)  -- (522.5+6,325)  -- (507.5+3,325)  -- cycle ;

\draw  [draw opacity=0][fill={rgb, 255:red, 126; green, 211; blue, 33 }  ,fill opacity=0.5 ] (264,315) -- (277.5+3,315)  -- (277.5+3,325)  -- (264,325)  -- cycle ;
\draw  [draw opacity=0][fill={rgb, 255:red, 126; green, 211; blue, 33 }  ,fill opacity=0.5 ] (292.5+6,315) -- (507.5+3,315)  -- (507.5+3,325)  -- (292.5+6,325)  -- cycle ;
\draw  [draw opacity=0][fill={rgb, 255:red, 126; green, 211; blue, 33 }  ,fill opacity=0.5 ] (522.5+6,315) -- (544,315)  -- (544,325)  -- (522.5+6,325)  -- cycle ;

\draw (264-3.5,290+1.5) node [anchor=north west][inner sep=0.75pt]   [align=left] {\textcolor[rgb]{0.29,0.56,0.88}{\textbf{[}}};

\draw (285-1.5,290+1.5) node [anchor=north west][inner sep=0.75pt]   [align=left] {\textcolor[rgb]{0.29,0.56,0.88}{\textbf{]}}};
\draw (285,290+1.5) node [anchor=north west][inner sep=0.75pt]   [align=left] {\textcolor[rgb]{0.29,0.56,0.88}{\textbf{(}}};

\draw (310-1.5,290+1.5) node [anchor=north west][inner sep=0.75pt]   [align=left] {\textcolor[rgb]{0.29,0.56,0.88}{\textbf{]}}};
\draw (310,290+1.5) node [anchor=north west][inner sep=0.75pt]   [align=left] {\textcolor[rgb]{0.29,0.56,0.88}{\textbf{(}}};

\draw (410-1.5,290+1.5) node [anchor=north west][inner sep=0.75pt]   [align=left] {\textcolor[rgb]{0.29,0.56,0.88}{\textbf{]}}};
\draw (410,290+1.5) node [anchor=north west][inner sep=0.75pt]   [align=left] {\textcolor[rgb]{0.29,0.56,0.88}{\textbf{(}}};

\draw (432,290+1.5) node [anchor=north west][inner sep=0.75pt]   [align=left] {\textcolor[rgb]{0.29,0.56,0.88}{\textbf{)}}};
\draw (432+3,290+1.5) node [anchor=north west][inner sep=0.75pt]   [align=left] {\textcolor[rgb]{0.29,0.56,0.88}{\textbf{[}}};

\draw (440,290+1.5) node [anchor=north west][inner sep=0.75pt]   [align=left] {\textcolor[rgb]{0.29,0.56,0.88}{\textbf{)}}};
\draw (440+3,290+1.5) node [anchor=north west][inner sep=0.75pt]   [align=left] {\textcolor[rgb]{0.29,0.56,0.88}{\textbf{[}}};

\draw (462,290+1.5) node [anchor=north west][inner sep=0.75pt]   [align=left] {\textcolor[rgb]{0.29,0.56,0.88}{\textbf{)}}};
\draw (462+3,290+1.5) node [anchor=north west][inner sep=0.75pt]   [align=left] {\textcolor[rgb]{0.29,0.56,0.88}{\textbf{[}}};

\draw (480-1.5,290+1.5) node [anchor=north west][inner sep=0.75pt]   [align=left] {\textcolor[rgb]{0.29,0.56,0.88}{\textbf{]}}};
\draw (480,290+1.5) node [anchor=north west][inner sep=0.75pt]   [align=left] {\textcolor[rgb]{0.29,0.56,0.88}{\textbf{(}}};

\draw (515,290+1.5) node [anchor=north west][inner sep=0.75pt]   [align=left] {\textcolor[rgb]{0.29,0.56,0.88}{\textbf{)}}};
\draw (515+3,290+1.5) node [anchor=north west][inner sep=0.75pt]   [align=left] {\textcolor[rgb]{0.29,0.56,0.88}{\textbf{[}}};

\draw (544-3.5,290+1.5) node [anchor=north west][inner sep=0.75pt]   [align=left] {\textcolor[rgb]{0.29,0.56,0.88}{\textbf{]}}};

\draw (264-3.5,310+1.5) node [anchor=north west][inner sep=0.75pt]   [align=left] {\textcolor[rgb]{0.29,0.56,0.88}{\textbf{[}}};

\draw (277.5-1.5,310+1.5) node [anchor=north west][inner sep=0.75pt]   [align=left] {\textcolor[rgb]{0.29,0.56,0.88}{\textbf{]}}};
\draw (277.5,310+1.5) node [anchor=north west][inner sep=0.75pt]   [align=left] {\textcolor[rgb]{0.29,0.56,0.88}{\textbf{(}}};

\draw (285-1.5,310+1.5) node [anchor=north west][inner sep=0.75pt]   [align=left] {\textcolor[rgb]{0.29,0.56,0.88}{\textbf{]}}};
\draw (285,310+1.5) node [anchor=north west][inner sep=0.75pt]   [align=left] {\textcolor[rgb]{0.29,0.56,0.88}{\textbf{(}}};

\draw (292.5,310+1.5) node [anchor=north west][inner sep=0.75pt]   [align=left] {\textcolor[rgb]{0.29,0.56,0.88}{\textbf{)}}};
\draw (292.5+3,310+1.5) node [anchor=north west][inner sep=0.75pt]   [align=left] {\textcolor[rgb]{0.29,0.56,0.88}{\textbf{[}}};

\draw (310-1.5,310+1.5) node [anchor=north west][inner sep=0.75pt]   [align=left] {\textcolor[rgb]{0.29,0.56,0.88}{\textbf{]}}};
\draw (310,310+1.5) node [anchor=north west][inner sep=0.75pt]   [align=left] {\textcolor[rgb]{0.29,0.56,0.88}{\textbf{(}}};

\draw (410-1.5,310+1.5) node [anchor=north west][inner sep=0.75pt]   [align=left] {\textcolor[rgb]{0.29,0.56,0.88}{\textbf{]}}};
\draw (410,310+1.5) node [anchor=north west][inner sep=0.75pt]   [align=left] {\textcolor[rgb]{0.29,0.56,0.88}{\textbf{(}}};

\draw (432,310+1.5) node [anchor=north west][inner sep=0.75pt]   [align=left] {\textcolor[rgb]{0.29,0.56,0.88}{\textbf{)}}};
\draw (432+3,310+1.5) node [anchor=north west][inner sep=0.75pt]   [align=left] {\textcolor[rgb]{0.29,0.56,0.88}{\textbf{[}}};

\draw (440,310+1.5) node [anchor=north west][inner sep=0.75pt]   [align=left] {\textcolor[rgb]{0.29,0.56,0.88}{\textbf{)}}};
\draw (440+3,310+1.5) node [anchor=north west][inner sep=0.75pt]   [align=left] {\textcolor[rgb]{0.29,0.56,0.88}{\textbf{[}}};

\draw (462,310+1.5) node [anchor=north west][inner sep=0.75pt]   [align=left] {\textcolor[rgb]{0.29,0.56,0.88}{\textbf{)}}};
\draw (462+3,310+1.5) node [anchor=north west][inner sep=0.75pt]   [align=left] {\textcolor[rgb]{0.29,0.56,0.88}{\textbf{[}}};

\draw (480-1.5,310+1.5) node [anchor=north west][inner sep=0.75pt]   [align=left] {\textcolor[rgb]{0.29,0.56,0.88}{\textbf{]}}};
\draw (480,310+1.5) node [anchor=north west][inner sep=0.75pt]   [align=left] {\textcolor[rgb]{0.29,0.56,0.88}{\textbf{(}}};

\draw (507.5-1.5,310+1.5) node [anchor=north west][inner sep=0.75pt]   [align=left] {\textcolor[rgb]{0.29,0.56,0.88}{\textbf{)}}};
\draw (507.5+1.5,310+1.5) node [anchor=north west][inner sep=0.75pt]   [align=left] {\textcolor[rgb]{0.29,0.56,0.88}{\textbf{[}}};

\draw (515,310+1.5) node [anchor=north west][inner sep=0.75pt]   [align=left] {\textcolor[rgb]{0.29,0.56,0.88}{\textbf{)}}};
\draw (515+3,310+1.5) node [anchor=north west][inner sep=0.75pt]   [align=left] {\textcolor[rgb]{0.29,0.56,0.88}{\textbf{[}}};

\draw (522.5,310+1.5) node [anchor=north west][inner sep=0.75pt]   [align=left] {\textcolor[rgb]{0.29,0.56,0.88}{\textbf{)}}};
\draw (522.5+3,310+1.5) node [anchor=north west][inner sep=0.75pt]   [align=left] {\textcolor[rgb]{0.29,0.56,0.88}{\textbf{[}}};

\draw (544-3.5,310+1.5) node [anchor=north west][inner sep=0.75pt]   [align=left] {\textcolor[rgb]{0.29,0.56,0.88}{\textbf{]}}};

\draw (200,220) node [anchor=north west][inner sep=0.75pt]    {$\mathcal{T}_{\delta ,1}$, {\textcolor[rgb]{0.96,0.65,0.14}{$\mathcal{T}_{\delta^{r},1}$}}};
\draw (200,240) node [anchor=north west][inner sep=0.75pt]    {$\mathcal{T}_{\delta ,2}$, {\textcolor[rgb]{0.96,0.65,0.14}{$\mathcal{T}_{\delta^{r},2}$}}};
\draw (200,260) node [anchor=north west][inner sep=0.75pt]    {$\mathcal{T}_{\delta ,3}$, {\textcolor[rgb]{0.96,0.65,0.14}{$\mathcal{T}_{\delta^{r},3}$}}};

\draw (240,290) node [anchor=north west][inner sep=0.75pt]  [color={rgb, 255:red, 74; green, 144; blue, 226 }  ,opacity=1 ]  {$\mathcal{J}_{0}$};
\draw (240,310) node [anchor=north west][inner sep=0.75pt]  [color={rgb, 255:red, 74; green, 144; blue, 226 }  ,opacity=1 ]  {$\mathcal{J}_{1}$};

\end{tikzpicture}
\end{center}
\caption{An illustration of the construction of $\mathcal{J}_{1}$ for $\dtt=4$. Final intervals are colored in green, while temporary intervals are colored in red.}
\label{fig:4.2}
\end{figure}

The main task of defining the partition $\mathcal{J}$ and $\Par$ is complete. Now, as discussed in~\S\ref{subsubsec:4.1.1}, we define a refined partition $\mathcal{J}^{\prime}$ with an orientation assigned to each sub-interval, and show that it meets our requirements.
By Proposition~\ref{proposition:5.1}, we have 
\[\att_{t} x\in \cusp_{\delta^{\prime}}^{+}(\Par(U))\]
for any $t\in U\in \mathcal{J}$. Therefore, by Proposition~\ref{proposition:2.2}, each $U\in \mathcal{J}$ can be divided to $\ll 1$ many intervals during each the trajectory of $x$ has either a fixed orientation $[w]_{\Par(U)}\in W_{\Par(U),\att}$ with respect to $\Par(U)$, or does not have an orientation at all. The collection of all these sub-intervals of elements $U\in \mathcal{J}$ form a refinement $\mathcal{J}^{\prime}$ of $\mathcal{J}$ 
with the following property. For any $U\in \mathcal{J}^{\prime}$, let $V\in\mathcal{J}$ be the unique element so that $U\subseteq V$, and define $\Par(U)=\Par(V)$. Then, there is
a function
\[\Weyl:\ \mathcal{J}^{\prime}\to \bigcup_{P\in\mathcal{P}}W_{P,\att}\cup\{\emptyset\}\]
so that 
\[\att_{t}x\in \cusp_{\delta^{\prime}}^{+}(\Par(U),\Weyl(U))\]
for all $t\in U\in\mathcal{J}^{\prime}$.

We now show that this partition indeed meets the requirements we set in~\S\ref{subsubsec:4.1.1}.
\begin{proposition}\label{proposition:4.4_n}
Let $\delta,\delta^{\prime},r,N$ be as before.
Then
    \begin{enumerate}
    \mathitem\label{item:4.4_1}
        \[|\mathcal{J}^{\prime}|\ll_{\att}  \frac{1}{r^{\dtt-1}|\log\delta|}N 
         \]
        \mathitem\label{item:4.4_2} \[\sum_{U\in\mathcal{J}}\Big[\err(\att_{\inf U}x,\Par(U))+\err(\att_{\sup U}x,\Par(U))\Big]\ll_{\att} 
         rN
         .\]
    \end{enumerate}
\end{proposition}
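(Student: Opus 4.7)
The plan is to reduce both items to the degree-stratified count from Proposition~\ref{proposition:5.1}(\ref{item:5.1_3}) together with the endpoint bound from Proposition~\ref{proposition:5.1}(\ref{item:5.1_2}). For item~(\ref{item:4.4_1}), I will sum the bound on $|\{U\in\mathcal{J}:\deg(U)=m\}|$ over $m=1,\dots,\dtt$:
\[
|\mathcal{J}|=\sum_{m=1}^{\dtt}|\{U\in\mathcal{J}:\deg(U)=m\}|\ll_{\att}\sum_{m=1}^{\dtt}\frac{N}{r^{m-1}|\log\delta|}\ll_{\att}\frac{N}{r^{\dtt-1}|\log\delta|},
\]
where the last step uses $r\in(0,1)$ so the sum is dominated by the $m=\dtt$ term. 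Then I will invoke the fact (already stated in the paragraph preceding the proposition) that $\mathcal{J}^{\prime}$ is obtained from $\mathcal{J}$ by dividing each $U\in\mathcal{J}$ into $\ll_{\dtt}1$ sub-intervals: this in turn follows by applying Proposition~\ref{proposition:2.2} with $\epsilon_0$ to each subspace $V_l(x)$ for $l\in\eta(\Par(U))$ and taking the common refinement of the finitely many resulting interval partitions. Hence $|\mathcal{J}^{\prime}|\ll_{\dtt}|\mathcal{J}|$, giving item~(\ref{item:4.4_1}).

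For item~(\ref{item:4.4_2}), the key is to translate Proposition~\ref{proposition:5.1}(\ref{item:5.1_2}) into a bound on the error term. Recall $\pi_{P}(\height(z))$ averages the diagonal entries of $\height(z)$ within each block of $P$, so $\err(z,P)$ is the $\ell^{2}$-deviation of $\height(z)$ from its block-wise average. Within any block of $\Par(U)$, the consecutive differences of entries of $\height(\att_{t}x)$ are exactly $-\log\eta_{l}(\att_{t}x)$ for $l$ ranging over the interior indices of that block, all of which lie in $\{1,\dots,\dtt-1\}\setminus\eta(\Par(U))$. By Proposition~\ref{proposition:5.1}(\ref{item:5.1_2}), at the endpoints $t=\inf U,\sup U$ each such jump satisfies $|\log\eta_{l}(\att_{t}x)|\leq r^{\deg U}|\log\delta|$, so all entries of $\height(\att_{t}x)$ within a single block lie within $\ll_{\dtt}r^{\deg U}|\log\delta|$ of one another. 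Consequently
\[
\err(\att_{t}x,\Par(U))\ll_{\dtt}r^{\deg U}|\log\delta|\qquad\text{for }t=\inf U,\sup U.
\]

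Plugging these endpoint bounds into the sum and again stratifying by degree:
\begin{align*}
\sum_{U\in\mathcal{J}}\bigl[\err(\att_{\inf U}x,\Par(U))+\err(\att_{\sup U}x,\Par(U))\bigr]
&\ll_{\dtt}\sum_{m=1}^{\dtt}|\{U:\deg(U)=m\}|\cdot r^{m}|\log\delta|\\
&\ll_{\att}\sum_{m=1}^{\dtt}\frac{N}{r^{m-1}|\log\delta|}\cdot r^{m}|\log\delta|\\
&=\dtt\cdot rN\ll_{\dtt}rN,
\end{align*}
which is the desired bound. The one place that needs a little care is verifying that the endpoint control from Proposition~\ref{proposition:5.1}(\ref{item:5.1_2}) really suffices to control the full block deviation, which is a purely combinatorial statement about heights once one notes that $\height(\att_t x)$ restricted to a block of $\Par(U)$ is monotone decreasing in the index (up to bounded error). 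The main conceptual point is simply that the saving of one power of $r$ at each degree level precisely cancels the $r^{-(m-1)}$ growth in the interval count, producing the uniform $rN$ bound.
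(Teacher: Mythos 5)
Your proof is correct and takes essentially the same approach as the paper's: for item (1) you sum the degree-stratified counts from Proposition~\ref{proposition:5.1}(\ref{item:5.1_3}), which is a trivial variant of the paper's direct invocation of $|\mathcal{J}|\leq|\mathcal{F}_{\delta^{r^{\dtt-1}}}|$; for item (2) both you and the paper derive $\err(\att_tx,\Par(U))\ll r^{\deg U}|\log\delta|$ at endpoints from Proposition~\ref{proposition:5.1}(\ref{item:5.1_2}) via the block-wise spread of heights, then sum stratified by degree so the $r^{m}$ saving cancels the $r^{-(m-1)}$ in the interval count. The only minor imprecision is your parenthetical that $\height(\att_tx)$ is monotone ``up to bounded error'' within a block: since $\lambda_1\leq\cdots\leq\lambda_{\dtt}$, the entries of $\height$ are exactly non-increasing, no error term needed.
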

\begin{proof}
Clearly, \[|\mathcal{J}^{\prime}|\ll |\mathcal{J}|.\]
    As $\mathcal{J}=\mathcal{J}_{\dtt-1}$, 
    we deduce from Equation~\eqref{eq:4.7_n}
    that
        \[|\mathcal{J}|\leq |\mathcal{F}_{\delta^{r^{\dtt-1}}}|\ll_{\att}  N/(r^{\dtt-1}|\log\delta|),\]
    concluding item~\eqref{item:4.4_1}. 

    \medskip

    For~\eqref{item:4.4_2}, let $U\in\mathcal{J}$ be of degree $\deg(U)=m$ for some $m$.
    Then, as discussed in~\S\ref{subsubsec:4.1.2}, the fact (which follows from Proposition~\ref{proposition:5.1}) that
    \[\eta_{l}(\att_{t} x)\geq \delta^{r^{m}}\]
    for all 
    \[l\in\{1,\ldots,\dtt-1\}\smallsetminus\eta(\Par(U))\]
    for $t=\inf U,\sup U$, implies that
    \[\err(\att_t x,\Par(U))\ll |\log\delta^{r^{m}}|=r^{m}|\log\delta|\]
    for such $t$.

    Then, using Proposition~\ref{proposition:5.1}, it follows that
    \begin{align*}\sum_{U\in \mathcal{J}\cap\deg^{-1}(m)}&\Big[\err(\att_{\inf U}x,\Par(U))+\err(\att_{\inf U}x,\Par(U))\Big]\ll r^{m}|\log\delta|\cdot|\mathcal{J}\cap\deg^{-1}(m)|\\&
    \ll_{\att} r^{m}|\log\delta|\cdot N/(r^{m-1}|\log\delta|)
    =rN
    .\end{align*}
    As this holds for all $m$, the result follows.
    
\end{proof}

\begin{corollary}\label{corollary:4.5}
For any $x\in X$,
    \[\Bigl\|\!\!\!\!\!\!\!\!\!\!\!\sum_{\substack{U\in \mathcal{J}^{\prime}:\\ \ \emptyset\not=\Weyl(U)=[w]_{\Par(U)}}}\hspace{-1cm}|U|\cdot \pi_{\Par(U)}(\upalpha^{w})\Bigr\|\leq  O_{\att}(rN)+\|\height(\att_{N}x)-\height(\att_{-N}x)\|.\]
\end{corollary}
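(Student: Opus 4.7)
The plan is to match each summand $|U|\cdot\pi_{\Par(U)}(\upalpha^w)$ against the change $\Delta\height(U,\Par(U))$ of the projected height on the same interval, telescope the resulting height changes over the refinement $\mathcal{J}^\prime$ down to a sum over $\mathcal{J}$, and then split each $\Delta\height(V,\Par(V))$ for $V\in\mathcal{J}$ into a piece that telescopes globally to $\height(\att_N x)-\height(\att_{-N}x)$ plus an error controlled by the $\err$-terms of Proposition~\ref{proposition:4.4_n}(2).

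First, on an interval $U\in\mathcal{J}^\prime$ with $\emptyset\neq \Weyl(U)=[w]_{\Par(U)}$, Proposition~\ref{lemma:6.1} gives $\alpha_l(\att_t x)\asymp\exp\bigl(t\sum_{i=1}^l\upalpha_{w(i)}\bigr)\alpha_l(x)$ for all $l\in\eta(\Par(U))$ and $t\in U$. Combined with $\log\alpha_l(z)\asymp-\sum_{i=1}^l(\height(z))_i$ (Theorem~\ref{theorem:3.4}) and evaluation at the endpoints of $U$, this yields $\||U|\pi_{\Par(U)}(\upalpha^w)+\Delta\height(U,\Par(U))\|\ll_\att 1$ (the sign reflects the convention $\height(\cdot)=\diag(-\log\lambda_i(\cdot))$ and is harmless under norms). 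For the remaining $U\in\mathcal{J}^\prime$ with $\Weyl(U)=\emptyset$, the refinement of $\mathcal{J}$ into $\mathcal{J}^\prime$ was obtained from Proposition~\ref{proposition:2.2}, which forces $|U|\ll_\att 1$; together with Corollary~\ref{corollary:3.12} and the boundedness of $\pi_{\Par(U)}$ this gives $\|\Delta\height(U,\Par(U))\|\ll_\att 1$. Summing and applying the triangle inequality,
\[\biggl\|\sum_{\substack{U\in\mathcal{J}^\prime:\\ \emptyset\neq\Weyl(U)=[w]_{\Par(U)}}}\!\!\!\!\!\!\!\!\!\!|U|\pi_{\Par(U)}(\upalpha^w)+\sum_{U\in\mathcal{J}^\prime}\Delta\height(U,\Par(U))\biggr\|\ll_\att |\mathcal{J}^\prime|.\]

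Since $\Par$ is constant on each $V\in\mathcal{J}$ and the $U\in\mathcal{J}^\prime$ contained in $V$ partition $V$ with aligned endpoints, the inner sums telescope within $V$:
\[\sum_{U\in\mathcal{J}^\prime}\Delta\height(U,\Par(U))=\sum_{V\in\mathcal{J}}\Delta\height(V,\Par(V)).\]
Writing $\Delta\height(V,\Par(V))=\Delta\height(V,B)+\bigl[\Delta\height(V,\Par(V))-\Delta\height(V,B)\bigr]$, the first summand telescopes globally (as $\pi_B=\mathrm{id}$) to $\height(\att_N x)-\height(\att_{-N}x)$, while the bracketed difference at each $V$ equals $(\pi_{\Par(V)}-\mathrm{id})\height(\att_{\sup V}x)-(\pi_{\Par(V)}-\mathrm{id})\height(\att_{\inf V}x)$, whose norm is bounded by $\err(\att_{\sup V}x,\Par(V))+\err(\att_{\inf V}x,\Par(V))$. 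Summing over $V\in\mathcal{J}$ and invoking Proposition~\ref{proposition:4.4_n}(2) bounds the total by $\ll_\att rN$.

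Combining the two displays yields
\[\biggl\|\sum_{\substack{U\in\mathcal{J}^\prime:\\ \emptyset\neq\Weyl(U)=[w]_{\Par(U)}}}\!\!\!\!\!\!\!\!\!\!|U|\pi_{\Par(U)}(\upalpha^w)\biggr\|\leq\|\height(\att_N x)-\height(\att_{-N}x)\|+O_\att(rN+|\mathcal{J}^\prime|),\]
and Proposition~\ref{proposition:4.4_n}(1) together with the standing constraint $r>(|\log\delta^\prime|/|\log\delta|)^{1/(\dtt+1)}$ (and $|\log\delta|$ bounded below) guarantees $|\mathcal{J}^\prime|\ll_\att N/(r^{\dtt-1}|\log\delta|)\ll_\att rN$, which absorbs the extra $|\mathcal{J}^\prime|$ term. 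The main delicate point in the argument is purely bookkeeping: keeping track of which partition ($\mathcal{J}$ versus $\mathcal{J}^\prime$) each sum lives on while telescoping, and of the $\pm$ sign convention linking $|U|\pi_{\Par(U)}(\upalpha^w)$ to $\Delta\height(U,\Par(U))$; all substantive estimates have already been packaged into Propositions~\ref{lemma:6.1} and~\ref{proposition:4.4_n}.
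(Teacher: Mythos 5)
Your proof is correct and follows essentially the same route as the paper, which reduces Corollary~\ref{corollary:4.5} to the discussion in \S\ref{subsubsec:4.1.1} (comparison of $|U|\pi_{\Par(U)}(\upalpha^w)$ with $\Delta\height$ via Proposition~\ref{lemma:6.1}, telescoping from $\mathcal{J}^{\prime}$ to $\mathcal{J}$, splitting $\Delta\height(V,\Par(V))$ into a globally telescoping part plus $\err$-terms) together with Proposition~\ref{proposition:4.4_n} and the constraint $r^{\dtt+1}>|\log\delta^\prime|/|\log\delta|$. You also correctly catch a small sign slip: with $\height(\cdot)=\diag(-\log\lambda_i(\cdot))$, Proposition~\ref{lemma:6.1} gives $\||U|\pi_{\Par(U)}(\upalpha^w)+\Delta\height(U,\Par(U))\|\ll 1$ rather than the minus sign appearing in \S\ref{subsubsec:4.1.1}, which is immaterial once norms are taken but is worth the care you gave it.
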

\begin{proof}
By the discussion in~\S\ref{subsubsec:4.1.1}, we have
\begin{align*}\Bigl\|\!\!\!\!\!\!\!\!\!\!\!\sum_{\substack{U\in \mathcal{J}^{\prime}:\\ \ \emptyset\not=\Weyl(U)=[w]_{\Par(U)}}}\hspace{-1cm}|U|\cdot \pi_{\Par(U)}(\upalpha^{w})\Bigr\|&
\leq \sum_{U\in\mathcal{J}}\Big[\err(\att_{\inf U_j}x,\Par(U_j))+\err(\att_{\sup U_j}x,\Par(U_{j}))\Big]\\&
+\|\height(\att_{N}x)-\height(\att_{-N}x)\|+O_{\att}(|\mathcal{J}^{\prime}|)
.\end{align*}
Thus the corollary follows from Proposition~\ref{proposition:4.4_n}, considering that $r$ was chosen so that
\[r^{\dtt}>r^{\dtt+1}>\frac{|\log\delta^{\prime}|}{|\log\delta|}>\frac{1}{|\log\delta|}.\]
\end{proof}

\subsection{Codings and discrete partitions}

When computing entropy, we will consider the more convenient discrete dynamical system obtained by the time-one map $\att\coloneqq\att_{1}$, instead of the continuous flow.
The analogous data to the partition of $[-N,N]$ and the assignments of $\Par$ and $\Weyl$ is called a coding.
\begin{definition}\label{def:13}
Let $N\in \mathbb{N}$. A 
function 
\[\mathcal{C}:\ \{-N,\ldots,N\}\to\{(P,[w]_P):\ P\in\mathcal{P},\ [w]_P\in W_{P,\att}\}\cup\big(\mathcal{P}\times\{\emptyset\}\big)\]
is called a \textit{coding} over the time interval $[-N,N]$.
The set of all codings over $[-N,N]$ is denoted by $\mathcal{C}_{N}$.
\end{definition}

\begin{definition}
Let $x\in X$, $N\in \mathbb{N}$, and $\delta,\delta^{\prime},r>0$ which satisfy $\delta^{1/2}<\delta^{\prime}<\eta_0$, $|\log\delta|<N$, and $(\frac{\log\delta^{\prime}}{\log\delta})^{1/(\dtt+1)}<r<1$.
Let $\mathcal{J}^{\prime}$, $\Par$ and $\Weyl$ be as constructed in~\S\ref{subsubsec:4.1.4}.
Then, define the coding
$\mathcal{C}_{x}^{\delta,\delta^{\prime},r,N}\in\mathcal{C}_{N}$ by
\[\mathcal{C}_{x}^{\delta,\delta^{\prime},r,N}(n)=(\Par(U_n),\Weyl(U_n))\]
for the unique $U_{n}\in\mathcal{J}^{\prime}$ so that $n\in U_{n}$.
\end{definition}
\begin{definition}
    For any subset $U\subseteq\mathbb{R}$, let
    \[U_{\mathbb{Z}}=U\cap\mathbb{Z}.\]
    For any collection $\mathcal{A}$ of real subsets, let
    \[\mathcal{A}_{\mathbb{Z}}=\{U_{\mathbb{Z}}:\ U\in\mathcal{A}\}.\]
    If $U\subseteq\mathbb{R}$ is an interval, we still consider $U_{\mathbb{Z}}$ as an  interval in terms of terminology.
\end{definition}
\begin{remark}\label{remark:4.6}
    For an interval $U\subseteq\mathbb{R}$, 
    \[\Big||U|-|U_{\mathbb{Z}}|\Big|\leq 1.\]
\end{remark}
\begin{corollary}[of Corollary~\ref{corollary:4.5}]\label{corollary:4.7_n}
Let $\mathcal{C}=\mathcal{C}_{x}^{\delta,\delta^{\prime},r,N}$. Then 
    \[\Bigl\|\!\!\!\sum_{\substack{P\in \mathcal{P},\\ [w]_{P}\in W_{P,\att}}}\bigl|\mathcal{C}^{-1}(P,[w]_{P})\bigr|\cdot \pi_P(\upalpha^{w})\Bigr\|\leq O_{\att}(rN)+\|\height(\att_{N}x)-\height(\att_{-N}x)\|.
    \]
\end{corollary}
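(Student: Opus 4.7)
The plan is to deduce this discrete statement from the continuous version Corollary~\ref{corollary:4.5} by controlling the discrepancy between the integer count $|\mathcal{C}^{-1}(P,[w]_P)|$ and the continuous total length $\sum_{U}|U|$ over the relevant intervals.

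First I would observe that by construction of $\mathcal{C}=\mathcal{C}_{x}^{\delta,\delta^{\prime},r,N}$, each integer $n\in\{-N,\ldots,N\}$ falls in a unique $U_n\in\mathcal{J}^{\prime}$ and $\mathcal{C}(n)=(\Par(U_n),\Weyl(U_n))$. Therefore, for each pair $(P,[w]_P)$,
\[
|\mathcal{C}^{-1}(P,[w]_{P})|=\sum_{\substack{U\in\mathcal{J}^{\prime}:\\\Par(U)=P,\ \Weyl(U)=[w]_{P}}}|U_{\mathbb{Z}}|.
\]
Applying Remark~\ref{remark:4.6} to each such $U$ yields $\bigl||U|-|U_{\mathbb{Z}}|\bigr|\leq 1$, so by the triangle inequality
\[
\Bigl\|\sum_{P,[w]_P}|\mathcal{C}^{-1}(P,[w]_{P})|\cdot\pi_P(\upalpha^{w})-\!\!\!\!\!\!\sum_{\substack{U\in\mathcal{J}^{\prime}:\\ \emptyset\neq\Weyl(U)=[w]_{\Par(U)}}}\!\!\!\!\!\!|U|\cdot\pi_{\Par(U)}(\upalpha^{w})\Bigr\|\leq |\mathcal{J}^{\prime}|\cdot\max_{P,w}\|\pi_P(\upalpha^{w})\|,
\]
where the maximum is $O_{\att}(1)$ since $\upalpha$ is fixed by $\att$.

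Next I would apply Proposition~\ref{proposition:4.4_n}(\ref{item:4.4_1}) to bound $|\mathcal{J}^{\prime}|\ll_{\att} N/(r^{\dtt-1}|\log\delta|)$. The choice of $r$ gives $r^{\dtt+1}|\log\delta|>|\log\delta^{\prime}|\geq 1$, so $r^{\dtt-1}|\log\delta|\geq 1/r^{2}$, hence $|\mathcal{J}^{\prime}|\ll_{\att} r^{2}N\leq rN$. Combined with the previous inequality this shows that the discrete sum differs from the continuous one of Corollary~\ref{corollary:4.5} by at most $O_{\att}(rN)$ in norm. A final triangle inequality with the bound of Corollary~\ref{corollary:4.5} absorbs this error into the already-present $O_{\att}(rN)$ term, yielding the desired inequality.

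The only subtle point is checking that $r$ as fixed in~\S\ref{subsubsec:4.1.3} indeed makes $|\mathcal{J}^{\prime}|$ small enough to be absorbed into $O_{\att}(rN)$; this is immediate from the chosen exponent, and there is no other obstacle since everything else is just bookkeeping of discrete versus continuous lengths.
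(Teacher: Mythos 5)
Your proof is correct and follows essentially the same route as the paper: expand the discrete count over integer intervals, use Remark~\ref{remark:4.6} to replace $|U_{\mathbb{Z}}|$ by $|U|$ at a cost of $O_{\att}(|\mathcal{J}^{\prime}|)$, bound $|\mathcal{J}^{\prime}|$ by $O_{\att}(rN)$ via Proposition~\ref{proposition:4.4_n} and the constraint on $r$, and finish with Corollary~\ref{corollary:4.5}. You spell out the bookkeeping (in particular the chain $r^{\dtt-1}|\log\delta|\geq r^{-2}$ from $r^{\dtt+1}|\log\delta|>|\log\delta^{\prime}|\geq 1$) slightly more explicitly than the paper, which simply cites these same three inputs, but there is no material difference.
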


\begin{proof}
Using
Proposition~\ref{proposition:4.4_n} together with Corollary~\ref{corollary:4.5} and Remark~\ref{remark:4.6}, we have    \begin{align*}\Bigl\|\!\!\!\sum_{\substack{P\in \mathcal{P},\\ [w]_{P}\in W_{P,\att}}}\bigl|\mathcal{C}^{-1}(P,[w]_{P})\bigr|\cdot \pi_P(\upalpha^{w})\Bigr\|&\leq O_{\att}(|\mathcal{J}^{\prime}|)+\Bigl\|\!\!\!\sum_{\substack{U\in \mathcal{J}^{\prime}:\\ \ \emptyset\not=\Weyl(U)=[w]_{\Par(U)}}}\hspace{-1cm}|U|\cdot \pi_{\Par(U)}(\upalpha^{w})\Bigr\|
    \\&
    \leq 
    O_{\att}(rN)+\|\height(\att_{N}x)-\height(\att_{-N}x)\|.
    \end{align*}
    \end{proof}

\subsection{The relation to the cusp regions}

In order to prove Theorem~\ref{theorem:1.1new}, we need to understand the relation between the parabolic groups assigned by a coding of a trajectory, and the regions of the cusp where the trajectory in fact passes through.
This relation is not as simple as one could hope for, due to our complicated construction of $\mathcal{J}$, where each interval $U\in\mathcal{J}$ is essentially defined using a different parameter $\delta^{r^{\deg U-1}}$.  
We study this relation using the cusp regions $\cusp_{\delta,\delta^{\prime}}(P,Q,[w]_{Q})$ and eventually obtain an entropy bound using these regions in Theorem~\ref{theorem:1.1}.
Constructing an open cover as in Theorem~\ref{theorem:1.1new} would follow easily from Theorem~\ref{theorem:1.1}, as will be shown in~\S\ref{subsec:8.3}.

\begin{lemma}\label{proposition:5.2}
Let $x,\delta,\delta^{\prime},r,N$ as before.
Let $n\in U\in \mathcal{J}^{\prime}$.
Let $Q\subseteq P\in\mathcal{P}$ be such that $\att_{n} x\in \cusp_{\delta,\delta^{\prime}}(P,Q)$.
Then
\begin{enumerate}
    \item\label{item:5.2_1}
    $Q\subseteq \Par(U)\subseteq P$.
    \item\label{item:5.2_2}
    If $\att_n x\in \cusp_{\delta,\delta^{\prime}}(P,Q,[w]_{Q})$ for some $[w]_{Q}\in W_{Q,\att}$, then
    $\Weyl(U)=[w]_{\Par(U)}$.
    \item\label{item:5.2_3} 
    \[\sum_{P\in\mathcal{P}}
    \bigl|(\mathcal{C}_{x}^{\delta,\delta^{\prime},r,N})^{-1}(P,\emptyset)\bigr|\ll_{\att} 
    \frac{1}{r^{\dtt-1}|\log\delta|}N
    .\]
\end{enumerate}
\end{lemma}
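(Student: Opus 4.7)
The plan is to derive items~\ref{item:5.2_1} and~\ref{item:5.2_2} directly from Proposition~\ref{proposition:5.1} together with the definitions of $\mathcal{J}^{\prime}$, $\Par$ and $\Weyl$, and to bound item~\ref{item:5.2_3} by a counting argument built on Proposition~\ref{proposition:4.4_n}.

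For item~\ref{item:5.2_1}, I would unpack $\att_{n}x\in\cusp_{\delta,\delta^{\prime}}(P,Q)$ as the conjunction $\eta(\att_{n}x,\delta)=\eta(P)$ and $\eta(\att_{n}x,\delta^{\prime})=\eta(Q)$. Since $U\in\mathcal{J}^{\prime}$ is contained in some element of $\mathcal{J}$ and $\Par$ is extended to $\mathcal{J}^{\prime}$ accordingly, item~\ref{item:5.1_1} of Proposition~\ref{proposition:5.1} applied at $t=n$ yields $\eta(\att_{n}x,\delta)\subseteq\eta(\Par(U))\subseteq\eta(\att_{n}x,\delta^{\prime})$, i.e.\ $\eta(P)\subseteq\eta(\Par(U))\subseteq\eta(Q)$. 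Translating through the order-reversing bijection between standard parabolic subgroups and subsets of $\{1,\ldots,\dtt-1\}$ from~\S\ref{subsec:3.3} then gives $Q\subseteq\Par(U)\subseteq P$.

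For item~\ref{item:5.2_2}, once item~\ref{item:5.2_1} is in hand the remaining task is compatibility of orientations. The hypothesis says $\att_{n}x$ has orientation $[w]_{Q}$ with respect to $Q$, so for each $l\in\eta(Q)$ the subspace $V_{l}(\att_{n}x)$ has orientation $\{\exp(\upalpha_{w(1)}),\ldots,\exp(\upalpha_{w(l)})\}$. Restricting this data to those $l\in\eta(\Par(U))\subseteq\eta(Q)$ gives exactly the orientation data required for $\Par(U)$; the corresponding class in $W_{\Par(U),\att}$ is well-defined because $Q\subseteq\Par(U)$ implies $W(T,Q)\subseteq W(T,\Par(U))$, making the natural projection $W_{Q,\att}\to W_{\Par(U),\att}$ well-defined and sending $[w]_{Q}$ to $[w]_{\Par(U)}$. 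Since $\Weyl(U)$ is defined to equal this orientation whenever it exists, and here it does, I would conclude $\Weyl(U)=[w]_{\Par(U)}$.

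The main work, and the main obstacle, lies in item~\ref{item:5.2_3}. The key observation is that $\bigcup_{P}(\mathcal{C}_{x}^{\delta,\delta^{\prime},r,N})^{-1}(P,\emptyset)=\{n\in[-N,N]\cap\mathbb{Z}:\Weyl(U_{n})=\emptyset\}$, where $U_{n}\in\mathcal{J}^{\prime}$ is the interval containing $n$; such $n$ lie precisely in the \emph{transition} subintervals of $\mathcal{J}^{\prime}$ on which the orientation of $\att_{t}x$ with respect to $\Par(U)$ fails to be well-defined. Since $\mathcal{J}^{\prime}$ is obtained from $\mathcal{J}$ by applying Proposition~\ref{proposition:2.2} to each $V_{l}(x)$ with $l\in\eta(\Par(U))$ within each $U\in\mathcal{J}$, and Proposition~\ref{proposition:2.2} guarantees that the complement of the ``good'' intervals has total Lebesgue measure $\ll_{\att}1$ and consists of $\ll_{\att}1$ components of bounded length, combined with $|\eta(\Par(U))|\ll 1$ this gives $\ll_{\att}1$ integer points with $\Weyl=\emptyset$ per $U\in\mathcal{J}$. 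Summing over $U\in\mathcal{J}$ and using the bound $|\mathcal{J}|\ll_{\att}N/(r^{\dtt-1}|\log\delta|)$ established in the proof of Proposition~\ref{proposition:4.4_n} then completes the estimate. The delicate point is verifying that a Lebesgue-measure bound on transition regions translates into an integer-point bound of the same order, which works here because each transition component is itself of length $\ll_{\att}1$, so its integer-point count is controlled by the number of components rather than by their total length.
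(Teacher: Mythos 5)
Your proposal is correct and follows essentially the same route as the paper's proof: item~\ref{item:5.2_1} via Proposition~\ref{proposition:5.1}\eqref{item:5.1_1} and the order-reversing bijection $P\mapsto\eta(P)$, item~\ref{item:5.2_2} by restricting the flag-orientation data from $\eta(Q)$ to $\eta(\Par(U))$, and item~\ref{item:5.2_3} by counting the $\ll_{\att}1$ ``undefined-orientation'' integer points per $U\in\mathcal{J}$ and multiplying by $|\mathcal{J}|\ll_{\att}N/(r^{\dtt-1}|\log\delta|)$ from Proposition~\ref{proposition:4.4_n}. You fill in details the paper leaves as one-liners (in particular the well-definedness of the projection $W_{Q,\att}\to W_{\Par(U),\att}$), but the underlying argument is the same.
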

\begin{proof}
Item~\ref{item:5.2_1} follows from Proposition~\ref{proposition:5.1}.
Item~\ref{item:5.2_2} follows easily from the fact that the flag $(V_{l}(\att_n x))_{l\in \eta(Q)}$ is a refinement of the flag $(V_{l}(\att_n x))_{l\in \eta(\Par(U))}$.
For Item~\ref{item:5.2_3}, we defined $\mathcal{J}^{\prime}$ as a refinement of $\mathcal{J}$ obtained by splitting each interval $V\in \mathcal{J}$ to $\ll 1$ sub-intervals $U\subseteq V$, each with an assignment by the function $\Weyl$, using Proposition~\ref{proposition:2.2}. For any such $V$, the total length of its sub-intervals $U$ with $\Weyl(U)=\emptyset$ is $\ll_{\att} 1$.
Taking into account Remark~\ref{remark:4.6}, and using Proposition~\ref{proposition:4.4_n}, 
we obtain 
    \[\sum_{P}\bigl|(\mathcal{C}_{x}^{\delta,\delta^{\prime},r,N})^{-1}(P,\emptyset)\bigr|\ll_{\att} |\mathcal{J}|\ll_{\att}\frac{1}{r^{\dtt-1}|\log\delta|}N,\]
    as required.
\end{proof}

\begin{proposition}\label{corollary:5.3}
Let $x\in X$ and $\mathcal{C}=\mathcal{C}_{x}^{\delta,\delta^{\prime},r,N}$.
For any $Y\subseteq X$, let 
\[T_{Y}^{x}=\{n\in [-N,N]_{\mathbb{Z}}:\ \att_{n} x\in Y\}.\]

Let \[\chi:\ \bigcupdot_{P\in \mathcal{P}}W_{P,\att}\to \mathbb{R}\] be a
function with a non-negative maximal value.
Then 
\begin{multline*}\sum_{H\in \mathcal{P}}\sum_{[\tau]_{H}\in W_{H,\att}}\bigl|\mathcal{C}^{-1}(H,[\tau]_H)\bigr|\chi([\tau]_{H})\leq\\
\leq\sum_{\substack{P,Q\in\mathcal{P}:\\ Q\subseteq P}}\sum_{[w]_{Q}\in W_{Q,\att}}\bigl|T^{x}_{\cusp_{\delta,\delta^{\prime}}(P,Q,[w]_{Q})}\bigr|\max_{\substack{H\in\mathcal{P}:\\ Q\subseteq H\subseteq P}}\chi([w]_{H})
+\bigl|T^{x}_{\cusp_{\delta^{\prime}}(X,\emptyset)}\bigr|\max_{\substack{H\in \mathcal{P},\\ [w]_{H}\in W_{H,\att}}}\chi([w]_{H}).
\end{multline*}

\end{proposition}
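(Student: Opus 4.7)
My plan is to rewrite the LHS as a pointwise sum over $n\in[-N,N]_{\mathbb{Z}}$ and then bound each term using Lemma~\ref{proposition:5.2}. Writing $U_n\in\mathcal{J}^{\prime}$ for the unique interval containing $n$, and noting that $\mathcal{C}(n)=(\Par(U_n),\Weyl(U_n))$ and that the LHS sum runs over $[\tau]_H\in W_{H,\att}$ (so the pairs $(H,\emptyset)$ are excluded), we have
\[\sum_{H\in\mathcal{P}}\sum_{[\tau]_H\in W_{H,\att}}\bigl|\mathcal{C}^{-1}(H,[\tau]_H)\bigr|\chi([\tau]_H)=\sum_{\substack{n\in[-N,N]_{\mathbb{Z}}:\\ \Weyl(U_n)\neq\emptyset}}\chi(\Weyl(U_n)).\]

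Next, I would partition $[-N,N]_{\mathbb{Z}}$ according to which cusp region contains $\att_n x$. By Proposition~\ref{proposition:3.15}(1) and Definitions~\ref{definition:8}--\ref{definition:9}, the family $\{\cusp_{\delta,\delta^{\prime}}(P,Q,[w]_Q)\}_{Q\subseteq P,\,[w]_Q}$ together with $\{\cusp_{\delta,\delta^{\prime}}(P,Q,\emptyset)\}_{Q\subseteq P}$ partition $X$ (with $P=G$ absorbing the compact part). I would therefore split the pointwise sum into an \emph{oriented} contribution, where $\att_n x$ has a well-defined orientation with respect to $Q$, and an \emph{unoriented} contribution, where it does not.

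For the oriented contribution, suppose $\att_n x\in\cusp_{\delta,\delta^{\prime}}(P,Q,[w]_Q)$. Then Lemma~\ref{proposition:5.2}(\ref{item:5.2_1}),(\ref{item:5.2_2}) give $Q\subseteq\Par(U_n)\subseteq P$ and $\Weyl(U_n)=[w]_{\Par(U_n)}$, whence
\[\chi(\Weyl(U_n))=\chi([w]_{\Par(U_n)})\leq\max_{H:\,Q\subseteq H\subseteq P}\chi([w]_H).\]
Summing over all $n$ with $\att_n x\in\cusp_{\delta,\delta^{\prime}}(P,Q,[w]_Q)$, and then over $P,Q,[w]_Q$, reproduces exactly the first term of the RHS.

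For the unoriented contribution, the points $\att_n x\in\cusp_{\delta,\delta^{\prime}}(P,Q,\emptyset)$ with $\Weyl(U_n)\neq\emptyset$ can only be controlled by the trivial pointwise bound $\chi(\Weyl(U_n))\leq\max_{H,[w]_H}\chi([w]_H)$. Since $\bigcup_{Q\subseteq P}\cusp_{\delta,\delta^{\prime}}(P,Q,\emptyset)\subseteq\cusp_{\delta^{\prime}}(X,\emptyset)$, the number of such $n$ is at most $|T^{x}_{\cusp_{\delta^{\prime}}(X,\emptyset)}|$; enlarging the index set from those $n$ with $\Weyl(U_n)\neq\emptyset$ to all of $T^{x}_{\cusp_{\delta^{\prime}}(X,\emptyset)}$ preserves the inequality precisely because $\max\chi\geq 0$ by hypothesis. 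This yields the second term of the RHS. The proof is essentially combinatorial, and the only subtlety is noting exactly where Lemma~\ref{proposition:5.2} matches $(\Par(U_n),\Weyl(U_n))$ to the refined cusp coordinates $(P,Q,[w]_Q)$ and where the non-negativity of $\max\chi$ is needed; I do not anticipate any real obstacle.
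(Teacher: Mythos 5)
Your proof is correct and follows essentially the same approach as the paper's, just phrased pointwise over $n\in[-N,N]_{\mathbb{Z}}$ rather than by decomposing the preimage sets $\mathcal{C}^{-1}(H,[\tau]_H)$ directly; both rely on the partition of $X$ by the refined cusp regions $\cusp_{\delta,\delta^{\prime}}(P,Q,\sigma)$, Lemma~\ref{proposition:5.2} to match $(\Par(U_n),\Weyl(U_n))$ to $(P,Q,[w]_Q)$, and the non-negativity of $\max\chi$ to absorb the unoriented times into $T^{x}_{\cusp_{\delta^{\prime}}(X,\emptyset)}$.
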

\begin{remark}
The 
function
$\chi$ could simply be the entropy $\chi([w]_{P})=h(P,\att^{w})$, but could also be include an additional linear functional $\chi([w]_P)=h(P,\att^{w})-\phi(\pi_P(\upalpha^{w}))$,
as will be in later parts of this paper. 
\end{remark}
\begin{proof}
For simplicity of notation, we will denote
\[T_{P,Q,\sigma}=T^{x}_{\cusp_{\delta,\delta^{\prime}}(P,Q,\sigma)}\]
for any $P,Q\in\mathcal{P}$ with $Q\subseteq P$ and $\sigma\in W_{Q,\att}\cup\{\emptyset\}$.

Recall that
\[\left\{\cusp_{\delta,\delta^{\prime}}(P,Q,\sigma):\ P,Q\in\mathcal{P},\ Q\subseteq P,\ \sigma\in W_{Q,\att}\cup\{\emptyset\}\right\}\]
is a partition of $X$.
Then, by Lemma~\ref{proposition:5.2},
\[\mathcal{C}^{-1}(H,[\tau]_{H})\subseteq\bigcupdot_{\substack{P,Q\in\mathcal{P}:\\  Q\subseteq H\subseteq P}}(T_{P,Q,\emptyset}\cupdot\bigcupdot_{\substack{[w]_{Q}\in W_{Q,\att}:\\ [w]_{H}=[\tau]_{H}}}T_{P,Q,[w]_{Q}})\]
Then

\begin{align*}
\numberthis\label{eq:5.1}
\Big|\mathcal{C}^{-1}(H,[\tau]_{H})\Big|\chi([\tau]_{H})&
= \sum_{\substack{P,Q\in\mathcal{P}:\\  Q\subseteq H\subseteq P}}\Big|T_{P,Q,\emptyset}\cap \mathcal{C}^{-1}(H,[\tau]_{H})\Big|\chi([\tau]_{H})\\&
+\sum_{\substack{P,Q\in\mathcal{P}:\\  Q\subseteq H\subseteq P}}\sum_{\substack{[w]_{Q}\in W_{Q,\att}:\\ [w]_{H}=[\tau]_{H}}} \Big|T_{P,Q,[w]_{Q}}\cap \mathcal{C}^{-1}(H,[w]_{H})\Big|\chi([w]_{H}).
\end{align*}
Note that the following sums are the same, only in different order,  
\[\sum_{H\in \mathcal{P}}\sum_{\substack{P,Q\in\mathcal{P}:\\  Q\subseteq H\subseteq P}}=\sum_{\substack{P,Q\in\mathcal{P}:\\  Q\subseteq P}}\sum_{\substack{H\in \mathcal{P}:\\ Q\subseteq H\subseteq P}},\]
and that for $Q\subseteq H$ 
\[\bigcupdot_{[\tau]_{H}\in W_{H,\att}}\Big\{[w]_{Q}\in W_{Q,\att}:\  [w]_{H}=[\tau]_H\Big\}=W_{Q,\att}.\]
So summing Equation~\eqref{eq:5.1} over all $H$ and $[\tau]_{H}$ in fact shows that
\begin{align*}
\sum_{H\in \mathcal{P}}\sum_{[\tau]_{H}\in W_{H,\att}}
\Big|\mathcal{C}^{-1}(H,[\tau]_{H})\Big|\chi([\tau]_{H})&
\leq \sum_{\substack{P,Q\in \mathcal{P}:\\Q\subseteq P}}\Big|T_{P,Q,\emptyset}
\Big|\max_{\substack{H\in \mathcal{P},\\ [w]_{H}\in W_{H,\att}}}\chi([w]_{H})
\\&\hspace{-5cm}
+\sum_{\substack{P,Q\in \mathcal{P}:\\Q\subseteq P}}\sum_{\substack{H\in\mathcal{P}:\\Q\subseteq H\subseteq P}}\sum_{[w]_{Q}\in W_{Q,\att}} \Big|T_{P,Q,[w]_{Q}}\cap \mathcal{C}^{-1}(H,[w]_{H})\Big|\chi([w]_{H})
\\&\hspace{-5cm}
\leq \Big|T^{x}_{\cusp_{\delta^{\prime}}(X,\emptyset)}\Big|\max_{\substack{H\in \mathcal{P},\\ [w]_{H}\in W_{H,\att}}}\chi([w]_{H})
+\sum_{\substack{P,Q\in \mathcal{P}:\\Q\subseteq P}}\sum_{[w]_{Q}\in W_{Q,\att}}\Big|T_{P,Q,[w]_{Q}}\Big|\max_{\substack{H\in\mathcal{P}:\\ Q\subseteq H\subseteq P}}\chi([w]_{H}),
\end{align*}
where we used the fact that
\[T_{P,Q,[w]_{Q}}\subseteq \bigcupdot_{\substack{H\in\mathcal{P}:\\ Q\subseteq H\subseteq P}} \mathcal{C}^{-1}(H,[w]_{H}).\]
This concludes the proof.
\end{proof}

\subsection{The number of codings}
The following proposition shows that there is only a small number of codings which can be obtained as a result of our construction in~\S\ref{subsec:coding}.
\begin{proposition}\label{lemma:num_of_V}
For $0<\delta<\delta^{\prime}<\eta_0$ small enough, we have 
\begin{equation*}
\left| \{\mathcal{C}_{x}^{\delta,\delta^{\prime},r,N}:\ x\in X\} \right |\leq 
\exp\Big(\kappa\frac{\log\big( r^{\dtt-1}|\log\delta|\big)}{r^{\dtt-1}|\log\delta|}N\Big),
\end{equation*}
for some constant $0<\kappa\ll_{\att} 1$.
\end{proposition}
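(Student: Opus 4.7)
The plan is to exploit the fact that $\mathcal{C}_x^{\delta,\delta',r,N}$ is by construction constant on each interval of $\mathcal{J}'_{\mathbb{Z}}$, i.e.\ the restriction to $\{-N,\ldots,N\}$ of the refined partition $\mathcal{J}'$ from \S\ref{subsubsec:4.1.4}. Consequently, each coding is a labelled ``step function'' on $\{-N,\ldots,N\}$ whose number of steps is bounded above by $|\mathcal{J}'|$. By Proposition~\ref{proposition:4.4_n}\eqref{item:4.4_1}, this number of steps is at most
\[
M \coloneqq \Bigl\lfloor \frac{C_{\att} N}{r^{\dtt-1}|\log\delta|}\Bigr\rfloor + 1
\]
for a suitable constant $C_{\att}\ll_{\att}1$. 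Hence, counting codings reduces to counting the pairs (breakpoint pattern, label assignment).

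The key counting step is the following. Let $L$ denote the (finite) label set $\{(P,[w]_P)\} \cup (\mathcal{P}\times\{\emptyset\})$; note that $|L|$ depends only on $\dtt$ and $\att$. Any coding $\mathcal{C}\in\{\mathcal{C}_x^{\delta,\delta',r,N}\}$ is specified by a choice of $k\leq M$ breakpoints in the $2N$ possible ``gaps'' between consecutive integers of $\{-N,\ldots,N\}$, together with a label from $L$ on each of the $k+1$ resulting blocks. Therefore
\[
\bigl|\{\mathcal{C}_x^{\delta,\delta',r,N}:x\in X\}\bigr| \;\leq\; \sum_{k=0}^{M}\binom{2N}{k}|L|^{k+1}.
\]

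The remaining step is purely analytic. Assuming (as we may, for $\delta$ small) that $M\leq N$, a standard Stirling estimate gives $\binom{2N}{k}\leq (2eN/k)^{k}$, which is increasing in $k$ on $[1,2N/e]$, so the sum above is bounded by
\[
(M+1)\cdot|L|\cdot \Bigl(\frac{2e\,|L|\,N}{M}\Bigr)^{M}.
\]
Taking logarithms and using $N/M\asymp_{\att} r^{\dtt-1}|\log\delta|$, the dominant term is
\[
M\cdot\log\!\Bigl(\frac{2e\,|L|\,N}{M}\Bigr) \;\ll_{\att}\; \frac{N}{r^{\dtt-1}|\log\delta|}\cdot\log\bigl(r^{\dtt-1}|\log\delta|\bigr),
\]
while the factor $(M+1)|L|$ contributes only $O(\log N)$, which is negligible. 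Absorbing constants into $\kappa$ yields the desired inequality.

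The main (and essentially only) obstacle is purely bookkeeping: one needs to verify that $|L|$ and the implicit constants really are independent of $N$, $\delta$, $\delta'$ and $r$, and that the $\log N$ contribution can be absorbed into the stated bound for all admissible ranges of parameters --- in particular for $r^{\dtt-1}|\log\delta|$ not too large --- so that $\log(r^{\dtt-1}|\log\delta|)\geq c>0$ (otherwise, when this quantity is bounded, the bound reduces to $\exp(\kappa' N)$, which is trivial and should be handled separately). Beyond that, no deep estimate is needed: the entire saving comes from Proposition~\ref{proposition:4.4_n}\eqref{item:4.4_1}.
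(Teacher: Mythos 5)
Your proof is correct, but it follows a genuinely different route than the paper. You collapse the whole count into a single stroke: since the coding is constant on the integer restriction of $\mathcal{J}'$ and Proposition~\ref{proposition:4.4_n}\eqref{item:4.4_1} caps $|\mathcal{J}'|$ by $M\asymp_{\att} N/(r^{\dtt-1}|\log\delta|)$, you simply bound the number of labelled step functions with at most $M$ breakpoints chosen from $2N$ gaps, i.e. $\sum_{k\le M}\binom{2N}{k}|L|^{k+1}$, and a Stirling estimate gives the target bound $\exp\bigl(O_{\att}(\tfrac{\log x}{x})N\bigr)$ with $x=r^{\dtt-1}|\log\delta|$. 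The paper instead splits the count in two stages: (i) the number of possible integer partitions $\mathcal{J}_{\mathbb{Z}}$, bounded via a local argument on $Z_{\mathcal{T}_{\delta^{r^{\dtt-1}},l}}$ (at most two breakpoints in any window of length $\asymp x$, so $(x^2+1)^{N/x}$ configurations); and (ii) the number of codings refining a given $\mathcal{J}_{\mathbb{Z}}$, bounded by $\prod_{V}\bigl((\dtt+1)!\,|V|\bigr)^{2\dtt!}$ and AM--GM. Both routes exploit the same input, Proposition~\ref{proposition:4.4_n}\eqref{item:4.4_1}, and produce the same exponent $(N/x)\log x$; yours is shorter and needs less of the internal structure of $\mathcal{J}$, whereas the paper's bookkeeping makes explicit where the spacing of the $\mathcal{T}$-intervals enters.

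Two small remarks on details. The monotonicity window you quote is off: $(2eN/k)^k$, and likewise $(2e|L|N/k)^k$, is increasing for $k<2N$ (respectively $k<2|L|N$), not just $k<2N/e$; since $M\le N$ the conclusion is unaffected, but the interval should be corrected. Regarding your caveat at the end: you do not actually need a separate case for $\log(r^{\dtt-1}|\log\delta|)$ being small. From $r>(|\log\delta'|/|\log\delta|)^{1/(\dtt+1)}$ one gets $r^{\dtt-1}|\log\delta|>|\log\delta'|^{(\dtt-1)/(\dtt+1)}|\log\delta|^{2/(\dtt+1)}>|\log\eta_0|$, so ``$\delta<\delta'<\eta_0$ small enough'' already bounds $r^{\dtt-1}|\log\delta|$ away from $1$ (indeed away from any fixed constant), which both kills the $O(M)$ lower-order terms and guarantees $(N/x)\log x\gtrsim\log N$, absorbing the $\log(M+1)$ contribution. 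With these minor tidy-ups your argument is a clean, self-contained alternative to the paper's.
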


\begin{proof}

Let $\mathcal{C}=\mathcal{C}_{x}^{\delta,\delta^{\prime},r,N}$ for some $x\in X$.
The coding is determined by the partition $\mathcal{J}^{\prime}_{\mathbb{Z}}$ and the functions $\Par,\Weyl$, defined in the construction before.
For simplicity of notation, we will denote by $\Opt(Y)$ the number of different values the variable $Y$ could take, for instance  $\Opt(\mathcal{J}^{\prime}_{\mathbb{Z}})$ is the number of possible outcomes for a partition $\mathcal{J}^{\prime}_{\mathbb{Z}}$.

\medskip

We start by counting the number of different possible partitions $\mathcal{J}_{\mathbb{Z}}$, i.e.\ bounding $\Opt(\mathcal{J}_{\mathbb{Z}})$.
For any collection of sets $\mathcal{A}$, let
\[E^{-}_{\mathcal{A}}=\{\inf U:\ U\in \mathcal{A}\},\]
\[E^{+}_{\mathcal{A}}=\{\sup U:\ U\in \mathcal{A}\}\]
and
\[Z_{\mathcal{A}}=\{\lfloor r\rfloor:\ r\in E^{+}_{\mathcal{A}}\cup E^{-}_{\mathcal{A}}\}.\]

First of all, note that the sets $E_{\mathcal{J}_{\mathbb{Z}}}^{+},E_{\mathcal{J}_{\mathbb{Z}}}^{-}$ determine the partition $\mathcal{J}_{\mathbb{Z}}$, so
\[\Opt(\mathcal{J}_{\mathbb{Z}})\leq \Opt(E_{\mathcal{J}_{\mathbb{Z}}}^{+})\Opt(E_{\mathcal{J}_{\mathbb{Z}}}^{-}).\]
Secondly, it is easy to see that for any possible $\mathcal{J}$,
\begin{equation*}
E_{\mathcal{J}_{\mathbb{Z}}}^{+}\subseteq \{\lfloor r\rfloor, \lfloor r\rfloor- 1:\ r\in E_{\mathcal{J}}^{+}\}\subseteq\{m,m- 1:\ m\in Z_{\mathcal{J}}\},\end{equation*}
and similarly
\[E_{\mathcal{J}_{\mathbb{Z}}}^{-}\subseteq\{m,m+ 1:\ m\in Z_{\mathcal{J}}\}.\]
So it follows that $E_{\mathcal{J}_{\mathbb{Z}}}^{+}$ and $E_{\mathcal{J}_{\mathbb{Z}}}^{-}$ are each determined by $Z_{\mathcal{J}}$ up to
\[\leq 2^{2|Z_J|}\leq 2^{2(|\mathcal{J}|+1)}\]
many options. Therefore
\begin{align*}\label{eq:n1}\numberthis
\Opt(\mathcal{J}_{\mathbb{Z}})&\leq \exp\Big(O(1)|\mathcal{J}|\Big)\cdot\Opt(Z_{\mathcal{J}})^{2}\leq \exp\Big(\frac{O_{\att}(1)}{r^{\dtt-1}|\log\delta|}N\Big)\cdot\Opt(Z_{\mathcal{J}})^{2}
,
\end{align*}
where Proposition~\ref{proposition:4.4_n} was used. 

Furthermore, note that 
for any possible $\mathcal{J}$, the partition $\mathcal{F}_{\delta^{r^{\dtt-1}}}$ is a refinement of $\mathcal{J}$ and so
\[Z_{\mathcal{J}}\subseteq Z_{\mathcal{F}_{\delta^{r^{\dtt-1}}}}.\] Therefore, using Equation~\eqref{eq:4.7n}, $Z_{\mathcal{J}}$ is determined by $Z_{\mathcal{F}_{\delta^{r^{\dtt-1}}}}$ up to
\[\leq 2^{|Z_{\mathcal{F}_{\delta^{r^{\dtt-1}}}}|}\leq \exp(\frac{O_{\att}(1)}{r^{\dtt-1}|\log\delta|}N)\]
options,
i.e.\
\begin{equation}\label{eq:n2}\Opt(Z_{\mathcal{J}})\leq
\exp(\frac{O_{\att}(1)}{ r^{\dtt-1}|\log\delta|}N)\cdot \Opt(Z_{\mathcal{F}_{\delta^{r^{\dtt-1}}}})
.\end{equation}
Lastly, as
\[Z_{\mathcal{F}_{\delta^{r^{\dtt-1}}}}=\bigcup_{l=1}^{\dtt-1}Z_{\mathcal{T}_{\delta^{r^{\dtt-1}},l}},\]
we have
\begin{equation}\label{eq:n3}\Opt(Z_{\mathcal{F}_{\delta^{r^{\dtt-1}}}})\leq\prod_{l=1}^{\dtt-1}
\Opt(Z_{\mathcal{T}_{\delta^{r^{\dtt-1}},l}}).\end{equation}
All together, we find from Equations~\eqref{eq:n1}-\eqref{eq:n3}
\begin{equation}\label{eq:4.9}
\Opt(\mathcal{J}_{\mathbb{Z}})\leq
\exp(\frac{O_{\att}(1)}{r^{\dtt-1}|\log\delta|}N)
\max_{l=1,\ldots,\dtt-1}\Opt(Z_{\mathcal{T}_{\delta^{r^{\dtt-1}},l}})^{2(\dtt-1)}.
\end{equation}

Let us now bound $\Opt(Z_{\mathcal{T}_{\delta^{r^{\dtt-1}},l}})$.
As discussed in~\S\ref{subsubsec:4.1.3}, $\mathcal{T}_{\delta^{r^{\dtt-1}},l}$ is a union of intervals of length $>\kappa_1 r^{\dtt-1}\log\frac{\delta^{\prime}}{\delta}$ each, for 
some $\kappa_1\in(0,1)$ which depends on $\att$.
It follows, by length considerations,
that
for any integral interval $U\subseteq [-N,N]_{\mathbb{Z}}$ of cardinality $\leq \lceil\kappa_1 r^{\dtt-1}\log\frac{\delta^{\prime}}{\delta}\rceil $
(hence with diameter $\lceil\kappa_1 r^{\dtt-1}\log\frac{\delta^{\prime}}{\delta}\rceil-1$), 
we have
\[|U\cap Z_{\mathcal{T}_{\delta^{r^{\dtt-1}},l}}|\leq 2.\]
The number of possibilities for these at most two intersection points is $|U|^{2}+1$.
As we can divide $[-N,N]_{\mathbb{Z}}$ to
\[\leq \big\lceil (2N+1)/\lceil\kappa_1 r^{\dtt-1}\log\frac{\delta^{\prime}}{\delta}\rceil\big\rceil\ll_{\att}
\frac{1}{r^{\dtt-1}|\log\delta|}N
\]
intervals of cardinality
\[\leq \lceil\kappa_1 r^{\dtt-1}\log\frac{\delta^{\prime}}{\delta}\rceil ,\]
we deduce
\begin{align*}\Opt(Z_{\mathcal{T}_{\delta^{r^{\dtt-1}},l}})&\leq (\lceil\kappa_1 r^{\dtt-1}\log\frac{\delta^{\prime}}{\delta}\rceil^{2}+1)^{O_{\att}(1)\cdot N/(r^{\dtt-1}|\log\delta|)}\\&
\leq\exp\Big(O_{\att}(1)\frac{\log\big(r^{\dtt-1}|\log\delta|\big)}{r^{\dtt-1}|\log\delta|}N\Big).
\end{align*}

Then we find from Equation~\eqref{eq:4.9} that in total
\begin{equation}\label{eq:4.11}
\Opt(\mathcal{J}_{\mathbb{Z}})\leq
\exp\Big(O_{\att}(1)\cdot\frac{\log\big(r^{\dtt-1}|\log\delta|\big)}{r^{\dtt-1}|\log\delta|}N\Big).
\end{equation}

\medskip

Next, we now count the number of codings $\mathcal{C}$ with a given partition $\mathcal{J}_{\mathbb{Z}}$, i.e\ the number to refine $\mathcal{J}_{\mathbb{Z}}$ to $\mathcal{J}^{\prime}_{\mathbb{Z}}$ and then assign the functions $\Par, \Weyl$.

By construction, as was previously discussed, $\mathcal{J}^{\prime}_{\mathbb{Z}}$ is obtained from $\mathcal{J}_{\mathbb{Z}}$ by splitting each interval $V\in\mathcal{J}_{\mathbb{Z}}$ to sub-intervals $U_1,\ldots U_k$, each with an assignment by the function $\Weyl$. The sequence $(\Weyl(U_j))_{j=1}^{k}$ is a strictly increasing sequence in $W_{\Par(V),\att}$ with respect to some some partial order, with possible occurrences of $\emptyset$ between the relative Weyl group elements in the sequence (c.f.\ Proposition~\ref{proposition:2.2}). 
So the length of this sequence is  \[k\leq 2|W_{P,\att}|+1\leq 2\dtt!+1.\]
It follows that the number of possible restrictions $\mathcal{C}|_{V}$ is at most
\[2^{\dtt-1}\cdot(\dtt!+1)^{2\dtt!+1}\cdot|V|^{2\dtt!}\leq ((\dtt+1) !)^{2\dtt !} |V|^{2\dtt !},\]
as follows. 
This bound is the product of the $|\mathcal{P}|=2^{\dtt-1}$ many different values for $\Par(V)$, the up to $(\dtt!+1)^{2\dtt!+1}$ sequences $\{\Weyl(U_j)\}_{j=1}^{k}$ of elements of $W_{\Par(V),\att}\cup\{\emptyset\}$, and most importantly the term $|V|^{2\dtt!}$ which bounds the number of ways an integer interval of length $|V|$ can be divided to $2\dtt!+1$ sub-intervals.

As this holds for all $V\in \mathcal{J}_{\mathbb{Z}}$, we find that 
the coding
$\mathcal{C}$ is one of 
\begin{align*}
\numberthis\label{eq:5.2}
\leq &
\prod_{V\in \mathcal{J}_{\mathbb{Z}}}((\dtt+1)! \cdot |V|)^{2\dtt!}
\leq
\big(\frac{\sum_{V\in \mathcal{J}_{\mathbb{Z}}} (\dtt+1)!\cdot |V|}{|\mathcal{J}_{\mathbb{Z}}|}\big)^{2\dtt!\cdot |\mathcal{J}_{\mathbb{Z}}|}\leq \big(\frac{(\dtt+2)!\cdot N}{|\mathcal{J}_{\mathbb{Z}}|}\big)^{2\dtt!\cdot |\mathcal{J}_{\mathbb{Z}}|}.
\end{align*}
different many possibilities, where we used the inequality of arithmetic and geometric means. Let us further bound this term.
Note that by
Proposition~\ref{proposition:4.4_n}, 
together with the assumption $r>(\frac{|\log\delta^{\prime}|}{|\log\delta|})^{1/(\dtt+1)}$,
we have
\begin{equation}\label{eq:4.13}|\mathcal{J}_{\mathbb{Z}}|\ll_{\att}   \frac{1}{ r^{\dtt-1}|\log\delta|}N< N.\end{equation}
Consider the $\mathbb{R}^{+}$ valued function $x\mapsto (z/x)^{x}$ for some fixed $z>0$. It is monotonically increasing for $x<z/e$. Then, 
we deduce from
Equation~\eqref{eq:5.2} and Equation~\eqref{eq:4.13} that there is some constant $\kappa_2>1$ which depends on $\att$, so that 
there are at most 
\begin{align*}
\numberthis\label{eq:4.14}
\leq& \big(\frac{(\dtt+2)!\cdot N}{|\mathcal{J}_{\mathbb{Z}}|}\big)^{2\dtt!\cdot|\mathcal{J}_{\mathbb{Z}}|}\leq \big(\frac{(\dtt+2)!\cdot \kappa_{2} N}{\kappa_2 N/(r^{\dtt-1}|\log\delta|)}\big)^{2\dtt!\kappa_2 N/(r^{\dtt-1}|\log\delta|)}\\&
\leq \exp\Big(O_{\att}(1)\frac{\log(O(1) r^{\dtt-1}|\log\delta|)}{ r^{\dtt-1}|\log\delta|}N\Big)
\leq 
\exp\Big(O_{\att}(1)\frac{ \log(r^{\dtt-1}|\log\delta|)}{ r^{\dtt-1}|\log\delta|}N\Big)
\end{align*}
different functions $\mathcal{C}$ for a given partition $\mathcal{J}_{\mathbb{Z}}$.

\medskip

Putting it all together, the number of functions $\mathcal{C}$ is bounded by the product of the number of possible partitions $\mathcal{J}_{\mathbb{Z}}$ with the number of possible of functions $\mathcal{C}$ for each $\mathcal{J}_{\mathbb{Z}}$. So we multiply the bounds in Equations~\eqref{eq:4.11} and \eqref{eq:4.14}, and find that 
\[\Opt(\mathcal{C})\leq \exp\Big(O_{\att}(1)\cdot\frac{\log\big(r^{\dtt-1}|\log\delta|)}{r^{\dtt-1}|\log\delta|}N\Big).\]
as required.
\end{proof}

\section{Bases in the cusp}\label{sec:bases}
We proceed towards the proof of the covering lemma in~\S\ref{sec:6}. In this section we construct bases for lattices in the cusp, that will simplify the proof later on.
\subsection{Metrics}\label{subsec:metrics_on_SL}
We will often require two notions of metrics on $G=\SL_{\dtt}(\mathbb{R})$, which induce the same topology on $G$. The first metric is the one induced from the standard Euclidean norm on $\mathbb{R}^{\dtt^{2}}$ (often referred to as the Frobenius norm), where we consider $\SL_{\dtt}(\mathbb{R})\subseteq M_{\dtt}(\mathbb{R})\cong \mathbb{R}^{\dtt^{2}}$. Namely, the distance between $g,g^{\prime}\in G$ is $\|g-g^{\prime}\|_{F}$, where for any $M\in M_{\dtt}(\mathbb{R})$ we denote 
\[\|M\|_{F}=\sqrt{\sum_{i,j=1}^{\dtt}M_{i,j}^{2}}.\]
The other notion is a right invariant metric, which we denote $d$. Such metric can be derived for $G=\SL_{\dtt}(\mathbb{R})$, as well as for  any connected Lie group, from a right-invariant Riemannian metric. Such construction can be found for instance in~\cite[\S~9.3.2]{einsiedler_ward_book}.

Each of our two metrics is useful for a different purpose. The key feature that allows us to use both of them is that they are strongly equivalent at a neighborhood of the identity (unit) matrix $e=I\in G$ (c.f.~\cite[\S~9.3.2]{einsiedler_ward_book}). 
That is to say that there are $c_{1},c_{2},\rho>0$, such that
\[c_1\|g-I\|_{F}\leq d(g,I)\leq c_2 \|g-I\|_{F}\] 
for all $g\in G$ with $d(g,I)<\rho$. 

To avoid any confusion, for $\rho>0$ and $H\leq G$ we will denote by $B_{\rho}^{H}$  the ball in $H$ with center $e$ and radius $\rho$ with respect to $d$, and by $D_{\rho}^{H}$ the same ball but with respect to the Frobenius metric.

\subsection{Bases}
When studying trajectories in the cusp, and in particular computing the entropy contribution of a parabolic subgroup $P$, 
it would be useful if the basis we use for the flag of unique small rational subspaces is compatible with the structure of $P$ and the orientation $[w]_{P}$. We develop such bases in this section. 

\begin{definition}
Let $\{0\}=V_0<V_1<\cdots<V_{k}<V_{k+1}=\mathbb{R}^{\dtt}$ be a flag of subspaces of $\mathbb{R}^{\dtt}$. 
A basis for 
$\{V_{j}\}_{j=0}^{k+1}$
is 
a set $\{v_i\}_{i=1}^{\dtt}$
such that $\{v_1,\ldots,v_{\dim V_l}\}$ is a basis for $V_l$, for all $1\leq l\leq k+1$.
\end{definition}

\begin{lemma}\label{lemma:5.1}
Let $\epsilon>0$ be sufficiently small. Let $\{W_j\}_{j=0}^{k+1}$ be a flag in $\mathbb{R}^{\dtt}$, and $\{w_i\}_{i=1}^{\dtt}$ an orthonormal basis for the flag.  Let $\{V_j\}_{j=0}^{k+1}$ be another flag, with $d_{\Gr}(V_j,W_j)<\epsilon$ for all $1\leq j\leq k$.
Then there exists a basis $\{v_i\}_{i=1}^{\dtt}$ for the flag $\{V_j\}_{j=0}^{k+1}$ of the form $v_i=w_i+u_i$ for $u_i\in W_{j}^{\perp}$ with $\|u_i\|\ll\epsilon$, where $j$ is the unique integer so that $\dim W_{j-1}<i\leq \dim W_{j}$.
\end{lemma}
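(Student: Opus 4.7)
The plan is to construct the $v_i$ inductively, block by block along the flag. Let $d_j = \dim W_j = \dim V_j$, and for each index $i$ let $j(i)$ denote the block to which $i$ belongs, i.e.\ the unique $j$ with $d_{j-1} < i \leq d_j$. I will define each $v_i$ as a canonical lift of $w_i$ to $V_{j(i)}$, via orthogonal projection.

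The key preliminary observation is that since $d_{\Gr}(V_j, W_j) < \epsilon$, every unit vector $v \in V_j$ satisfies $\|\pi_{W_j^\perp}(v)\| < \epsilon$, so the orthogonal projection $\pi_{W_j}|_{V_j} \colon V_j \to W_j$ is injective and hence (by dimension count) an isomorphism. Its inverse sends any unit $w \in W_j$ to a vector of the form $w + u$ with $u \in W_j^\perp$ and $\|u\| \ll \epsilon$, by a direct Pythagorean calculation. With this in hand, I would define, for each $i$, the vector $v_i$ to be the unique element of $V_{j(i)}$ with $\pi_{W_{j(i)}}(v_i) = w_i$. By construction $v_i = w_i + u_i$ with $u_i \in W_{j(i)}^\perp$ and $\|u_i\| \ll \epsilon$, matching the prescribed format.

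It remains to verify that $\{v_1, \ldots, v_{d_j}\}$ is a basis of $V_j$ for each $j$. Since there are $d_j = \dim V_j$ such vectors, all lying in $V_j$ (using $V_{j(i)} \subseteq V_j$ for $j(i) \leq j$), and since $\pi_{W_j}|_{V_j}$ is injective, it suffices to show that the projected vectors $\pi_{W_j}(v_i)$ for $i \leq d_j$ are linearly independent in $W_j$. I would expand each such projection in the orthonormal basis $\{w_1, \ldots, w_{d_j}\}$ of $W_j$ and examine the resulting $d_j \times d_j$ coefficient matrix $M$. For each $i$ with $j(i) \leq j$, the vector $u_i$ lies in $W_{j(i)}^\perp$, so its $W_j$-component lies in $W_j \cap W_{j(i)}^\perp = \Span\{w_l : d_{j(i)} < l \leq d_j\}$. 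Since $i \leq d_{j(i)}$, every non-zero off-diagonal entry in column $i$ of $M$ sits strictly below the diagonal; hence $M$ is lower triangular with $1$'s on the diagonal, so $\det M = 1$, giving the required linear independence.

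The argument is essentially clean once the triangular structure of $M$ is noticed; the main point one needs to get right is the compatibility between the orthonormal basis $\{w_i\}$ of the background flag and the nesting $W_{j(i)} \subseteq W_j$ for $j(i) \leq j$, which is precisely what forces all the off-diagonal perturbations to appear strictly below the diagonal. No delicate estimate is needed beyond controlling $\|u_i\|$ at the very first step.
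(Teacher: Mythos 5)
Your proof is correct, and it takes a genuinely different (and cleaner) route than the paper's. The paper argues by induction on the flag length, constructing the new block of vectors at each step via an explicit sequence of column operations (a Gaussian-elimination-type normalization that enforces $\langle v_i, w_s\rangle = \delta_{is}$ for $s\le \dim V_k$), and then invokes a soft perturbation argument ("a small perturbation of an orthonormal basis is still linearly independent") to conclude. You instead bypass the induction and the column operations entirely: you observe that $\pi_{W_{j(i)}}|_{V_{j(i)}}$ is an isomorphism when $\epsilon$ is small, define $v_i$ directly as the unique preimage of $w_i$ under this isomorphism, and then verify the basis property by exhibiting the coefficient matrix of the projected vectors $\pi_{W_j}(v_i)$, $i\le d_j$, as lower unitriangular, hence of determinant $1$. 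Both proofs ultimately produce vectors characterized by the same orthogonality relations, but your argument is non-inductive, the linear independence is established by an exact determinant computation rather than a qualitative perturbation bound, and the inclusion $W_j\cap W_{j(i)}^\perp = \Span\{w_l: d_{j(i)}<l\le d_j\}$ does all the work at once rather than one flag level at a time. The only point worth making completely explicit is the Pythagorean estimate $\|u_i\|\le \epsilon/\sqrt{1-\epsilon^2}\ll\epsilon$ that you sketch at the start, but this is straightforward and you flag it correctly.
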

\begin{remark}
Consider the matrix $v\coloneqq(v_1\vline \cdots \vline v_\dtt)$, represented in the basis $(w_1,\ldots,w_{\dtt})$. It is a lower-triangular block-matrix, with block sizes \[\dim V_{1}-\dim V_{0},\ldots,\dim V_{k+1}-\dim V_{k}.\]
The values are $1$ on the diagonal, and real numbers of absolute value $\ll \epsilon$ below the diagonal. This is the form we aim for in this lemma, and this picture is useful in order to follow the proof.
\end{remark}

\begin{proof}
We prove the claim by induction on $k$. For $k=0$, the claim is satisfied by the basis $\{v_i\}_{i=1}^{\dtt}$ where $v_i=w_i$ for all $i$.
Assume the claim holds for $k-1$ for some $k\geq 1$, and we prove it for $k$. 
We apply the claim for the flag \[0=V_0<V_1<\cdots<V_{k-1}<V_{k+1}=\mathbb{R}^{\dtt},\] whose length is shorter by $1$ compared to the original flag.
We obtain vectors $v_1,\ldots,v_{\dim V_{k-1}}$ which satisfy the required property. We complete them to a basis as follows.
First, for all 
\[\dim V_{k-1} < i\leq \dim V_k,\]
let $\tilde{v}_i\in V_{k}$ be a vector with $\|\tilde{v}_i-w_i\|<\epsilon$.
We use these vectors to construct the desired basis by the following process of $\dim V_k$ steps, In each step $j$, for all 
\[1\leq i\leq \dim V_k\]
we construct vectors $v_{i,j}$, by performing column operations. 
In this notation of $v_{i,j}$, the first index stands for the serial number of the vector, and the second index for the step number.
For the $0$th step, define 
\[v_{i,0}=\begin{cases} 
v_i & 1\leq i\leq \dim V_{k-1} \\
\tilde{v}_i & \dim V_{k-1}<i\leq \dim V_{k}
\end{cases}.\]
For the $j$th step, for $1\leq j\leq \dim V_{k}$, we define
\begin{equation*}
v_{i,j}=\begin{cases} 
v_{i,j-1} & 1\leq i\leq \dim V_{k-1}\\
v_{i,j-1}-\frac{\langle v_{i,j-1},w_j\rangle-\delta_{i,j}}{\langle v_{j,j-1},w_j\rangle}v_{j,j-1}
& \dim V_{k-1}<i\leq \dim V_{k} 
\end{cases}.
\end{equation*}
That is, for $i\leq\dim V_{k-1}$ we do not change the initial vector $\tilde{v}_{i}$ in any step, and for $i>\dim V_{k-1}$, at the $j$th step we either eliminate the component of $v_{i,j-1}$ in the direction of $w_j$ using $v_{j,j-1}$ (for $j\not=i$), or normalize this component (for $j=i$). 
Note that if $\epsilon$ is initially chosen to be small enough then $\langle v_{j,j-1},w_j\rangle\not=0$ for any $j$, hence the procedure is well defined.
This procedure can be thought of as just a sequence of column operations on the matrix $(v_{1,0}|\cdots |v_{\dim V_{k},0})$, where in each step $j$ we make sure that the $j$th row has only zeros except for the $j$th entry, and this $j$th entry is equal to $1$. 

Let
\[v_{i}=\begin{cases} v_{i,\dim V_{k}} & \dim V_{k-1}<i\leq \dim V_{k} \\
w_i & \dim V_{k}<i\leq \dtt
\end{cases}\]
be the result of this procedure.
It is not difficult to show (by induction, for instance) that for any $\dim V_{k-1}<i\leq \dim V_{k}$, we have 
\begin{enumerate}
\item $v_i\in V_{k}$
\item
$\|v_i-w_i\|\ll \epsilon$
\item
$\langle v_i,w_s\rangle=\delta_{i,s}$ for all $1\leq s\leq \dim V_{k}$.
\end{enumerate}

Finally, note that in general, a small perturbation of an orthonormal basis is still a linearly independent set. So we get that if $\epsilon$ is small enough, then
$\{v_1,\ldots,v_{\dtt}\}$ is linearly independent, and so  a basis of the flag $\{V_j\}_{j=0}^{k+1}$, which clearly is of the correct form.
\end{proof}

We can now write down the bases which we will use in the covering lemma.
Here, let $K_{\att}$ be the compact subgroup $K\cap C_{G}(\att)$, for $K=\SO(\dtt)$ and $C_{G}(\att)$ the centralizer of $\att$. Furthermore, for $P\in\mathcal{P}$, let $U_{P}^{-}=U_{P^t}$ be the unipotent radical of $P^{t}$, which is the set of lower-triangular block matrices, of the same blocks as $P$, with units on the diagonal. 

\begin{proposition}\label{proposition:bases}
Let $x\in X$. Assume there are $P\in\mathcal{P}$, $[w]_P\in W_{P,\att}$, $T\in\mathbb{R}_{>0}$ and $0<\delta<\eta_0$ such that 
\[\att_{t} x\in \cusp_{\delta}^{+}(P,[w]_P)\qquad\text{for all $t\in[0, T]$}.
\]
Then there are some $O\in K_{\att}$, and $u\in U_{P}^{-}$ satisfying 
\[\|\att_{t}^{w}u\att_{-t}^{w}-I\|_{F}\ll\epsilon_{0}\qquad\text{for all $t\in [0,T]$},\]
where $\epsilon_0$ was defined in Remark~\ref{remark:2.2}, so that the columns of the matrix $Owu$ are a basis for the flag $\{V_{l}(x)\}_{l\in\eta(P)}$ of exceptionally small $x$-rational subspaces. 
\end{proposition}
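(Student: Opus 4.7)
The strategy is to use the orientation hypothesis to identify a fixed $\att$-invariant flag $\{\tilde{W}_l\}_{l\in\eta(P)}$ that the flag $\{V_l(x)\}$ hugs throughout $[0,T]$, apply Lemma~\ref{lemma:5.1} at time $0$ to produce the required basis in the form $Owu$, and derive the conjugation bound by comparing this basis with the one Lemma~\ref{lemma:5.1} would output at time $t$.

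For the first step, the hypothesis $\att_t x\in\cusp_\delta^+(P,[w]_P)$ for every $t\in[0,T]$ together with Proposition~\ref{proposition:3.13} gives $V_l(\att_t x)=\att_t V_l(x)$ for all $l\in\eta(P)$ and $t\in[0,T]$, and the orientation condition combined with Proposition~\ref{proposition:2.2} applied at the constant $\epsilon_0$ of Remark~\ref{remark:2.2} forces the existence of a single $\att$-invariant subspace $\tilde{W}_l$ of dimension $l$ with eigenvalue multiset $\{\exp(\upalpha_{w(i)})\}_{i=1}^{l}$ such that $d_{\Gr}(\att_t V_l(x),\tilde{W}_l)<\epsilon_0$ for all $t\in[0,T]$; the $\tilde{W}_l$'s are nested by closeness to the nested $V_l(x)$'s. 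Since $K_\att$ acts as a product of orthogonal groups on the $\att$-eigenspaces and both $\tilde{W}_l$ and $\Span\{e_{w(1)},\ldots,e_{w(l)}\}$ meet each eigenspace in the same dimension, one can choose $O\in K_\att$ so that $\tilde{W}_l=O\cdot\Span\{e_{w(1)},\ldots,e_{w(l)}\}$ for every $l\in\eta(P)$ simultaneously. Applying Lemma~\ref{lemma:5.1} with the orthonormal basis $\{Oe_{w(i)}\}_{i=1}^{\dtt}$ then produces a basis with $v_i=Oe_{w(i)}+u_i$, $u_i\in\tilde{W}_j^\perp$ (where $j$ is the block containing $i$), and $\|u_i\|\ll\epsilon_0$. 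Letting $w$ denote the permutation matrix $we_i=e_{w(i)}$ and letting $u$ have entries equal to the coordinates of $v_i$ in the basis $\{Oe_{w(s)}\}$, one has $v=Owu$, and the orthogonality constraint $u_i\perp\tilde{W}_j$ forces $u\in U_P^-$ with $\|u-I\|_F\ll\epsilon_0$.

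For the conjugation bound, the key observation is that $\att$ is diagonal, its eigenspaces are mutually orthogonal, $\tilde{W}_j$ is a direct sum of subspaces of eigenspaces, and hence $\tilde{W}_j^\perp$ is itself $\att$-invariant. Rescaling the columns of $\att_t v$ by $e^{-t\upalpha_{w(i)}}$ produces vectors $\tilde{v}_i(t)$ that still lie in the appropriate members of the flag $\{\att_t V_l(x)\}$ and satisfy $\tilde{v}_i(t)-Oe_{w(i)}=e^{-t\upalpha_{w(i)}}\att_t u_i\in\tilde{W}_j^\perp$. Since $\{\att_t V_l(x)\}$ remains $\epsilon_0$-close to $\{\tilde{W}_l\}$ by step~1, transversality forces $\tilde{v}_i(t)$ to coincide with the unique basis vector that Lemma~\ref{lemma:5.1} would select at time $t$; in particular $\|\tilde{v}_i(t)-Oe_{w(i)}\|\ll\epsilon_0$. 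Combined with $\att_tO=O\att_t$ and $\att_t^w=\diag(e^{t\upalpha_{w(i)}})$, the matrix identity $\att_t v=Ow(\att_t^w u\att_{-t}^w)\att_t^w$ translates exactly into $\|\att_t^w u\att_{-t}^w-I\|_F\ll\epsilon_0$.

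The main obstacle will be the uniqueness step: one must argue that the rescaled basis $\tilde{v}(t)$ is precisely what Lemma~\ref{lemma:5.1} would output at time $t$. This relies on transversality of $\att_t V_l(x)$ and $\tilde{W}_l^\perp$ throughout $[0,T]$, which in turn relies on the flag remaining $\epsilon_0$-close to the invariant flag on the entire interval---precisely where the hypothesis that $\att_t x\in\cusp_\delta^+(P,[w]_P)$ for all $t\in[0,T]$ (rather than only at the endpoints) is essential.
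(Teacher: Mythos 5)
Your proposal is correct and follows essentially the same route as the paper: identify the $\att$-invariant flag $\{Z_m\}=\{\tilde{W}_l\}$ close to the rational flag, write an adapted orthonormal basis as $Ow$ with $O\in K_{\att}$, apply Lemma~\ref{lemma:5.1} at time $0$ to get $u\in U_P^-$, and then use that the moving flag $\{\att_t V_l(x)\}$ stays $\epsilon_0$-close to $\{\tilde{W}_l\}$ throughout $[0,T]$ to control $\att_t^w u\att_{-t}^w$. The only stylistic difference is in the final estimate: the paper computes $\|\tilde{y}_j-z_j\|\ll\epsilon_0$ directly from the definition of $d_{\Gr}$ (using that $z_j$ is the orthogonal projection of $\tilde{y}_j$ to $Z_l$), whereas you reach the same bound by a uniqueness-plus-transversality argument (the rescaled column $\tilde v_i(t)$ satisfies the same membership and orthogonality conditions as the Lemma~\ref{lemma:5.1} output at time $t$, so by transversality of $\att_t V_j(x)$ with $\tilde W_j^\perp$ they coincide). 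Both are valid; the direct estimate is slightly more economical, but the content is the same.
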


\begin{proof}
Consider the flag of exceptionally small $x$-rational subspaces $\{V_{l}(x)\}_{l\in \eta(P)}$, add the trivial subspaces $\{0\}$ and $\mathbb{R}^{\dtt}$, and denote it by
\[\{0\}=Y_0<Y_1<\cdots<Y_{k+1}=\mathbb{R}^{\dtt}.\]
For all $1\leq m\leq k$, there is an $\att_{\bullet}$-invariant subspace $Z_m$ such that $d_{\Gr}(Y_m,Z_m)<\epsilon_0$.
Consider the flag
\[\{0\}<Z_1<\cdots<Z_{k+1}=\mathbb{R}^{\dtt}.\]
Note that we can clearly find and fix an orthonormal basis $\{z_1,\ldots,z_\dtt\}$ for the flag 
$\{Z_{m}\}_{m=0}^{k+1}$,
consisting of $\att_\bullet$-eigenvectors, so that the matrix 
$z=(z_1\vline\cdots\vline z_{\dtt})$, whose $i$th column is $z_i$, is of the form $z=Ow$ for $O\in K_{\att}$.

Let $\{y_1,\ldots,y_{\dtt}\}$ be the basis for
$\{Y_{m}\}_{m=0}^{k+1}$ 
guaranteed in Lemma~\ref{lemma:5.1} with respect to $\{z_1,\ldots,z_{\dtt}\}$.
Then, writing $y=(y_1\vline \cdots\vline y_\dtt)$, it follows that $y=zu$ for $u\in U_{P}^{-}$ with $\|u-I\|_{F}\ll\epsilon_0$.
Namely, the columns of the matrix $Owu$ are a basis for the flag, as we desired.

We still need to show that $u$ satisfies $\|\att_{t}^{w}u\att_{-t}^{w}\|_{F}\ll \epsilon_0$, i.e.\ that it is exponentially small.
By assumption, $\att_{t}x\in \cusp_{\delta}^{+}(P,[w]_P)$ for all $t\in [0, T]$, and in particular
$\att_{t} x$ has the same well defined orientation $[w]_{P}$ with respect to $P$ for all such $t$. So by Proposition~\ref{proposition:2.2}, the trajectory of the flag $\{Y_{m}\}_{m=0}^{k+1}$ is close to $\{Z_{m}\}_{m=0}^{k+1}$ at all times, not only at $t=0$. That is, we have 
$d_{\Gr}(\att_{t}Y_m,Z_m)<\epsilon_0$ for all $t\in [0,T]$ 
and
for all $1\leq m\leq k$.

Note that the columns of $\att_{t}y=\att_{t} zu$ are a basis for the flag 
$\{\att_{t} Y_{l}\}_{l=0}^{k+1}$,
and hence so are the columns of $\att_{t} z u \att_{-t}^{w} =z\att_{t}^{w}u\att_{-t}^{w}$.
Fix $1\leq j\leq \dtt$, and let $\tilde{y}_j$ be the $j$th columns of $z\att_{t}^{w}u\att_{-t}^{w}$, and $l$ the unique integer which satisfies $\dim Z_{l-1}<j\leq \dim Z_{l}$.
Note that 
\begin{equation}\label{eq:4.3}\tilde{y}_{j}=z_j+\sum_{i=l+1}^{\dtt} (\att_{t}^{w}u\att_{-t}^{w})_{i,j} z_i,\end{equation}
and in particular the projection of $\tilde{y}_{j}$ to $Z_{l}$ is  $\pi_{Z_{l}}(\tilde{y}_j)=z_j$.
So by definition of $d_{\Gr}$ we get that
\[ \|\tilde{y}_j-z_j\|\leq d_{\Gr}(\att_{t} Y_i,Z_i) \|\tilde{y}_j\|<\epsilon_0 (1+\|\tilde{y}_j-z_j\|^2)^{1/2}\]
so $\|\tilde{y}_j-z_j\|\ll \epsilon_0$.
Then, as $\{z_i\}_{i=1}^{\dtt}$ is an orthonormal basis, it follows from Equation~\eqref{eq:4.3} that the coefficients 
\[(\att_{t}^{w}u\att_{-t}^{w})_{i,j}\]
are indeed bounded by $\ll \epsilon_0$ in absolute value, for all $i\geq l+1$.
Therefore, as $u\in U_{P}^{-}$, it follows that $\|\att_{t}^{w}u\att_{-t}^{u}-I\|_{F}\ll\epsilon_0$, as required.
\end{proof}

\section{The covering lemma}\label{sec:6}
In this section we state and prove the covering lemma. We consider the following sets, comprised of points whose trajectory agrees with a given coding $\mathcal{C}$.  
\begin{definition}\label{definition:10}
For any $\delta,\delta^{\prime},r,N$ as before,
and a coding $\mathcal{C}\in \mathcal{C}_N$, let
\begin{equation*}
    Z(\mathcal{C},\delta,\delta^{\prime},r)=\Big\{x\in X:\ \mathcal{C}_{x}^{\delta,\delta^{\prime},r,N}=\mathcal{C}\Big\}.
\end{equation*}
\end{definition}
The purpose of the covering lemma is to cover sets of the form $Y\cap Z(\mathcal{C},\delta,\delta^{\prime},r)$, for compact $Y\subset X$, by a suitable amount of Bowen balls (see~\S\ref{subsec:6.1}). It is a key ingredient of the proof of Theorem~\ref{theorem:1.1new}.
\subsection{Bowen balls}\label{subsec:6.1}
We will often use Bowen balls to describe small neighborhoods which behave `nicely' with respect to the flow $\att_{\bullet}$. The definition is as follows.
For $\rho>0$ and $N\in \mathbb{N}$, and $w\in W$, let
\[\mathcal{B}_{N,\rho,w}\coloneqq \bigcap_{n=-N}^{N}\att_{-n}^{w}B_{\rho}^{G}\att_{n}^{w},\]
where $B_{\rho}^{G}$ is the ball of $G$ of radius $\rho$ with respect to the right-invariant metric $d$ (see~\S\ref{subsec:metrics_on_SL}).
A {\textbf{Bowen $(N,\rho,w)$-ball}} is a set of the form
\[\mathcal{B}_{N,\rho,w}x\]
for $x\in X$.
Similarly, let \[\mathcal{B}_{N,\rho,w}^{+}\coloneqq \bigcap_{n=0}^{N}\att_{-n}^{w}B_{\rho}^{G}\att_{n}^{w}.\]
A {\textbf{forward Bowen $(N,\rho,w)$-ball}} is a set of the form
\[\mathcal{B}_{N,\rho,w}^{+}x.\]
 
We will often restrict to the case where $w$ is the identity element of $W$, and in these cases we omit the subscript $w$.

It is important for us to note that small
neighborhoods of unity in
$P$ may be covered by a suitable amount of Bowen balls, corresponding to entropy. In the following lemma, $D_{r}^{P}$ is a ball in $P$ of radius $r$ around the identity, with respect to the Frobenius metric (see~\S\ref{subsec:metrics_on_SL}).
\begin{lemma}\label{lemma:5.8}
For any $w\in W$, the ball $D_{r}^{P}$ may be covered by $\ll_{r,\rho} \exp(N\cdot h(P,\att^{w}))$  
sets of the form $\mathcal{B}_{N,\rho,w}^{+}p$ for $p\in P$,
assuming $\rho$ is small enough.
\end{lemma}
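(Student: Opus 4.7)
The plan is to work in the Lie algebra $\mathfrak{p}$ of $P$ via the exponential map, where conjugation by $\att^w$ acts diagonally on root spaces, and then carry out a straightforward box-counting argument.

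First I would fix small neighborhoods $\mathcal{U}\subset\mathfrak{p}$ and $\exp(\mathcal{U})\subset P$ of the identity on which $\exp$ is a bi-Lipschitz diffeomorphism with respect to both the Frobenius and right-invariant metrics (using \S\ref{subsec:metrics_on_SL}), and choose $\rho$ small enough that $B_\rho^P\subset \exp(\mathcal{U})$. Decompose $\mathfrak{p} = \bigoplus_{(i,j)\in S_P} U_{ij}$, where $S_P\subseteq\{1,\ldots,\dtt\}^2$ is the set of entries in the upper-triangular block pattern of $P$. Conjugation by $\att^w$ acts on $U_{ij}$ by the scalar $\exp(\upalpha^w_i-\upalpha^w_j)$. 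Hence a small element $\exp(\sum_{(i,j)\in S_P} t_{ij} E_{ij})$ lies in $\mathcal{B}_{N,\rho,w}^{+}$ precisely when, for every $0\leq n\leq N$, the conjugated element has all coordinates bounded by $\asymp\rho$, which (up to multiplicative constants absorbed into the $\ll_{r,\rho}$) amounts to
\[
|t_{ij}| \;\ll\; \rho\cdot\min\bigl(1,\exp(-N(\upalpha^w_i-\upalpha^w_j))\bigr)
\qquad\text{for each }(i,j)\in S_P.
\]

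Next I would cover $D_r^P$ (a Frobenius ball of radius $r$ in $P$) by a grid of translates of such a ``Bowen box''. In each coordinate direction $(i,j)\in S_P$, the number of grid steps required is
\[
\ll_{r,\rho}\;\max\bigl(1,\exp(N(\upalpha^w_i-\upalpha^w_j))\bigr)=\exp\bigl(N(\upalpha^w_i-\upalpha^w_j)^{+}\bigr),
\]
so the total number of translates needed is
\[
\ll_{r,\rho}\;\prod_{(i,j)\in S_P}\exp\bigl(N(\upalpha^w_i-\upalpha^w_j)^{+}\bigr)=\exp\bigl(N\cdot h(P,\att^w)\bigr),
\]
by the definition of $h(P,\att^w)$ given in \S\ref{section:introduction}. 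Finally, each grid point $p_0\in P$ yields a covering set $\mathcal{B}_{N,\rho,w}^{+}p_0$ (after a routine adjustment replacing left by right translation using right-invariance of $d$ together with the fact that $p_0$ stays in a fixed compact set to absorb bi-Lipschitz constants).

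The one point requiring care is converting the coordinate estimate on $\exp(\mathcal{U})$ into a genuine covering of $D_r^P$ by forward Bowen balls of the form $\mathcal{B}_{N,\rho,w}^{+}p$ rather than $p\,\mathcal{B}_{N,\rho,w}^{+}$; since $d$ is right-invariant and the conjugation-based definition of $\mathcal{B}_{N,\rho,w}^{+}$ is invariant under the change $\rho\mapsto C\rho$ for any fixed $C$, this just costs a bounded multiplicative factor in the counting, which is absorbed by the implicit constant in $\ll_{r,\rho}$. Everything else is a direct computation of volumes/grid sizes on the Lie algebra, so no serious obstacle is expected.
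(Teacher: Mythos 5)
Your plan takes a genuinely different route from the paper. You propose to tile $D_r^P$ directly by translates of the ``Bowen box'' computed in Lie-algebra (equivalently, matrix-entry) coordinates; the paper instead takes a maximal $(N,\rho/3)$-separated set $S\subset B_r^P$ with respect to $\att^w$, shows it is a net, passes to a disjoint sub-family $S'$, and bounds $|S'|$ by the ratio $m(B^P_{r+\rho/3})/m(P\cap\mathcal{B}^+_{N,\rho/3,w})$ using Haar measure on $P$. Both approaches compute the same box shape --- your bound $|t_{ij}|\ll\rho\exp(-N(\upalpha^w_i-\upalpha^w_j)^+)$ is exactly the paper's estimate of the width of $P\cap\mathcal{B}^+_{N,\rho/3,w}$ in the $(i,j)$ entry --- so both arrive at $\exp(N\cdot h(P,\att^w))$. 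The separated-set route pays its combinatorics up front and only ever needs the box shape for a single \emph{volume} computation near the identity, which makes the non-abelian structure of $P$ a non-issue; your route demands more from the box.

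There is a gap in the tiling step. You assert that a product grid of translates $\mathcal{B}^+_{N,\rho,w}p_0$ covers $D_r^P$, the remaining work being only a ``routine adjustment replacing left by right translation''. But because $P$ is non-abelian, the matrix entries of $gp_0^{-1}$ contain cross terms that a nearest-in-each-coordinate grid choice does not control, and those cross terms dominate the box scale exponentially. Concretely, in the $3\times 3$ upper-triangular case with $\upalpha^w_1>\upalpha^w_2>\upalpha^w_3$ one has
\[
(gp_0^{-1})_{13}=G_{13}-P_{13}+(P_{12}-G_{12})P_{23},
\]
and with the nearest grid choice for $P_{12}$ the cross term $(P_{12}-G_{12})P_{23}$ has size $\asymp\rho^2\exp(-N(\upalpha^w_1-\upalpha^w_2))$, which exceeds the required scale $\rho\exp(-N(\upalpha^w_1-\upalpha^w_3))$ by a factor $\rho\exp(N(\upalpha^w_2-\upalpha^w_3))\to\infty$. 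So the selection rule ``nearest grid point in each coordinate'' fails, and the issue is not resolved by swapping left for right translations or rescaling $\rho\mapsto C\rho$, which is what your last paragraph invokes. It can be repaired: choose the grid values in a triangular order (increasing $j-i$), letting $P_{ij}$ be the grid point nearest to the target $G_{ij}$ plus the already-determined cross terms, and enlarge the grid domain by an $O(1)$ factor; this keeps the count. Alternatively, drop the explicit grid and use the separated-set plus Haar measure argument, which sidesteps the group-law corrections entirely. As written, the plan is missing this observation; with either repair it goes through.
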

\begin{proof}
First, note that since $D_{r}^{P}$ is bounded, it may be covered by $\ll_{r} 1$ many sets $B_{r}^{P}q$ for $q\in P$, so it is sufficient to cover $B_{r}^{P}$ by the desired amount of sets $\mathcal{B}_{N,\rho,w}^{+} p$.

Let $\rho$ be small enough so that the Frobenious metric and the right-invariant metric $d$ are equivalent on a $\rho$-neighborhood of the identity (with respect to $d$).
Let $S\subset B_{r}^{P}$ be a (finite) maximal $(N,\rho/3)$-separated set with respect to $\att^{w}$, i.e.\ a set that is maximal with respect to the property that for all $s_1,s_2\in S$ there is some $0\leq n\leq N$ such that \[d(\att^{w}_{n}s_1,\att^{w}_{n}s_2)\geq \rho/3.\] 
Such a set exists because
$B_{r}^{P}$ is bounded. 

First, we claim that \[B_{r}^{P}\subseteq\bigcup_{s\in S}\mathcal{B}^{+}_{N,\rho/3,w}s.\]
Indeed, if $g\in B_{r}^{P}$ is not contained in the RHS, then in particular $g\not\in S$ so $S\cup\{g\}$ cannot be $(N,\rho)$-separated (by maximality of $S$).
So There is some $s\in S$ such that
\[d(\att^{w}_n g,\att^{w}_n s)<\rho/3\]
for all $n$.
In particular 
\[gs^{-1}\in \mathcal{B}^{+}_{N,\rho/3,w}.\]
So
\[g=(gs^{-1}) s\in \mathcal{B}^{+}_{N,\rho/3,w}s,\]
in contradiction.

Next, we claim that there is a sub-collection $S^{\prime}\subseteq S$ so that \[B_{r}^{P}\subseteq \bigcup_{s\in S^{\prime}} \mathcal{B}^{+}_{N,\rho,w}s\] and $\{\mathcal{B}_{N,\rho/3,w}^{+}s:\ s\in S^{\prime}\}$ are disjoint.
Indeed, if \[\mathcal{B}_{N,\rho/3,w}^{+}s_1\cap \mathcal{B}_{N,\rho/3,w}^{+}s_2\not=\emptyset\] for $s_1,s_2\in S$,  let $g$ be in this intersection. Then $g$ satisfies $g s_{1}^{-1},g s_{2}^{-1}\in \mathcal{B}_{N,\rho/3,w}^{+}$.
Let $h\in \mathcal{B}_{N,\rho/3,w}^{+}s_2$ be arbitrary. Then
\begin{align*}
    d(\att^{w}_n h,\att^{w}_n s_1)&\leq d(\att^{w}_n h,\att^{w}_n s_2)+d(\att^{w}_n s_2, \att^{w}_n g)+d(\att^{w}_n g, \att^{w}_n s_1)\\&
    \leq d(\att^{w}_n(h s_{2}^{-1})\att^{w}_{-n},e)
    +d(\att^{w}_n(g s_{2}^{-1})\att^{w}_{-n},e)
    +d(\att^{w}_n(g s_{1}^{-1})\att^{w}_{-n},e)\\&
    \leq \rho/3+\rho/3+\rho/3=\rho.
\end{align*}
It follows that $hs_{1}^{-1}\in \mathcal{B}_{N,\rho,w}^{+}$, so
\[\mathcal{B}_{N,\rho/3,w}^{+}s_2\subseteq \mathcal{B}_{N,\rho,w}^{+}s_1,\]
hence the ball $\mathcal{B}_{N,\rho/3,w}^{+}s_2$ may be excluded from the collection. We proceed this way until the collection is disjoint.

To finish, we only need to show that $S^{\prime}$ is of the correct cardinality. To do so, let $m$ stand for the right Haar measure on $P$. 
Note that
\[B_{r+\rho/3}^{P}\supseteq P\cap\bigcupdot_{s\in S^{\prime}}\mathcal{B}^{+}_{N,\rho/3,w}s,\]
so
\[m (B_{r+\rho/3}^{P})\geq \sum_{s\in S^{\prime}}m(P\cap\mathcal{B}_{N,\rho/3,w}^{+}s)=|S^{\prime}|m(P\cap \mathcal{B}_{N,\rho/3,w}^{+}).\]
The measure $m(P\cap\mathcal{B}_{N,\rho/3,w}^{+})$ may be computed using the Lebesgue measure on $P$ embedded in Euclidean space.
Write
\[P=(I+\bigoplus_{(i,j)\in E}U_{ij})\cap G\]
for some $E\subseteq \{1,\ldots,\dtt\}^{2}$ (where $U_{ij}$ is the set of matrices with zeros at all entries except possibly the $(i,j)$'th entry).
Using the choice of $\rho$ to be small enough so that the two metrics are equivalent on $\mathcal{B}_{N,\rho/3,w}^{+}$, we deduce 
that the width (with respect to the absolute value) of the set
$P\cap \mathcal{B}_{N,\rho/3,w}^{+}$ 
in the $(i,j)$'th entry
is
\[\asymp \rho \cdot \exp(-N\cdot(\upalpha_{w(i)}-\upalpha_{w(j)})^{+}),\]
for any $(i,j)\in E$.
Then
we obtain that
\[m(P\cap\mathcal{B}_{N,w,\rho/3}^{+})\asymp_{\rho} \prod_{(i,j)\in E}\exp(-N\cdot(\upalpha_{w(i)}-\upalpha_{w(j)})^{+})=\exp(-N\cdot h(P,\att^{w})).\]
Hence 
\[|S^{\prime}|\ll_{\rho} m(B_{r+\rho/3}^{P})\cdot \exp(N\cdot h(P,\att^{w})),\]
so
\[\{\mathcal{B}_{N,\rho,w}^{+}s:\ s\in S^{\prime}\}\]
is a covering of $B_{r}^{P}$ of the required cardinality.
\end{proof}

We now cover balls in $G$, rather than in $P$, by similar sets. To do so, we use the bases introduced in~\S\ref{sec:bases}.
\begin{proposition}\label{prop:bases2}
Let $\delta>0$ be small enough. Let $P\in\mathcal{P}$ and $[w]_P\in W_{P,\att}$. Let $[g]\in\SLdRZ$ and $T\in\mathbb{N}$.
Assume that $\att_{t} [g]\in \cusp_{\delta}^{+}(P,[w]_P)$ for all $t\in [0,T]$.
Then the set
\[Z=\left\{hg:\ h\in B_{1}^{G},\ \att_{t}[hg]\in \cusp_{\delta}^{+}(P,[w]_P)\ \forall t\in[0,T]\right\}\]
may be covered by $\ll_{\rho} \exp(T\cdot h(P,\att^{w}))$ sets of the form 
$\mathcal{B}_{T,\rho}^{+}z$ (where $z\in G$), for $\rho$ small enough.
\end{proposition}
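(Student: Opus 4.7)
The plan is to apply Proposition~\ref{proposition:bases} to each element of $Z$ to obtain a clean decomposition $hg=Owup$ with $O\in K_\att$, $u\in U_P^-$ satisfying the $\att_\bullet^w$-conjugation bound, and $p\in P$ lying in a bounded region. The $(Owu)$-factor will vary in a compact subset $K^*=K_\att\cdot w\cdot\{u\in U_P^-:\|u-I\|_F\ll\epsilon_0\}$ and contribute only $\ll_\rho 1$ to the Bowen ball count, while the $p$-factor will give the main $\exp(T\cdot h(P,\att^w))$ count via Lemma~\ref{lemma:5.8}.

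I would begin by normalizing. Applying Proposition~\ref{proposition:bases} to $[g]$ (its hypothesis holds by assumption), any flag-compatible $\mathbb{Z}$-basis of $[g]$ differs from the basis $O_0wu_0$ produced by the proposition by right multiplication by some $p_0\in P$, since two bases of the same flag are related by the flag's stabilizer. After right-translating $g$ by an appropriate element of $\SL_\dtt(\mathbb{Z})$ --- an operation that translates $Z$ identically and commutes with left Bowen-ball translation --- I may assume $g=O_0wu_0p_0$, so the columns of $g$ themselves form a flag-compatible $\mathbb{Z}$-basis of $[g]$.

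The crucial observation is that, for $\delta$ small enough, $V_l([hg])=hV_l([g])$ for every $l\in\eta(P)$. Indeed, $hV_l([g])$ is $[hg]$-rational and its covolume in $[hg]$ is $\asymp\alpha_l([g])\asymp\alpha_l([hg])$, using that $h\in B_1^G$ has bounded operator norm on $\bigwedge^l\mathbb{R}^\dtt$ (and noting $B_1^G$ has compact closure by Hopf--Rinow). In contrast, by Proposition~\ref{proposition:3.6}, every other $l$-dimensional $[hg]$-rational subspace has covolume $\gg\delta^{-1}\alpha_l([hg])$, so the competition is resolved in favor of $hV_l([g])$ when $\delta$ is small. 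Consequently, the columns of $hg$ form a flag-compatible $\mathbb{Z}$-basis of $[hg]$, and comparing with the basis $O_{hg}wu_{hg}$ produced by Proposition~\ref{proposition:bases} applied to $[hg]$ yields $hg=O_{hg}wu_{hg}p_{hg}$ with $p_{hg}\in P$ --- and no residual $\SL_\dtt(\mathbb{Z})$-factor. Since $hg$ and $K^*$ are bounded, $\{p_{hg}:hg\in Z\}$ lies in a ball $D^P\subset P$ of bounded radius.

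Finally, $K^*$ is covered by $\ll_\rho 1$ standard forward Bowen $(T,\rho/4)$-balls (using that $K_\att$ commutes with $\att_\bullet$ and that $\att_n$-conjugation of $wu$ reduces, via $\att_nw=w\att_n^w$, to an $\att_n^w$-conjugation of $u$ bounded by $O(\epsilon_0)$, surrounded by bounded conjugation by the fixed $w$), while $D^P$ is covered by $\ll_\rho\exp(T\cdot h(P,\att^w))$ forward Bowen $(T,\rho/4,w)$-balls via Lemma~\ref{lemma:5.8}. The main obstacle will be verifying that each product $(\mathcal{B}^+_{T,\rho/4}k)\cdot(\mathcal{B}^+_{T,\rho/4,w}p_j)$ is contained in a single standard forward Bowen $(T,\rho)$-ball: writing $(k_1p_1)(k_2p_2)^{-1}=k_1(p_1p_2^{-1})k_2^{-1}$ and $k_i=O_iwu_i$, the $\att_n$-conjugation reduces via $\att_nO_i=O_i\att_n$ and $\att_nw=w\att_n^w$ to $\att_n^w$-conjugates of the $u_i$ (bounded by $O(\epsilon_0)$) and of $p_1p_2^{-1}$ (bounded by $\rho/2$ via the $w$-Bowen condition), surrounded by bounded conjugations by the fixed $w$ and the $O_i\in K_\att$; choosing $\rho$ small enough relative to $\epsilon_0$ (permitted by the ``$\rho$ small enough'' assumption) concludes the cover.
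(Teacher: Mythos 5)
Your overall approach is the paper's --- observe $V_l([hg])=hV_l([g])$, apply Proposition~\ref{proposition:bases} to produce flag bases, extract a $P$-factor, and cover via Lemma~\ref{lemma:5.8} --- but the normalization you introduce creates a genuine gap. The claim ``Since $hg$ and $K^*$ are bounded, $\{p_{hg}:hg\in Z\}$ lies in a ball $D^P\subset P$ of bounded radius'' is false: after normalizing $g=O_0wu_0p_0$, the columns of $g$ have lengths $\asymp\lambda_i([g])$, which are not bounded uniformly over lattices in the cusp, so neither $hg$ nor $p_0$ is uniformly bounded. What is uniformly bounded is $hO_0wu_0$, and hence $p_{hg}p_0^{-1}=(O_{hg}wu_{hg})^{-1}(hO_0wu_0)$; thus $\{p_{hg}\}$ lies in a right-translate $D^Pp_0$ of a uniformly bounded ball, not in $D^P$ itself. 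This matters because the implicit constant in $\ll_\rho$ must be independent of $[g]$, so Lemma~\ref{lemma:5.8} must be applied to a ball of $g$-independent radius. The repair is to cover $D^P$ and right-translate the centers by the fixed $p_0$ (Bowen balls are left cosets, so this is harmless), but as written the argument does not go through. The paper sidesteps this entirely by comparing the two \emph{bounded} objects $O_1wu_1$ and $h^{-1}O_2wu_2$ directly, so that the unbounded $g$ appears only inside the fixed Bowen-ball centers $wq_iu_1^{-1}w^{-1}O_1^{-1}g$.

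A secondary issue is in your final verification: the $\att_n$-conjugate of $k_1(p_1p_2^{-1})k_2^{-1}$ reduces (after peeling off $O_i$ and $w$) to $(\att_n^wu_1\att_{-n}^w)(\att_n^w(p_1p_2^{-1})\att_{-n}^w)(\att_n^wu_2^{-1}\att_{-n}^w)$, and since $u_1$ and $u_2^{-1}$ sit on opposite sides of the inner factor and are not mutually inverse, the two outer $O(\epsilon_0)$ bounds do not cancel; the product lands only in a Bowen $(T,O(\epsilon_0))$-ball. Your ``choosing $\rho$ small enough relative to $\epsilon_0$'' runs the wrong direction (smaller $\rho$ makes the membership harder, not easier). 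The correct resolution --- which the paper carries out explicitly --- is to accept the $O(\epsilon_0)$ radius and split the resulting $\mathcal{B}_{T,O(\epsilon_0)}^{+}$-ball into $\ll_\rho 1$ Bowen $(T,\rho)$-balls, absorbing the slack into a constant.
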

\begin{proof}
Let $hg\in Z$. Denote $x=[g]\in X$. First, note that because $h$ is small, $h^{-1}$ is small as well, so multiplying a subspace of $\mathbb{R}^{\dtt}$ by them changes its volume by a bounded amount. In particular, for any $x$-rational subspace $U$, $hU$ is $hx$-rational with $\covol_{hx}(hU)\asymp \covol_{x}(U)$ and likewise if $V$ is $hx$-rational then $h^{-1}V$ is $x$-rational and $\covol_{x}(h^{-1} V)\asymp \covol_{hx}(V)$.
In particular, it follows from Proposition~\ref{proposition:3.6} that the unique rational subspaces of small covolume satisfy
$V_l(hx)=h V_l(x)$, for all $l\in\eta(P)$.

Let $O_1, O_2\in K_{\att}$ and $u_1,u_2\in U_{P}^{-}$ be as in Proposition~\ref{proposition:bases}, so that the columns of $O_1 w u_1$ and $O_2 w u_2$ form bases for the flags $\{V_l(x)\}_{l\in\eta(P)}$ and $\{V_{l}(hx)\}_{l\in\eta(P)}$, respectively. 
It follows that the columns of $O_{1} w u_1$ and $h^{-1}O_{2} w u_{2}$ are bases for the same flag, hence there is some $p\in P$ so that 
\[O_{1} w u_{1}=h^{-1}O_{2} w u_{2}p.\]
Note that
\[p=u_{2}^{-1} w^{-1}O_{2}^{-1} h O_{1} w u_{1},\]
and so it satisfies $\|p-I\|_{F}\leq r$ for some $r\ll 1$, i.e.\ $p\in D_{r}^{P}$.
By Lemma~\ref{lemma:5.8}, $D_{r}^{P}$ is contained in the union of some sets  $\{\mathcal{B}_{T,\rho,w}^{+}q_i\}_{i=1}^{m}$ where \[m\ll_{\rho}\exp(T\cdot h(P,\att^{w})).\] These sets do not depend on $p$.
Then, we may write $p=bq_i$ for some $b\in B_{T,\rho,w}^{+}$ and $1\leq i\leq m$. 
It follows that
\[hg=(O_{2} wu_{2} pu_{1}^{-1}w^{-1}O_{1}^{-1})g=(O_2wu_2bw^{-1})(wq_i u_{1}^{-1}w^{-1}O_{1}^{-1}g).\]

Since $K_{\att}$ is compact, we can cover it by sets $\{k_{j} B_{\rho}^{K_{\att}}\}_{j=1}^{n}$ for $n\ll_{\rho} 1$, where $k_j\in K_{\att}$ for all $j$.
Then there is some $1\leq j\leq m$ so that $O_{2}\in k_j B_{\rho}^{K_{\att}}$.
Note that conjugation by an element of $K$ preserves the Frobenius norm, which is equivalent to the invariant metric $d$ on a neighborhood of the identity (see~\S\ref{subsec:metrics_on_SL}). Hence, if $\rho$ is small enough, we have
by construction that 
\[wu_2w^{-1},wbw^{-1}\in B_{T,O(\rho)}^{+}.\]
Using the fact that $K_{\att}$ commutes with $\att$, it then follows that
\[O_{2} w u_{2} b w^{-1}\in k_{j} B_{T,O(\rho)}^{+}.\]
Since $k_{j}\in K_{\att}$, we also similarly have 
\[k_{j} B_{T,O(\rho)}^{+}\subseteq B_{T,O(\rho)}^{+}k_j.\]

A similar argument to the proof of Lemma~\ref{lemma:5.8} shows that $\mathcal{B}_{T,O(\rho)}^{+}$ can be covered by $\ll 1$ sets of the form $\mathcal{B}_{T,\rho}^{+}z$.
So all together, after splitting the balls
\[\{\mathcal{B}_{T,O(\rho)}^{+}k_j wq_iu_{1}^{-1}w^{-1}O_{1}^{-1}g:\ 1\leq i\leq m,\ 1\leq j\leq n\},\]
we obtain
\[\ll mn \ll_{\rho} \exp(T\cdot h(P,\att^{w}))\] sets of the correct form, one of which contains $hg$.

Note that the collection $\{k_jwq_iu_{1}^{-1}w^{-1}O_{1}^{-1}g\}_{i,j}$, which serves as the set of centers for the balls we constructed, depends only on $g$ and not on $hg$. Hence this construction gives a cover for the set $Z$ as required. 
\end{proof}

\subsection{Statement and proof of covering lemma}
In this subsection we state and prove the covering lemma, which gives a covering of 
the set $Y\cap Z(\mathcal{C},\delta,\delta^{\prime},r)$ by a suitable number of Bowen balls,  for any compact set $Y\subset X$.

Recall that a coding $\mathcal{C}\in \mathcal{C}_{N}$ is defined as a function from the set $\{-N,\ldots,N\}$. It would be more convenient for us to first study the forward version of the set $Z(\mathcal{C},\delta,\delta^{\prime},r)$, defined by:
\[Z_{+}(\mathcal{C},\delta,\delta^{\prime},r)=\Big\{x\in X:\ 
\mathcal{C}_{x}^{\delta,\delta^{\prime},r,N}(n)=\mathcal{C}(n)\ \forall 0\leq n\leq N\Big\}.\]

The main step of the covering lemma is as follows.
\begin{lemma}[main step]\label{lemma:main step}
Assume $Z_{+}(\mathcal{C},\delta,\delta^{\prime},r)\not=\emptyset$. Let $\mathcal{J}^{\prime}_{\mathbb{Z}}$ be the partition of $[0,N]_{\mathbb{Z}}$ corresponding to $\mathcal{C}$, with $\mathcal{C}|_{L_j}=(P_j,\sigma_j)$ for $P_j\in\mathcal{P}$ and $\sigma_j\in W_{P_j,\att}\cup\{\emptyset\}$.
Write $\mathcal{J}^{\prime}_{\mathbb{Z}}=\{L_1,\ldots,L_{|\mathcal{J}^{\prime}_{\mathbb{Z}}|}\}$, where $\max L_{l}+1=\min L_{l+1}$ for all $l<|\mathcal{J}^{\prime}_{\mathbb{Z}}|$.
Then for all $1\leq l\leq |\mathcal{J}^{\prime}_{\mathbb{Z}}|
$, and any compact set $Y\subset X$, the set $Y\cap Z_{+}(\mathcal{C},\delta,\delta^{\prime},r)$ can be covered by
\begin{equation*}
    \ll N_{\rho}(Y)\cdot C^{l}\exp\Big(\sum_{\substack{1\leq j\leq l:\\\sigma_j=[w_j]_{P_j}\in W_{P_j,\att}}}|L_j|\cdot h(P_j,\att_{t}^{w_j})+\sum_{\substack{1\leq j\leq l:\\ \sigma_j=\emptyset}}|L_j|\cdot h(G,\att)\Big)
\end{equation*}
many forward Bowen $(\max L_l,\rho)$-balls, for some  
$0<C\ll_{\att,\rho} 1$, where $N_{\rho}(Y)$ is the covering number of $Y$ by balls of radius $\rho$.
\end{lemma}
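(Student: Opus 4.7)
The plan is to prove the lemma by induction on $l$, with Proposition~\ref{prop:bases2} serving as the main engine. The inductive structure mirrors the partition $\mathcal{J}'_{\mathbb{Z}}$: at the $l$-th step, a cover by forward Bowen $(\max L_{l-1},\rho)$-balls is refined into a cover by forward Bowen $(\max L_l,\rho)$-balls, picking up a multiplicative factor of size $\exp\bigl(|L_l|\cdot h(P_l,\att^{w_l})\bigr)$ per ball in the good case, and $\exp\bigl(|L_l|\cdot h(G,\att)\bigr)$ in the degenerate case $\sigma_l=\emptyset$. The constants at each step combine into $C^l$, while $N_\rho(Y)$ enters only at the base case.

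For the base case $l=1$, I would cover $Y$ by $N_\rho(Y)$ balls of radius $\rho$ in $X$; on each such ball, the intersection with $Z_{+}(\mathcal{C},\delta,\delta',r)$ consists of points whose trajectory lies in $\cusp^{+}_{\delta'}(P_1,\sigma_1)$ throughout $L_1=[0,\max L_1]_{\mathbb{Z}}$, so Proposition~\ref{prop:bases2} applied with $T=\max L_1$ and $(P,[w]_P)=(P_1,\sigma_1)$ directly produces the required cover (absorbing the difference between $\max L_1$ and $|L_1|$ into $C$). For the inductive step, I fix a forward Bowen $(T_{l-1},\rho)$-ball $\mathcal{B}^{+}_{T_{l-1},\rho}z$ from the previous cover (writing $T_m=\max L_m$), represent $z\in G$, and consider a point $x=[hz]$ in its intersection with $Z_{+}(\mathcal{C},\delta,\delta',r)$. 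The conjugate $h':=\att_{T_{l-1}+1}h\att_{-(T_{l-1}+1)}$ then lies in $B^{G}_{O(\rho)}\subseteq B^{G}_{1}$ for $\rho$ small enough relative to the single-step distortion of $\att$, and $\att_{T_{l-1}+1}x=[h'\cdot\att_{T_{l-1}+1}z]$ satisfies $\att_s[h'\att_{T_{l-1}+1}z]\in\cusp^{+}_{\delta'}(P_l,\sigma_l)$ for $s\in[0,|L_l|-1]$. I would now invoke Proposition~\ref{prop:bases2} at the base point $\att_{T_{l-1}+1}z$ with $T=|L_l|-1$ and $(P,[w]_P)=(P_l,[w_l]_{P_l})$, yielding a cover of the set of admissible $h'$ by $\ll_\rho\exp\bigl(|L_l|\cdot h(P_l,\att^{w_l})\bigr)$ forward Bowen $(|L_l|-1,\rho)$-balls; pulling each back by $\att_{-(T_{l-1}+1)}$ produces a Bowen-type constraint on $x$ for times in $L_l$, which combined with the original constraint for times $[0,T_{l-1}]_{\mathbb{Z}}$ gives a forward Bowen $(T_l,\rho)$-ball (up to a bounded enlargement of $\rho$ absorbable into $C$). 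In the degenerate case $\sigma_l=\emptyset$, I would substitute the Ruelle volume bound that the ball $B^{G}_{1}$ can always be covered by $\ll_\rho\exp(|L_l|\cdot h(G,\att))$ forward Bowen $(|L_l|-1,\rho)$-balls, since the total positive Jacobian of $\att$ acting on $\Lie(G)$ is exactly $h(G,\att)=\sum_{i,j}(\upalpha_i-\upalpha_j)^{+}$.

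The main obstacle I anticipate is the bookkeeping in the inductive step: one must verify that pulling back a forward Bowen $(|L_l|-1,\rho)$-ball around $\att_{T_{l-1}+1}z$ by $\att_{-(T_{l-1}+1)}$ and intersecting with the original forward Bowen $(T_{l-1},\rho)$-ball around $z$ genuinely produces a set contained in a forward Bowen $(T_l,\rho')$-ball for some $\rho'$ bounded by $\rho$ times an $\att$-dependent constant. This follows almost directly from the definition $\mathcal{B}^{+}_{T,\rho}=\bigcap_{n=0}^{T}\att_{-n}B^{G}_{\rho}\att_{n}$ since the two time intervals $[0,T_{l-1}]$ and $[T_{l-1}+1,T_l]$ are disjoint and together exhaust $[0,T_l]$, but care is needed because the Bowen ball centers before and after the time shift differ by $\att_{T_{l-1}+1}$, and because Proposition~\ref{prop:bases2} only controls a $B^{G}_{1}$ neighborhood of the reference point. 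All other constants accumulated in the argument (finitely many splittings, $\rho$-dependence, approximations between the Frobenius and right-invariant metrics near the identity) are uniform in $l$ and get absorbed into $C$.
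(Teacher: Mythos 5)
Your proposal is correct and follows essentially the same strategy as the paper: induction on $l$, with Proposition~\ref{prop:bases2} producing the multiplicative factor $\ll_\rho\exp(|L_l|\cdot h(P_l,\att^{w_l}))$ at each step and the base case supplying the $N_\rho(Y)$ factor. The only cosmetic difference is bookkeeping: the paper first splits each forward Bowen $(\max L_{l-1},\rho)$-ball into $\ll_\att 1$ forward Bowen $(\min L_l,\rho)$-balls and then applies $\att_{\min L_l}$, whereas you absorb that one-step conjugation directly into the enlargement of $\rho$; and for the degenerate case $\sigma_l=\emptyset$ you invoke the Ruelle volume bound for $B_1^G$ while the paper simply sets $P=G$ (so that $\cusp_{\delta'}^{+}(G,[w]_G)=X$) and runs Proposition~\ref{prop:bases2} unchanged — both yield the same $\exp(|L_l|\cdot h(G,\att))$ factor.
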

\begin{proof}
We prove the claim by induction on $l$.
We start with a covering $\mathcal{O}$ of $Y$ by $N_{\rho}(Y)$ many balls of the form $ B_{\rho}^{G} x$. 

Let $1\leq l \leq |\mathcal{J}^{\prime}_{\mathbb{Z}}|$ and write
$L_{l}=[T,T+M]_{\mathbb{Z}}$ for $T,M\in\mathbb{Z}$.
If $l>1$ then we assume by induction that the claim holds for $l-1$, and start with a covering of $Y\cap Z_{+}(\mathcal{C},\delta,\delta^{\prime},r)$ by the correct amount of Bowen $(T-1,\rho)$-balls (as $\max L_{l-1}=T-1$). We split each of them to $\ll_{\att} 1$ many Bowen $(T,\rho)$-balls. If $l=1$ (i.e.\ $T=0$) we also have an initial covering by the correct number of Bowen $(T,\rho)$-balls, namely $\mathcal{O}$.  

Let $\mathcal{B}_{T,\rho}^{+} x$ be one of these balls. 
Without loss of generality, we may assume $x\in Z_{+}(\mathcal{C},\delta,\delta^{\prime},r)$,  
otherwise we split the ball to $\ll 1$ Bowen balls around points in $\mathcal{B}^{+}_{T,\rho} x \cap Z_{+}(\mathcal{C},\delta,\delta^{\prime},r)$. 
Write $\mathcal{C}|_{L_l}=(P_l,\sigma_l)$. If  $\sigma_l=[w_l]_P\in W_{P_l,\att}$ then we give the simpler notations $P=P_l$ and $[w]_P=[w_l]_{P_l}$. Otherwise, we let $P=G$ and $[w]_{G}$ the unique element of $W_{G,\att}$.
Then in any case, by the construction in~\S\ref{subsec:coding},
\[\att_{t}x\in \cusp_{\delta^{\prime}}^{+}(P,[w]_P)\] for all $t\in [T,T+M]$.

Write $x=[g]$. 
Consider \[\att_T (\mathcal{B}_{T,\rho}^{+}g)\subseteq B_{\rho}^{G}(\att_{T}g).\]
By Proposition~\ref{prop:bases2}, there is a collection of $k\ll_{\rho}\exp(M\cdot h(P,\att^{w}))$ points $g_i$ so that
\[\{f\in \mathcal{B}_{T,\rho}^{+}g:\ [f]\in Z_{+}(\mathcal{C},\delta,\delta^{\prime},r)\} \subseteq \att_{-T}\bigcup_{i=1}^{k}  \mathcal{B}_{M,\rho}^{+} g_{i}.\]
It follows that
\begin{equation}\label{eq:5.3}
\mathcal{B}_{T,\rho}^{+}x\cap Z_{+}(\mathcal{C},\delta,\delta^{\prime},r)\subseteq \pi_{X}\Big(\bigcup_{i=1}^{k}\att_{-T}\mathcal{B}_{M,\rho}^{+}g_i\cap \mathcal{B}_{T,\rho}^{+}g\Big)
\end{equation}
for $\pi_{X}$ the projection from $G$ to $X=G/\Gamma$.

Consider
\begin{equation*}\label{eq:5.12}
    \att_{-T}\mathcal{B}_{M,\rho}^{+} g_{i}\cap \mathcal{B}_{T,\rho}^{+}g.
\end{equation*}
We claim that if this intersection is non-empty, then it is contained in a union of $\ll 1$ sets of the form $\mathcal{B}_{T+M,\rho}^{+}q$. Indeed, let \[h,q\in \att_{-T}\mathcal{B}_{M,\rho}^{+}g_i\cap \mathcal{B}_{T,\rho}^{+}g.\]
First, for any $0\leq n\leq T$ we have
\begin{align*}
\numberthis\label{eq:5.4}
d(\att_{n}hq^{-1}\att_{-n},e)&=d(\att_n q h^{-1} \att_{-n},e)\leq d(\att_{n}qh^{-1}\att_{-n},\att_{n} g h^{-1}\att_{-n})+d(\att_{n} g h^{-1}\att_{-n},e)\\&
=d(\att_{n}qg^{-1}\att_{-n},e)+d(\att_{n}hg^{-1}\att_{-n},e)<\rho+\rho=2\rho
\end{align*}
where we used the fact that $h,q\in \mathcal{B}_{T,\rho}^{+}g$.
Next, for $0\leq n\leq M$, we have
\begin{align*}
\numberthis\label{eq:5.5}
d(\att_{T+n}hq^{-1}\att_{-(T+n)},e)&=d(\att_n (\att_{T} q)(\att_{T}h)^{-1}\att_{-n},e)\\&\hspace{-1.5cm}
\leq d(\att_{n}(\att_{T} q)(\att_{T}h)^{-1}\att_{-n},\att_{n}g_{i}(\att_{T}h)^{-1}\att_{-n})+d(\att_{n}g_{i}(\att_{T}h)^{-1}\att_{-n},e)
\\&\hspace{-1.5cm}
=d(\att_{n}(\att_{T}qg_{i}^{-1})\att_{-n},e)+d(\att_{n}(\att_{T}hg_{i}^{-1})\att_{-n},e)\\&\hspace{-1.5cm}
<\rho+\rho=2\rho
\end{align*}
where we used the fact that $h,q\in \att_{-T}\mathcal{B}_{M,\rho}^{+}g_i$.
All together, we deduce that from Equations~\eqref{eq:5.4}-\eqref{eq:5.5} that
\[h\in \mathcal{B}_{T+M,2\rho}^{+}q,\]
so in fact 
\[\att_{-T}\mathcal{B}_{M,\rho}^{+} g_{i}\cap \mathcal{B}_{T,\rho}^{+}g\subseteq \mathcal{B}^{+}_{T+M,2\rho}q.\]
Then it follows 
from Equation~\eqref{eq:5.3} 
that the intersection of $Z_{+}(\mathcal{C},\delta,\delta^{\prime},r)$ with each of the Bowen-$(T,\rho)$ balls we started the step with, is covered by $\ll_{\rho} \exp(M\cdot h(P,\att^{w}))$ many Bowen-$(T+M,\rho)$ balls.
This gives the correct amount of Bowen balls for the induction step.
\end{proof}

The covering lemma is a straightforward generalization of Lemma~\ref{lemma:main step}, for non-forward Bowen balls.
\begin{lemma}[covering lemma]\label{covering lemma}
Let $\mathcal{C}\in \mathcal{C}_{N}$ and assume $Z(\mathcal{C},\delta,\delta^{\prime},r)\not=\emptyset$. Then  for any compact $Y\subset X$, the set $\att_{N}Y\cap Z(\mathcal{C},\delta,\delta^{\prime},r)$ may be covered by 
\[\ll N_{\rho}(Y)\cdot \exp(\frac{D}{r^{\dtt-1}|\log\delta|}N)\cdot \exp\left(\sum_{\substack{P\in\mathcal{P},\\ [w]_P\in W_{P,\att}}} |\mathcal{C}^{-1}(P,[w]_P)|\cdot h(P,\att^{w})\right)\]
many Bowen $(N,\rho)$-balls, for some constant $0<D\ll_{\att,\rho} 1$.
\end{lemma}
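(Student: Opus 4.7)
The plan is to reduce to the main step (Lemma~\ref{lemma:main step}) by shifting time backward by $N$, turning the two-sided problem into a forward problem of length $2N$. For $x \in \att_N Y \cap Z(\mathcal{C},\delta,\delta',r)$ set $y = \att_{-N}x$, so that $y \in Y$ and the shifted coding $\tilde{\mathcal{C}}(s) := \mathcal{C}(s-N)$ on $[0,2N]_{\mathbb{Z}}$ records exactly the forward cusp-containment of $y$: $\att_s y \in \cusp_{\delta'}^+(P,[w]_P)$ whenever $s \in L+N$ with $L \in \mathcal{J}'_{\mathbb{Z}}$ and $\mathcal{C}|_L = (P,[w]_P)$ (and analogously for the $\emptyset$ labels). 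This follows from Proposition~\ref{proposition:5.1}(\ref{item:5.1_1}) applied to $x$, combined with $\att_s y = \att_{s-N}x$.

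Running the argument of the main step in this shifted setting---with $N$ replaced by $2N$, $\mathcal{C}$ by $\tilde{\mathcal{C}}$, and $l = |\tilde{\mathcal{J}}'_{\mathbb{Z}}|$, where $\tilde{\mathcal{J}}'_{\mathbb{Z}} = \{L+N : L \in \mathcal{J}'_{\mathbb{Z}}\}$---yields a covering of $Y \cap \att_{-N}(Z(\mathcal{C},\delta,\delta',r))$ by forward Bowen $(2N,\rho)$-balls of cardinality at most
\[
N_\rho(Y)\cdot C^{|\tilde{\mathcal{J}}'_{\mathbb{Z}}|}\exp\Big(\!\!\!\!\sum_{\tilde{\mathcal{C}}|_L=(P,[w]_P)}\!\!\!\!|L|\,h(P,\att^w)+\!\!\!\!\sum_{\tilde{\mathcal{C}}|_L=(P,\emptyset)}\!\!\!\!|L|\,h(G,\att)\Big).
\]
I would then push this cover forward by $\att_N$, using the direct computation
\[
\att_N\mathcal{B}^+_{2N,\rho}\att_{-N}=\bigcap_{m=-N}^{N}\att_{-m}B_\rho^G\att_m=\mathcal{B}_{N,\rho}
\]
(substitution $m = n-N$) to see that each forward Bowen $(2N,\rho)$-ball $\mathcal{B}^+_{2N,\rho}g$ is sent to the two-sided Bowen $(N,\rho)$-ball $\mathcal{B}_{N,\rho}\att_N g$; this delivers a cover of $\att_N Y \cap Z(\mathcal{C},\delta,\delta',r)$ by the same number of two-sided Bowen $(N,\rho)$-balls.

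It remains to simplify the exponent into the form required. Shift-invariance gives $\tilde{\mathcal{C}}^{-1}(P,[w]_P) = \mathcal{C}^{-1}(P,[w]_P)+N$ as subsets of $\mathbb{Z}$, so the non-empty sum rewrites as $\sum_{P,[w]_P}|\mathcal{C}^{-1}(P,[w]_P)|\,h(P,\att^w)$. The factor $C^{|\tilde{\mathcal{J}}'_{\mathbb{Z}}|}$ is absorbed via $|\tilde{\mathcal{J}}'_{\mathbb{Z}}| = |\mathcal{J}'_{\mathbb{Z}}| \ll_{\att} N/(r^{\dtt-1}|\log\delta|)$ from Proposition~\ref{proposition:4.4_n}(\ref{item:4.4_1}), and the $\emptyset$-contribution is absorbed via $\sum_P|\mathcal{C}^{-1}(P,\emptyset)| \ll_{\att} N/(r^{\dtt-1}|\log\delta|)$ from Lemma~\ref{proposition:5.2}(\ref{item:5.2_3}) together with $h(G,\att), C \ll_{\att,\rho} 1$; both collapse into a single factor $\exp(DN/(r^{\dtt-1}|\log\delta|))$ for some $D \ll_{\att,\rho} 1$.

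The main obstacle is the observation that $\tilde{\mathcal{C}}$ need not coincide with $\mathcal{C}_y^{\delta,\delta',r,2N}$---the latter is built from the full two-sided trajectory of $y$, which extends beyond the trajectory of $x$ used to define $\mathcal{C}_x$. Consequently one cannot simply invoke the statement of the main step applied to $Z_+(\tilde{\mathcal{C}})$; instead I would appeal to its proof, noting that the only property of points in $Z_+(\mathcal{C})$ actually used there is the forward cusp-containment supplied by Proposition~\ref{proposition:5.1}(\ref{item:5.1_1}), which we have verified holds for $y$ in our setting.
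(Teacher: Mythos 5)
Your proposal is correct and follows essentially the same route as the paper: shift the trajectory by $N$ so that the two-sided coding over $[-N,N]$ becomes a forward coding over $[0,2N]$, invoke the main step (Lemma~\ref{lemma:main step}) to cover by forward Bowen $(2N,\rho)$-balls, and push forward by $\att_N$ (using $\att_{N}\mathcal{B}^{+}_{2N,\rho}\att_{-N}=\mathcal{B}_{N,\rho}$) to obtain two-sided Bowen $(N,\rho)$-balls; the error factor then collects the $\emptyset$-contribution via Lemma~\ref{proposition:5.2}(\ref{item:5.2_3}) and the $C^{l}$ factor via Proposition~\ref{proposition:4.4_n}(\ref{item:4.4_1}). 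One place you are actually more careful than the paper: the paper simply defines $\mathcal{C}'\in\mathcal{C}_{2N}$ by padding $\mathcal{C}$ on the left and asserts $\att_{-N}Z(\mathcal{C},\delta,\delta',r)=Z_{+}(\mathcal{C}',\delta,\delta',r)$ as ``clear'', whereas you correctly point out that $\mathcal{C}_{y}^{\delta,\delta',r,2N}$ is built from the longer window $[-2N,2N]$ and so need not restrict to the shifted $\mathcal{C}_{x}^{\delta,\delta',r,N}$; your workaround of re-running the proof of the main step (which only ever uses the forward cusp-containment $\att_{t}y\in\cusp_{\delta'}^{+}(P,[w]_P)$ on each interval, verified here via Proposition~\ref{proposition:5.1}) is exactly what is needed to make the argument airtight.
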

\begin{proof}
Define $\mathcal{C}^{\prime}\in \mathcal{\mathcal{C}}_{2N}$ by 
\[\mathcal{C}^{\prime}(n)=\begin{cases}\mathcal{C}(-N) & n\leq 0 \\ \mathcal{C}(n-N) & n>0\end{cases}.\]
Then clearly 
\[\att_{-N}Z(\mathcal{C},\delta,\delta^{\prime},r)=Z_{+}(\mathcal{C}^{\prime},\delta,\delta^{\prime},r).\]
Consider the partition of $[-2N,2N]$ to maximal intervals on which $\mathcal{C}^{\prime}$ is constant. It is just the partition of $[-N,N]$, shifted by $N$ and extended to $-2N$ for the first interval. The number of such intervals, by construction in~\S\ref{subsec:coding}, is $\ll_{\att} N/(r^{\dtt-1}|\log\delta|)$. 
Then, by Lemma~\ref{lemma:main step}, $Y\cap\att_{-N}Z(\mathcal{C},\delta,\delta^{\prime},r)$ may be covered by 
\begin{multline*}\ll  N_{\rho}(Y)\cdot \exp(\frac{D} {r^{\dtt-1}|\log\delta|}N)\\
\cdot\exp\Bigl(\!\!\!\sum_{\substack{P\in\mathcal{P},\\ [w]_P\in W_{P,\att}}} |\mathcal{C}^{-1}(P,[w]_{P})|\cdot h(P,\att^{w})+|\mathcal{C}^{-1}(\mathcal{P}\times\{\emptyset\})|\cdot h(G,\att)\Bigr)\end{multline*}
many forward Bowen $(2N,\rho)$-balls, for some $D\asymp_{\rho,\att} 1$.
The image of this covering by $\att_{N}$ gives a covering of $\att_{N}Y\cap Z(\mathcal{C},\delta,\delta^{\prime},r)$ by the same number of (non-forward) Bowen $(N,\rho)$-balls.
Furthermore, by Proposition~\ref{proposition:5.2},
\[|\mathcal{C}^{-1}(\mathcal{P}\times \{\emptyset\})|\ll_{\att} \frac{1}{r^{\dtt-1}|\log\delta|}N,\]
so the proposition follows by adjusting $D\asymp_{\att,\rho} 1$.
\end{proof}

\section{Proving an entropy bound}\label{sec:proof_1.1}
In this section we prove the following theorem, which implies Theorem~\ref{theorem:1.1new} as will be explained in~\S\ref{subsec:8.3}. This theorem gives an entropy bound depending on our specific choice of cusp regions $\cusp_{\delta,\delta^{\prime}}(P,Q,[w]_Q)$.

\begin{theorem}\label{theorem:1.1}
Let $0<\delta<\delta^{\prime}$ small enough, with $\delta^{\prime}>\delta^{1/2}$. Fix $r$ in the range
\[(\frac{\log\delta^{\prime}}{\log\delta})^{\tfrac1{\dtt+1}}<r<1.\]
Let $\mu$ be an $\att$-invariant probability measure for $\att\in A$, and $\phi\in \Lie(A)^{\ast}$.
Then 
\begin{multline*}
h_{\mu}(\att)\leq
\sum_{\substack{P,Q\in\mathcal{P}:\\Q\subseteq P}}\sum_{[w]_{Q}\in W_{Q,\att}}\mu(\cusp_{\delta,\delta^{\prime}}(P,Q,[w]_{Q}))\cdot\max_{\substack{H\in\mathcal{P}:\\Q\subseteq H\subseteq P}}(h-\phi)([w]_{H})\\
+C_{\att}f(\delta,r,\phi)+C_{\att,\phi}^{\prime}\frac{1}{|\log\delta^{\prime}|}
\end{multline*}
where
$C_{\att}$ is a constant which depends only on $\att$, $C^{\prime}_{\att,\phi}$ a constant  which depends only on $\att$ and $\phi$, 
and
\[f(\delta,r,\phi)=\frac{\log\big(r^{\dtt-1}|\log\delta|\big)}{r^{\dtt-1}|\log\delta|}+\|\phi\|r.\]
\end{theorem}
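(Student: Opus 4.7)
The plan is to combine Katok's entropy formula with the covering lemma (Lemma~\ref{covering lemma}), the coding-to-cusp decomposition (Proposition~\ref{corollary:5.3}), the linear-functional bound (Corollary~\ref{corollary:4.7_n}) and the counting of codings (Proposition~\ref{lemma:num_of_V}). By ergodic decomposition it suffices to treat ergodic $\mu$.

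By Katok's entropy formula, for any $\eta>0$ and sufficiently small $\rho>0$,
\[h_\mu(\att)\leq \limsup_{N\to\infty}\frac{1}{N}\log N_\rho^{\mathrm{Bow}}(\mu,1-\eta;N),\]
where $N_\rho^{\mathrm{Bow}}$ is the minimal number of $(N,\rho)$-Bowen balls covering a set of $\mu$-measure at least $1-\eta$.  I would fix a compact $Y\subset X$ with $\mu(Y)>1-\eta/3$ and set $Y^{\mathrm{good}}_N:=\att_{-N}Y\cap Y\cap \att_N Y$, so $\mu(Y^{\mathrm{good}}_N)>1-\eta$ by $\att$-invariance.  Decomposing $\att_N Y^{\mathrm{good}}_N$ according to codings and applying Lemma~\ref{covering lemma} piece-wise, each $\att_N Y^{\mathrm{good}}_N\cap Z(\mathcal{C},\delta,\delta^{\prime},r)$ is covered by at most
\[\ll N_\rho(Y)\exp\bigl(DN/(r^{\dtt-1}|\log\delta|)\bigr)\exp\bigl(\Psi_h(\mathcal{C})\bigr)\]
Bowen $(N,\rho)$-balls, where $\Psi_\chi(\mathcal{C}):=\sum_{P,[w]_P}|\mathcal{C}^{-1}(P,[w]_P)|\,\chi([w]_P)$.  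Splitting $\Psi_h=\Psi_{h-\phi}+\phi\bigl(\sum |\mathcal{C}^{-1}(P,[w]_P)|\pi_P(\upalpha^w)\bigr)$ and invoking Corollary~\ref{corollary:4.7_n}, the second summand is bounded by $\|\phi\|\bigl(O_\att(rN)+\|\height(\att_N x)-\height(\att_{-N}x)\|\bigr)$ for any $x\in Z(\mathcal{C})$, and on $Y^{\mathrm{good}}_N$ the height difference is $O_Y(1)$.

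For the first summand $\Psi_{h-\phi}(\mathcal{C}_x)$, Proposition~\ref{corollary:5.3} with $\chi=h-\phi$ (which has non-negative maximum, attained for $P=Q=G$) yields a pointwise upper bound in terms of the visit counts $|T^x_{\cusp_{\delta,\delta^{\prime}}(P,Q,[w]_Q)}|$ and $|T^x_{\cusp_{\delta^{\prime}}(X,\emptyset)}|$.  Birkhoff's ergodic theorem, sharpened to a uniform statement via Egorov applied to this finite collection of cusp regions, provides $Y^{\ast}\subseteq Y$ with $\mu(Y^{\ast})>1-2\eta/3$ on which the normalized visit counts converge uniformly to the corresponding $\mu$-measures; together with Proposition~\ref{proposition:3.9} giving $\mu(\cusp_{\delta^{\prime}}(X,\emptyset))\ll_\att 1/|\log\delta^{\prime}|$, this produces, for $x\in Y^{\ast}\cap Y^{\mathrm{good}}_N$ and $N$ large, a pointwise bound $\Psi_h(\mathcal{C}_x)\leq N\cdot M+o(N)$ where $M$ equals the right-hand side of the theorem.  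Proposition~\ref{lemma:num_of_V} bounds the number of participating codings by $\exp\bigl(\kappa\log(r^{\dtt-1}|\log\delta|)/(r^{\dtt-1}|\log\delta|)\cdot N\bigr)$, which is absorbed into the error term $C_\att f(\delta,r,\phi)$; summing Bowen-ball counts over codings intersecting $Y^{\ast}\cap Y^{\mathrm{good}}_N$ (whose $\mu$-measure is $>1-O(\eta)$) and applying Katok's formula yields the theorem after letting $\eta\to 0$.

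The main obstacle is the uniformity step: the covering lemma produces an exponential bound per coding, while the link between $\Psi_{h-\phi}(\mathcal{C})$ and cusp-region measures is pointwise in $x$ through the ergodic theorem.  Egorov's theorem combined with the restriction to $Y^{\mathrm{good}}_N$, which also controls $\|\height(\att_{\pm N}x)\|$ and hence the spurious contribution of the linear functional, effectively allows the maximum of $\Psi_h(\mathcal{C})$ over participating codings to be replaced by the $\mu$-integrated expression, at the price of the stated error terms.
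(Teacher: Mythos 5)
Your proposal is correct and follows essentially the same route as the paper's proof: ergodic decomposition, Katok's criterion (Lemma~\ref{lemma:7.1}), the good set $F=E\cap T^{N}Y\cap T^{-N}Y$ built from a compact $Y$ and Birkhoff+Egorov, the covering lemma applied coding-by-coding, Corollary~\ref{corollary:4.7_n} to absorb $\phi$ at the cost of $O_{\att}(\|\phi\|rN)$ plus the bounded height change on $F$, Proposition~\ref{corollary:5.3} to pass from coding counts to visit counts of $\cusp_{\delta,\delta^{\prime}}(P,Q,[w]_Q)$, Proposition~\ref{proposition:3.9} for the mass of the unoriented region, and Proposition~\ref{lemma:num_of_V} for the number of participating codings. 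Your identification of the uniformity issue and its resolution via Egorov together with the coding count is exactly the mechanism used in the paper.
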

\begin{remark}\label{remark:1.2}
    The error term $\frac{1}{|\log\delta^{\prime}|}$ can clearly  be made arbitrarily small, as the limit $\delta^{\prime}\to 0^{+}$ is taken (and with it the limit $\delta\to 0^{+}$ is taken as $\delta<(\delta^{\prime})^{2})$.
    As for $f(\delta,r,\phi)$, note that 
    $r$ needs to be small for the error term to be insignificant. 
    As $r>(\frac{\log\delta^{\prime}}{\log\delta})^{1/(\dtt+1)}$,
    this requires
    $\delta$ to be considerably smaller than $\delta^{\prime}$.
    We would later on use this theorem with $\delta^{\prime}$ being fixed as 
    \[\delta^{\prime}=\exp(-|\log\delta|^{\beta})\]
    for some $0<\beta<1$, which guarantees that all error terms become arbitrarily small as $\delta$ approaches zero.
\end{remark}

The covering lemma (Lemma~\ref{covering lemma}) is used to prove Theorem~\ref{theorem:1.1} by a standard procedure which was utilized in some previous works~\cite{einsiedler2012entropy,einsiedler2011distribution,einsiedler2015escape,kadyrov2017singular,mor2021excursions}. 
In our case, we also need to include linear functionals in the estimates, and to do so we use Corollary~\ref{corollary:4.7_n}.
A final ingredient of the proof is the following lemma, which gives an upper bound for the entropy.
When the space is compact, this lemma follows from a result of Katok \cite{katok1980lyapunov}.
Proof of this lemma in settings similar to ours may be found in~\cite{einsiedler2011distribution,mor2021excursions}. The proof translates to the setup in this paper with little change, so we do not repeat the proof here as well. 
In this section, $T$ stands for the time-one map of the flow $\att_{\bullet}$, namely $Tx=\att_{1} x$.
\begin{lemma}\label{lemma:7.1}
Let $\mu$ be an $\att_{\bullet}$-invariant probability measure on $X$. For any given integer $1\leq N\in \mathbb{N}$ and $\rho,\epsilon>0$, let $\BC_{\rho}(N,\epsilon)$ be the minimal number of Bowen $(N,\rho)$-balls needed to cover any subset of $X$ of measure larger than $1-\epsilon$. Then for any $\rho>0$,
\begin{equation*}
    h_{\mu}(T)\leq\lim_{\epsilon\to 0^{+}}\liminf_{N\to\infty}\frac{\log\BC_{\rho}(N,\epsilon)}{2N}.
\end{equation*}
\end{lemma}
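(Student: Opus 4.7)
The plan is to adapt the classical Katok-style lower bound for $\BC_{\rho}(N,\epsilon)$ by combining the Shannon--McMillan--Breiman (SMB) theorem with a geometric counting lemma, replacing arguments that rely on compactness of $X$ with ones that use the fact that $\mu$ is a probability measure and that $X=\SL_{\dtt}(\mathbb{R})/\SL_{\dtt}(\mathbb{Z})$ has tractable local geometry. Fix $\eta>0$. First I would choose a measurable partition $\mathcal{Q}$ of $X$ with $H_\mu(\mathcal{Q})<\infty$, $h_\mu(T,\mathcal{Q})>h_\mu(T)-\eta$, and each atom of $d$-diameter at most $\rho/3$. Such a $\mathcal{Q}$ is built by fixing a compact $C\subset X$ with $\mu(C)$ close to $1$, tiling $C$ by small pieces refining any partition already achieving entropy $h_\mu(T)-\eta/2$, and placing $X\smallsetminus C$ into a single atom.

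By SMB, $-\tfrac{1}{2N+1}\log\mu(\mathcal{Q}_{N}(x))\to h_\mu(T,\mathcal{Q})$ for $\mu$-a.e.\ $x$, where $\mathcal{Q}_{N}=\bigvee_{n=-N}^{N}T^{-n}\mathcal{Q}$, so Egorov's theorem yields $Y\subset X$ with $\mu(Y)>1-\epsilon/2$ and $N_{0}$ such that $\mu(\mathcal{Q}_{N}(x))\le e^{-(2N+1)(h_\mu(T)-2\eta)}$ for all $x\in Y$ and $N\ge N_{0}$. Now, given any $A\subset X$ with $\mu(A)>1-\epsilon$ covered by $L$ Bowen $(N,\rho)$-balls, the set $A\cap Y$ has mass exceeding $1-3\epsilon/2$; since each $\mathcal{Q}_{N}$-atom meeting $Y$ has mass at most $e^{-(2N+1)(h_\mu(T)-2\eta)}$, the number of distinct such atoms meeting $A\cap Y$ is at least $(1-3\epsilon/2)\,e^{(2N+1)(h_\mu(T)-2\eta)}$.

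The key remaining step is a geometric counting lemma asserting that each Bowen $(N,\rho)$-ball meets at most $M=M(\rho)$ atoms of $\mathcal{Q}_{N}$, with $M$ independent of $N$. Granting this, $L\cdot M\ge (1-3\epsilon/2)\,e^{(2N+1)(h_\mu(T)-2\eta)}$; dividing by $2N$ and letting $N\to\infty$, then $\epsilon\to 0$ and $\eta\to 0$, yields the lemma. The geometric lemma reduces to the observation that each $\mathcal{Q}_{N}$-atom has Bowen $(N,\cdot)$-diameter at most $\rho/3$ while the ball has Bowen diameter $2\rho$, so a doubling-type estimate for the Bowen metric at scale $\rho$ suffices.

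The main obstacle is verifying this doubling estimate. Although the right-invariant metric on $G$ is globally doubling by translation-invariance, the Bowen metric $d_{N}(x,y)=\max_{|n|\le N}d(T^{n}x,T^{n}y)$ is an intersection of $2N+1$ constraints, and naive bounds on its doubling constant blow up with $N$. The correct argument exploits that $\mathcal{B}_{N,\rho}\subset G$ is a neighborhood of the identity whose shape is dictated by the Lyapunov exponents of $T$, so the doubling of $d_{N}$ at scale $\rho$ is controlled by the doubling of $d$ alone; additional care for points with small injectivity radius deep in the cusp is handled by further restricting $Y$ to lie in a compact subset of $\mu$-mass close to $1$, which exists since $\mu$ is a probability measure. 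This is precisely the point where the proof follows~\cite{einsiedler2011distribution,mor2021excursions} with only cosmetic modifications.
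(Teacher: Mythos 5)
Your overall architecture (SMB theorem, Egorov, then a count of how many atoms of $\mathcal{Q}_{N}$ a single Bowen ball can meet) is a legitimate route, and is close in spirit to the argument in the cited references, but the key counting step is a genuine gap and the justification you offer for it does not work. A first, smaller problem: your partition has the single unbounded atom $X\smallsetminus C$, so atoms of $\mathcal{Q}_{N}$ whose itinerary visits that atom do \emph{not} have Bowen $(N,\cdot)$-diameter at most $\rho/3$, which already invalidates the premise of your geometric lemma. The more serious problem is that a doubling estimate for the Bowen metric cannot bound the number of $\mathcal{Q}_{N}$-atoms meeting a Bowen $(N,\rho)$-ball by a constant independent of $N$: doubling controls how many \emph{disjoint Bowen balls of a definite radius} can be packed into a larger one, whereas the atoms of $\mathcal{Q}_{N}$, although of small Bowen diameter, contain no Bowen $(N,c)$-ball for any fixed $c>0$ (their measure is roughly $e^{-(2N+1)h}$), and a ball of radius $\rho$ in any metric can meet arbitrarily many disjoint sets of diameter $\rho/3$. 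Concretely, the number of atoms met equals the number of distinct itineraries $\big(\mathcal{Q}(T^{n}y)\big)_{|n|\le N}$ realized inside the ball, and this grows exponentially in $N$ whenever the orbit of the ball straddles $\partial\mathcal{Q}$ for a positive proportion of the times $n$; no bound $M(\rho)$ valid for \emph{every} Bowen ball exists.

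The missing ingredient is control of the boundary-crossing frequency. One must choose $\mathcal{Q}$ so that the $2\rho$-neighbourhood of $\partial\mathcal{Q}$, together with the unbounded atom, has $\mu$-measure at most $\gamma$; apply the pointwise ergodic theorem and Egorov a second time to the indicator of this neighbourhood to obtain a large-measure set of centers $x$ for which $T^{n}x$ lies in it for at most $2\gamma(2N+1)$ values of $|n|\le N$; observe that for all remaining $n$ the set $T^{n}(\mathcal{B}_{N,\rho}x)$ lies in a single atom of $\mathcal{Q}$; and arrange (by re-centering) that every Bowen ball in the cover is centered at such a point. This yields a bound of the form $|\mathcal{Q}|^{O(\gamma N)}=e^{o(N)}$, not $O_{\rho}(1)$, on the number of atoms met, which still suffices after letting $\gamma\to 0^{+}$ at the end. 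Without this step the inequality $L\cdot M\ge(1-3\epsilon/2)e^{(2N+1)(h_{\mu}(T)-2\eta)}$ is unsubstantiated and the proof does not close.
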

\begin{proof}
Omitted.
\end{proof}

Let us now prove Theorem~\ref{theorem:1.1}.
\begin{proof}[Proof of Theorem~\ref{theorem:1.1}]
First of all, let us assume that $\mu$ is ergodic.
Let $\epsilon>0$ be arbitrary.
Let $0<\tau<\eta_0$ be small enough (depending on $\mu$ and $\epsilon$) so that the compact set $Y=\compact_{\tau}(X)$ satisfies $\mu(Y)>1-\frac{\epsilon}{3}$.

From Birkhoff's pointwise ergodic theorem, for all $(P,Q,[w]_{Q})$, where $P,Q\in\mathcal{P}$ satisfy $Q\subseteq P$ and $[w]_{Q}\in W_{Q,\att}$, the sequence of functions
\begin{equation*}
    f_{N,P,Q,[w]_Q}\coloneqq\frac{1}{2N+1}\sum_{n=-N}^{N}\mathbbm{1}_{T^{-n}(\cusp_{\delta,\delta^{\prime}}(P,Q,[w]_Q))}
\end{equation*}
converges almost everywhere (as $N\to\infty$) to the constant function $\mu(\cusp_{\delta,\delta^{\prime}}(P,Q,[w]_{Q}))$. 
By Egorov's theorem, there is a subset $E_{P,Q,[w]_{Q}}\subseteq X$ with 
\[\mu(E_{P,Q,[w]_{Q}})>1-\frac{\epsilon}{3|\mathcal{P}|^{2}|W|}\]
such that $(f_{N,P,Q,[w]_Q})_{N\in\mathbb{N}}$ converges uniformly on $E_{P,Q,[w]_{Q}}$. Therefore, we may choose $N_{P,Q,[w]_{Q}}$ such that for all $N>N_{P,Q,[w]_{Q}}$ and $x\in E_{P,Q,[w]_{Q}}$:
\begin{equation*}
    \frac{1}{2N+1}\sum_{n=-N}^{N}\mathbbm{1}_{T^{-n}(\cusp_{\delta,\delta^{\prime}}(P,Q,[w]_{Q}))}(x)>\kappa_{P,Q,[w]_{Q}}\coloneqq\mu(\cusp_{\delta,\delta^{\prime}}(P,Q,[w]_{Q}))-\epsilon.
\end{equation*}

Set \[N_0=\max_{\substack{P,Q\in\mathcal{P}:\ Q\subseteq P,\\ [w]_{Q}\in W_{Q,\att}}}N_{P,Q,[w]_{Q}}\] and \[E=\bigcap_{\substack{P,Q\in\mathcal{P}:\ Q\subseteq P,\\ [w]_{Q}\in W_{Q,\att}}}E_{P,Q,[w]_{Q}}.\]
Let $N>N_0$.
Define \begin{equation*}
F=E\cap T^{N}Y\cap T^{-N}Y.
\end{equation*}
Note that 
\begin{equation*}
\mu(F)> 1-|\mathcal{P}|^{2}|W|\frac{\epsilon}{3|\mathcal{P}|^{2}|W|}-\frac{\epsilon}{3}-\frac{\epsilon}{3}=1-\epsilon
\end{equation*}
so in view of Lemma~\ref{lemma:7.1}, it would suffice to cover $F$ by a suitable amount of Bowen balls.
Assume now that $N>|\log\delta|$ as well. 
By the covering lemma (Lemma~\ref{covering lemma}), for any $\mathcal{C}\in\mathcal{C}_{N}$, the set
\[T^{N}(Y)\cap Z(\mathcal{C},\delta,\delta^{\prime},r)\]
can be covered by
\begin{equation}\label{eq:7.2}\ll N_{\rho}(Y)\cdot\exp(\frac{D}{ r^{\dtt-1}|\log\delta|}N)\cdot\exp\Big(\sum_{\substack{P\in \mathcal{P},\\ [w]_P\in W_{P,\att}}}|\mathcal{C}^{-1}(P,[w]_P)|\cdot h(P,\att^{w})\Big)\end{equation}
Bowen $(N,\rho)$-balls, for some small fixed $\rho>0$.
Let
\[\mathcal{A}=\{\mathcal{C}\in \mathcal{C}_{N}:\ F\cap Z(\mathcal{C},\delta,\delta^{\prime},r)\not=\emptyset\}.\]
Clearly \[F\subseteq \bigcup_{\mathcal{C}\in \mathcal{A}}T^{N}(Y)\cap Z(\mathcal{C},\delta,\delta^{\prime},r),\]
and it follows, using Equation~\eqref{eq:7.2}, that
$F$ can be covered by
\begin{align*}&\ll_{\tau}
\sum_{\mathcal{C}\in\mathcal{A}}\exp(\frac{D}{ r^{\dtt-1}|\log\delta|}N)\cdot\exp\Big(\sum_{\substack{P\in \mathcal{P},\\ [w]_P\in W_{P,\att}}}|\mathcal{C}^{-1}(P,[w]_P)|\cdot h(P,\att^{w})\Big)\\&
\leq 
|\mathcal{A}|
\exp(\frac{D}{ r^{\dtt-1}|\log\delta|}N)\cdot\max_{\mathcal{C}\in \mathcal{A}}\Big[\exp\Big(\sum_{\substack{P\in \mathcal{P},\\ [w]_P\in W_{P,\att}}}|\mathcal{C}^{-1}(P,[w]_P)|\cdot h(P,\att^{w})\Big)\Big]
\end{align*}
many Bowen $(N,\rho)$-balls (note that we omitted the dependence on $\rho$ from the $\ll$ notation, since $\rho$ is considered fixed).
By Proposition~\ref{lemma:num_of_V},  
\begin{equation*}
|\mathcal{A}|\leq \exp(\kappa\frac{\log\big(r^{\dtt-1}|\log\delta|\big)}{r^{\dtt-1}|\log\delta|}N),\end{equation*}
so the covering we found for $F$ by Bowen $(N,\rho)$-balls is in fact of cardinality
\[\ll_{\tau} 
\exp(\kappa^{\prime}\frac{\log\big(r^{\dtt-1}|\log\delta|\big)}{r^{\dtt-1}|\log\delta|}N)\cdot
\max_{\mathcal{C}\in \mathcal{A}}\Big[
\exp\Big(\sum_{\substack{P\in \mathcal{P},\\ [w]_P\in W_{P,\att}}}|\mathcal{C}^{-1}(P,[w]_P)|\cdot h(P,\att^{w})\Big)\Big]
\]
for some $\kappa^{\prime}>0$ which depends on $\att$.

\medskip

We now add the linear functional.
Note that for $x\in F$ we have 
\[\|\height(\att_{N}x)-\height(\att_{-N}x)\|\ll_{\tau} 1,\]
with $\height$ defined as in Equation~\eqref{eq:3.4}. 
Therefore, by 
Corollary~\ref{corollary:4.7_n}, for any $\mathcal{C}\in\mathcal{A}$ we have
\[\exp\Big(-\sum_{\substack{P\in \mathcal{P},\\ [w]_P\in W_{P,\att}}}|\mathcal{C}^{-1}(P,[w]_P)|\phi(\pi_P(\upalpha^{w}))+ \tilde{D}\|\phi\|\cdot rN\Big)\geq 1,\]
for some $\tilde{D}\asymp_{\att} 1$ and large enough $N$.
So the covering of $F$ 
is certainly of cardinality at most 
\begin{align*}
    &\ll_{\tau} 
\exp(E\cdot f(\delta,r,\phi)N)\cdot
\max_{\mathcal{C}\in\mathcal{A}}\Big[
    \exp\Big(\sum_{\substack{P\in \mathcal{P},\\ [w]_P\in W_{P,\att}}}|\mathcal{C}^{-1}(P,[w]_P)|\big(h(P,\att^{w})-\phi(\pi_P(\upalpha^{w}))\big)\Big)\Big],
\end{align*}
 for some $0<E\ll_{\att} 1$, where the notation 
\[f(\delta,r,\phi)=\frac{\log\big(r^{\dtt-1}|\log\delta|\big)}{r^{\dtt-1}|\log\delta|}+\|\phi\|r\]
was introduced.

For any $x\in X$, let 
\[T_{P,Q,[w]_Q}^{x}=\{n\in [-N,N]\cap\mathbb{Z}:\ T^{n}x\in\cusp_{\delta,\delta^{\prime}}(P,Q,[w]_Q)\}.\]
Let $\mathcal{C}\in\mathcal{A}$ and $x\in F\cap Z(\mathcal{C},\delta,\delta^{\prime},r)$.
Recall that by Proposition~\ref{corollary:5.3}, we have
\begin{align*}
&\sum_{H\in \mathcal{P}}\sum_{[\tau]_{H}\in W_{H,\att}}
\Big|\mathcal{C}^{-1}(H,[\tau]_{H})\Big|(h-\phi)([\tau]_{H})
\leq \\&
\sum_{\substack{P,Q\in\mathcal{P}:\\ Q\subseteq P}}\sum_{[w]_{Q}\in W_{Q,\att}}\Big|T^{x}_{P,Q,[w]_{Q}}\Big|\max_{\substack{H\in\mathcal{P}:\\ Q\subseteq H\subseteq P}}(h-\phi)([w]_{H})
+\Big|T^{x}_{\cusp_{\delta^{\prime}}(X,\emptyset)}\Big|\max_{\substack{H\in \mathcal{P},\\ [w]_{H}\in W_{H,\att}}}(h-\phi)([w]_{H}).
\end{align*}
Furthermore, note that
\[\cusp_{\delta^{\prime}}(X,\emptyset)\cupdot\bigcupdot_{\substack{Q\subseteq P\\ [w]_{Q}\in W_{Q,\att}}}\cusp_{\delta,\delta^{\prime}}(P,Q,[w]_{Q})=X\]
is a partition, so in particular
\[\Big|T^{x}_{\cusp_{\delta^{\prime}}(X,\emptyset)}\Big|=2N+1-\sum_{\substack{P,Q\in \mathcal{P}:\\Q\subseteq P}}\sum_{[w]\in W_{Q}}\Big|T^{x}_{P,Q,[w]_{Q}}\Big|.\]
Moreover, using Proposition~\ref{proposition:3.9}, we have
\[1-\sum_{\substack{P,Q\in\mathcal{P}:\\ Q\subseteq P}}\sum_{[w]_{Q}\in W_{Q,\att}}\mu(\cusp_{\delta,\delta^{\prime}}(P,Q,[w]_{Q}))=\mu(\cusp_{\delta^{\prime}}(X,\emptyset))\ll_{\att}\frac{1}{|\log\delta^{\prime}|}.\]
Finally, recall that the set $F$ satisfies, by its definition, 
\begin{align*}
F\subseteq\Big\{&x\in T^{N}(Y)\cap T^{-N}(Y):\\&\:  \left|T_{P,Q,[w]_Q}^{x}\right|\geq(2N+1)\kappa_{P,Q,[w]_Q},\ \forall Q,P\in\mathcal{P}\text{ with $Q\subseteq P$ and } [w]_{Q}\in W_{Q,\att}\Big\}.
\end{align*}

Using all of these estimates, we find that
our cover of $F$ is in fact of size at most
\begin{align*}
&\ll_{\tau}\exp\Big[E\cdot f(\delta,r,\phi)N+\sum_{\substack{P,Q\in\mathcal{P}:\\ Q\subseteq P}}\sum_{[w]_{Q}\in W_{Q,\att}}(2N+1)\kappa_{P,Q,[w]_{Q}}\cdot\max_{\substack{H\in\mathcal{P}:\\ Q\subseteq H\subseteq P}}(h-\phi)([w]_{H})\\&\hspace{1cm}
+\Big(2N+1-\sum_{\substack{P,Q\in\mathcal{P}:\\ Q\subseteq P}}\sum_{[w]_{Q}\in W_{Q,\att}}(2N+1)\kappa_{P,Q,[w]_{Q}}\Big)\max_{\substack{H\in \mathcal{P},\\ [w]_{H}\in W_{H,\att}}}(h-\phi)([w]_{H})\Big]
\\&
=\exp\Big[(2N+1)\Big(\sum_{\substack{P,Q\in\mathcal{P}:\\ Q\subseteq P}}\sum_{[w]_{Q}\in W_{Q,\att}}\mu(\cusp_{\delta,\delta^{\prime}}(P,Q,[w]_{Q}))\max_{\substack{H\in\mathcal{P}:\\ Q\subseteq H\subseteq P}}(h-\phi)([w]_{H})\\&\hspace{1cm}
+\mu(\cusp_{\delta^{\prime}}(X,\emptyset))\max_{\substack{H\in \mathcal{P},\\ [w]_{H}\in W_{H,\att}}}(h-\phi)([w]_{H})+O_{\att,\phi}(\epsilon)\Big)
+E\cdot f(\delta,r,\phi)N\Big]\\&
\leq \exp\Big[(2N+1)\Big(\sum_{\substack{P,Q\in\mathcal{P}:\\ Q\subseteq P}}\sum_{[w]_{Q}\in W_{Q,\att}}\mu(\cusp_{\delta,\delta^{\prime}}(P,Q,[w]_{Q}))\max_{\substack{H\in\mathcal{P}:\\ Q\subseteq H\subseteq P}}(h-\phi)([w]_{H})\\&\hspace{1cm}
+O_{\att,\phi}(\frac{1}{|\log\delta^{\prime}|}+\epsilon)\Big)+E\cdot f(\delta,r,\phi)N\Big].
\end{align*}

\medskip

Recall that $\mu(F)>1-\epsilon$,
so the amount of Bowen balls we obtained is an upper bound for $\BC_{\rho}(N,\epsilon)$.
Then, using Lemma~\ref{lemma:7.1}, we get
\begin{align*}
h_{\mu}(T)&\leq \lim_{\epsilon\to 0^{+}}\underset{N\to\infty}{\liminf}\frac{\log\BC_{\rho}(N,\epsilon)}{2N+1}
\\&\leq
\sum_{\substack{P,Q\in\mathcal{P}:\\ Q\subseteq P}}\sum_{[w]_{Q}\in W_{Q,\att}}
\mu(\cusp_{\delta,\delta^{\prime}}(P,Q,[w]_{Q}))\max_{\substack{H\in\mathcal{P}:\\ Q\subseteq H\subseteq P}}(h-\phi)([w]_{H})\\&
+O_{\att}(f(\delta,r,\phi))+O_{\att,\phi}(\frac{1}{|\log\delta^{\prime}|}),
\end{align*}
as desired.

In order to deduce the theorem for non-ergodic $\mu$, we can decompose $\mu$ to its ergodic
components $\mu=\int\mu_{t}d\tau(t)$ over some measure space. As
\begin{equation*}
h_{\mu}(T)=\int h_{\mu_{t}}(T)d\tau(t),
\end{equation*}
the result follows immediately from the ergodic case.
\end{proof}

\section{Deducing the results}
In this section we deduce from Theorem~\ref{theorem:1.1} the 
results of this paper. We start with Theorem~\ref{corollary:1.2} and then discuss all other results, which require the notion of the RBS compactification. 
\subsection{Proof of Theorem~\ref{corollary:1.2}}
\begin{proof}[Proof of Theorem~\ref{corollary:1.2}]
Consider the bound given by Theorem~\ref{theorem:1.1}, with $\phi=0$.
The entropy for a parabolic subgroup increases as the subgroup increases, so
\[\max_{\substack{H\in\mathcal{P}:\\ Q\subseteq H\subseteq P}}(h-\phi)([w]_{H})=\max_{\substack{H\in\mathcal{P}:\\ Q\subseteq H\subseteq P}}h(H,\att^{w})=h(P,\att^{w})\]
for any $P,Q\in\mathcal{P}$ with $Q\subseteq P$ and $[w]_{Q}\in W_{Q,\att}$.

Let $P\in\mathcal{P}$ and $[w]_{P}\in W_{P,\att}$.
It is clear that
\[\bigcupdot_{\substack{Q\in \mathcal{P}:\\Q\subseteq P}}\bigcupdot_{\substack{[\tau]_{Q}\in W_{Q,\att}:\\ [\tau]_P=[w]_P}}\cusp_{\delta,\delta^{\prime}}(P,Q,[\tau]_{Q})\subseteq\cusp_{\delta}(P,[w]_{P}).\]
Then Theorem~\ref{corollary:1.2} follows from this inclusion together with
the choices
\[\delta^{\prime}=\exp(-|\log\delta|^{1/2})\]
and
\[r=(\frac{|\log\delta^{\prime}|}{|\log\delta|})^{1/(\dtt+2)}.\]
\end{proof}

\subsection{The reductive Borel-Serre compactification}\label{subsec:8.2}
The rest of the results involve the reductive Borel-Serre (RBS) compactification of $G/\Gamma$.
We first introduce it in the generality where $G=\mathbf{G}(\mathbb{R})$ is the real locus of a semisimple linear algebraic group $\mathbf{G}$ defined over $\mathbb{Q}$, and $\Gamma$ is an arithmetic subgroup. 
Then we relate this definition to our work on the particular case $\mathbf{G}=\SL_{\dtt}$ and $\Gamma=\mathbf{G}(\mathbb{Z})$.
We follow the construction of~\cite{borel2006compactifications}.

Fix some maximal torus in $\mathbf{G}$.
For $P=\mathbf{P}(\mathbb{R})$ the real locus of a parabolic subgroup $\mathbf{P}<\mathbf{G}$,  consider the rational Langlands decomposition
\[P=M_{\mathbf{P}}A_{\mathbf{P}}N_{P}\]
with respect to the fixed maximal torus,
where $N_{P},A_{\mathbf{P}},M_{\mathbf{P}}$ are the real loci of the unipotent radical of $\mathbf{P}$, of the torus component of the Levi subgroup of $\mathbf{P}$, and of the semisimple component of the Levi subgroup of $\mathbf{P}$, respectively (see~\cite[\S.III.1.9]{borel2006compactifications} for precise definitions). 
For our special case where $G=\SL_{\dtt}(\mathbb{R}$) and $P$ a standard parabolic subgroup, we can write $P=M_P A_P N_P$ for
$N_{P}$ the subgroup of block upper triangular matrices with unit matrices on the diagonal, 
$A_P$ the subgroup of block scalar matrices, 
and $M_P$ the subgroup of block diagonal matrices with determinant $\pm 1$ on each block.

We also consider the relative rational Langlands decomposition of a rational parabolic subgroup $\mathbf{Q}$ with respect to a rational parabolic subgroup $\mathbf{P}\subset\mathbf{Q}$. 
In these settings, there is a uniquely determined $\mathbb{Q}$-parabolic subgroup $\mathbf{P}^{\prime}<\mathbf{M}_{\mathbf{Q}}$, for $\mathbf{M}_{\mathbf{Q}}$ the semisimple component
of the Levi subgroup of
$\mathbf{Q}$ \cite[Lemma 2]{harish1968automorphic}, which satisfies $M_{\mathbf{P}^\prime}=M_{\mathbf{P}}$ and gives rise to the decomposition \[KM_{\mathbf{Q}}=KM_{\mathbf{P}^{\prime}}A_{\mathbf{P}^{\prime}}N_{P^\prime}\]
where $K$ is a maximal compact subgroup of $G$. For the case $\mathbf{G}=\SL_{\dtt}$, we will choose $K=\SO(\dtt)$.
Lastly, for a $\mathbb{Q}$-parabolic subgroup $\mathbf{P}$, we require the notation $\Phi(P,A_{\mathbf{P}})$, which is the set of characters of $A_{\mathbf{P}}$ occurring in its action on $\mathfrak{n}_{\mathbf{P}}=\Lie N_P$ by the adjoint representation.

Now, we define the reductive Borel-Serre (RBS) compactification of $G$.
For a $\mathbb{Q}$-parabolic subgroup $\mathbf{P}$, define the boundary face \[e(\mathbf{P})=KM_{\mathbf{P}}\cong G/A_{\mathbf{P}}N_P \] and the \textbf{reductive Borel-Serre compactification}
\[\overline{G}^{\RBS}=\coprod_{\textup{$\mathbf{P}<\mathbf{G}$ $\mathbb{Q}$-parabolic}}e(\mathbf{P}).\]
The topology on $\overline{G}^{\RBS}$ can be given by a convergence class of sequences \cite[Definition I.8.11]{borel2006compactifications}, i.e.\ by determining the limit of sequences in $\overline{G}^{\RBS}$ in a suitable manner. We give two classes of converging sequences in $\overline{G}^{\RBS}$ which together form a convergence class:

\begin{enumerate}
    \item For any $\mathbb{Q}$-parabolic subgroup $\mathbf{P}$, an unbounded sequence \[y_j=(m_j,a_j,n_j)\in (KM_{\mathbf{P}})\times A_{\mathbf{P}}\times N_P\cong G\] converges to a points $m_{\infty}\in e(\mathbf{P})$ if and only if the following conditions are satisfied:
    \begin{enumerate}
        \item $\alpha(a_j)\to\infty$ for all $\alpha\in\Phi(P,A_{\mathbf{P}})$.
        \item $m_j\to m_\infty$ in $M_{\mathbf{P}}K$.
    \end{enumerate}
    \item For two $\mathbb{Q}$-parabolic subgroups $\mathbf{P}\subsetneq \mathbf{Q}$, a sequence \[y_j=(m_j,a_j,n_j)\in e(\mathbf{P})\times A_{\mathbf{P}^{\prime}}\times N_{P^\prime}=e(\mathbf{Q})\] converges to $m_{\infty}\in e(\mathbf{P})$ if and only if
    \begin{enumerate}
        \item $\alpha(a_j)\to\infty$ for all $\alpha\in\Phi(P^\prime,A_{\mathbf{P}^{\prime}})$.
        \item $m_j\to m_{\infty}$ in $e(\mathbf{P})$.
    \end{enumerate}
\end{enumerate}

We are interested in the compactification of $G/\Gamma$ rather than $G$. To obtain such a compactification, we note that $\mathbf{G}(\mathbb{Q})$ acts continuously from the right on $\overline{G}^{\RBS}$ as follows \cite[Proposition III.14.4]{borel2006compactifications}. For $g\in\mathbf{G}(\mathbb{Q})$, write $g=namk$ for $k\in K$, $m\in M_{\mathbf{P}}$, $a\in A_{\mathbf{P}}$, $n\in N_P$. Of course $k$ and $m$ are not uniquely defined, but $mk$ is. Then $g$ acts on $m^{\prime}\in KM_{\mathbf{P}}$ by \[m^{\prime}g=m^{\prime}mk\in e(k^{-1}\mathbf{P}k).\]
Therefore, the quotient space \[\overline{ G/\Gamma}^{\RBS}\coloneqq\overline{G}^{\RBS}/\Gamma\] is well defined for any arithmetic subgroup $\Gamma<\mathbf{G}(\mathbb{Q})$. It is a compact Hausdorff space which contains $G/\Gamma$ as an open dense set \cite[Proposition III.14.5]{borel2006compactifications}.

We aim to give a detailed form of this compactification. First, recall that $\mathbf{G}(\mathbb{Q})$ acts on the set of $\mathbb{Q}$-parabolic subgroups of $\mathbf{G}$ by conjugation. Choose some set of representatives $\{\mathbf{P}_i\}_{i=1}^{k}$ for the $\Gamma$-conjugacy classes of the $\mathbb{Q}$-parabolic subgroups. 
Moreover, let  \[\pi_{M_\mathbf{P}}:\:P=M_{\mathbf{P}} A_{\mathbf{P}} N_P\to M_{\mathbf{P}}\] be the natural projection, and define \[\Gamma_{M_{\mathbf{P}}}=\pi_{M_{\mathbf{P}}}(\Gamma\cap P).\] 
Now, it follows \cite[Proposition III.14.8]{borel2006compactifications} that
\[\overline{ G/\Gamma}^{\RBS}=\coprod_{i=1}^{k} KM_{\mathbf{P}_{i}}/\Gamma_{M_{\mathbf{P}_i}}=\coprod_{i=1}^{k}G/A_{\mathbf{P}_{i}}N_{P_{i}}\Gamma_{M_{\mathbf{P}_i}}.\]

Back to the main case of interest which is $G=\SL_{\mathtt{d}}(\mathbb{R})$ and $\Gamma=\SL_{\dtt}(\mathbb{Z})$, we may choose the set of representatives to be the standard parabolic subgroups $\mathcal{P}$, so that
\[\overline{ G/\Gamma}^{\RBS}=\coprod_{P\in\mathcal{P}}e_{\infty}(P),\]
where \[e_{\infty}(P)\coloneqq K M_{\mathbf{P}}/\Gamma_{M_{\mathbf{P}}}.\]
In this case, if $P$ has block sized $l_1,\ldots,l_k$, then we simply have 
\[ M_{\mathbf{P}}/\Gamma_{M_{\mathbf{P}}}=\prod_{j=1}^{k}\SL_{l_j}(\mathbb{R})/ \SL_{l_j}(\mathbb{Z}).\]

\subsection{Proof of Theorem~\ref{theorem:1.1new}}\label{subsec:8.3}
\begin{proof}[Proof of Theorem~\ref{theorem:1.1new}]
Let $\epsilon>0$. Let $\delta,\delta^{\prime},r$ be as in Theorem~\ref{theorem:1.1}, so that additionally
\[f(\delta,r,\phi)<\frac{\epsilon}{3}(1+\|\phi\|)\]
for all $\phi\in\Lie(A)^{\ast}$,
and
\[\frac{1}{|\log\delta^{\prime}|}<\frac{\epsilon}{3}.\]
Then \[C_{\att}f(\delta,r,\phi)+C^{\prime}_{\att,\phi}\frac{1}{|\log\delta^{\prime}|}<D_{\att,\phi}\epsilon\]
for all $\att\in A,\ \phi\in\Lie(A)^{\ast}$, where 
$D_{\att,\phi}=\max\{C_{\att},C_{\att}\|\phi\|,C^{\prime}_{\att,\phi}\}.$

For all $H\in\mathcal{P}$, let
\[Y_H=\bigcup_{\substack{P,Q\in\mathcal{P}:\\ Q\subseteq H\subseteq P}}\cusp_{\delta,\delta^{\prime}}(P,Q)= \{x\in X:\ \eta(x,\delta)\subseteq \eta(H)\subseteq\eta(x,\delta^{\prime})\}.\]
It is easy to see by the definition of the RBS compactification that $\overline{Y_H}\cap e_{\infty}(Q)=\emptyset$ holds for any $Q\in\mathcal{P}$ with $H\not\subseteq Q$, where the closure is in the RBS compactification. 
Indeed, $Y_{H}$ is unbounded (in terms of the jumps $\eta_i$ in the successive minima) precisely in the directions $\eta(H)$ while the boundary component $e_{\infty}(Q)$ is obtained by approaching infinity at all the directions $\eta(Q)$ together. Therefore, in order for the intersection to be non-empty we must have $\eta(Q)\subseteq\eta(H)$, hence $H\subseteq Q$. Note that by the same argument, there clearly exists an open set $U_{H}\subseteq\overline{G/\Gamma}^{\RBS}$ which contains $\overline{Y_H}$ and satisfies the same property $U_{H}\cap e_{\infty}(Q)=\emptyset$ for all $Q\in\mathcal{P}$ with $H\not\subseteq Q$.

Now, assume $\att$ and $\phi$ are given. Then 
for any $Q\subseteq P$ and any $[w]_{Q}\in W_{Q,\att}$, we let $H_{P,Q,[w]_{Q}}$ be a subgroup $Q\subseteq H_{P,Q,[w]_{Q}}\subseteq P$ so that
\[\max_{Q\subseteq H\subseteq P}
(h-\phi)([w]_{H})=
(h-\phi)([w]_{H_{P,Q,[w]_Q}}).\]
Then we join together cusp regions with the same $H_{P,Q,[w]_{Q}}$, and define
\[V_{H,[\tau]_H}=\bigcup
\Big\{\cusp_{\delta,\delta^{\prime}}(P,Q,[w]_{Q}):\ H_{P,Q,[w]_{Q}}=H,\ [w]_{H}=[\tau]_H \Big\}.\]
These sets are clearly mutually disjoint, and $V_{H,[\tau]_{H}}\subseteq U_{H}$.
Note that
\begin{equation}\label{eq:8.1n}\bigcupdot_{\substack{H\in \mathcal{P},\\ [\tau]_H\in W_{H,\att}}}V_{H,[\tau]_H}=\bigcupdot_{\substack{Q\subseteq P,\\ [w]_{Q}\in W_{Q,\att}}}\cusp_{\delta,\delta^{\prime}}(P,Q,[w]_{Q})).\end{equation}

Then, it follows from Theorem~\ref{theorem:1.1}, using Equation~\eqref{eq:8.1n}, that
\begin{align*}
\numberthis\label{eq:8.2}
h_{\mu}(\att)&\leq
\sum_{\substack{P,Q\in\mathcal{P}:\\Q\subseteq P}}\sum_{[w]_{Q}\in W_{Q,\att}}\mu(\cusp_{\delta,\delta^{\prime}}(P,Q,[w]_{Q}))\cdot\max_{\substack{H\in\mathcal{P}:\\Q\subseteq H\subseteq P}}(h-\phi)([w]_{H})+D_{\att,\phi}\epsilon\\&
=
\sum_{\substack{P,Q\in\mathcal{P}:\\Q\subseteq P}}\sum_{[w]_{Q}\in W_{Q,\att}}\mu(\cusp_{\delta,\delta^{\prime}}(P,Q,[w]_{Q}))\cdot(h-\phi)([w]_{H_{P,Q,[w]_{Q}}})+D_{\att,\phi}\epsilon\\&
=\sum_{H\in \mathcal{P}}\sum_{[\tau]_{H}\in W_{H,\att}}\mu(V_{H,[\tau]_H})\cdot(h-\phi)([\tau]_H)+D_{\att,\phi}\epsilon
.
\end{align*}

This is indeed the correct bound we desired. However, the collection of mutually disjoint sets \[\{V_{H,[\tau]_{H}}:\ H\in\mathcal{P},\ [\tau]_{H}\in W_{H,\att}\},\] 
is not a partition of $X$, because it does not cover the space, so we need to extend it without significantly affecting the entropy bound.
It follows from Equation~\eqref{eq:8.1n} that
\[\bigcupdot_{\substack{H\in \mathcal{P}:\\ [\tau]_H\in W_{H,\att}}}V_{H,[\tau]_H}=X\smallsetminus\bigcupdot_{H\in\mathcal{P}\smallsetminus\{G\}}\cusp_{\delta^{\prime}}(H,\emptyset),\]
so we only need for all $H\in\mathcal{P}\smallsetminus\{G\}$ to redefine $V_{H,[\tau]_{H}}$, for some $[\tau]_{H}\in W_{H,\att}$, to be the union of the original set  $V_{H,[\tau]_{H}}$ with $\cusp_{\delta^{\prime}}(H,\emptyset)$. Indeed, for any choice of $[\tau]_{H}$ the redefined set $V_{H,[\tau]_H}$ would still be a subset of $U_{H}$. Furthermore, by Proposition~\ref{proposition:3.9}
\[\mu(\cusp_{\delta^{\prime}}(H,\emptyset))\ll_{\att} \frac{1}{|\log\delta^{\prime}|}<\frac{\epsilon}{3}\]
for any $\mu$,
so adding $\mathcal{N}_{\delta^{\prime}}(H,\emptyset)$ would not affect the entropy by much because
\[\mu(\cusp_{\delta^{\prime}}(H,\emptyset)) \cdot (h-\phi)([\tau]_{H})\gg_{\att,\phi} - \epsilon,\]
so the same bound as in Equation~\eqref{eq:8.2} would still hold, only with a re-adjusted constant $D_{\att,\phi}\ll_{\att,\phi} 1$.
Note that the fact that $\mathcal{N}_{\delta^{\prime}}(H,\emptyset)$ has small measure allowed us to choose for $H\in\mathcal{P}$ the set $V_{H,[\tau]_{H}}$ to which we add $\mathcal{N}_{\delta^{\prime}}(H,\emptyset)$ arbitrarily, but instead we could have avoided this argument and just chose some $[\tau]_{H}$ so that $(h-\phi)([\tau]_{H})\geq 0$. 
\end{proof}

\subsection{Proof of Theorem~\ref{corollary:1.3} and Corollaries~\ref{corollary:1.4}-\ref{corollary:1.5}}
\begin{proof}[Proof of Theorem~\ref{corollary:1.3}]
Let $\delta,\delta^{\prime},r$ be as in the statement of  Theorem~\ref{theorem:1.1}. Then
\begin{multline*}
h_{\mu_n}(\att)\leq
\sum_{\substack{P,Q\in\mathcal{P}:\\Q\subseteq P}}\sum_{[w]_{Q}\in W_{Q,\att}}\mu_n(\cusp_{\delta,\delta^{\prime}}(P,Q,[w]_{Q}))\cdot\max_{\substack{H\in\mathcal{P}:\\Q\subseteq H\subseteq P}}(h-\phi)([w]_{H})\\
+C_{\att}f(\delta,r,\phi)+C_{\att,\phi}^{\prime}\frac{1}{|\log\delta^{\prime}|}
\end{multline*}
for all $n\in \mathbb{N}$,
and in particular
\begin{multline*}
h_{\mu_n}(\att)\leq
\sum_{\substack{P,Q\in\mathcal{P}:\\Q\subseteq P}}\mu_n(\cusp_{\delta,\delta^{\prime}}(P,Q))\cdot\max_{[w]_{Q}\in W_{Q,\att}}\max_{\substack{H\in\mathcal{P}:\\Q\subseteq H\subseteq P}}(h-\phi)([w]_{H})\\
+C_{\att}f(\delta,r,\phi)+C_{\att,\phi}^{\prime}\frac{1}{|\log\delta^{\prime}|},
\end{multline*}
where we used implicitly the easy to check fact that for any $\phi\in\Lie(A)^{\ast}$ and $H\in\mathcal{P}$ there is some $[w]_{H}\in W_{H,\att}$ so that $\phi([w]_{H})\geq 0$.

Taking the limsup as $n\to\infty$, we obtain
\begin{multline}\label{eq:8.1}
\limsup_{n\to\infty} h_{\mu_n}(\att)\leq
\sum_{\substack{P,Q\in\mathcal{P}:\\Q\subseteq P}}\nu(\overline{\cusp_{\delta,\delta^{\prime}}(P,Q)})\cdot\max_{[w]_{Q}\in W_{Q,\att}}\max_{\substack{H\in\mathcal{P}:\\Q\subseteq H\subseteq P}}(h-\phi)([w]_{H})\\
+C_{\att}f(\delta,r,\phi)+C_{\att,\phi}^{\prime}\frac{1}{|\log\delta^{\prime}|}
\end{multline}
where the closure is in the RBS compactification.

Next, we take the limit as $\delta\to 0^{+}$. We take a positive sequence $(\delta_{j})_{j=1}^{\infty}$ converging to $0$, and choose as in Remark~\ref{remark:1.2} suitable $\delta_{j}^{\prime},r_{j}$ so that the error terms $f(\delta_{j},r_{j},\phi)$ and $\frac{1}{|\log\delta^{\prime}_{j}|}$ vanish at the limit.
By the description of the RBS compactification in~\S\ref{subsec:8.2}, it follows that 
\[\limsup_{j\to\infty} \overline{\cusp_{\delta_j,\delta_j^{\prime}}(P,Q)}=\begin{cases}
e_{\infty}(P) & P=Q \\ 
\emptyset & P\not=Q
\end{cases}.
\]
Therefore, taking the limit in Equation~\eqref{eq:8.1},
we get
\[\limsup_{n\to\infty}h_{\mu_n}(\att)\leq \sum_{P\in \mathcal{P}}\nu(e_{\infty}(P))\cdot\max_{[w]_{P}\in W_{P,\att}}(h-\phi)([w]_{P})\]
as required.
\end{proof}

Corollaries~\ref{corollary:1.4}-\ref{corollary:1.5} follow immediately from Theorem~\ref{corollary:1.3}.

\bibliographystyle{plain}

\end{document}